\newlength{\perspective}
 \setlist[itemize]{wide = 0pt, labelwidth = 2em, labelsep*=0em, itemindent = 0pt, leftmargin = \dimexpr\labelwidth + \labelsep\relax, noitemsep,topsep = 1ex,}
 \setlist[enumerate]{wide = 0pt, labelwidth = 2em, labelsep*=0em, itemindent = 0pt, leftmargin = \dimexpr\labelwidth + \labelsep\relax, noitemsep,topsep = 1ex}
\theoremstyle{plain}
\newtheorem{thmx}{Theorem}
\renewcommand{\thethmx}{\Alph{thmx}} 
\newtheorem{thm}{Theorem}[section]  
\newtheorem{lem}[thm]{Lemma}
\newtheorem{claim}[thm]{Claim} 
\newtheorem{proposition}[thm]{Proposition}
\newtheorem{cor}[thm]{Corollary}
\newtheorem{corx}[thmx]{Corollary} 
\newtheorem{conjecture}[thm]{Conjecture}
\theoremstyle{definition}
\newtheorem{dfn}[thm]{Definition}
\theoremstyle{remark}
\newtheorem{rem}[thm]{Remark} 
\newtheorem{example}[thm]{Example}
\numberwithin{equation}{section}  
\theoremstyle{plain}
\newlist{thmlist}{enumerate}{1}
\setlist[thmlist]{wide = 0pt, labelwidth = 2em, labelsep*=0em, itemindent = 0pt, leftmargin = \dimexpr\labelwidth + \labelsep\relax, noitemsep,topsep = 1ex, font=\normalfont, label=(\roman*), ref=\thethm.(\roman{thmlisti})}
\newlist{thmenum}{enumerate}{1} 
\setlist[thmenum]{wide = 0pt, labelwidth = 2em, labelsep*=0em, itemindent = 0pt, leftmargin = \dimexpr\labelwidth + \labelsep\relax, noitemsep,topsep = 1ex, font=\normalfont, label=(\roman*), ref=\thethmx.(\roman{thmenumi})}
\newlist{corlist}{enumerate}{1} 
\setlist[corlist]{wide = 0pt, labelwidth = 2em, labelsep*=0em, itemindent = 0pt, leftmargin = \dimexpr\labelwidth + \labelsep\relax, noitemsep,topsep = 1ex, font=\normalfont, label=(\roman*), ref=\thecorx.(\roman{corlisti})}
\crefname{lem}{Lemma}{Lemmas} 
\crefname{conjecture}{Conjecture}{Conjectures}
\crefname{thm}{Theorem}{Theorems}
\crefname{proposition}{Proposition}{Propositions}
\crefname{dfn}{Definition}{Definitions}
\crefname{rem}{Remark}{Remarks}
\crefname{cor}{Corollary}{Corollaries}
\crefname{corx}{Corollary}{Corollaries}
\crefname{problem}{Problem}{Problems}
\crefname{thmx}{Theorem}{Theorems}
\crefname{claim}{Claim}{Claims}
\crefname{assumption}{Assumption}{Assumptions}
\crefname{main}{Main Theorem}{Main Theorems}
\def\ep{\varepsilon}
\def\Res{{\rm Res}}
\def\rank{{\rm rank}\,}
\newcommand{\cS}{\mathcal{S}}
\newcommand{\sslash}{\mathbin{/\mkern-6mu/}}
\newcommand*{\rom}[1]{\expandafter\@slowromancap\romannumeral #1@}
\newcommand{\crefnames}[3]{%
	\@for\next:=#1\do{%
		\expandafter\crefname\expandafter{\next}{#2}{#3}%
	}%
}
\newcommand{\sD}{\mathscr{D}}
\newcommand{\sS}{\mathscr{S}}
\newcommand{\cA}{\mathcal A}
\newcommand{\cD}{\mathcal D}
\newcommand{\cO}{\mathcal O}
\newcommand{\bA}{\mathbb{A}}
\newcommand{\bC}{\mathbb{C}}
\newcommand{\bD}{\mathbb{D}}
\newcommand{\bF}{\mathbb{F}}
\newcommand{\bG}{\mathbb{G}}
\newcommand{\bQ}{\mathbb{Q}}
\newcommand{\bR}{\mathbb{R}}
\newcommand{\bV}{\mathbb{V}}
\newcommand{\bW}{\mathbb{W}}
\newcommand{\bZ}{\mathbb{Z}}
\newcommand{\mxp}{M_{\rm B}(X,N)_{\bF_p}}
 \newcommand{\xsp}{X^{\! \rm sp}}
  \newcommand{\zsp}{Z^{\! \rm sp}}
\newcommand{\kC}{\mathfrak{C}}
  \def\spec{\textrm{Spec}\,}
\def\hess{{\rm d}{\rm d}^{\rm c}}
\def\End{{\rm \small  End}}
\def\Im{{\rm Im}\,}
\def\brho{{\bm{\varrho}}}
\def\btau{{\bm{\tau}}}
\def\bsigma{{\bm{\sigma}}}
\newcommand{\Hom}{{\rm Hom}}
\newcommand{\diae}{{}^\diamond\! E}
 \newcommand{\ord}{{\rm ord}\,}
  \newcommand{\kR}{\mathfrak{R}}
  \newcommand{\vhs}{{\scaleto{V\!H\!S}{4pt}}}
    \newcommand{\GL}{{\rm GL}}
\begin{document} 
 	\title[Reductive Shafarevich conjecture]{Reductive Shafarevich conjecture} 
	
 	\date{\today} 
  
 \author{Ya Deng}
 
	\email{ya.deng@math.cnrs.fr}
	 \address{CNRS, Institut \'Elie Cartan de Lorraine, Universit\'e de Lorraine, F-54000 Nancy,
		France.}
	\urladdr{https://ydeng.perso.math.cnrs.fr}

 \author{Katsutoshi Yamanoi}
 
\email{yamanoi@math.sci.osaka-u.ac.jp}
\address{Department of Mathematics, Graduate School of Science,Osaka University, Toyonaka, Osaka 560-0043, Japan} 
	\urladdr{https://sites.google.com/site/yamanoimath/}

\dedicatory{ \large With an appendix joint  with \rm Ludmil Katzarkov} 
\subjclass{32Q30, 32E05, 14D07, 14F35}

	  \keywords{Reductive Shafarevich conjecture, Hamonic mapping to Bruhat-Tits buildings, period mappings and period domain,  Higgs bundles, non-abelian Hodge theory,  variation of Hodge structures, Holomorphic convexity}
 \begin{abstract}  
In this paper, we prove the holomorphic convexity of the covering of a complex projective \emph{normal} variety $X$, which corresponds to the intersection of kernels of reductive representations $\varrho:\pi_1(X)\to {\rm GL}_{N}(\mathbb{C})$,  therefore   answering  a question by Eyssidieux, Katzarkov, Pantev, and Ramachandran in 2012.   It is worth noting that Eyssidieux had previously proven this result in 2004 when $X$ is smooth. While our approach follows the general strategy employed in Eyssidieux's proof, it introduces several improvements and simplifications.  Notably, it avoids the necessity of using the reduction mod $p$ method   in Eyssidieux's original proof.  

Additionally, we construct the Shafarevich morphism for complex reductive representations  of  fundamental groups of complex quasi-projective varieties unconditionally, and proving its algebraic nature at the function field level. 
 \end{abstract}   
 	\maketitle
\tableofcontents	
\section{Introduction}
\subsection{Shafarevich conjecture}
In his famous textbook ``Basic Algebraic Geometry\fg \cite[p 407]{Sha13}, Shafarevich raised  the following tantalizing conjecture.
\begin{conjecture}[Shafarevich]\label{conj:Sha}
		Let $X$ be a complex projective variety. Then its universal covering is holomorphically convex. 
\end{conjecture}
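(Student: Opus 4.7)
The plan is to build a Shafarevich morphism $\mathrm{sh}_{\widetilde X}\colon \widetilde X \to \mathrm{Sh}(\widetilde X)$ onto a Stein space by resolving $\pi_1(X)$ layer by layer. I would first use a Takayama-type reduction to assume $X$ is smooth, since the fundamental group of a normal projective variety is a finite extension of that of a suitable resolution (after blow-ups with simply connected exceptional loci), so convexity of the universal cover transfers between $X$ and a resolution. From there the strategy is a descending tower of Galois coverings, each of which peels off one class of representations.

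At the innermost layer, take the Galois cover $\widetilde X^{\mathrm{red}}$ corresponding to the intersection $N_{\mathrm{red}}$ of the kernels of all reductive representations $\varrho\colon \pi_1(X)\to \mathrm{GL}_N(\mathbb{C})$; its holomorphic convexity is the main result announced in the paper. To conclude convexity of the full universal cover, it then suffices to show that the residual Galois cover $\widetilde X \to \widetilde X^{\mathrm{red}}$ is relatively holomorphically convex, since holomorphic convexity is preserved under composition with a proper relatively convex map. Equivalently, one needs a proper plurisubharmonic exhaustion of $\widetilde X$ that refines the one on $\widetilde X^{\mathrm{red}}$ supplied by the paper's theorem.

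To analyze $N_{\mathrm{red}}$, filter it by successively larger classes of representations of $\pi_1(X)$: first unipotent and solvable linear representations, then pro-algebraic completions. On the unipotent layers I would invoke Hain's mixed Hodge structure on the Malcev completion of $\pi_1(X)$ and a parabolic/mixed refinement of the Corlette--Simpson correspondence, together with the Goldman--Millson deformation framework, to produce additional bounded plurisubharmonic functions on the intermediate covers; the harmonic-map technology developed in the paper for reductive representations should adapt to variations of \emph{mixed} Hodge structure, each unipotent extension contributing a new abelian-like factor that refines $\mathrm{sh}_{\widetilde X}$. The output of this tower is a proper holomorphic map whose deck group is the intersection of kernels of all finite-dimensional linear representations of $\pi_1(X)$ (over every field), modulo the linear part being absorbed.

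The principal obstacle — and what I expect to be genuinely hard — is the possible non-linear residue of $\pi_1(X)$: elements lying in the kernel of every finite-dimensional linear representation over every field of characteristic zero and every $p$. On such a residue the harmonic-map techniques central to the paper, both Mochizuki--Simpson theory in characteristic zero and the Gromov--Schoen theory into Bruhat--Tits buildings in characteristic $p$, have no direct input since there are no local systems left to harmonicize. The plan here would be either to show that the residue sits inside a finite \'etale cover (making its contribution automatically compact), or, failing that, to import a non-abelian Hodge theory for the full pro-homotopy type of $X$ in the style of Katzarkov--Pantev--To\"en, or alternatively a Campana-style core-map argument producing enough rational curves on $X$ to trivialize the non-linear part of $\pi_1(X)$. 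This last step is where the present techniques stop providing obvious guidance, and it is where a new idea beyond the reductive-plus-unipotent scheme seems indispensable.
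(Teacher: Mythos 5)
The statement you are trying to prove is an open conjecture: the paper does not prove \cref{conj:Sha} in this generality, and neither does your proposal. What the paper establishes is the \emph{reductive} case (holomorphic convexity of $\widetilde{X}_\kC$, the cover attached to the intersection of kernels of reductive representations, \cref{main2}), and your text ultimately concedes the decisive point yourself — the "non-linear residue" of $\pi_1(X)$, i.e.\ the part killed by every finite-dimensional linear representation over every field, is exactly where you say "a new idea seems indispensable." A proof cannot end there; that residue is precisely the content of the conjecture beyond the linear case, and no technique in this paper (tame harmonic bundles, Gromov--Schoen harmonic maps into buildings, absolutely constructible sets) applies to it, since all of them take a linear local system as input.

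Beyond the admitted gap, several intermediate steps as stated would fail. The covering $\widetilde{X}\to\widetilde{X}^{\mathrm{red}}$ is an \emph{infinite} Galois covering, not a proper map, so holomorphic convexity does not transfer by "composition with a proper relatively convex map"; even in the linear setting, passing from the reductive quotient to the full linear quotient (nilpotent/solvable layers) is the substance of the linear Shafarevich conjecture proved in \cite{EKPR12}, which requires a genuinely nontrivial mixed non-abelian Hodge-theoretic argument rather than a routine refinement of the reductive exhaustion functions. Likewise, the reduction to the smooth case is not automatic: for a resolution $\mu:Y\to X$ the map $\mu_*$ on fundamental groups is surjective but convexity of universal covers does not simply "transfer" back and forth — the singular normal case in this paper needs the separate argument of the appendix, and for the full conjecture no such reduction is known. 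So the proposal is a research program (largely coinciding with known directions: reductive case here, linear case in \cite{EKPR12}, non-linear part open), not a proof of the conjecture.
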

Recall that a complex normal space $X$  is \emph{holomorphically convex} if  it satisfies the following condition: for each compact $K \subset X$, its  \emph{holomorphic hull}
$$
\left\{x \in X \mid |f(x)| \leq \sup _K|f|, \forall f\in \cO(X)\right\},
$$ 
 is compact. $X$ is \emph{Stein} if it is holomorphically convex and holomorphically separable, i.e.  for distinct $x$ and $y$ in $X$, there exists $f\in \cO(X)$ such that $f(x)\neq f(y)$.   By the Cartan-Remmert theorem, a complex space $X$ is holomorphically convex if and only if it admits a proper surjective holomorphic mapping   onto some Stein space.  

The study of \cref{conj:Sha}  for smooth projective surfaces has been a subject of extensive research since the mid-1980s. Gurjar-Shastri \cite{GS85} and Napier \cite{Nap90} initiated this investigation, while  Koll\'ar \cite{Kol93} and   Campana \cite{Cam94}  independently explored the conjecture in the 1990s, employing the tools of Hilbert schemes and Barlet cycle spaces. In 1994, Katzarkov discovered  that non-abelian Hodge theories developed by  Simpson \cite{Sim92}  and Gromov-Schoen \cite{GS92}  can be utilized to prove \cref{conj:Sha}. His initial work \cite{Kat97} demonstrated \cref{conj:Sha} for projective varieties with nilpotent fundamental groups. Shortly thereafter, he and Ramachandran \cite{KR98} successfully established \cref{conj:Sha}  for smooth projective surfaces whose fundamental groups admit a faithful Zariski-dense representation in a reductive complex algebraic group. Building upon the ideas presented in \cite{KR98} and \cite{Mok92}, Eyssidieux further developed non-abelian Hodge theoretic arguments in higher dimensions. In \cite{Eys04} he proved that \cref{conj:Sha} holds for any \emph{smooth} projective variety whose fundamental group possesses a faithful representation that is Zariski dense in a reductive complex algebraic group. This result is commonly referred to as the ``\emph{Reductive Shafarevich conjecture}\fg. It is worth emphasizing  that the work of Eyssidieux \cite{Eys04} is not only ingenious  but also highly significant in subsequent research.   It serves as a foundational basis for advancements in the linear Shafarevich conjecture \cite{EKPR12} and the exploration of compact Kähler cases \cite{CCE15}.   More recently, there have been significant advancements in the quasi-projective setting by Green-Griffiths-Katzarkov \cite{GGK22} and Aguilar-Campana \cite{AC23}, particularly when considering the case of nilpotent fundamental groups.  
 
 \subsection{Main theorems}
 The aim of this paper is to present a relatively comprehensible  proof of Eyssidieux's results on the reductive Shafarevich conjecture   and its associated problems, as originally discussed in \cite{Eys04}. Additionally, we aim to extend these results to the cases of quasi-projective and singular varieties, thus answering a question raised in \cite[p. 1549]{EKPR12}.  Let us first give the definition of  the  {Shafarevich morphism} for linear representations of fundamental groups.
 \begin{dfn}[Shafarevich morphism]
 	Let $X$ be a   quasi-projective  normal variety, and let $\varrho:\pi_1(X)\to {\rm GL}_N(\bC)$ be a linear representation.  A dominant holomorphic map  ${\rm sh}_{\varrho}:X\to {\rm Sh}_{\varrho}(X)$ to a complex  normal space ${\rm Sh}_{\varrho}(X)$  whose general fibers are connected  is called the \emph{Shafarevich morphism} of $\varrho$ if  for any closed subvariety $Z\subset X$,  ${\rm sh}_\varrho(Z)$ is a point if and only if $\varrho({\rm Im}[\pi_1(Z^{\rm norm})\to \pi_1(X)])$ is finite. Here $Z^{\rm norm}$ denotes the normalization of $Z$. 
 \end{dfn}
Our first main result is the \emph{unconditional} construction of the \emph{Shafarevich morphism} for reductive representations. Additionally, we establish the algebraicity of the  {Shafarevich morphism} at the function field level. 

\begin{thmx}[=\cref{thm:Sha5,lem:bimeromorphic2,lem:extended Shafarevich}] \label{main3}
Let $X$ be a   quasi-projective  normal variety, and let $\varrho:\pi_1(X)\to {\rm GL}_N(\bC)$ be a reductive representation.  Then  
\begin{thmenum}
\item \label{item:uncondition} there exists a dominant holomorphic map  ${\rm sh}_{\varrho}:X\to {\rm Sh}_{\varrho}(X)$ to a complex  normal space ${\rm Sh}_{\varrho}(X)$ whose general fibers are connected such that  for any connected Zariski closed subset   $Z\subset X$, the following properties are equivalent:
\begin{enumerate} [label=\rm (\alph*)]
	\item ${\rm sh}_\varrho(Z)$ is a point;
	\item  $ \varrho({\rm Im}[\pi_1(Z)\to \pi_1(X)])$ is finite;
	\item  for any irreducible component  $Z_o$ of $Z$,  $ \varrho({\rm Im}[\pi_1(Z_o^{\rm norm})\to \pi_1(X)])$ is finite. 
\end{enumerate}  
\item \label{item:sommese}  There exists 
\begin{enumerate}[label=\rm (\alph*)]
	\item  a proper bimeromorphic morphism  $\sigma:S\to {\rm Sh}_{\varrho}(X)$ from a smooth  quasi-projective   variety $S$;
	\item  a proper birational morphism  $\mu: Y\to X$ from a smooth  quasi-projective   variety $Y$; 
	\item an algebraic morphism $f:Y\to S$ with general fibers connected;  
\end{enumerate} 
such that we have the following commutative diagram: 
\[
\begin{tikzcd}
Y \arrow[r, "\mu"]\arrow[d, "f"]& X \arrow[d, "{\rm sh}_\varrho"]&\\
	S                   \arrow[r, "\sigma"]                             & {\rm Sh}_\varrho(X) 
\end{tikzcd}
\]   
\item If $X$ is  smooth, then there exists a smooth partial compactification $X'$ of $X$ and a proper surjective holomorphic fibration $\overline{{\rm sh}}_{\varrho}:X'\to 	{\rm Sh}_{\varrho}(X)$ such that its restriction on $X$ is the Shafarevich morphism of $\varrho$: 
 	\begin{equation*}
		\begin{tikzcd}
			X\arrow[r, hook] \arrow[d,"{\rm sh}_\varrho"']& X'\arrow[d,"\overline{{\rm sh}}_{\varrho}"] \\
			{\rm Sh}_\varrho(X)\arrow[r,equal]&	{\rm Sh}_{\varrho}(X)  
		\end{tikzcd}
	\end{equation*}  
\end{thmenum}
\end{thmx}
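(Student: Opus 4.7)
The plan is to follow the non-abelian Hodge-theoretic strategy initiated by Katzarkov--Ramachandran and developed by Eyssidieux, but to work directly with complex representations so as to bypass reduction mod $p$. First I would reduce to the smooth quasi-projective case by choosing a log resolution $\mu\colon (Y,E)\to X$ with $Y$ smooth and $E$ an SNC divisor; since $X$ is normal, $\mu_\ast\colon\pi_1(Y)\twoheadrightarrow\pi_1(X)$ has kernel invisible to $\varrho$ and the fibers of $\mu$ are algebraically simply connected, so it suffices to construct the Shafarevich morphism for $\varrho\circ\mu_\ast$ on $Y$ and descend through $\mu$.

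For the construction of ${\rm sh}_\varrho$, the reductivity of $\varrho$ gives (by Mochizuki, in the quasi-projective case) a tame pure imaginary harmonic bundle $(E,\theta,h)$ on $Y$. On the universal cover $\widetilde Y$ this yields an equivariant pluriharmonic map $u\colon\widetilde Y\to G/K$ to the symmetric space of the real form of $G=\overline{\varrho(\pi_1(Y))}^{\rm Zar}$. Because $u$ alone does not separate enough subvarieties, I would enlarge the collection of harmonic objects in two ways. First, using Simpson's $\mathbb{C}^\ast$-action on the Betti moduli space $M_{\rm B}(Y,N)$ together with his density theorem, one deforms $\varrho$ within every irreducible component of the character variety passing through it to a complex variation of Hodge structure, and attaches the associated Griffiths period map together with its Sommese algebraic reduction. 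Second, for each non-Archimedean place $v$ of the trace field at which $\varrho$ is unbounded, one attaches (directly, without mod-$p$ reduction) an equivariant harmonic map to the corresponding Bruhat--Tits building. Taking the equivalence relation on $\widetilde Y$ generated by ``identified by every member of this family'' and performing Stein factorization with $\pi_1(Y)$-descent yields the desired dominant holomorphic map ${\rm sh}_\varrho\colon X\to {\rm Sh}_\varrho(X)$ with connected general fibers onto a normal analytic space.

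The three equivalences in part (i) are then verified together. The implications (b)$\Rightarrow$(c) and (c)$\Rightarrow$(a) are essentially formal: finite monodromy images force every equivariant harmonic, period, or building map to contract the lifts of the $Z_o$, hence $Z$. The delicate direction is (a)$\Rightarrow$(b), where one must upgrade ``contracted by every harmonic/period/building map'' to ``finite monodromy'' for the full image of $\pi_1(Z)\to \pi_1(X)$. I would argue this in two steps: first show that the monodromy lies in a subgroup bounded at every Archimedean place (via the symmetric-space map and period map) and at every non-Archimedean place (via the building map); then invoke reductivity of the Zariski closure to promote ``bounded at every place'' to ``finite''. The normalization subtlety in (c) is handled by the standard fact that $\pi_1(Z_o^{\rm norm})\to \pi_1(Z_o)$ is surjective, so the condition on irreducible components matches that on $Z$ itself.

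For (ii), algebraicity at the function field level is obtained by replacing ${\rm Sh}_\varrho(X)$ by the Stein factorization of $Y\to \prod(\textrm{period/building targets})$ for the chosen $\mu\colon Y\to X$; each factor is algebraic (Griffiths--Sommese for the period maps, GAGA for the building analogues), so the combined target $S$ is algebraic, $\sigma\colon S\to{\rm Sh}_\varrho(X)$ is bimeromorphic, and the algebraic $f\colon Y\to S$ is automatic. For (iii), given a smooth $X$ with a smooth SNC compactification $\bar X$, one defines $X'\subset\bar X$ by adjoining exactly those components of $\bar X\setminus X$ along which $\varrho$ has finite local monodromy; on $X'$ all tame harmonic objects extend continuously and properly (by Mochizuki), so $\overline{{\rm sh}}_\varrho\colon X'\to {\rm Sh}_\varrho(X)$ is a proper holomorphic fibration. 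The main obstacle is the step (a)$\Rightarrow$(b): in Eyssidieux's original approach this is exactly where reduction mod $p$ intervenes, and replacing it here by a direct complex-analytic rigidity argument spanning all Archimedean and non-Archimedean harmonic data is the key technical work the framework must support.
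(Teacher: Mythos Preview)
Your outline captures the broad architecture (harmonic maps, period maps, building maps, Stein-factorize, descend), but the central mechanism that makes (a)$\Rightarrow$(b) work is missing, and the substitute you propose does not go through.

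The gap is in the sentence ``for each non-Archimedean place $v$ of the trace field at which $\varrho$ is unbounded\ldots then invoke reductivity\ldots to promote `bounded at every place' to `finite'.'' The representation $\varrho$ is an \emph{arbitrary} complex reductive representation; its traces may be transcendental, so there is no trace field with a meaningful set of places, and the Dirichlet-type argument ``bounded at every place $\Rightarrow$ finite'' simply does not apply to $\varrho$ itself. The paper's way around this is not to work with $\varrho$ directly but to introduce a $\bQ$-constructible, $\bR^*$-invariant subset $\kC\subset M_{\rm B}(X,N)$ containing $[\varrho]$ (for a single $\varrho$ it is defined as the intersection $\bigcap_{f^*\varrho=1} j_Z^{-1}\{1\}$, and its $\bC^*$-invariance is a nontrivial fact about functoriality of parabolic prolongations, \cref{lem:C*}). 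One then picks $\bar{\bQ}$-points $\varrho_i$ in each geometric connected component of $\kC$, defined over a common number field $k$, and runs the ``bounded at every place'' argument for \emph{these}. Crucially, since $\kC$ is defined over $\bQ$, every Galois conjugate $\varrho_{i,w}$ (for $w$ an archimedean place of $k$) still lies in $\kC$; combining this with the fact that on a fiber $F$ of the non-archimedean reduction $s_\kC$ every $[\tau]\in\kC(K)$ is bounded, one gets $\varrho_i|_F:\pi_1(F)\to\GL_N(\cO_k)$, hence the product over all archimedean places has discrete image.

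What links this back to the original $\varrho$ and to the period map is the fiber-conjugacy lemma (\cref{lem:conjugate3}): on any fiber $F$ of $s_\kC$, two reductive representations in the \emph{same connected component} of $\kC$ restrict to conjugate representations. This is proved by a Noetherian/boundedness argument in character varieties over non-archimedean fields, and it is exactly what replaces reduction mod $p$. It gives simultaneously that $\varrho|_F$ is conjugate to some $\varrho_{i,w_1}|_F$, and that each $\varrho_{i,w}|_F$ is conjugate to its VHS deformation $\varrho_{i,w}^{\vhs}|_F$. Your outline never establishes any relation between $\varrho|_F$ and the VHS you deform to, so the period map you attach has no a~priori control over $\varrho$-monodromy on fibers.

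Two smaller points. For (ii), the building targets are not algebraic, so ``GAGA for the building analogues'' is not available; the paper instead goes through Koll\'ar's rational Shafarevich map and a Raynaud--Gruson flattening argument to produce the algebraic model $S$. For (iii), simply deleting boundary components with infinite local monodromy from a fixed compactification is not enough: one must first perform a sequence of blow-ups along boundary strata (governed by the sign pattern of the monodromy matrices, \cref{prop:infinity}) to ensure that on the resulting model every holomorphic disk hitting the remaining boundary has infinite monodromy.
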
  

The proof of \cref{item:uncondition} relies on a more technical result but with richer information, cf. \cref{thm:Shafarevich1}.
\begin{rem}\label{rem:Griffiths}
 We  conjecture that ${\rm Sh}_{\varrho}(X)$ is quasi-projective and ${\rm sh}_{\varrho}$ is an algebraic morphism (cf. \cref{conj:algebraic}).  Our conjecture is motivated by Griffiths' conjecture, which predicted the same result when $\varrho$ underlies a $\mathbb{Z}$-VHS.  Consequently, we can interpret the results presented in \cref{item:sommese} as supporting evidence for our conjecture at the function field level.  It is worth noting that Sommese, in \cite{Som78}, proved \cref{item:sommese} when $\varrho$ underlies a $\mathbb{Z}$-VHS and $\varrho(\pi_1(X))$ is torsion free, using $L^2$-methods. In contrast, we employ a different approach to establish \cref{item:sommese}, which notably provides a simpler proof of Sommese's theorem.   Griffiths' conjecture was recently proved by Baker-Brunebarbe-Tsimerman \cite{BBT23} using o-minimal theory.
\end{rem}
Based on \cref{main3}, we  construct the Shafarevich morphism for families of representations. 
 \begin{corx}[=\cref{thm:Sha3}]\label{cor:Sha3}
	Let $X$ be a quasi-projective normal variety. 
	Let $\Sigma$ be a (non-empty) set of reductive representations $\varrho:\pi_1(X)\to {\rm GL}_{N_\varrho}(\bC)$. 	
	Then there is a dominant holomorphic map   ${\rm sh}_{\Sigma}:X\to {\rm Sh}_{\Sigma}(X)$ with general fibers connected onto a complex normal space such that for closed subvariety $Z\subset X$, ${\rm sh}_\Sigma(Z)$   is a point if and only if $\varrho({\rm Im}[\pi_1(Z^{\rm norm})\to \pi_1(X)])$ is finite  for every $\varrho\in\Sigma$. 
\end{corx}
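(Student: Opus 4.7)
The plan is to reduce the potentially infinite family $\Sigma$ to a finite subfamily $T^\ast \subset \Sigma$ and then take ${\rm sh}_\Sigma$ to be the Shafarevich morphism of the single reductive representation $\varrho_{T^\ast} := \bigoplus_{\varrho \in T^\ast} \varrho$. For any finite $T \subset \Sigma$, the direct sum $\varrho_T := \bigoplus_{\varrho \in T} \varrho$ is again reductive (the Zariski closure of its image is a product of reductive groups), so \cref{main3} furnishes a Shafarevich morphism ${\rm sh}_{\varrho_T}\colon X \to {\rm Sh}_{\varrho_T}(X)$. Since a direct sum of representations has finite image if and only if each summand does, the characterization of Shafarevich fibers in \cref{main3} immediately yields the finite analogue of the target statement: for any closed subvariety $Z \subset X$, ${\rm sh}_{\varrho_T}(Z)$ is a point iff $\varrho(\pi_1(Z^{\rm norm}))$ is finite for every $\varrho \in T$.

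To locate $T^\ast$, the next step is to introduce, for each finite $T \subset \Sigma$, the closed subset $R_T := ({\rm sh}_{\varrho_T} \times {\rm sh}_{\varrho_T})^{-1}(\Delta) \subset X \times X$. Using \cref{item:sommese}, which provides an algebraic model of each ${\rm sh}_{\varrho_T}$ after proper bimeromorphic modification of source and target, one checks that $R_T$ is Zariski closed in $X \times X$. The monotonicity $R_{T'} \subset R_T$ for $T \subset T'$ will be established as follows: if $(x,y) \in R_{T'}$ with both points in a general (and hence, by \cref{main3}, connected) fiber of ${\rm sh}_{\varrho_{T'}}$, then applying \cref{main3} to each irreducible component of that fiber shows $\varrho_T$ has finite image on $\pi_1$ of the component, so ${\rm sh}_{\varrho_T}$ collapses each component, and connectedness of the whole fiber forces the collapsed images to coincide; the general case descends from this generic locus by taking Zariski closures.

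Since $X \times X$ is Noetherian in the Zariski topology, the descending family $\{R_T\}_T$ stabilizes, so there exists a finite $T^\ast \subset \Sigma$ with $R_{T^\ast} = \bigcap_{\varrho \in \Sigma} R_{\{\varrho\}}$. Setting ${\rm sh}_\Sigma := {\rm sh}_{\varrho_{T^\ast}}$ and ${\rm Sh}_\Sigma(X) := {\rm Sh}_{\varrho_{T^\ast}}(X)$, the required characterization follows quickly. If ${\rm sh}_\Sigma(Z)$ is a point, then $Z \times Z \subset R_{T^\ast} \subset R_{\{\varrho\}}$ for every $\varrho \in \Sigma$, so ${\rm sh}_\varrho(Z)$ is a point and $\varrho(\pi_1(Z^{\rm norm}))$ is finite by \cref{main3}. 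Conversely, if $\varrho(\pi_1(Z^{\rm norm}))$ is finite for every $\varrho \in \Sigma$, then it is finite for each $\varrho_i \in T^\ast$, hence $\varrho_{T^\ast}(\pi_1(Z^{\rm norm}))$ is finite and ${\rm sh}_\Sigma(Z) = {\rm sh}_{\varrho_{T^\ast}}(Z)$ is a point by \cref{main3}. The main obstacle in this plan is to establish both the Zariski closedness of $R_T$ and the monotonicity $R_{T'} \subset R_T$ in the presence of non-connected special fibers (only \emph{general} fibers are guaranteed connected by \cref{main3}); both issues should be handled by carefully descending from the algebraic model of \cref{item:sommese} to $X \times X$ via the proper birational morphism $\mu$.
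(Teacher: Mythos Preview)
Your reduction to a finite subfamily via a Noetherian argument on $X\times X$ has a genuine gap: you need $R_T=({\rm sh}_{\varrho_T}\times{\rm sh}_{\varrho_T})^{-1}(\Delta)$ to be \emph{Zariski} closed, but ${\rm sh}_{\varrho_T}$ is only a holomorphic map to a complex normal space, so $R_T$ is a priori merely analytic. You propose to repair this with \cref{item:sommese}, but that item gives an algebraic model $f\colon Y\to S$ only after composing with a proper \emph{bimeromorphic} morphism $\sigma\colon S\to {\rm Sh}_{\varrho_T}(X)$. Because $\sigma$ can collapse distinct points of $S$, the Zariski closed set $(f\times f)^{-1}(\Delta_S)$ (or its image in $X\times X$) does not coincide with $(\mu\times\mu)^{-1}(R_T)$: you lose exactly those pairs whose $f$-images are distinct but get identified by $\sigma$. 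Consequently the algebraic surrogate fails the characterization ``$Z\times Z\subset(\text{set})\Leftrightarrow \varrho_T(\pi_1(Z^{\rm norm}))$ finite'', and the Noetherian descent does not go through. This is not a technicality you can massage away: knowing $R_T$ is Zariski closed is essentially equivalent to the algebraicity conjecture \cref{conj:algebraic}, which the paper leaves open. (Your monotonicity argument has a related issue: fibers of ${\rm sh}_{\varrho_{T'}}$ are analytic, not Zariski closed, so \cref{main3} does not literally apply to them; and ``descending by Zariski closure'' from generic fibers needs $R_T$ to be Zariski closed, which is the same gap.)

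The paper circumvents this by an entirely different route. It first reduces $\Sigma$ to a \emph{countable} family $\{\varrho_i\}$ using the countability of the set of kernels (\cref{lem:202309261}), forms the nested products $\tau_i=\varrho_1\oplus\cdots\oplus\varrho_i$, and then finds a single smooth partial compactification $X'$ of $X$ on which every ${\rm sh}_{\tau_i}$ extends to a \emph{proper} holomorphic map $f_i\colon X'\to S_i$ (\cref{lem:20230909,lem:extended Shafarevich}). Properness, not algebraicity, is the key: it allows an analytic simultaneous Stein factorization (\cref{lem:simultaneous1}) built from a compact exhaustion of $X'$, with the Noetherian argument applied only locally on each compact piece. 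The resulting $f_\infty\colon X'\to S_\infty$ does not come from any finite subfamily of $\Sigma$. Note that in the projective case your argument would actually succeed, since then ${\rm Sh}_{\varrho_T}(X)$ is projective and $R_T$ is Zariski closed by GAGA; the difficulty is specific to the quasi-projective setting.
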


Our second main result focuses on the holomorphic convexity of topological  Galois coverings associated with reductive representations of fundamental groups within \emph{absolutely constructible subsets} of character varieties  $M_{\rm B}(\pi_1(X),{\rm GL}_N)$, where $X$ represents a projective normal variety.
\begin{thmx}[=\cref{thm:HC,main4}]\label{main2}
	Let $X$ be a  projective normal variety, and let $\kC$ be an absolutely constructible subset  of $M_{\rm B}(\pi_1(X),{\rm GL}_N)(\bC)$ as defined in \cref{def:ac,def:ac2}.    We assume that  $\kC$  is  defined on $\bQ$.  Set $H:=\cap_{\varrho}\ker\varrho$, where  $\varrho:\pi_1(X)\to {\rm GL}_N(\bC)$ ranges over all reductive representations such that  $[\varrho]\in \kC(\bC)$. Let $\widetilde{X}$ be the universal covering of $X$, and denote  $\widetilde{X}_\kC:=\widetilde{X}/H$.    Then the complex space $\widetilde{X}_\kC$ is holomorphically convex.  In particular,  we have
	\begin{thmenum}
		\item the covering of $X$ corresponding to the intersections of the kernels of all reductive representations of
	$\pi_1(X)$ in $\GL_N(\bC)$ is holomorphically convex; 
	\item\label{item:singular}  if $\pi_1(X)$ is a subgroup of ${\rm GL}_N(\bC)$ whose Zariski closure is reductive, then the universal covering $\widetilde{X}$ of $X$ is holomorphically convex.
	\end{thmenum}
\end{thmx}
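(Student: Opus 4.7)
The plan is to combine Theorem A with a non-abelian Hodge-theoretic construction of a plurisubharmonic exhaustion on the base of the Shafarevich morphism, following the architecture of \cite{Eys04} while using the assumption that $\kC$ is absolutely constructible over $\bQ$ to bypass the reduction mod $p$ step that appeared in Eyssidieux's original proof.

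\textbf{Reduction to $X$ smooth.} Let $\mu: Y \to X$ be a resolution of singularities. Since $X$ is projective and normal, $\mu_\ast: \pi_1(Y) \to \pi_1(X)$ is surjective, so pullback identifies $M_{\rm B}(\pi_1(X),N)$ with a Zariski-closed subset of $M_{\rm B}(\pi_1(Y),N)$ and produces an absolutely constructible $\mu^\ast\kC$ defined over $\bQ$. The induced map $\widetilde Y_{\mu^\ast\kC} \to \widetilde X_{\kC}$ is then a proper holomorphic surjection between normal complex spaces, and holomorphic convexity descends under such maps (the holomorphic hull of a compact $K\subset\widetilde X_\kC$ is contained in the image of the compact hull of its preimage). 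Hence it suffices to prove the theorem when $X$ is smooth.

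\textbf{Shafarevich reduction.} Assume $X$ smooth and let $\Sigma$ denote the family of reductive $\varrho$ with $[\varrho]\in\kC(\bC)$. \cref{cor:Sha3} produces a dominant holomorphic fibration ${\rm sh}_\Sigma: X\to {\rm Sh}_\Sigma(X)$. Pulling back along $\widetilde X_\kC\to X$ yields a holomorphic map $\widetilde{\rm sh}: \widetilde X_\kC\to T$ onto an \'etale covering $T$ of ${\rm Sh}_\Sigma(X)$ determined by $H$. By construction the ${\rm sh}_\Sigma$-fibers are the closed subvarieties whose image in $\pi_1(X)/H$ is finite, so their lifts to $\widetilde X_\kC$ are compact; hence $\widetilde{\rm sh}$ is proper. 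By Cartan-Remmert, holomorphic convexity of $\widetilde X_\kC$ reduces to Steinness of $T$.

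\textbf{Steinness of $T$ via harmonic maps.} To prove Steinness of $T$ I would build a continuous plurisubharmonic exhaustion that is strictly plurisubharmonic on a dense Zariski open subset. For each $\varrho\in\Sigma$, Corlette's theorem supplies an equivariant pluriharmonic map $\widetilde X\to \GL_N(\bC)/{\rm U}(N)$, and Gromov-Schoen supplies analogous pluriharmonic maps to Bruhat-Tits buildings at every non-Archimedean place of a field of definition of $\kC$. Each has energy form a closed positive $(1,1)$-current, yielding after push-down along ${\rm sh}_\Sigma$ a semipositive potential on ${\rm Sh}_\Sigma(X)$. The absolute constructibility of $\kC$ over $\bQ$ is used twice: first to exhibit Zariski density of $\bC$-VHS points in each stratum of $\kC$, whose period maps contribute strict positivity in Griffiths-horizontal directions via Simpson's non-abelian Hodge correspondence; and second to parametrise the Archimedean and non-Archimedean pluriharmonic maps by an algebraic family, so that summation/integration over $\Sigma$ yields a well-defined exhaustion which, lifted to $T$, is proper on sublevel sets.

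\textbf{Main obstacle.} The crucial step is to show that the combined potential becomes strictly plurisubharmonic after descending to $T$, equivalently to identify the \emph{Katzarkov-Eyssidieux foliation} on $X$ cut out by the spectral data of the Higgs bundles attached to all $\varrho\in\Sigma$ with the foliation by fibers of ${\rm sh}_\Sigma$. In \cite{Eys04} this matching was achieved by reducing modulo $p$ and invoking Simpson's non-abelian Hodge theory in positive characteristic to propagate foliation-tangent vectors. The simplification announced here should come from replacing that specialisation argument by a direct characteristic-zero identification: the non-Archimedean pluriharmonic maps to Bruhat-Tits buildings, available at every finite place thanks to $\kC$ being defined over $\bQ$, provide sufficient spectral data at non-VHS points of $\Sigma$ to pin down the foliation and hence the strict plurisubharmonicity on $T$ without leaving characteristic zero.
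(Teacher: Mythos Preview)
Your architecture is right---reduce to smooth $X$, produce a proper holomorphic fibration from $\widetilde X_\kC$ onto a target, and show the target is Stein by constructing a psh function bounded below by the pullback of a current whose class is K\"ahler on a compact quotient---and the reduction to the smooth case is essentially what the paper does in \cref{main4}. But the ``Steinness of $T$'' step, which is the whole content, is where your outline diverges from what actually works.

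First, the target is not simply an \'etale cover of ${\rm Sh}_\Sigma(X)$. In the paper the map is $\Psi = (s_\kC\circ\pi_H,\,p):\widetilde X_H\to S_\kC\times\sD$, the product of the Katzarkov--Eyssidieux reduction map $s_\kC$ (built from \emph{non-archimedean} representations in $\kC$) and the period map $p$ of a fixed $\bC$-VHS $\sigma$ produced by \cref{prop:nonrigid}. One takes the Cartan--Remmert factorization $\widetilde X_H\to\widetilde S_H\xrightarrow{g} S_\kC\times\sD$, and the crucial fact (\cref{claim:discrete}) is that $g$ has \emph{discrete fibers}. This product structure is what later makes the combined current K\"ahler; going through \cref{cor:Sha3} alone does not hand you this.

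Second, and this is the real gap: your proposed mechanism for strict positivity---``non-Archimedean pluriharmonic maps at every finite place provide sufficient spectral data''---is not what carries the argument, and ``summation/integration over $\Sigma$'' is not how the psh function is built. The paper proves a precise structural result (\cref{thm:crucial}): there exists a \emph{finite} family $\btau=\{\tau_i:\pi_1(X)\to\GL_N(K_i)\}$ with $[\tau_i]\in\kC(K_i)$, $K_i$ non-archimedean, such that the canonical current $T_\btau$ on $S_\kC$ has \emph{K\"ahler class}. The proof is a Noetherian induction using a dichotomy (\cref{thm:dichotomy}): for any such family, either $T_\btau$ is already big on each subvariety, or every $s_{\tau_i}$ factors through the reduction map of the abelianization $\pi_1(X)\to (H_i/\cD H_i)(K_i)$. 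In the second case one is reduced to absolutely constructible subsets of character varieties into \emph{tori}, where Simpson's structure theorem (torsion translates of triple tori, \cref{thm:S1}) lets one explicitly manufacture representations whose canonical current is K\"ahler (\cref{thm:positivity}). This tori reduction is the replacement for Eyssidieux's reduction mod $p$; your sketch does not contain it. Once $\{T_\btau\}$ is K\"ahler on $S_\kC$, the finite sum $\phi_0+\sum_{i=1}^m\phi_i$ (distance-squared on the symmetric space of $\sD$ plus distance-squared on each Bruhat--Tits building of $\tau_i$) satisfies $\hess\sum\phi_i\ge q^*(f^*T_\btau+T)$, with $\{f^*T_\btau+T\}$ K\"ahler on the projective quotient $W=\widetilde S_H/\nu(N)$, and \cref{prop:stein} finishes.

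Finally, the role of $\bQ$-definedness is not what you describe. It enters in \cref{prop:nonrigid} (see \cref{rem:Q}): one picks $\bar\bQ$-points $[\varrho_i]$ in each component of $\kC$, and needs all their Galois conjugates under ${\rm Gal}(k/\bQ)$ to remain in $\kC$ so that the product over archimedean places has discrete image in $\prod\GL_N(\bC)$ when restricted to fibers of $s_\kC$.
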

For large representations, we have the following result.
\begin{thmx}[=\cref{thm:Stein,thm:Stein singular}]\label{main}
Let $X$ and $\kC$ be as described in \cref{main2}. If $\kC$ is   \emph{large}, meaning that for any closed subvariety $Z$ of $X$, there exists a reductive representation $\varrho:\pi_1(X)\to {\rm GL}_N(\bC)$ such that $[\varrho]\in \kC(\bC)$ and $\varrho({\rm Im}[\pi_1(Z^{\rm norm})\to \pi_1(X)])$ is infinite, then all intermediate coverings of $X$ between $\widetilde{X}$ and $\widetilde{X}_\kC$  are Stein spaces.
\end{thmx}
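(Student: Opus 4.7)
The plan is to derive the theorem from \cref{main2} in two logical steps: first, use the largeness of $\kC$ to exclude positive-dimensional compact analytic subsets of $\widetilde{X}_\kC$, thereby upgrading the holomorphic convexity provided by \cref{main2} to Steinness; second, invoke the classical theorem that an unramified covering of a Stein normal space is Stein to handle all intermediate coverings at once.

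For the first step, I would apply \cref{main2} to produce the Cartan--Remmert reduction $p:\widetilde{X}_\kC\to S$, a proper surjective holomorphic map with connected fibres onto a Stein normal space $S$; its positive-dimensional fibres are precisely the maximal positive-dimensional compact analytic subsets of $\widetilde{X}_\kC$. Suppose for contradiction that $C\subset\widetilde{X}_\kC$ is an irreducible compact analytic subset of positive dimension. Let $\pi:\widetilde{X}_\kC\to X$ denote the unramified Galois cover with deck group $\pi_1(X)/H$, and set $Z:=\pi(C)$; then $Z$ is an irreducible positive-dimensional closed subvariety of $X$, and $C\to Z$ is a finite surjection. Form the pull-back cover $W:=\widetilde{X}_\kC\times_X Z^{\rm norm}\to Z^{\rm norm}$ and factor the canonical map $C^{\rm norm}\to\widetilde{X}_\kC$ through $W$. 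The image of $C^{\rm norm}$ in $W$ lies in a single connected component $W_o$, is compact, and has full dimension $\dim Z^{\rm norm}$; since $W_o$ is irreducible and normal of the same dimension, this image must equal $W_o$, so $W_o$ is compact. Because $W_o$ is a connected cover of $Z^{\rm norm}$ whose Galois group is the image of the natural map $\alpha:\pi_1(Z^{\rm norm})\to\pi_1(X)/H$, compactness forces $\alpha(\pi_1(Z^{\rm norm}))$ to be finite. Now $H=\bigcap_{\varrho}\ker\varrho$ as $\varrho$ ranges over reductive representations with $[\varrho]\in\kC(\bC)$, so each such $\varrho$ factors through $\pi_1(X)/H$, and therefore $\varrho\bigl({\rm Im}[\pi_1(Z^{\rm norm})\to\pi_1(X)]\bigr)$ is finite for every such $\varrho$. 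This contradicts the largeness of $\kC$ applied to $Z$. Hence $\widetilde{X}_\kC$ admits no positive-dimensional compact analytic subset, the connected fibres of $p$ are singletons, $p$ is a biholomorphism, and $\widetilde{X}_\kC$ is Stein.

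For the second step, any intermediate covering $Y$ with $\widetilde{X}\to Y\to\widetilde{X}_\kC$ corresponds to a subgroup $G\subset H$; since $\pi_1(X)$ acts freely on $\widetilde{X}$, the induced map $Y\to\widetilde{X}_\kC$ is an unramified covering of normal complex spaces. By the classical theorem (Stein's result for manifolds, extended to normal complex spaces in, e.g., Grauert--Remmert) that an unramified covering of a Stein normal space is Stein, $Y$ is Stein, concluding the proof. The most delicate step is the dimension/irreducibility argument identifying the image of $C^{\rm norm}$ with the entire component $W_o$; this uses that a full-dimensional closed analytic subset of an irreducible normal complex space is the whole space, and requires some care in passing through $Z^{\rm norm}$ when $Z$ is singular or non-normal. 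Beyond this, the proof is a transparent packaging of \cref{main2} together with classical covering theory and Stein theory.
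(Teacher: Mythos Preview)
Your proposal is correct and, in Step~1, runs parallel to the paper: \cref{claim:biho} (and \cref{claim:biho2} in the singular case) carries out precisely this argument, showing that largeness forces the Cartan--Remmert reduction ${\rm sh}_H:\widetilde{X}_\kC\to\widetilde{S}_H$ to be a biholomorphism. Your route diverges in Step~2. The paper does not appeal to Stein's classical theorem that unramified covers of Stein spaces are Stein; instead it reopens the machinery behind \cref{thm:HC}, extracts the explicit continuous plurisubharmonic function $\phi$ on $\widetilde{X}_\kC=\widetilde{S}_H$ satisfying $\hess\phi\ge q^*T_0$ for a current $T_0$ with K\"ahler class on the projective base $W$, pulls this inequality back along the intermediate cover $p:\widetilde{X}'\to\widetilde{X}_\kC$, and then invokes Eyssidieux's criterion \cref{prop:stein} for the Galois cover $q\circ p:\widetilde{X}'\to W$. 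Your argument is more economical, treats \cref{main2} as a black box, and handles non-Galois intermediate covers uniformly (note that \cref{prop:stein} as stated requires a Galois cover, and the paper's proof explicitly treats only Galois intermediate covers). The paper's route, by contrast, stays within the circle of tools already assembled and produces an explicit strictly psh function on each intermediate cover. One small imprecision: the component $W_o\to Z^{\rm norm}$ need not be Galois; what you actually use is that its \emph{degree} equals the cardinality of the image of $\pi_1(Z^{\rm norm})$ in $\pi_1(X)/H$, which compactness of $W_o$ forces to be finite. Your Step~1 can also be streamlined by noting directly that ${\rm Im}[\pi_1(C^{\rm norm})\to\pi_1(Z^{\rm norm})]$ has finite index and is mapped into $H$ under $\pi_1(Z^{\rm norm})\to\pi_1(X)$, bypassing the fibre product $W$ altogether---this is how the paper argues in \cref{claim:biho}.
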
  
 In addition to employing new methods for the proof of \cref{main2,main},  it yields a stronger result compared to \cite{Eys04} in two aspects:
	 \begin{enumerate}[label=(\alph*)]
	 	\item The definition of absolutely constructible subsets (cf. \cref{def:ac,def:ac2}) in our proof is more general than the one provided in \cite{Eys04}. This generality is crucial for proving \cref{item:singular} for singular varieties since the original definition of absolutely constructible subsets, as introduced by Simpson \cite{Sim93b}, pertains specifically to the moduli spaces of semistable Higgs bundles with zero characteristic numbers over \emph{smooth} projective varieties \cite{Sim94,Sim94b}. However, such moduli spaces are not constructed for singular or quasi-projective varieties.  Our broader definition allows for a wider range of applications, including the potential extension of \cref{conj:Sha} to quasi-projective varieties. We are currently engaged in an ongoing project to explore this extension further. 
	 	\item Our result extends to the case where $X$ is a singular variety, whereas in \cite{Eys04}, the result is limited to smooth varieties. This expansion of our result answers a question raised by Eyssidieux, Katzarkov, Pantev and Ramachadran in their celebrated work on linear Shafarevich conjecture for smooth projective varieties  (cf. \cite[p. 1549]{EKPR12}).
	 \end{enumerate} 
 We remark that \cref{main} is not a direct consequence of \cref{main2}. It is important to note that \cref{main} holds significant practical value in the context of singular varieties. Indeed, finding a large representation over a \emph{smooth} projective variety can be quite difficult. In practice, the usual approach involves constructing large representations using the Shafarevich morphism in \cref{main3}, resulting in large representations of fundamental groups of \emph{singular normal} varieties. Therefore, the extension of \cref{main} to singular varieties allows for more practical applicability.


\subsection{Comparison with \cite{Eys04} and Novelty}
It is worth noting that Eyssidieux \cite{Eys04} does not explicitly require absolutely constructible subsets $\kC$ to be defined over $\mathbb{Q}$, although it may seem to be an essential condition (cf. \cref{rem:Q}). Regarding \cref{main3}, it represents a new result that significantly builds upon our previous work \cite{CDY22}. While \cref{main} is not explicitly stated in \cite{Eys04}, it should be possible to derive it for smooth projective varieties $X$ based on the proof provided therein.  However, it is worth noting that the original proof in \cite{Eys04} is known for its notoriously difficult and involved nature. 
One of the main goals of this paper is to provide a relatively accessible proof for \cref{main2} by incorporating more detailed explanations. We draw inspiration from some of the methods introduced in our recent work \cite{CDY22}, which aids in presenting a more comprehensible proof.   Our proofs of \cref{main2,main} require  us to apply Eyssidieux's Lefschetz theorem from \cite{Eys04}.  We also owe many ideas to Eyssidieux's work in \cite{Eys04} and frequently draw upon them without explicit citation.

Despite this debt, there are some novelties in our approach, including:
\begin{enumerate}[label=(\alph*)]
	\item An avoidance of the reduction mod $p$ method used in \cite{Eys04}.
	\item A new and more canonical construction of the Shafarevich morphism that incorporates both rigid and non-rigid cases, previously treated separately in \cite{Eys04}.
	\item We relax the definition of absolutely constructible subsets in \cite{Sim93b,Eys04}, enabling us to extend our results to projective normal varieties and establish the reductive Shafarevich conjecture for such varieties. We observe that adopting the original definition of absolutely constructible subsets by Simpson and Eyssidieux would pose significant challenges in extending the Shafarevich conjecture to the singular setting.
	\item The construction of the Shafarevich morphism for reductive representations over   quasi-projective varieties, along with a proof of its algebraic property at the function field level.
	\item A detailed exposition of the application of Simpson's absolutely constructible subsets to the proof of holomorphic convexity in \cref{main2,main} (cf.  \cref{subsec:tori,thm:crucial}).  
	This application was briefly outlined in \cite[Proof of Proposition 5.4.6]{Eys04}, but we present a more comprehensive  approach, providing complete  details.
\end{enumerate}

\medspace 
 
The main part of this paper was completed in February 2023 and was subsequently shared with several experts in the field in April for feedback. During the revision process, it came to our attention that Brunebarbe \cite{Bru23} recently announced a result similar to \cref{item:uncondition}.    In \cite[Theorem B]{Bru23} Brunebarbe claims the existence of the Shafarevich morphism under a stronger   assumption of infinite monodromy at infinity and torsion-freeness of the representation, and he does not address the algebraicity of Shafarevich morphisms. 
 It seems that some crucial aspects of the arguments  in \cite{Bru23} need to be carefully addressed, particularly those related to non-abelian Hodge theories  may have been overlooked (cf.   \cref{rem:gap of Brunebarbe}).  
 
 \subsection{Further developments} 
More recently, the techniques and results   in this paper have  applications in the following works. 
\begin{itemize}
	\item In \cite{DY24}, we constructed  the  Shafarevich morphism for any representation $\varrho:\pi_1(X)\to \GL_{N}(K)$, where $X$ is a complex normal quasi-projective variety and $K$ is a  field of positive characteristic.  We also proved the generalized Green-Griffiths-Lang conjecture for such $X$ provided that $\varrho$ is big. When $\varrho$ is faithful and $X$ is a projective normal surface, we prove that the universal cover of $X$ is holomorphically convex. 
\item  In \cite{DW24}, the first author and Botong Wang proved the linear Singer-Hopf conjecture:  let $X$ be a smooth complex projective  variety which is aspherical, i.e. its universal cover  is contractible. If there is an almost faithful representation $\pi_1(X)\to \GL_{N}(K)$ with $K$ any field, then $(-1)^{\dim X}\chi(X)\geq 0$, where $\chi(X)$ is the Euler characteristic of $X$.
	\end{itemize}
 \subsection*{Convention and notation} In this paper, we use the following conventions and notations:
 \begin{itemize}[noitemsep]
 	\item Quasi-projective varieties and their closed subvarieties are assumed to be positive-dimensional and irreducible unless specifically mentioned otherwise. Zariski closed subsets, however, may be reducible .
 	\item Fundamental groups are always referred to as topological fundamental groups.
 	\item If $X$ is a complex space, its normalization is denoted by $X^{\mathrm{norm}}$.
 	\item The bold letter Greek letter  $\brho$ (or $\btau$, $\bsigma$...) denotes a family of finite reductive representations  $\{\varrho_i:\pi_1(X)\to \GL_{N}(K)\}_{i=1,\ldots,k}$ where $K$ is some non-archimedean local field or complex number field. 
 \item A \emph{proper holomorphic fibration} between complex spaces $f:X\to Y$ is   \emph{surjective} and each fiber of $f$ is \emph{connected}.   
 \item Let $X$ be a compact normal  K\"ahler space and let $V\subset H^0(X,\Omega_X^1)$ be a $\bC$-linear subspace. The \emph{generic rank} of $V$ is the largest integer $r$ such that ${\rm Im}[\Lambda^rV\to  H^0(X,\Omega^r_X)]\neq 0$.  
 \item For a quasi-projective  variety $X$, we denote by $M_{\rm B}(X,N)$ the $\GL_{N}$-character variety of  $\pi_1(X)$ in characteristic zero and $\mxp$  the $\GL_{N}$-character variety of  $\pi_1(X)$ in characteristic $p>0$. 
 \item For a finitely presented group $\Gamma$ and an algebraically closed field $K$, a representation $\varrho:\Gamma\to \GL_{N}(K)$ is semisimple if it is a direct sum of irreducible representations, and   is reductive if the Zariski closure of $\varrho(\Gamma)$ is a reductive group over $K$. 
 \item $\bD$ denotes the unit disk in $\bC$, and $\bD^*$ denotes the puncture unit disk.  
 \end{itemize} 
 
 \subsection*{Acknowledgments} We would like to thank Daniel Barlet, S\'ebastien Boucksom, Michel Brion,   Fr\'ed\'eric Campana, Philippe Eyssidieux, Ludmil Katzarkov, Bruno Klingler, J\'anos Koll\'ar, Mihai P\u{a}un, Carlos Simpson, Botong Wang and Mingchen Xia for answering our questions and helpful discussions. The impact of Eyssidieux's work \cite{Eys04} on this paper cannot be overstated.    This work was completed during YD’s visit at the University of Miami in February 2023. He would like to   extend special thanks to Ludmil Katzarkov for the warm invitation and fruitful discussions that ultimately led to the collaborative development of the joint appendix.
  
 \section{Technical preliminary}
 \subsection{Admissible coordinates}
 The following definition of \emph{admissible coordinates} introduced in \cite{Moc07}  will be used throughout the paper. 
 	\begin{dfn}(Admissible coordinates)\label{def:admissible}	Let $ {X}$ be a complex manifold and let $D$ be a simple normal crossing divisor. Let $x$ be a point of $D$, and assume that $\{D_{j}\}_{ j=1,\ldots,\ell}$ 
 	be components of $D$ containing $x$. An \emph{admissible coordinate} centered at $x$
 	is the tuple $(U ;z_1,\ldots,z_n;\varphi)$ (or simply  $(U ;z_1,\ldots,z_n)$ if no confusion arises) where
 	\begin{itemize}
 		\item $U $ is an open subset of $ {X}$ containing $x$.
 		\item there is a holomorphic isomorphism   $\varphi:U \to \mathbb D^n$ so that  $\varphi(D_j)=(z_j=0)$ for any
 		$j=1,\ldots,\ell$.
 	\end{itemize}  
 \end{dfn}
 
\subsection{Tame and pure imaginary harmonic bundles}\label{sec:tame}
Let $\overline{X}$ be a compact complex manifold, $D=\sum_{i=1}^{\ell}D_i$ be a   simple normal crossing divisor, $X=\overline{X}\backslash D$ be the complement of $D$ and $j:X\to \overline{X}$ be the inclusion.

\begin{dfn}[Higgs bundle]\label{Higgs}
	A \emph{Higgs bundle} on $X$  is a pair $(E,\theta)$ where $E$ is a holomorphic vector bundle, and $\theta:E\to E\otimes \Omega^1_X$ is a holomorphic one form with value in $\End(E)$, called the \emph{Higgs field},  satisfying $\theta\wedge\theta=0$.
\end{dfn}
Let  $(E,\theta)$ be a Higgs bundle  over a complex manifold $X$.  
Suppose that $h$ is a smooth hermitian metric of $E$.  Denote by $\nabla_h$  the Chern connection of $(E,h)$, and by $\theta^\dagger_h$  the adjoint of $\theta$ with respect to $h$.  We write $\theta^\dagger$ for $\theta^\dagger_h$ for short if no confusion arises.   The metric $h$ is  \emph{harmonic} if the connection  $\nabla_h+\theta+\theta^\dagger$
is flat.
\begin{dfn}[Harmonic bundle] A harmonic bundle on  $X$ is
	a Higgs bundle $(E,\theta)$ endowed with a  harmonic metric $h$.
\end{dfn}

Let $(E,\theta,h)$ be a  harmonic bundle on $X$.	 Let $p$ be any point of $D$, and $(U;z_1,\ldots,z_n)$  be an admissible coordinate centered at  $p$. On $U$, we have the description:
\begin{align}\label{eq:local}
	\theta=\sum_{j=1}^{\ell}f_jd\log z_j+\sum_{k=\ell+1}^{n}f_kdz_k.
\end{align}
\begin{dfn}[Tameness]\label{def:tameness}
	Let $t$ be a formal variable. For any $j=1,\dots, \ell$, the characteristic polynomial $\det (f_j-t)\in \mathcal{O}(U\backslash D)[t],$ is a polynomial in $t$   whose coefficients are holomorphic functions. If those functions can be extended
	to the holomorphic functions over $U$ for all $j$, then the harmonic bundle is called \emph{tame} at $p$.  A harmonic bundle is \emph{tame} if it is tame at each point.
\end{dfn} 

For a tame harmonic bundle  $(E,\theta,h)$ over $\overline{X}\backslash D$,  we prolong $E$ over $\overline{X}$ by a sheaf of $\cO_{\overline{X}}$-module $\diae_h$  as follows: 
\begin{align*} 
	\diae_h(U)=\{\sigma\in\Gamma(U\backslash D,E|_{U\backslash D})\mid |\sigma|_h\lesssim {\prod_{i=1}^{\ell}|z_i|^{-\ep}}\  \ \mbox{for all}\ \ep>0\}. 
\end{align*}  
In \cite{Moc07} Mochizuki proved that $\diae_h$ is locally free and that  $\theta$ extends to a morphism
$$
\diae_h\to \diae_h\otimes \Omega^1_{\overline{X}}(\log D),
$$
which we still denote by $\theta$.   

 \begin{dfn}[Pure imaginary]\label{def:nilpotency}
 	Let $(E, h,\theta)$ be a tame harmonic bundle on $\overline{X}\backslash D$. The residue $\Res_{D_i}\theta$ induces an endomorphism of $\diae_h|_{D_i}$. Its characteristic polynomial has constant coefficients, and thus the eigenvalues are all constant. We say that $(E,\theta,h)$ is \emph{pure imaginary}   if for each component $D_i$ of $D$, the   eigenvalues of $\Res_{D_i}\theta$ are all pure imaginary. 
 \end{dfn}   
 One can verify that \cref{def:nilpotency} does not depend on the compactification $\overline{X}$ of $\overline{X}\backslash D$. 
\begin{thm}[{Mochizuki \cite[Theorem 25.21]{Moc07b}}]\label{moc}
	Let $\overline{X}$ be a projective manifold and let $D$ be a   simple normal crossing divisor on $\overline{X}$.  Let $(E,\theta,h)$ be a tame pure imaginary harmonic bundle on $X:=\overline{X}\backslash D$. Then the flat bundle $(E, \nabla_h+\theta+\theta^\dagger)$ is semi-simple. Conversely, if $(V,\nabla)$ is a semisimple flat bundle on $X$, then there is a tame pure imaginary harmonic bundle $(E,\theta,h)$ on $X$ so that $(E, \nabla_h+\theta+\theta^\dagger)\simeq (V,\nabla)$. Moreover, when $\nabla$ is \emph{simple}, then any such harmonic metric $h$ is unique up to positive multiplication. 
\end{thm}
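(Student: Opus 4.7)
The statement comprises three parts: (i) tame pure imaginary harmonic implies semisimple, (ii) existence of a tame pure imaginary harmonic metric on any semisimple flat bundle, and (iii) uniqueness in the simple case. My plan is to promote the compact non-abelian Hodge correspondence of Corlette, Simpson, Donaldson and Hitchin to the open quasi-projective setting by making systematic use of the Mochizuki prolongation $\diae_h$ and its induced parabolic structure on $(\overline{X},D)$; the overall scheme follows the strategy of \cite{Moc07,Moc07b}.

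For (i), the first observation is that, by construction, the pure imaginary condition translates into the statement that the natural parabolic weights on $\diae_h$ along each $D_i$ can be taken to be zero. Hence $(\diae_h,\theta)$ acquires the structure of a polystable parabolic Higgs bundle on $(\overline{X},D)$ with vanishing parabolic Chern numbers. Given any flat subbundle $V \subset (E,\nabla_h+\theta+\theta^\dagger)$, the $\mathcal O_X$-coherent extension of $V$ inside $\diae_h$ produces a saturated parabolic Higgs subsheaf. Polystability then forces this subsheaf to be a direct summand whose orthogonal complement is again a parabolic Higgs direct summand, and I would check that the splitting descends to a flat decomposition of the underlying local system. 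Iterating this procedure yields semisimplicity.

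For (ii), I would first reduce by taking direct sums to the case where $(V,\nabla)$ is simple. The task is then to exhibit a harmonic metric whose associated tame Higgs bundle has pure imaginary residues. My approach is a compact exhaustion: choose a sequence $X_k \Subset X_{k+1} \Subset X$ of smoothly bounded domains with $\bigcup_k X_k = X$, fix on a collar of $D$ a model Hermitian metric $h_{\mathrm{mod}}$ built from the Jordan decomposition of local monodromies and designed so that its logarithmic Higgs residues are pure imaginary, and solve the Donaldson heat flow on each $X_k$ with this model as Dirichlet data on $\partial X_k$. The flow converges on each $X_k$ thanks to semisimplicity of $(V,\nabla)$ (no destabilizing flat subsheaf can appear), and a subsequential diagonal limit as $k\to\infty$ yields a harmonic metric $h$ on $X$. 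One then has to verify, via a Bochner-type inequality for $\log\mathrm{tr}(h\, h_{\mathrm{mod}}^{-1})$ combined with Simpson's main estimate on punctured polydiscs, that $h$ is mutually bounded with $h_{\mathrm{mod}}$ near $D$; tameness and the pure imaginary property of the resulting harmonic bundle both follow from this comparison.

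For (iii), given two tame pure imaginary harmonic metrics $h_1,h_2$ on the simple flat bundle, write $h_2 = h_1\cdot s$ with $s$ positive self-adjoint with respect to $h_1$. A standard Donaldson-type computation yields $\Delta_{h_1}\log\mathrm{tr}\, s \geq 0$, and tameness together with the pure imaginary hypothesis forces $\mathrm{tr}\, s$ to stay bounded near $D$. A maximum principle applied on $\overline{X}$ with a cut-off supported away from $D$ then shows $\mathrm{tr}\, s$ is constant, and simplicity of $(V,\nabla)$ forces $s$ to be a positive scalar. The principal obstacle is undoubtedly step (ii), and specifically the \emph{asymptotic analysis} showing that the harmonic metric produced by the exhaustion is automatically tame and pure imaginary: this is the technical heart of Mochizuki's work. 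The necessary ingredients, each delicate in its own right, are (a) a flexible supply of local model metrics near $D$ encoding the semisimplicity of local monodromy around each $D_i$; (b) Simpson's norm estimates for tame harmonic bundles on punctured polydiscs; and (c) a global Bochner inequality with careful cut-offs near $D$ to propagate the asymptotic information. Once these are in place the three steps above close up quickly.
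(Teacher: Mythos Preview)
The paper does not give its own proof of this statement: \cref{moc} is stated with attribution to Mochizuki \cite[Theorem 25.21]{Moc07b} and is used as a black box throughout. There is no \texttt{proof} environment following it; the authors simply quote the result and move on. So there is nothing to compare your proposal against in this paper.

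That said, your outline contains a substantive inaccuracy worth flagging. In part (i) you write that ``the pure imaginary condition translates into the statement that the natural parabolic weights on $\diae_h$ along each $D_i$ can be taken to be zero.'' This conflates two different pieces of asymptotic data. The parabolic weights encode the growth rate of the harmonic metric $h$ near $D$ (i.e.\ the real parts of the eigenvalues of the residue of the flat connection), whereas the pure imaginary condition concerns the eigenvalues of $\Res_{D_i}\theta$, the residue of the \emph{Higgs field}. A tame harmonic bundle can be pure imaginary and still have nontrivial parabolic weights; conversely, vanishing parabolic weights says nothing about $\Res_{D_i}\theta$. The correct translation is roughly that pure imaginary residues of $\theta$ correspond, under the Simpson--Mochizuki table relating Higgs data to monodromy data, to the local monodromy eigenvalues having modulus one in a way compatible with semisimplicity of the associated local system. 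Your argument for (i) would need to be rewritten with this distinction in mind; as stated, the mechanism by which polystability of the prolonged parabolic Higgs bundle forces semisimplicity of the flat bundle is not correctly identified. Parts (ii) and (iii) are closer in spirit to what Mochizuki actually does, though of course the analytic details you defer are precisely the content of \cite{Moc07,Moc07b}.
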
  

The following important theorem by Mochizuki  will be used throughout the paper.
\begin{thm}\label{thm:reductive}
Let $f:X\to Y$ be a morphism of quasi-projective varieties, where $Y$ is smooth and $X$ is normal. For any reductive representation $\varrho:\pi_1(Y)\to \GL_{N}(K)$, where $K$ is a non-archimedean local field of characteristic zero or the  complex number field,  the pullback $f^*\varrho:\pi_1(X)\to \GL_{N}(K)$  is also reductive.
\end{thm}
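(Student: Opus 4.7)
My strategy is to use the non-abelian Hodge correspondence---\cref{moc} in the complex case and its non-archimedean analogue via equivariant harmonic maps into Bruhat--Tits buildings---to translate reductivity into the existence of a geometric/analytic object (a tame pure imaginary harmonic metric, respectively a suitably tame equivariant pluriharmonic map) whose pullback is functorial under morphisms. Since reductivity is an algebraic property of the Zariski closure of the image, and since a resolution of singularities $\mu\colon \widetilde X\to X$ of the normal variety $X$ induces a surjection $\mu_*\colon \pi_1(\widetilde X)\twoheadrightarrow \pi_1(X)$ (by Zariski connectedness of the fibers of $\mu$), the representations $f^*\varrho$ and $(f\circ \mu)^*\varrho$ have the same Zariski closure, so we may assume $X$ is smooth. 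After further resolution of indeterminacies, I choose smooth projective compactifications $\bar X\supset X$ and $\bar Y\supset Y$ with SNC boundary divisors $D_X,D_Y$ such that $f$ extends to $\bar f\colon \bar X\to \bar Y$ with $\bar f^{-1}(D_Y)\subset D_X$.

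For $K=\mathbb C$, by \cref{moc} applied on $Y$ the reductive representation $\varrho$ is the monodromy of a tame pure imaginary harmonic bundle $(E,\theta,h)$ on $Y$. I form the pullback $(f^*E,f^*\theta,f^*h)$ on $X$. Harmonicity is preserved because the flatness of $\nabla_h+\theta+\theta^\dagger$ is preserved under the holomorphic pullback $f^*$. To verify tameness at a point $p\in D_X$, I work in admissible coordinates $(U;z_1,\dots,z_n)$ on $\bar X$ around $p$ and $(V;w_1,\dots,w_m)$ on $\bar Y$ around $\bar f(p)$; the local description \eqref{eq:local} of $\theta$ pulls back, and each characteristic polynomial coefficient of a component of $f^*\theta$ is a holomorphic pullback of a coefficient of the corresponding component of $\theta$, hence extends holomorphically across $D_X$. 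For the pure imaginary condition, the residue $\Res_{D_{X,i}}(f^*\theta)$ along a component $D_{X,i}$ of $D_X$ is a nonnegative-integer multiple (the ramification index of $\bar f$ along $D_{X,i}$) of the pullback of the residue of $\theta$ along the image component of $D_Y$---and vanishes if $\bar f(D_{X,i})\not\subset D_Y$. Hence its eigenvalues remain pure imaginary. The converse direction of \cref{moc} on $X$ then yields that $f^*\varrho$ is semisimple, so its Zariski closure is reductive.

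For $K$ a non-archimedean local field of characteristic zero, I would argue analogously, replacing \cref{moc} by the theory of equivariant pluriharmonic maps into the Bruhat--Tits building $\Delta(\GL_N(K))$ developed by Gromov--Schoen, Jost--Zuo and Katzarkov--Pantev--Simpson: reductivity of $\varrho$ is equivalent to the existence of a $\varrho$-equivariant pluriharmonic map $\widetilde Y\to \Delta(\GL_N(K))$ with tame asymptotic behavior along $D_Y$. Precomposing such a map with the lift $\widetilde f\colon \widetilde X\to \widetilde Y$ produces a $(f^*\varrho)$-equivariant pluriharmonic map on $\widetilde X$, and the asymptotic tameness is inherited by an analogous local computation near $D_X$. \emph{The hard part} will be the verification that tameness and pure imaginarity---respectively the tame behavior of the building-valued harmonic map---genuinely survive pullback when $\bar f$ is highly ramified along $D_Y$ or when several components of $D_X$ are collapsed onto one stratum of $D_Y$; this is precisely where Mochizuki's local analysis near the boundary divisor is indispensable and constitutes the technical heart of the argument.
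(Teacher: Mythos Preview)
Your reduction to the case where $X$ is smooth via a desingularization $\mu\colon X'\to X$ and the surjectivity $\mu_*\colon \pi_1(X')\twoheadrightarrow \pi_1(X)$ is exactly what the paper does.

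For $K=\bC$, your approach---pull back the tame pure imaginary harmonic bundle furnished by \cref{moc} and verify that tameness and pure-imaginarity survive---is correct in outline and is essentially the \emph{content} of Mochizuki's pullback theorem. The paper, however, does not repeat this analysis but simply invokes it as a black box (\cite[Theorem~25.30]{Moc07b}), which already asserts that the pullback of a semisimple flat bundle along a morphism of smooth quasi-projective varieties is semisimple. So your complex-case argument rederives what the paper quotes.

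For $K$ non-archimedean, you take a genuinely different and much harder route. Your proposed equivalence ``reductivity $\Longleftrightarrow$ existence of a tame $\varrho$-equivariant pluriharmonic map to $\Delta(\GL_N(K))$'' is not an established theorem in the quasi-projective setting (Gromov--Schoen give existence under hypotheses on the image, not a clean characterisation of reductivity), and even granting it, checking the asymptotic tameness of the pulled-back map would require substantial new work. The paper sidesteps all of this with a one-line observation: since $K$ has characteristic zero, there is an \emph{abstract} field embedding $K\hookrightarrow\bC$; because semisimplicity of a representation over a characteristic-zero field is preserved under extension and descent of scalars, the non-archimedean case reduces immediately to $K=\bC$. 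You missed this shortcut, which replaces your entire building-theoretic paragraph by a single sentence.
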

\begin{proof}
If $K$ is a non-archimedean local field of characteristic zero, then there is an abstract embedding $K\hookrightarrow \mathbb{C}$. Therefore, it suffices to prove the theorem for $K=\mathbb{C}$.

  Let $\mu:X'\to X$ be a desingularization of $X$. By \cite[Theorem 25.30]{Moc07b}, $(f\circ\mu)^*\varrho:\pi_1(X')\to \GL_{N}(\bC)$ is reductive. Since $\mu_*:\pi_1(X')\to \pi_1(X)$ is surjective, it follows that $(f\circ\mu)^*\varrho(\pi_1(X'))=f^*\varrho(\pi_1(X))$. Hence, $f^*\varrho$ is also reductive.  
\end{proof}

 \subsection{Positive currents on  normal complex spaces}
 For this subsection, we refer to \cite{Dem85} for more details. 
 \begin{dfn}
 	Let $Z$ be an irreducible normal complex space.
 	A upper semi continuous function $\phi: Z \rightarrow \mathbb{R} \cup\{-\infty\}$ is plurisubharmonic if it is not identically $-\infty$ and every point $z \in Z$ has a neighborhood $U$ embeddable as a closed subvariety of the unit ball $B$ of some $\mathbb{C}^M$ in such a way that $\left.\phi\right|_U$ extends to a psh function on $B$.
 	
 	A closed positive current with continuous potentials $\omega$ on $Z$ is specified by a data $\left\{U_i, \phi_i\right\}_i$ of an open covering $\left\{U_i\right\}_i$ of $Z$, a continuous psh function $\phi_i$ defined on $U_i$ such that $\phi_i-\phi_j$ is pluriharmonic on $U_i \cap U_j$.
 	
 	A closed positive current with continuous potentials $Z$ is a Kähler form iff its local potentials can be chosen smooth and strongly plurisubharmonic.
 	
 	A psh function $\phi$ on $Z$ is said to satisfy $\hess \phi \geq \omega$ iff $\phi-\phi_i$ is psh on $U_i$ for every $i$. 
 \end{dfn} 
 
 In other words, a closed positive current with continuous potentials is a section of the sheaf $C^0 \cap P S H_Z / \operatorname{Re}\left(O_Z\right)$.
 
 \begin{dfn}
 	Assume $Z$ to be compact. The class of a closed positive current with continuous potentials is its image in $H^1\left(Z, \operatorname{Re}\left(O_Z\right)\right)$.
 	
 	A class in $H^1\left(Z, \operatorname{Re}\left(O_Z\right)\right)$ is said to be Kähler if it is the image of a Kähler form. 
 \end{dfn}
 
 To make contact with the usual terminology observe that if $Z$ is a compact Kähler manifold $H^1\left(Z, \operatorname{Re}\left(O_Z\right)\right)=H^{1,1}(Z, \mathbb{R})$. Hence we use abuse of notation to write $H^{1,1}(Z, \mathbb{R})$ instead of $H^1\left(Z, \operatorname{Re}\left(O_Z\right)\right)$ in this paper.

 \begin{lem}\label{lem:Demailly}
 	Let $f:X\to Y$ be a finite Galois cover with Galois group $G$, where $X$ and $Y$  are both irreducible normal complex spaces. Let $T$ be a positive $(1,1)$-current on $X$ with continuous potential. Assume that $T$ is invariant under $G$. Then there is a closed positive $(1,1)$-current $S$ on $Y$ with continuous potential such that $T=f^*S$.
 \end{lem}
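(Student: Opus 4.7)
The plan is to build $S$ from suitably averaged local potentials of $T$ and then invoke the descent of plurisubharmonicity through the finite quotient map $X \to X/G = Y$.

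First, I would fix $y \in Y$ and pick any $x \in f^{-1}(y)$. Let $H = \mathrm{Stab}_G(x) \subset G$ and choose a small connected open neighborhood $U$ of $x$ which is $H$-stable and such that $gU \cap U = \emptyset$ for all $g \in G \setminus H$; then $f(U) \simeq U/H$ is open in $Y$. After shrinking, the data defining $T$ gives a continuous psh function $\phi$ on $U$ with $\hess \phi = T|_U$. For each $h \in H$ one has $h^*T = T$, hence $h^*\phi - \phi$ is pluriharmonic on $U$. Averaging, the function
\[
\psi := \frac{1}{|H|} \sum_{h \in H} h^*\phi
\]
is continuous, $H$-invariant, and still satisfies $\hess \psi = T|_U$.

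Second, I would use $f(U) \simeq U/H$ to descend $\psi$ to a continuous function $\bar\psi$ on the open neighborhood $f(U) \subset Y$ of $y$. The key input here is the standard descent principle for plurisubharmonic functions under a finite surjective holomorphic map between normal complex spaces (see \cite{Dem85}): an upper semicontinuous function $\bar\psi$ on $f(U)$ is psh if and only if $f^*\bar\psi$ is psh on $f^{-1}(f(U))$. Since $f^*\bar\psi = \psi$ on $U$ (extended $G$-equivariantly to the other sheets), $\bar\psi$ is continuous psh on $f(U)$.

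Third, I would globalize. Cover $Y$ by such open sets $\{V_\alpha\}$ equipped with continuous psh potentials $\bar\psi_\alpha$, all pulling back to local potentials of $T$. On $V_\alpha \cap V_\beta$, the difference $f^*(\bar\psi_\alpha - \bar\psi_\beta)$ is pluriharmonic upstairs, so applying the same descent principle to $\pm (\bar\psi_\alpha - \bar\psi_\beta)$ shows that $\bar\psi_\alpha - \bar\psi_\beta$ itself is pluriharmonic on $V_\alpha \cap V_\beta$. Thus the data $\{V_\alpha, \bar\psi_\alpha\}$ define a closed positive $(1,1)$-current $S$ with continuous potential on $Y$, and $f^*S = T$ by construction.

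The main obstacle is the pluripotential descent principle through the finite, possibly ramified, map $U \to U/H$ at ramification loci and singular points of $X$ and $Y$. This is precisely the step that requires continuity of the potentials and normality of $X$ and $Y$, and where one genuinely leans on Demailly's theory of psh functions on normal complex spaces rather than the classical smooth setting.
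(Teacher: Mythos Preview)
Your argument is correct. The paper takes a more direct route: rather than averaging over the stabilizer, descending, and then gluing local potentials, it defines $S$ in one stroke by the pushforward formula $S := \frac{1}{\deg f}\, f_* T$, with continuous potential $\frac{1}{\deg f}\, f_*\varphi$, where $f_*\varphi(y) := \sum_{x\in f^{-1}(y)} \varphi(x)$ (counted with multiplicity). Demailly's \cite[Proposition~1.13(b)]{Dem85} gives immediately that $f_*\varphi$ is psh and that $\hess f_*\varphi = f_*T$; the $G$-invariance of $T$ is then only used at the end to check that $f^*S = T$, first off the branch locus and then everywhere by continuity of the potentials. Your ``descent principle'' is in fact a corollary of this same pushforward result (since $f_*(f^*u) = (\deg f)\cdot u$), so both proofs rest on the identical input from \cite{Dem85}; the paper's packaging simply bypasses the local-to-global gluing and the separate verification that the transition functions $\bar\psi_\alpha - \bar\psi_\beta$ are pluriharmonic.
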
 
 \begin{proof}
 	Since the statement is local, we may assume that $T=\hess \varphi$ such that $\varphi\in C^0(X)$. Define a function on $Y$ by 
 	$$
 	f_*\varphi(y):=\sum_{x\in f^{-1}(y)}\varphi(x) 
 	$$ 
 	here the sums are counted with multiplicity.
 	By \cite[Proposition 1.13.(b)]{Dem85}, we know that
 	$
 	f_*\varphi 
 	$ is a psh function on $Y$ and 
 	$$
 	\hess f_*\varphi=f_*T.
 	$$
 	One can see that $f_*\varphi$ is also continuous.  
 	Define a current $S:=\frac{1}{\deg f}f_*T$.     Since $T$ is $G$-invariant, it follows that $f^*S=T$ outside the branch locus of $f$.  Since $f^*S=\frac{1}{\deg f}\hess (f_*\varphi)\circ f$, the potential of $f^*S$ is continuous. It follows that $f^*S=T$ over the whole $X$. 
 \end{proof}
 \subsection{Holomorphic forms on complex normal spaces}
There are many ways to define holomorphic forms on complex normal spaces. For our purpose of the current paper, we use the following definition in \cite{Fer70}.  
 \begin{dfn} \label{def:form}
  Let $X$ be a   normal complex space. 	 Let $\left(A_i\right)_{i \in I}$ be an open finite covering of $X$ such that each subset $A_i$ is an analytic subset of some open subset $\Omega_i \subset \mathbb{C}^{N_i}$.  The space of holomorphic $p$-forms,  denoted by $\Omega_X^p$, is defined by local restrictions of holomorphic $p$-forms on the sets $\Omega_i$ above to $A_i^{\rm reg}$, where $A_i^{\rm reg}$ is the smooth locus of $A_i$. 
 \end{dfn} 
The following fact will be used throughout the paper.
\begin{lem}\label{lem:pull}
	Let $f:X\to Y $ be a holomorphic map between normal complex spaces. Then for any holomorphic $p$-form $\omega$ on $Y$, $f^*\omega$ a holomorphic $p$-form on $Y$. 
\end{lem}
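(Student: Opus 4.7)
The claim is local on $X$, so fix a point $x\in X$ and set $y=f(x)\in Y$. By \cref{def:form} we may choose a neighborhood $B$ of $y$ embedded as an analytic subset of an open set $\Omega_Y\subset\bC^M$ such that $\omega|_{B^{\rm reg}}$ is the restriction of a holomorphic $p$-form $\widetilde\omega\in\Omega^p_{\Omega_Y}(\Omega_Y)$. Similarly choose a neighborhood $A$ of $x$ embedded as an analytic subset of an open set $\Omega_X\subset\bC^N$. The idea is to extend $f|_{A\cap f^{-1}(B)}$ to a holomorphic map between the ambient open sets, pull back $\widetilde\omega$ there, and restrict to $A^{\rm reg}\cap f^{-1}(B)$.

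First I would invoke the standard fact (part of the definition of morphism of complex spaces) that a holomorphic map of analytic sets locally admits a holomorphic extension to the ambient spaces: shrinking $A$ and $\Omega_X$ if necessary, the $M$ coordinate functions of $f$ (which are holomorphic functions on the analytic set $A\cap f^{-1}(B)$) extend to holomorphic functions on an open neighborhood $\Omega'_X\subset\Omega_X$ of $x$, producing a holomorphic map $F:\Omega'_X\to\Omega_Y$ with $F|_{A\cap\Omega'_X}=f|_{A\cap\Omega'_X}$. Then $F^*\widetilde\omega$ is a holomorphic $p$-form on $\Omega'_X$, so its restriction to $A^{\rm reg}\cap\Omega'_X$ is, by \cref{def:form}, a holomorphic $p$-form representative on this patch of $X$. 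These local forms glue to a section of $\Omega^p_X$ which we declare to be $f^*\omega$.

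To see that this construction is independent of the choices of $\Omega_X$, $\Omega_Y$, and the extension $F$, note that on the open dense subset $X^{\rm reg}\cap f^{-1}(Y^{\rm reg})$ of $X^{\rm reg}$ the restriction of the form produced above coincides with the classical manifold pullback $(f|_{X^{\rm reg}\cap f^{-1}(Y^{\rm reg})})^*(\omega|_{Y^{\rm reg}})$, which is intrinsic. Two candidate forms in $\Omega^p_X$ coinciding on a nonempty open subset of $X^{\rm reg}$ must be equal, since a holomorphic form on the connected complex manifold $X^{\rm reg}$ is determined by its values on any open subset. This handles the case when $f(X)\not\subset Y^{\rm sing}$.

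The only remaining subtlety, and the one I would expect to require the most care, is the case when $f(X)\subset Y^{\rm sing}$, where the intrinsic comparison on the smooth locus is unavailable. In that situation the independence of the choice of extension $F$ still holds, because any two local holomorphic extensions $F_1,F_2:\Omega'_X\to\Omega_Y$ of $f$ agree on the analytic set $A\cap\Omega'_X$, so $F_1^*\widetilde\omega$ and $F_2^*\widetilde\omega$ have the same restriction to $A^{\rm reg}\cap\Omega'_X$ by the usual fact that the pullback of a holomorphic form along two maps that agree on a submanifold agrees on that submanifold (computed in local coordinates from the Jacobians along the submanifold, which depend only on $F|_{A\cap\Omega'_X}$ and its first-order behavior tangent to $A^{\rm reg}$, and these are intrinsic to $f$). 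Independence of the ambient embeddings of $A$ and $B$ follows similarly by comparing via a joint embedding. Combining these local constructions yields a well-defined global section $f^*\omega\in\Omega^p_X$, as required.
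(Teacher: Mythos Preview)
Your proof is correct and follows essentially the same approach as the paper: locally embed $A$ and $B$ in ambient open sets $\Omega\subset\bC^N$, $\Omega'\subset\bC^M$, extend $f$ to a holomorphic $\tilde f:\Omega\to\Omega'$, pull back an ambient extension $\tilde\omega$ of $\omega$, and restrict. The paper simply asserts that the result is independent of the choices, whereas you spell out this independence (including the case $f(X)\subset Y^{\rm sing}$); apart from this extra care, the arguments coincide.
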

\begin{proof}
By \cref{def:form},    for any $x\in X$, there exist 
\begin{itemize}
	\item  a neighborhood $A$ (resp. $B$) of $x$ (resp. $f(x)$)    such that $A$ (resp. $B$) is  an analytic subset of some open $\Omega \subset \mathbb{C}^{m}$ (resp. $\Omega'\subset \mathbb{C}^{n}$).
\item   a holomorphic map $\tilde{f}:\Omega\to \Omega'$ such that $\tilde{f}|_{A}=f|_A$.  
\item A holomorphic $p$-form $\tilde{\omega}$ on $\Omega'$ such that $\omega=\tilde{\omega}|_{B}$. 
\end{itemize} 
Therefore, we can define $f^*\omega|_{A}:=\tilde{f}^*\tilde{\omega}|_{\Omega}$. One can check that this does not depend on the choice of local embeddings of $X$ and $Y$.
\end{proof}

\subsection{The criterion for K\"ahler classes}
We will need the following  extension of the celebrated Demailly-P\u{a}un's theorem \cite{DP04} on characterization of K\"ahler classes on complex compact normal Kähler  spaces by Das-Hacon-P\u{a}un in \cite{DHP22}.
\begin{thm}[\protecting{\cite[Corollary 2.39]{DHP22}}]\label{thm:DHP}
	 Let $X$ be a projective normal variety, $\omega$ a Kähler form on $X$, and $\alpha \in H_{\rm BC}^{1,1}(X)$. Then 
	$\alpha$ is Kähler if and only if $\int_V \alpha^{\dim V} >0$ for  every
positive dimensional closed subvariety $V \subset X$. 
\end{thm}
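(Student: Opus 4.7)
The forward implication is straightforward from the definition of a Kähler form on a normal space: if $\alpha$ is represented by a Kähler form $\omega$ (with smooth, strongly psh local potentials after a local embedding into $\bC^N$), then $\omega^{\dim V}|_{V^{\rm reg}}$ is a positive volume form on the regular locus of any positive-dimensional subvariety $V$, and integration gives $\int_V \alpha^{\dim V}>0$.

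For the converse, my plan is to bootstrap the smooth Demailly--P\u{a}un theorem via a log resolution $\pi\colon Y\to X$ with $Y$ smooth projective and exceptional divisor $E=\sum E_i$. The pullback $\beta:=\pi^*\alpha$ satisfies $\int_W \beta^{\dim W}=\deg(\pi|_W)\int_{\pi(W)}\alpha^{\dim W}$ whenever $\pi|_W$ is generically finite onto its image, and vanishes otherwise. The hypothesis therefore gives strict positivity of $\beta^{\dim W}$ against every non-contracted subvariety $W\subset Y$, but only non-negativity (in fact vanishing) against $\pi$-contracted ones. Hence $\beta$ sits on the boundary of the Kähler cone of $Y$ and must be perturbed into its interior.

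The key step is to adjust $\beta$ by the exceptional divisors. By the Grauert--Hironaka negativity lemma for proper bimeromorphic morphisms, one can choose rationals $a_i>0$ such that the class $\beta-\sum a_i[E_i]$ has strictly positive top self-intersection on every $\pi$-contracted subvariety; taking the $a_i$ sufficiently small, strict positivity on non-contracted subvarieties is preserved. The smooth Demailly--P\u{a}un theorem then supplies a Kähler representative $\omega_Y$ for $\beta-\sum a_i[E_i]$. Pushing forward, $\pi_*\omega_Y$ is a closed positive $(1,1)$-current on $X$; since $\pi_*[E_i]=0$ for every exceptional $E_i$, its Bott--Chern class equals $\alpha$, and since $\pi$ is proper bimeromorphic the local potentials of $\pi_*\omega_Y$ are continuous psh functions on $X$ (after local embedding into some $\bC^N$), which is essentially Demailly's direct image computation for psh functions under proper holomorphic maps.

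The main obstacle I anticipate is promoting continuity of the potentials to \emph{strong} plurisubharmonicity, i.e.\ actually producing a Kähler form in the normal-space sense defined in the paper, at singular points of $X$. Mere pushforward of a Kähler form is only a Kähler \emph{current}, and at a singular point $x \in X$ the potential of $\pi_*\omega_Y$ could in principle fail to be locally bounded from below by a strictly psh function in an ambient ball. Overcoming this will require a regularization argument: either a Demailly-type mass concentration on $Y$ combined with the hypothesis $\int_V \alpha^{\dim V}>0$ applied to subvarieties meeting $\mathrm{Sing}(X)$, or a Kollár--Boucksom-style gluing of local Kähler potentials lifted through local embeddings. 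This is precisely the step that distinguishes the normal case from Demailly--P\u{a}un's smooth setting and, in the proof of Das--Hacon--P\u{a}un, occupies the technical heart of the argument.
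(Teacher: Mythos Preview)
The paper does not give its own proof of this statement: it is quoted verbatim as \cite[Corollary 2.39]{DHP22} and used as a black box throughout (in \cref{cor:positivity}, \cref{thm:positivity}, \cref{claim:kahler}, \cref{claim:projective}, \cref{lem:20240419}, etc.). There is therefore no proof in the paper to compare your proposal against.

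That said, your outline is broadly in the spirit of how such extensions of Demailly--P\u{a}un are proved, and you have correctly identified the genuine difficulty: the pushforward of a K\"ahler form under a resolution is only a K\"ahler current with possibly singular potentials along $\mathrm{Sing}(X)$, and upgrading this to a K\"ahler form in the sense of the paper (smooth strongly psh potentials after local embedding) is exactly the nontrivial content of the Das--Hacon--P\u{a}un argument. Your sketch stops at precisely the point where the real work begins, so as written it is a plan rather than a proof; but since the paper itself defers entirely to \cite{DHP22}, there is nothing further to check here.
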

 
 \subsection{Some criterion for Stein space}
 We require the following criterion for the Stein property of a topological Galois covering of a compact complex normal space.
 \begin{proposition}[\protecting{\cite[Proposition 4.1.1]{Eys04}}]\label{prop:stein}
 	Let $X$ be a compact complex normal space and let $\pi:\widetilde{X}'\to X$ be some topological Galois covering.  Let $T$ be a positive current on $X$  with continuous potential such that $\{T\}$ is a K\"ahler class. Assume that there exists a continuous plurisubharmonic function $\phi: \widetilde{X}'\to \bR_{\geq 0}$ such that $\hess \phi\geq \pi^*T$.  Then $\widetilde{X}'$ is a Stein space. 
 \end{proposition}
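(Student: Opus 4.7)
The plan is to construct a continuous strictly plurisubharmonic exhaustion function on $\widetilde{X}'$ and then invoke the standard characterization of Stein spaces (Grauert's solution to the Levi problem, valid for normal complex spaces).

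First I would normalize the lower bound by replacing $T$ with a smooth Kähler form. By the definition of a Kähler class given in the excerpt, there is a Kähler form $\omega$ on $X$ representing $\{T\}$; the exact sheaf sequence
\[
0 \longrightarrow \mathrm{Re}(\mathcal{O}_X) \longrightarrow C^0\cap PSH_X \longrightarrow C^0\cap PSH_X/\mathrm{Re}(\mathcal{O}_X) \longrightarrow 0
\]
then produces a continuous function $\psi$ on $X$ with $T=\omega+\hess\psi$. The function
\[
\Phi := \phi - \psi\circ\pi + C
\]
is, for $C$ large enough, continuous, nonnegative, and satisfies $\hess\Phi\geq\pi^*\omega$ on $\widetilde{X}'$, with $\pi^*\omega$ a smooth Kähler form upstairs. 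From here on one may assume $\pi^*T$ is itself a smooth Kähler form on $\widetilde{X}'$.

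The crucial step is to prove that $\Phi$ is proper. Suppose for contradiction that a sequence $y_n\in\widetilde{X}'$ with no convergent subsequence satisfies $\Phi(y_n)\leq M$. By compactness of $X$, pass to a subsequence with $\pi(y_n)\to x_0\in X$; on a simply connected evenly covered neighborhood $U\ni x_0$ write $\pi^{-1}(U)=\bigsqcup_{g\in G}gV$, so that $y_n\in g_nV$ with the $g_n$ eventually pairwise distinct. Pulling back by $g_n^{-1}$, each $\Phi_n := g_n^{-1,*}\Phi$ is a nonnegative continuous plurisubharmonic function on $\widetilde{X}'$ satisfying $\hess\Phi_n\geq\pi^*\omega$ (as $\pi^*\omega$ is $G$-invariant), with $\Phi_n(g_n^{-1}y_n)\leq M$ and $g_n^{-1}y_n\to v_0\in V$. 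Combining the Kähler lower bound with a local submean-value estimate applied to $\Phi_n$ minus a local potential of $\pi^*\omega$, one obtains a uniform $L^1_{\mathrm{loc}}$-bound for $\{\Phi_n\}$ on $V$; a Hartogs-type compactness extracts a plurisubharmonic limit $\Phi_\infty$ on $V$ inheriting $\hess\Phi_\infty\geq\pi^*\omega|_V$. A global integration argument, exploiting the disjointness of the translates $g_nV$ and the strict positivity of $(\pi^*\omega)^{\dim X}$ together with Stokes' theorem on a suitable exhaustion by sheets, forces $\Phi$ to grow without bound along the sequence $y_n$, contradicting $\Phi(y_n)\leq M$. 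Hence every sublevel set of $\Phi$ is relatively compact in $\widetilde{X}'$.

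With $\Phi$ identified as a continuous strictly plurisubharmonic exhaustion on the normal complex space $\widetilde{X}'$, Grauert's theorem yields that $\widetilde{X}'$ is Stein. The main obstacle is the properness step: one must upgrade a pointwise strict plurisubharmonicity estimate into a coercive growth statement for $\Phi$, and this essentially relies on the compactness of the base $X$ and the freeness of the Galois action $G\curvearrowright\widetilde{X}'$, without which a bounded strictly plurisubharmonic $\Phi\geq 0$ could exist on the covering.
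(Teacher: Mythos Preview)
The paper does not supply a proof of this proposition; it is quoted verbatim from \cite[Proposition~4.1.1]{Eys04}. So there is nothing to compare your argument against here, and your write-up must stand on its own.

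Your reduction step is fine: passing from $T$ to a genuine K\"ahler form $\omega$ by adding $\hess\psi$ with $\psi\in C^0(X)$ is exactly how one should begin, and the final appeal to Grauert is correct once a continuous strictly psh exhaustion is in hand. The entire content of the proposition, however, lies in the properness of $\Phi$, and your argument there has two real gaps.

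First, the uniform $L^1_{\rm loc}$ bound you claim for the family $\Phi_n=g_n^{-1,*}\Phi$ on $V$ is not justified. You know $\Phi_n\ge 0$ and $\Phi_n(z_n)\le M$ with $z_n\to v_0$. The sub-mean-value inequality for the psh function $\Phi_n-u$ gives $\Phi_n(z_n)-u(z_n)\le\mathrm{avg}_{B}(\Phi_n-u)$, i.e.\ a \emph{lower} bound on $\int_B\Phi_n$, not an upper one. Nothing prevents $\Phi_n$ from being enormous on most of $V$ while small at the single point $z_n$; Hartogs-type compactness does not apply.

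Second, even if one grants a limit $\Phi_\infty$ on $V$, your ``global integration argument, exploiting the disjointness of the translates $g_nV$\dots together with Stokes' theorem on a suitable exhaustion by sheets'' is a placeholder, not an argument. You have not said which form is being integrated, over which domain, or how the boundary terms are controlled; and any Stokes computation on a sublevel set $\{\Phi<c\}$ presupposes its relative compactness, which is precisely what you are trying to prove.

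The standard route (Napier, Eyssidieux) is different from your compactness-of-translates idea: one uses that $\pi^*\omega$ is a \emph{complete} K\"ahler metric on $\widetilde{X}'$ (this is where compactness of $X$ and the Galois property enter), and then shows directly that a continuous function with $\hess\Phi\ge\pi^*\omega$ on a complete K\"ahler space is proper, e.g.\ via a Jensen-formula growth estimate along chains of holomorphic disks of uniform size. If you want to keep the sheet-counting viewpoint, you would need an a~priori bound of the form $\mathrm{Vol}_{\pi^*\omega}\{\Phi<c\}<\infty$, obtained from Bedford--Taylor/Chern--Levine--Nirenberg estimates, and then conclude because each translate $g_nV$ contributes a fixed positive volume. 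Either way, the mechanism that converts the local inequality $\hess\Phi\ge\pi^*\omega$ into global coercivity must be made explicit.
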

 
  \subsection{Some facts on moduli spaces of rank 1 local systems} 
  For this subsection we refer the readers to \cite{Sim93b} for a systematic treatment. 
 Let $X$ be a smooth projective variety defined over  a field $K\subset \bC$. Let $M=M(X)$ denote the moduli space of complex local systems of rank one over $X$. We consider $M$ as a real analytic group under the operation of tensor product. There are three natural algebraic groups $M_{\mathrm{B}}$, ${\rm M}_{\rm DR}$ and ${\rm M}_{\rm Dol}$   whose underlying real analytic groups are canonically isomorphic to $M$.   The first is Betti moduli space $M_{\mathrm{B}}:= {\rm Hom}(\pi_1(X), \bC^*)$. The second is De Rham moduli space  $ M_{\mathrm{DR}} $ which consists of  pairs $(L,\nabla)$ where $L$ is a holomorphic line bundle on $X$ and $\nabla$ is an integrable algebraic connection on $L$. The last one $M_{\text {Dol }}$is moduli spaces of rank one Higgs bundles on $X$. Let ${\rm Pic}^\tau (X)$ be the group of line bundles on $X$ whose first Chern classes are torsion. We have  
 $$
 M_{\text {Dol }}={\rm Pic}^\tau (X)\times H^0(X, \Omega_X^1)
 $$
 For any subset $S\subset M$, let $S_{\rm B}$, $S_{\rm Dol}$ and $S_{\rm DR}$ denote the corresponding subsets of $M_{\mathrm{B}}$, ${\rm M}_{\rm DR}$ and ${\rm M}_{\rm Dol}$. 
 
 \begin{dfn}[Triple torus]
 	A triple torus is a closed, connected real analytic subgroup $N \subset M$ such that $N_{\mathrm{B}}, N_{\mathrm{DR}}$, and $N_{\text {Dol }}$ are algebraic subgroups defined over $\bC$. We say that a closed real analytic subspace $S \subset M$ is a translate of a triple torus if there exists a triple torus $N \subset M$ and a point $v \in M$ such that $S=\{v \otimes w, w \in N\}$. Note that, in this case, any choice of $v \in M$ will do.
 \end{dfn}
 
 We say that a point $v \in M$ is torsion if there exists an integer $a>0$ such that $v^{\otimes a}=1$. Let $M^{\text {tor }}$ denote the set of torsion points. Note that for a given integer $a$, there are only finitely many solutions of $v^{\otimes a}=1$. Hence, the points of $M_{\mathrm{B}}^{\text {tor }}$ are defined over $\overline{\mathbb{Q}}$, and the points of $M_{\mathrm{DR}}^{\text {tor }}$ and $M_{\mathrm{Dol}}^{\text {tor }}$ are defined over $\overline{K}$.
 
 We say that a closed subspace $S$ is a torsion translate of a triple torus if $S$ is a translate of a triple torus $N$ by an element $v \in M^{\text {tor }}$. This is equivalent to asking that $S$ be a translate of a triple torus, and contain a torsion point.
 
 Let $A$ be the Albanese variety of $X$ (which can be defined as $  {H}^0 ( {X}, \Omega_{ {X}}^1 )^* / {H}_1( {X},\mathbb{Z}) )$. Let $ {X} \rightarrow  {A}$ be the map from $ {X}$ into $ {A}$ given by integration (from a basepoint, which will be suppressed in the notation but assumed to be defined over $\overline{K})$. Pullback of local systems gives a natural map from $M(A)$ to $M(X)$, which is an isomorphism
 $$
 M(A) \cong M^0(X),
 $$
 where $M^0(X)$ is the connected component of $M(X)$ containing the trivial rank one local system.  
 The Albanese variety $A$ is defined over $\overline{ {K}}$. We recall the following result in \cite[Lemma 2.1]{Sim93b}.
 \begin{lem}[Simpson]\label{lem:triple}
 	Let $N \subset M$ be a closed connected subgroup such that $N_{\mathrm{B}} \subset M_{\mathrm{B}}$ is complex analytic and $N_{\mathrm{Dol}} \subset M_{\mathrm{Dol}}$ is an algebraic subgroup. Then there is a connected abelian subvariety $ {P} \subset {A}$, defined over $\overline{K}$, such that $N$ is the image in $M$ of $M(A/  {P})$. In particular, $N$ is a triple torus in $M$. \qed
 \end{lem}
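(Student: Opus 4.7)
The strategy is to encode both hypotheses as conditions on the Lie algebra $\mathfrak n\subset H^1(X,\bC)$ and compare them via the real-analytic but not holomorphic isomorphism $M_{\mathrm B}\simeq M_{\mathrm{Dol}}$. Since $N$ is connected, $N_{\mathrm{Dol}}$ lies in $M_{\mathrm{Dol}}^0=\mathrm{Pic}^0(X)\times H^0(X,\Omega_X^1)$. Because the first factor is complete and the second is affine, and because the ambient product is itself a trivial extension of an abelian variety by a vector group, a pullback argument in $\mathrm{Ext}^1$ forces the connected algebraic subgroup $N_{\mathrm{Dol}}$ to split as
$$N_{\mathrm{Dol}}=T\times V,$$
with $T\subset \mathrm{Pic}^0(X)$ a connected abelian subvariety and $V\subset H^0(X,\Omega_X^1)$ a complex linear subspace.

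Writing $\xi=\xi_1+i\xi_2\in H^1(X,\bC)$ with $\xi_j\in H^1(X,\bR)$, the polar decomposition of a character into its positive-real and unitary parts shows that the Lie-algebra incarnation of $M_{\mathrm B}\simeq M_{\mathrm{Dol}}$ is the $\bR$-linear map
$$\xi_1+i\xi_2\;\longmapsto\;\bigl(\pi^{0,1}(\xi_2),\,\pi^{1,0}(\xi_1)\bigr)\in H^{0,1}(X)\oplus H^{1,0}(X),$$
where $\pi^{p,q}\colon H^1(X,\bR)\xrightarrow{\sim}H^{p,q}(X)$ are the Hodge projections. Thus $\mathfrak n_{\mathrm B}$ is described by $\pi^{1,0}(\xi_1)\in V$ and $\pi^{0,1}(\xi_2)\in\mathrm{Lie}(T)$. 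The assumption that $N_{\mathrm B}$ is complex-analytic amounts to $i\mathfrak n_{\mathrm B}=\mathfrak n_{\mathrm B}$, and combining this with the reality relation $\pi^{0,1}(\eta)=\overline{\pi^{1,0}(\eta)}$ for $\eta\in H^1(X,\bR)$ yields, after a short computation, the single constraint
$$V=\overline{\mathrm{Lie}(T)}\subset H^{1,0}(X).$$

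The duality $\mathrm{Pic}^0(X)\cong A^*$ combined with Poincaré complete reducibility allows one to write $T=(A/P)^*$ for a unique connected abelian subvariety $P\subset A$. Under this identification $\mathrm{Lie}(T)$ is the image of $H^{0,1}(A/P)\hookrightarrow H^{0,1}(X)$, so $\overline{\mathrm{Lie}(T)}$ is the image of $H^0(A/P,\Omega^1)\hookrightarrow H^0(X,\Omega^1)$; consequently $N_{\mathrm{Dol}}=(A/P)^*\times H^0(A/P,\Omega^1)$ is exactly the image of $M_{\mathrm{Dol}}(A/P)$ in $M_{\mathrm{Dol}}(X)$, and transporting back via $M\simeq M_{\mathrm{Dol}}$ identifies $N$ with the image of $M(A/P)\to M(X)$. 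Since $M(A/P)$ is visibly a triple torus, so is $N$. For the $\overline K$-rationality, the rigidity of endomorphisms of abelian varieties yields $\mathrm{End}(A^*_{\bC})=\mathrm{End}(A^*_{\overline K})$; any abelian subvariety of $A^*_{\bC}$ is the image of such an endomorphism, hence descends to $\overline K$, and in particular so do $T$ and $P$.

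The delicate step is the middle one: the Lie-algebra identification must be written down carefully — including the sign and ordering conventions of the polar decomposition — so that the $i$-stability of $\mathfrak n_{\mathrm B}$ unfolds precisely into $V=\overline{\mathrm{Lie}(T)}$ rather than a shifted or twisted relation. Once this is pinned down, the splitting of $N_{\mathrm{Dol}}$, the Poincaré reducibility, and the descent to $\overline K$ are all essentially standard.
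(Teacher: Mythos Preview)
The paper does not supply its own proof of this lemma: it is stated with a \qed\ and attributed to \cite[Lemma~2.1]{Sim93b}. So there is nothing in the paper to compare against beyond the bare citation.

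Your proposal is correct and is essentially Simpson's original argument. The splitting $N_{\mathrm{Dol}}=T\times V$ holds, though your ``pullback in $\mathrm{Ext}^1$'' justification is a bit loose; the clean way is to note that $\ker(p_2|_{N_{\mathrm{Dol}}})$ is a closed subgroup of $\mathrm{Pic}^0(X)\times\{0\}$, hence complete, and that any extension of a vector group by a complete group with the total space connected is split (the Chevalley affine part maps isomorphically onto the vector quotient and is forced into $\{0\}\times H^0$ since a vector group has no nonconstant map to an abelian variety). After that, your Lie-algebra computation showing $V=\overline{\mathrm{Lie}(T)}$ from $i\mathfrak n_{\mathrm B}=\mathfrak n_{\mathrm B}$, the identification $T=(A/P)^*$ via duality and Poincar\'e reducibility, and the descent of $P$ to $\overline{K}$ via rigidity of $\mathrm{End}(A)$ are all correct and match Simpson's line of reasoning.
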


 \subsection{Absolutely constructible subsets (I)}
 In this section we will recall some facts on \emph{absolutely constructible subsets} (resp. \emph{absolutely closed subsets}) introduced by Simpson in \cite[\S 6]{Sim93b} and later developed by Budur-Wang \cite{BW20}.  
 
 Let $X$ be a smooth projective variety defined over a subfield $\ell$ of $\bC$. Let $G$ be a reductive group defined over $\bar{\bQ}$. The representation scheme of $\pi_1(X)$ is an affine $\bar{\bQ}$-algebraic scheme described by its functor of points:
 $$R(X,G)(\spec A):=\Hom (\pi_1(X), G(A))$$
 for any $\bar{\bQ}$-algebra $A$. The character scheme of $\pi_1(X)$ with values in $G$ is the finite type 
 affine scheme 
  $
 M_{\rm B}(X,G):=R(X,G)\sslash G,$ where ``$\sslash$\fg denotes the GIT quotient.    
 If $G={\rm GL}_N$, we simply write $M_{\rm B}(X,N):=M_{\rm B}(X,{\rm GL}_N)$.
 Simpson constructed a quasi-projective scheme  $M_{\rm DR}(X,G)$, and $M_{\rm Dol}(X,G)$ over $\ell$. 
 The $\bC$-points of $M_{\rm DR}(X,G)$ are in bijection with the equivalence classes of flat $G$-connections with reductive monodromy. 
There are natural isomorphisms
 $$
 \psi: M_{\rm B}(X, G)(\bC)\to M_{\rm DR}(X,G)(\bC)
 $$
such that $\psi$ is an isomorphism  of complex analytic spaces. 
 For each automorphism $\sigma \in \operatorname{Aut}(\mathbb{C} / \mathbb{Q})$, let $X^\sigma:=X\times_\sigma\bC$ be the   conjugate variety of $X$, which is also smooth projective. There is a natural map
 $$
 \begin{gathered}
 	p_\sigma: M_{\mathrm{DR}}(X,G) \rightarrow M_{\mathrm{DR}}\left(X^\sigma,G^\sigma\right). 
 \end{gathered}
 $$

 Let us now introduce the following definition of absolutely  constructible subsets.
 \begin{dfn}[Absolutely constructible subset]\label{def:ac}
 	A subset $\kC \subset {M}_{\rm B}(X,G)(\bC)$ is an \emph{absolutely constructible subset} (resp. \emph{absolutely closed subset}) if the following conditions are satisfied. 
 	\begin{thmlist} 
 		\item $\kC$ is the   a $\bar{\bQ}$-constructible (resp. $\bar{\bQ}$-closed) subset of $M_{\rm B}(X,G)$.
 		\item For each $\sigma \in \operatorname{Aut}(\mathbb{C} / \mathbb{Q})$, there exists a $\bar{\bQ}$-constructible (resp. $\bar{\bQ}$-closed) set $\kC^\sigma \subset M_{\rm B}\left(X^\sigma, G^\sigma\right)(\bC)$  such that  $\psi^{-1} \circ p_\sigma \circ \psi(\kC)=\kC^\sigma$.
 		\item \label{item:R*}$\kC(\bC)$ is preserved by the action of $\bR^*$ defined in \cref{sec:C*action}. 
 	\end{thmlist}   
 \end{dfn}
 \begin{rem} 
 	\begin{thmlist}
\item Note that this definition is significantly weaker than the notion of absolutely constructible sets defined in \cite{Sim93b,Eys04}, as it does not consider moduli spaces of semistable Higgs bundles with trivial characteristic numbers, and it does not require that $\psi(\kC)$ is $\bar{\bQ}$-constructible in ${M}_{\rm DR}(X,G)(\bC)$. This revised definition allows for a broader range of applications, including quasi-projective varieties. In \cite{Sim93b,Eys04}, the preservation of $\kC(\mathbb{C})$ under the action of $\mathbb{C}^*$ is a necessary condition. It is important to emphasize that our definition only requires $\mathbb{R}^*$-invariance, which is weaker than $\mathbb{C}^*$-invariance. Our definition corresponds to the \emph{absolutely constructible subset} as defined in \cite[Definition 6.3.1]{BW20}, with the additional condition that $\kC(\mathbb{C})$ is preserved by the action of $\mathbb{R}^*$. 
\item It is crucial to emphasize the flexibility of our weaker definition of absolutely constructible subsets, as defined in \Cref{def:ac}, which allows for its extension to singular varieties, as will be shown in \cref{def:ac2}. This extension plays a crucial role in establishing  the reductive Shafarevich conjecture for projective normal varieties, thereby generalizing the previous findings by Eyssidieux \cite{Eys04}.
\end{thmlist}
 \end{rem}

 By \cite[Theorem 9.1.2.(2) \& Proposition 7.4.4.(2)]{BW20} we have the following result, which generalizes \cite{Sim93b}.
 \begin{thm}[Budur-Wang, Simpson]\label{thm:S1}
 	Let $X$ be a smooth projective variety over $\mathbb{C}$.  An \emph{absolute locally closed} subset in $M_{\rm B}( {X}, 1)(\bC)$ is the complement of a finite union of torsion translates of triple tori, within
 	another finite union of torsion translates of triple tori.    An absolute constructible subset in  $M_{\rm B}( {X}, 1)(\bC)$ is a finite union of  {absolute locally closed} subset.  \qed 
 \end{thm}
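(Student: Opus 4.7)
The plan is to reduce the theorem to the following core claim: every irreducible component of an absolutely closed subset of $M_{\rm B}(X,1)(\bC)$ is a torsion translate of a triple torus. Granted this, the structure of absolutely locally closed subsets follows directly from the definition as the difference of two absolutely closed subsets, and the structure of absolutely constructible subsets then follows from the standard decomposition of constructible subsets of a Noetherian topological space into finite unions of locally closed pieces, combined with the observation that each such piece can be taken to inherit the Galois and $\bR^*$-stability required to be absolutely locally closed.

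To prove the core claim, I would start with an irreducible absolutely closed subvariety $Z\subset M_{\rm B}(X,1)(\bC)$ and consider the translate $Z':=v^{-1}\cdot Z$ by a suitable point $v\in Z$. The strategy is to show that, for a good choice of $v$, $Z'$ is a closed connected subgroup of $M=M_{\rm B}(X,1)(\bC)$ satisfying the hypotheses of Simpson's \cref{lem:triple}, namely that it is complex analytic on the Betti side and algebraic on the Dolbeault side. Complex analyticity on the Betti side is immediate since $Z$ is $\bar{\bQ}$-closed inside the algebraic group $M_{\rm B}(X,1)$. The algebraicity of $Z'_{\rm Dol}$ inside $M_{\rm Dol}(X,1)={\rm Pic}^\tau(X)\times H^0(X,\Omega^1_X)$ is where condition (ii) of \cref{def:ac} intervenes: for every $\sigma\in{\rm Aut}(\bC/\bQ)$ the Galois conjugate must again be $\bar{\bQ}$-closed on the Betti side of $X^\sigma$, and this Galois descent rigidifies matters enough to force $Z'_{\rm Dol}$ to be algebraic. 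The $\bR^*$-invariance from \cref{item:R*} then ensures that the Higgs-field component of $Z'_{\rm Dol}$ is a $\bC$-linear (and not merely $\bR$-linear) subspace of $H^0(X,\Omega^1_X)$, completing the verification of Simpson's hypotheses.

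Once \cref{lem:triple} is applied, $Z'$ coincides with the image of $M(A/P)\to M(X)$ for some abelian subvariety $P\subseteq A$, hence is a triple torus. To show that $v$ can in fact be chosen to be a torsion point, I would use Galois invariance a second time: the set of points of $Z$ with finite Galois orbit is Zariski dense in $Z$ because $Z$ itself is Galois-stable, and combining this with the characterisation of torsion points in $M_{\rm B}(X,1)={\rm Hom}(\pi_1(X),\bC^*)$ as precisely those defined over $\bar{\bQ}$ produces a torsion point in $Z$. The main obstacle I foresee is the algebraicity step on the Dolbeault side: the passage from real analytic to $\bC$-algebraic subgroups is genuinely subtle, since our weakened axiom \cref{item:R*} only supplies $\bR^*$-invariance rather than the full $\bC^*$-invariance of \cite{Sim93b,Eys04}. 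Making this step work requires carefully combining the Galois-descent condition (ii) of \cref{def:ac} with the $\bR^*$-invariance, which is precisely what is accomplished in \cite[Theorem 9.1.2(2) and Proposition 7.4.4(2)]{BW20}, on which we ultimately rely.
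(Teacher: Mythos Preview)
The paper does not prove this theorem at all: the sentence immediately before the statement says it follows from \cite[Theorem 9.1.2.(2) \& Proposition 7.4.4.(2)]{BW20}, and the \qed\ after the statement signals that the result is quoted, not established. Your proposal is therefore not to be compared against anything in the paper itself; it is a sketch of the argument in the cited references, and you yourself concede in the last sentence that the decisive step is the one carried out in \cite{BW20}.

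That said, one step in your outline is wrong as written. You claim that torsion points of $M_{\rm B}(X,1)=\mathrm{Hom}(\pi_1(X),\bC^*)$ are ``precisely those defined over $\bar{\bQ}$'', and use this to produce a torsion point in $Z$. This is false: the constant representation with value $2\in\bar{\bQ}^*$ is a $\bar{\bQ}$-point that is not torsion. Having finite Galois orbit under $\mathrm{Aut}(\bC/\bQ)$ only pins down $\bar{\bQ}$-points, not roots of unity. In Simpson's actual argument the torsion point is obtained differently: once one knows that $v^{-1}Z$ is a triple torus $N$ (for \emph{any} $v\in Z$), the torsion translates of $N$ foliate the ambient group and one argues via the algebraicity of $Z_{\rm Dol}$ over $\bar K$ together with the structure of ${\rm Pic}^\tau$ that $Z$ must be one of the torsion leaves. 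Your Galois-density shortcut does not give this.
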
 

Absolute constructible subsets are preserved by the following operations:
 \begin{thm}[Simpson]\label{thm:S2}
 	Let $f:Z\to X$ be  a morphism   between   smooth projective varieties over $\bC$ and let $g: G\to G'$ be a  morphism of reductive groups over $\bar{\bQ}$.   Consider the natural map $i: M_{\rm B}(X,G)\to M_{\rm B}(X,G')$ and $j: M_{\rm B}(X, G)\to M_{\rm B}(Z, G)$. Then for any absolutely constructible subsets $\kC\subset M_{\rm B}(X,G)(\bC)$ and $\kC'\subset M_{\rm B}(X,G')(\bC)$,   we have $i(\kC)$,  $i^{-1}(\kC' )$ and $j(\kC)$  are all absolutely  constructible. \qed
 \end{thm}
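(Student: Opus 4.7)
The plan is to verify each of the three defining conditions in \cref{def:ac} separately, for each of the three operations $i(\kC)$, $i^{-1}(\kC')$, and $j(\kC)$. The key observation is that both $i$ and $j$ are functorial constructions that commute with the three structures appearing in the definition of absolute constructibility: algebraicity over $\bar{\bQ}$, Galois conjugation, and the $\bR^*$-action coming from non-abelian Hodge theory. So the strategy reduces to checking this compatibility in each case; the verification for $i^{-1}$ is formally dual to that for images and I will treat it alongside.

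For the first condition, I would note that $M_{\rm B}(X,G)$, $M_{\rm B}(Z,G)$ and $M_{\rm B}(X,G')$ are all affine schemes of finite type over $\bar{\bQ}$, since $G$ and $G'$ are $\bar{\bQ}$-reductive groups and the character scheme only depends on the finitely presented fundamental group and the target group scheme. The map $i$ is induced by $g$, which is a $\bar{\bQ}$-morphism, and $j$ is induced by the $\bar{\bQ}$-morphism of representation schemes coming from $f_*:\pi_1(Z)\to\pi_1(X)$. Chevalley's theorem then gives that $i(\kC)$ and $j(\kC)$ are $\bar{\bQ}$-constructible, while $i^{-1}(\kC')$ is $\bar{\bQ}$-constructible (or $\bar{\bQ}$-closed when $\kC'$ is) by pullback.

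For the second condition, for any $\sigma\in\text{Aut}(\bC/\bQ)$ I would observe that, since $g$ is defined over $\bar{\bQ}$, the induced map $i$ fits into a commutative square with the Galois conjugation maps $p_\sigma$ on the De Rham side (and hence, via $\psi$, on the Betti side); thus $(i(\kC))^\sigma=i(\kC^\sigma)$, and similarly for $i^{-1}$. For $j$, applying $\sigma$ transforms the complex morphism $f:Z\to X$ into $f^\sigma:Z^\sigma\to X^\sigma$, and Simpson's functorial construction of $M_{\rm DR}$ gives a commutative diagram relating $p_\sigma$ on the two sides to the pullback maps induced by $f$ and $f^\sigma$. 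The desired identity $(j(\kC))^\sigma=j^\sigma(\kC^\sigma)$ follows.

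For the third condition, the $\bR^*$-action on $M_{\rm B}(\bC)$ is defined via the NAH real analytic isomorphism with $M_{\rm Dol}$ and the standard scaling $t\cdot(E,\theta)=(E,t\theta)$ of Higgs bundles. The map $i$ coming from $g:G\to G'$ commutes with this scaling, since extending the structure group of a Higgs bundle does not affect the scalar applied to the Higgs field. Likewise, $j$ commutes with the scaling because the pullback along $f$ of a Higgs bundle $(E,\theta)$ is $(f^*E, f^*\theta)$, and $f^*$ is $\bC$-linear on $\theta$; hence $i^{-1}$ is automatically $\bR^*$-equivariant as well. The main obstacle I expect is the technical verification of functoriality of non-abelian Hodge theory on character varieties themselves (not just at the level of individual bundles), in particular for the Galois compatibility of $j$ where one has to track the comparison $\psi$ through conjugation of both $X$ and $Z$; once this bookkeeping is in place, the three conditions are straightforward to check in parallel for all three operations.
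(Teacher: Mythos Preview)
The paper does not give its own proof of this statement: it is stated as a result due to Simpson and closed with a \qed\ symbol immediately after the statement, with no argument provided. So there is nothing in the paper to compare your proposal against directly.

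That said, your outline is the natural way one would verify the result under the paper's \cref{def:ac}: check $\bar{\bQ}$-constructibility via Chevalley, check the Galois-conjugation condition by functoriality of $M_{\rm DR}$ and the Riemann--Hilbert comparison $\psi$, and check $\bR^*$-invariance via the Dolbeault realization. One point worth flagging: for smooth projective $X$ and $Z$ the compatibility of the $\bR^*$-action with pullback $j$ is indeed straightforward from the Higgs description, but note that the paper later devotes \cref{prop:pullcommute} (and the preparatory \cref{prop:functoriality}) to establishing the analogous compatibility carefully in the quasi-projective setting, where it is genuinely delicate; you are implicitly using the (easier) projective case of that statement. Your identification of the Galois bookkeeping for $j$ as the place requiring the most care is accurate, since one must track both the conjugated variety $Z^\sigma$ and the conjugated morphism $f^\sigma$ through Simpson's construction of $M_{\rm DR}$.
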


 \begin{example}\label{example:ac}
 	$M_{\rm B}(X,G)(\bC)$, the isolated point in $M_{\rm B}(X,G)(\bC)$, and the   class of trivial representation  in $M_{\rm B}(X,G)(\bC)$ are all absolutely  constructible. 
 \end{example}

In this paper, absolutely constructible subsets are used to prove the holomorphic convexity of some topological Galois covering of $X$ in \cref{main2,main}. It will not be used in the proof of  \cref{main3}.

\subsection{Katzarkov-Eyssidieux reduction and  canonical currents}\label{sec:KE}
For this subsection, we  refer to the papers \cite[\S 3.3.2]{Eys04} or \cite{CDY22} for a comprehensive and systematic treatment. 

\begin{thm}[Katzarkov, Eyssidieux]\label{thm:KE}
	Let $X$ be a projective normal variety, and let $K$ be a non-archimedean local field. Let $\varrho: \pi_1(X)\to {\rm GL}_N(K)$ be a reductive representation. Then there exists a fibration $s_\varrho:X\to S_\varrho$ to a normal projective space, such that for any subvariety $Z$ of $X$, the following are equivalent:
	\begin{thmlist}
		\item\label{item:KZ1} $\varrho(\Im[\pi_1(Z^{\rm norm})\to \pi_1(X)])$ is bounded;
		\item \label{item:KZ2}  $s_\varrho(Z)$ is a point;
		\item $\varrho(\Im[\pi_1(Z)\to \pi_1(X)])$ is bounded.
	\end{thmlist} 
\end{thm}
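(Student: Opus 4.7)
The plan is to construct $s_\varrho$ through the non-abelian Hodge theoretic machinery of Gromov-Schoen harmonic maps to the Bruhat-Tits building $\Delta := \Delta(\mathrm{GL}_N(K))$, and then descend from a log resolution. First I would let $\mu: \widehat{X} \to X$ be a log resolution and set $\widehat{\varrho} := \mu^*\varrho$; by \cref{thm:reductive} the pullback $\widehat{\varrho}$ remains reductive, which precludes the existence of a $\widehat{\varrho}(\pi_1(\widehat{X}))$-fixed point at infinity of $\Delta$. Gromov-Schoen then supplies a $\widehat{\varrho}$-equivariant, locally Lipschitz, pluriharmonic-away-from-a-codimension-two-set map $u: \widetilde{\widehat{X}} \to \Delta$.

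Next, via the spectral covering construction of Katzarkov-Eyssidieux \cite{Eys04}, the differential $du$ produces on (a blow-up of) $\widehat{X}$ an algebraic foliation $\mathcal{F}$ whose leaves are the subsets along which $u$ is locally constant; integrating $\mathcal{F}$ yields an algebraic fibration $\widehat{s}: \widehat{X} \to \widehat{S}$ onto a normal projective variety. Since $\mu$-exceptional components $E$ satisfy $\mathrm{Im}[\pi_1(E) \to \pi_1(X)] = 1$ (every loop in a fiber of $\mu$ becomes null-homotopic at its image point), the representation $\widehat{\varrho}|_{\pi_1(E)}$ is trivial and $E$ is contained in a fiber of $\widehat{s}$; combined with normality of $X$ and a Stein factorization argument, $\widehat{s}$ then descends to the desired algebraic fibration $s_\varrho: X \to S_\varrho$ with normal projective target.

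The three-way equivalence then follows from standard non-archimedean harmonic-map principles. For (i)$\Leftrightarrow$(iii), the normalization $Z^{\mathrm{norm}} \to Z$ being finite yields a finite-index inclusion $\mathrm{Im}[\pi_1(Z^{\mathrm{norm}}) \to \pi_1(X)] \subset \mathrm{Im}[\pi_1(Z) \to \pi_1(X)]$, and boundedness in $\mathrm{GL}_N(K)$ is invariant under finite-index passage. For (ii)$\Rightarrow$(i), if $s_\varrho$ contracts $Z$, then after passing to a resolution dominating $\widehat{X}$ the strict transform of $Z^{\mathrm{norm}}$ lies in a fiber of $\widehat{s}$, along which $u$ factors through a constant map; by the Bruhat-Tits fixed-point theorem this forces the corresponding monodromy image to be bounded. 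For the converse, if $\varrho(\mathrm{Im}[\pi_1(Z^{\mathrm{norm}}) \to \pi_1(X)])$ is bounded, uniqueness of equivariant harmonic maps into the CAT(0) target $\Delta$ forces the pullback of $u$ to a desingularization of $Z^{\mathrm{norm}}$ to be constant, so $Z^{\mathrm{norm}}$ is tangent to $\mathcal{F}$ and hence contracted by $s_\varrho$.

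The main obstacle will be the algebraicity and descent step: proving that $\widehat{s}$ is genuinely an algebraic morphism (rather than merely meromorphic) and that the Stein factorization descends cleanly across the singular locus of $X$ to yield a globally defined fibration on the normal but possibly singular variety $X$. This hinges on Eyssidieux's theory of the canonical current associated with $u$, the algebraic integrability of foliations arising from non-archimedean harmonic maps, and careful bookkeeping with the normalization comparison in (i)$\Leftrightarrow$(iii) via finite-index boundedness.
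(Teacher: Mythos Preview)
Your overall strategy---Gromov--Schoen harmonic map to the building, spectral cover and spectral one-forms, descent from a resolution---is the standard one and matches the construction the paper outlines (the paper itself does not prove \cref{thm:KE} but refers to \cite{Kat97,Eys04,CDY22}, and describes $s_\varrho$ as the Stein factorization of the partial Albanese morphism attached to the spectral one-forms on the spectral cover; your ``foliation integration'' is the same object in different language).

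However, your argument for (i)$\Leftrightarrow$(iii) contains a genuine error. You claim that finiteness of $Z^{\rm norm}\to Z$ forces $\Im[\pi_1(Z^{\rm norm})\to\pi_1(X)]$ to have finite index in $\Im[\pi_1(Z)\to\pi_1(X)]$. This is false: take $Z$ a nodal rational curve, so $\pi_1(Z)\simeq\bZ$ while $\pi_1(Z^{\rm norm})=\pi_1(\bP^1)=1$; the image from the normalization is trivial, not of finite index. More generally, for non-normal $Z$ the map $\pi_1(Z^{\rm norm})\to\pi_1(Z)$ need not be surjective or have finite-index image, so boundedness cannot be transferred this way.

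The implication (iii)$\Rightarrow$(i) is trivial (factor through $\pi_1(Z)$). The nontrivial direction (i)$\Rightarrow$(iii) must pass through (ii): once you know $s_\varrho(Z)$ is a point, use that the harmonic map $u$ is \emph{constant} on the connected fiber $F$ of $s_\varrho$ containing $Z$. Then every element of $\varrho(\Im[\pi_1(Z)\to\pi_1(X)])\subset\varrho(\Im[\pi_1(F)\to\pi_1(X)])$ fixes the common image point in $\Delta(\GL_N(K))$, and point-stabilizers in the building are bounded (compact open). This is the argument actually needed, and it does not rely on any finite-index comparison between $\pi_1(Z^{\rm norm})$ and $\pi_1(Z)$. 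A minor related imprecision: in your descent step you invoke $\Im[\pi_1(E)\to\pi_1(X)]=1$ for exceptional \emph{components}, but what you need (and what is true, since $X$ is normal) is that each connected \emph{fiber} of $\mu$ has trivial image in $\pi_1(X)$.
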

We will call the above $s_\varrho$ the (Katzarkov-Eyssidieux) reduction map for $\varrho$.  When $X$ is smooth, the equivalence of \cref{item:KZ1} and \cref{item:KZ2} is proved by Katzarkov \cite{Kat97} and Eyssidieux \cite{Eys04}.  The more general result stated in the theorem can be found in \cite{CDY22}. 
We will outline the construction of certain  \emph{canonical}  positive closed $(1,1)$-currents  over $S_{\varrho}$. As demonstrated in the proof of \cite{Eys04}, we can establish the existence of a finite ramified Galois cover denoted by $\pi: \xsp \to X$ with the Galois group $H$, commonly known as the  \emph{spectral covering of $X$} (cf. \cite[Definition 5.14]{CDY22}).  This cover possesses holomorphic 1-forms ${\eta_1,\ldots,\eta_m} \subset H^0(\xsp, \pi^*\Omega_X^1)$, which can be considered as the $(1,0)$-part of the complexified differential of the $\pi^*\varrho$-equivariant harmonic mapping from $\xsp$ to the Bruhat-Tits building of $G$. These particular 1-forms, referred to as the \emph{spectral one-forms} (cf. \cite[Definition 5.16]{CDY22}),  play a significant role in the proof of \cref{main2,main}. Consequently, the Stein factorization of the \emph{partial Albanese morphism} $a:\xsp\to A$ (cf. \cite[Definition 5.19]{CDY22}) induced by ${\eta_1,\ldots,\eta_m}$  leads to the Katzarkov-Eyssidieux reduction map $s_{\pi^*\varrho}:\xsp\to S_{\pi^*\varrho}$ for $\pi^*\varrho$. Moreover, we have the  following commutative diagram:
\begin{equation*}
	\begin{tikzcd}
		\xsp\arrow[r, "\pi"] \arrow[d, "s_{\pi^*\varrho}"]\arrow[dd,bend right=37, "a"'] &  X\arrow[d, "s_{\varrho}"]\\
		S_{\pi^*\varrho} \arrow[d, "b"]\arrow[r, "\sigma_\pi"] & S_{\varrho}\\
		A &
	\end{tikzcd}
\end{equation*}
Here  $\sigma_\pi$ is also a finite ramified Galois cover with Galois group $H$. 
Note that there are one forms $\{\eta_1',\ldots,\eta_m'\}\subset H^0(A, \Omega_A^1)$ such that $a^*\eta_i'=\eta_i$. Consider the finite morphism $b: S_{\pi^*\varrho}\to A$. Then  we define a positive $(1,1)$-current $T_{\pi^*\varrho}:=b^*\sum_{i=1}^{m}i\eta_i'\wedge\overline{\eta_i}'$ on $S_{\pi^*\varrho}$. Note that $T_{\pi^*\varrho}$ is invariant under the Galois action $H$. Therefore, by \cref{lem:Demailly} there is a positive closed $(1,1)$-current $T_{\varrho}$ defined on $S_\varrho$ with continuous potential such that $\sigma_\pi^*T_{\varrho}=T_{\pi^*\varrho}$.
\begin{dfn}[Canonical current]\label{def:canonical}
	The closed positive $(1,1)$-current 	$T_\varrho$ on $S_\varrho$ is called the \emph{canonical current} of $\varrho$. 
\end{dfn}
More generally, let   $\{\varrho_i:\pi_1(X)\to {\rm GL}_N(K_i)\}_{i=1,\ldots,k}$ be   reductive representations where $K_i$ is a non-archimedean local field. We shall denote by the bolded letter $\bm{\varrho} :=\{\varrho_i\}_{i=1,\ldots,k}$ be such family of representations.   Let  $s_\brho:X\to S_{\brho}$  be the Stein factorization of  $(s_{\varrho_1},\ldots,s_{\varrho_k}):X\to S_{\varrho_1}\times\cdots\times S_{\varrho_k}$ where $s_{\varrho_i}:X\to S_{\varrho_i}$ denotes the reduction map associated with $\varrho_i$ and $p_i: S_{\brho}\to S_{\varrho_i}$ is the induced  morphism.    
$s_\brho:X\to S_{\brho}$  is called the \emph{reduction map} for  the family ${\brho}$ of representations.
\begin{dfn}[Canonical current II]\label{def:canonical2}
	The closed positive $(1,1)$-current 	$T_{\brho}:=\sum_{i=1}^{k}p_i^*T_{\varrho_i}$ on $S_\brho$ is called the canonical current of $\brho$. 
\end{dfn}

\begin{lem}[\protecting{\cite[Lemme 1.4.9 \& 3.3.10]{Eys04}}] \label{lem:functorial}
	Let $f:Z\to X$ be a morphism between  projective normal varieties and let $\brho:=\{\varrho_i:\pi_1(X)\to {\rm GL}_N(K_i)\}_{i=1,\ldots,k}$ be a family of  reductive representations where $K_i$ is a non-archimedean local field.   Then we have
	\begin{equation}\label{eq:functorial}
		\begin{tikzcd}
			Z \arrow[r, "f"] \arrow[d, "s_{f^*\brho}"] &  X\arrow[d, "s_{\brho}"]\\
			S_{f^*\brho} \arrow[r, "\sigma_f"] & S_{\brho}
		\end{tikzcd}
	\end{equation}
	where $\sigma_f$ is a \emph{finite} morphism. Here   $ f^*\brho= \{f^*\varrho_i\}_{i=1,\ldots,k}$ denotes the pull back of the family of   $\brho:=\{\varrho_i:\pi_1(X)\to {\rm GL}_N(K_i)\}_{i=1,\ldots,k}$. Moreover,    the following properties hold:
	 \begin{thmlist}
		\item The local potential of  $T_{\brho}$  is continuous. In particular, for any closed subvariety $W\subset X$,  we have
		$$
		\{T_\brho\}^{\dim W}\cdot W=\int_{W}T_\brho^{\dim W}\geq 0.
		$$
		\item \label{item:descend}$T_{f^*\brho}=\sigma_f^* T_{\bm{\varrho}}$;
		\item \label{item:same positivity} For every  closed subvariety $\Xi\subset S_{f^*\brho}$,  $\{T_\brho\}^{\dim \Xi}\cdot (\sigma_f(\Xi))>0$ if and only if    $\{T_{f^*\brho}\}^{\dim \Xi}\cdot \Xi>0$. \qed
	\end{thmlist}
\end{lem}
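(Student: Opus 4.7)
The plan is to first establish the commutative diagram \eqref{eq:functorial} with $\sigma_f$ finite, and then deduce the three properties of the canonical currents from the explicit construction via the spectral covering recalled in \S\,1.8.

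For the diagram, it suffices to prove that for an irreducible subvariety $W \subset Z$, $W$ is contracted by $s_{f^*\brho}$ if and only if $f(W)$ is contracted by $s_\brho$. Granted this characterization, the fibers of $s_\brho \circ f$ and $s_{f^*\brho}$ have the same irreducible components, so passing through the Stein factorization yields a holomorphic $\sigma_f$ with finite fibers, and hence $\sigma_f$ is finite. By Theorem 1.12 applied on both $X$ and $Z$, the two conditions amount to comparing the boundedness of $\varrho_i$ on $\Im[\pi_1(W^{\rm norm}) \to \pi_1(X)]$ and on $\Im[\pi_1(f(W)^{\rm norm}) \to \pi_1(X)]$. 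The restriction $f|_W$ lifts uniquely to a surjective proper morphism $\tilde f : W^{\rm norm} \to f(W)^{\rm norm}$ between irreducible normal varieties, and its Stein factorization together with the standard fact that a finite surjection of connected normal complex spaces induces a finite-index image on $\pi_1$ shows that $\tilde f_* \pi_1(W^{\rm norm})$ has finite index in $\pi_1(f(W)^{\rm norm})$. A representation is bounded on a group if and only if it is bounded on any finite-index subgroup, so the two boundedness conditions above coincide.

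For property (i), recall that on the spectral Galois cover $\sigma_\pi : S_{\pi^*\varrho_i} \to S_{\varrho_i}$ the current $T_{\pi^*\varrho_i}$ is the pullback $b^* \sum_j i \eta_j' \wedge \overline{\eta_j'}$ of a smooth semipositive $(1,1)$-form from the Albanese torus; Lemma 0.9 ensures that its invariant descent $T_{\varrho_i}$ on $S_{\varrho_i}$ retains a continuous potential, and hence so does $T_\brho = \sum_i p_i^* T_{\varrho_i}$. The Bedford--Taylor product $T_\brho^{\dim W}$ is therefore well defined and yields a nonnegative number on any closed subvariety $W$. Property (ii), $T_{f^*\brho} = \sigma_f^* T_\brho$, is a consequence of the functoriality of the spectral-cover construction and of the uniqueness of the equivariant harmonic map to the Bruhat--Tits building: pulling the spectral cover $X^{\rm sp}_i \to X$ back along $f$ and normalizing produces a cover of $Z$ dominating $Z^{\rm sp}_i$, and the pulled-back spectral one-forms coincide with the spectral one-forms associated to $f^*\varrho_i$. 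Summing the resulting identities $T_{f^*\varrho_i} = \sigma_{f,i}^* T_{\varrho_i}$ over $i$ gives (ii).

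Finally, (iii) is a routine application of the projection formula. Since $\sigma_f$ is finite, $\dim \sigma_f(\Xi) = \dim \Xi$ for every closed subvariety $\Xi \subset S_{f^*\brho}$, and
\[
\{T_{f^*\brho}\}^{\dim \Xi} \cdot \Xi \;=\; \{\sigma_f^* T_\brho\}^{\dim \Xi} \cdot \Xi \;=\; \deg(\sigma_f|_\Xi) \cdot \{T_\brho\}^{\dim \Xi} \cdot \sigma_f(\Xi),
\]
so the two intersection numbers are simultaneously positive. The main obstacle is the finiteness of $\sigma_f$, which rests on the nontrivial equivalence in Theorem 1.12 between boundedness conditions on $\pi_1$ of a subvariety and on $\pi_1$ of its normalization under reductive representations; once this is in hand, the remaining assertions follow formally from the spectral-covering description of the canonical current.
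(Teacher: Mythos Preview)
Your proof is correct and aligns with what the paper does: the paper does not give its own proof of this lemma but cites \cite[Lemme~1.4.9 \& 3.3.10]{Eys04} (hence the \qed\ after the statement), adding only the remark that property (iii) ``is a consequence of the first two assertions,'' which is precisely your projection-formula computation. Your argument for the diagram via Theorem~1.12 and the finite-index image of $\pi_1(W^{\rm norm})$ in $\pi_1(f(W)^{\rm norm})$, and your sketch of (i)--(ii) through the spectral-cover description, are the standard way to unpack Eyssidieux's lemmas and match what the paper implicitly relies on.
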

Note that \cref{item:same positivity} is a consequence of the first two assertions.

The current $T_{\varrho}$ will serve as a lower bound for the complex hessian of plurisubharmonic functions constructed by the method of harmonic mappings. 
\begin{proposition}[\protecting{\cite[Proposition 3.3.6, Lemme 3.3.12]{Eys04}}]\label{prop:Eys}
	Let   $X$ be a projective normal variety and let   $\varrho:\pi_1(X)\to G(K)$ be a Zariski dense representation where $K$ is a non archimedean local field and $G$ is a reductive group.	   Let $x_0 \in \Delta(G)$ be an arbitrary point.  Let $u: \widetilde{X}\to \Delta(G)$ be the associated the harmonic mapping, where $\widetilde{X}$ is  the universal covering of $X$. 
	The function $\phi: \widetilde{X} \rightarrow \mathbb{R}_{\geq 0}$ defined by
	$$
	\phi(x)=2d^2\left(u(x), u(x_0)\right)
	$$
	satisfies the following properties:
	\begin{enumerate}[label=(\alph*)]
		\item   $\phi$ descends to a function $\phi_{\varrho}$ on $\widetilde{X_{\varrho}}=\widetilde{X}  / \operatorname{ker}\left(\varrho\right)$.
		\item $\hess\phi_\varrho\geq (s_\varrho\circ\pi)^*T_\varrho$,  where  we denote by $\pi:\widetilde{X}_\varrho\to X$ the covering map. 
		\item $\phi_{\varrho}$ is locally Lipschitz;
		\item  \label{item: descends} Let $T$ be a normal complex space and $r:\widetilde{X}_\varrho \to T$ a proper holomorphic fibration such that $s_{\varrho} \circ \pi: \widetilde{X}_\varrho \rightarrow S_{\varrho} $ factorizes via a morphism $\nu:T {\rightarrow} S_\varrho$. The function $\phi_{\varrho}  $ is of the form $\phi_{\varrho}=\phi_\varrho^T \circ r$  with $\phi_\varrho^T$ being a continuous  plurisubharmonic function on $T$;  
		\item $\hess\phi^T_\varrho \geq \nu^* T_{\varrho}$.\qed
	\end{enumerate}
\end{proposition}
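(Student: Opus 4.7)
The plan is to work on the universal covering $\widetilde{X}$ first, where we have at our disposal the $\varrho$-equivariant harmonic map $u:\widetilde{X}\to \Delta(G)$ produced by the theory of Gromov–Schoen (valid because $\Delta(G)$ is locally compact and NPC, and $\varrho$ is reductive with Zariski dense image so a harmonic map exists). Then every statement will be obtained by descending along $\widetilde{X}\to \widetilde{X}_\varrho \to S_\varrho$.  Item (a) is immediate: for $\gamma\in\ker\varrho$, equivariance gives $u(\gamma\cdot x)=\varrho(\gamma)\cdot u(x)=u(x)$, so $\phi$ is $\ker\varrho$-invariant.  Item (c) follows from the Gromov–Schoen regularity theorem, which asserts that $u$ is locally Lipschitz; since $d(\cdot,u(x_0))$ is $1$-Lipschitz on the CAT(0) space $\Delta(G)$, the composite $x\mapsto 2d^2(u(x),u(x_0))$ is locally Lipschitz, and this passes to the quotient.

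The main content is (b).  The strategy is to pull back to the spectral covering $\pi:\xsp\to X$ constructed in \S\ref{sec:KE}.  On $\xsp$ the harmonic map $\pi^*u$ becomes nicely structured: the $(1,0)$-part of its complexified differential gives the spectral one-forms $\eta_1,\ldots,\eta_m \in H^0(\xsp,\pi^*\Omega^1_X)$ (by construction of the partial Albanese), and these are precisely the forms used to define $T_{\pi^*\varrho}$.  The Gromov–Schoen/Korevaar–Schoen Bochner-type inequality for harmonic maps to NPC targets yields
\[
\hess\bigl(d^2(u,q)\bigr)\ \geq\ 2\,|\partial u|^2
\]
in the distributional sense, for any base point $q$.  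After pulling back to $\xsp$ the quantity $|\partial u|^2$ matches $\sum_{j} i\,\eta_j\wedge\overline{\eta}_j$ (this is the content of the spectral one-form construction), which by definition equals $s_{\pi^*\varrho}^*T_{\pi^*\varrho}$.  Thus on $\xsp$ we obtain $\hess(\pi^*\phi_\varrho)\geq (s_{\pi^*\varrho})^* T_{\pi^*\varrho}$.  Because $T_\varrho$ was defined so that $\sigma_\pi^*T_\varrho=T_{\pi^*\varrho}$ (using \cref{lem:Demailly} and Galois invariance under $H$) and because $\pi^*\phi_\varrho=\phi_{\pi^*\varrho}$ is likewise Galois invariant, descending the inequality from $\xsp$ back to $X$ (and thus to $\widetilde{X}_\varrho$) gives $\hess\phi_\varrho\geq (s_\varrho\circ\pi)^* T_\varrho$.

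For (d) and (e), the key point is that $\phi_\varrho$ is constant on the fibers of $s_\varrho\circ\pi:\widetilde{X}_\varrho\to S_\varrho$.  Indeed, any connected positive-dimensional fiber corresponds via \cref{thm:KE} to a subvariety $Z$ of $X$ on which $\varrho$ has bounded image; equivariance of $u$ combined with the fact that a bounded group acting on $\Delta(G)$ fixes a point (and hence the corresponding orbit of $u$ is contained in a single apartment where it must be constant by harmonicity and boundedness) shows $u$ is constant on such fibers; hence so is $\phi$.  Given a factorization through $r:\widetilde{X}_\varrho\to T$ with $s_\varrho\circ\pi=\nu\circ r$, the function $\phi_\varrho$ therefore descends to a function $\phi_\varrho^T$ on $T$, continuous because $r$ is a proper fibration and $\phi_\varrho$ is continuous.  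Plurisubharmonicity of $\phi_\varrho^T$ and the inequality $\hess\phi_\varrho^T\geq \nu^*T_\varrho$ in (e) follow from (b) together with the basic fact that if a function on the total space of a proper fibration is plurisubharmonic and constant along fibers, its descent to the base is plurisubharmonic, and currents pull back compatibly (using continuity of potentials).

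The main obstacle will be the precise matching, in (b), of the abstract energy density $|\partial u|^2$ for a harmonic map into a Bruhat–Tits building with the explicit closed positive $(1,1)$-form $\sum i\eta_j\wedge\overline{\eta}_j$ built from the spectral one-forms.  This requires the Gromov–Schoen differential-theoretic analysis of harmonic maps to buildings (pointwise decomposition into apartments, identification of the differential in regular apartments) and the compatibility of this differential with the partial Albanese construction that defines the $\eta_j$; everything else in the statement is then either equivariance, basic NPC geometry, or formal descent under the finite Galois cover $\sigma_\pi$ using \cref{lem:Demailly}.
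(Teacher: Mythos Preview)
The paper does not prove this proposition; it is cited from \cite[Proposition 3.3.6, Lemme 3.3.12]{Eys04} and closed with \qed, so there is no in-paper argument to compare against directly.

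Your treatment of (a), (c), and the outline for (b) are reasonable and follow the standard route: convexity of $d^2(\cdot,q)$ on a CAT(0) space, Gromov--Schoen pluriharmonicity of $u$, and identification of the energy density with the spectral-form current after passage to the spectral cover and descent via \cref{lem:Demailly}. Your acknowledgment that the precise matching of $|\partial u|^2$ with $\sum i\,\eta_j\wedge\overline{\eta}_j$ is the crux of (b) is accurate.

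Your argument for (d), however, has a genuine gap. First, the fibers of $s_\varrho\circ\pi:\widetilde{X}_\varrho\to S_\varrho$ are in general neither compact nor connected (the covering $\pi$ is infinite), so the blanket claim that $\phi_\varrho$ is constant on them needs qualification. Second, and more seriously, the justification you give --- that a bounded group fixes a point and ``hence the corresponding orbit of $u$ is contained in a single apartment where it must be constant by harmonicity and boundedness'' --- does not hold: a fixed point for the monodromy does not confine the image of $u$ to a single apartment, and bounded monodromy alone does not force a harmonic map to be constant on a non-compact cover. One can salvage the conclusion by a different route (use the fixed point $p$, note $d^2(u,p)$ descends to the compact fiber in $X$, apply the maximum principle, and then read off $|\partial u|^2=0$ from the Bochner inequality), but this is not what you wrote.

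The clean argument is in fact simpler and already implicit in your item (b): $\phi_\varrho$ is plurisubharmonic on $\widetilde{X}_\varrho$; the fibers of $r$ are compact and connected; hence $\phi_\varrho$ is constant on each by the maximum principle, and therefore factors through $r$. No analysis of where $u$ itself lands is needed. With (d) obtained this way, your derivation of (e) from (b) by descent is fine.
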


\subsection{The generalization of Katzarkov-Eyssidieux reduction to quasi-projective varieties}
In our work \cite{CDY22} on hyperbolicity of quasi-projective varieties, we extended  \cref{thm:KE} to  quasi-projective varieties. The theorem we established is stated below.
 \begin{thm}[\protecting{\cite[Theorem H]{CDY22}}] \label{thm:KZ}
	Let $X$ be a complex smooth quasi-projective variety, and let $\varrho:\pi_1(X)\to {\rm GL}_N(K)$ be a reductive representation where $K$ is non-archimedean local field.  Then there exists a quasi-projective normal variety $S_\varrho$ and a dominant morphism $s_\varrho:X\to S_\varrho$ with connected general fibers, such that  for any connected Zariski closed  subset $T$ of $X$, the following properties are equivalent:
	\begin{enumerate}[label={\rm (\alph*)}]
		\item \label{item bounded} the image $\varrho({\rm Im}[\pi_1(T)\to \pi_1(X)])$ is a bounded subgroup of $G(K)$.
		\item \label{item normalization} For every irreducible component $T_o$ of $T$, the image $\varrho({\rm Im}[\pi_1(T_o^{\rm norm})\to \pi_1(X)])$ is a bounded subgroup of $G(K)$.
		\item \label{item contraction}The image $s_\varrho(T)$ is a point.\qed
	\end{enumerate} 
\end{thm}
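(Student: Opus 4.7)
The plan is to lift the construction of \cref{sec:KE} to the quasi-projective setting. I would begin by fixing a smooth projective compactification $\bar X$ of $X$ with boundary divisor $D:=\bar X\setminus X$ simple normal crossing. The reductivity of $\varrho$ together with Gromov--Schoen harmonic mapping theory produce a $\varrho$-equivariant pluriharmonic mapping $u:\widetilde X\to \Delta(G)$ to the Bruhat--Tits building of the Zariski closure $G$ of $\varrho(\pi_1(X))$. From $u$ one extracts, exactly as recalled in \cref{sec:KE}, a finite Galois spectral cover $\pi:\xsp\to X$ with Galois group $H$ together with spectral one-forms $\eta_1,\ldots,\eta_m\in H^0(\xsp,\pi^*\Omega_X^1)$.

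The decisive technical step is to show that after passing to a smooth projective compactification $\overline{\xsp}$ of $\xsp$ with simple normal crossing boundary lying over $\bar X$, each $\eta_i$ extends as a logarithmic one-form on $\overline{\xsp}$ with poles only along the preimage of $D$. I expect this to be the principal obstacle: one must analyze the asymptotic behavior of $u$ along loops around the components of $D$ carrying possibly unbounded local $\varrho$-monodromy, which is the non-archimedean counterpart of the tame/pure imaginary boundary analysis behind \cref{moc}. Granted this extension, the $\eta_i$ are pulled back from a semi-abelian variety $A$ via a partial log-Albanese morphism $a:\xsp\to A$.

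Taking the Stein factorization of $a$ in the quasi-projective category yields a dominant morphism $s_{\pi^*\varrho}:\xsp\to S_{\pi^*\varrho}$ onto a normal quasi-projective variety with connected general fibers, together with a finite morphism $b:S_{\pi^*\varrho}\to A$. The canonicity of the spectral data allows the $H$-action on $\xsp$ to descend to $S_{\pi^*\varrho}$, and the geometric quotient $S_\varrho:=S_{\pi^*\varrho}/H$ produces the desired normal quasi-projective variety equipped with a dominant morphism $s_\varrho:X\to S_\varrho$ with connected general fibers.

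It then remains to verify the equivalence of the three conditions. The implication (a)$\Rightarrow$(b) follows since $\pi_1(T_o^{\rm norm})\to \pi_1(X)$ factors through $\pi_1(T)\to \pi_1(X)$ up to conjugation. For (c)$\Rightarrow$(a) one mimics the argument of \cref{thm:KE}: all spectral one-forms restrict to zero on any fiber of $s_\varrho$, which forces the lifted harmonic map to $\Delta(G)$ to be constant on the preimage in $\xsp$ of such a fiber, so the image of $\pi_1(T)$ in $G(K)$ is bounded. The essential direction is (b)$\Rightarrow$(c): for each irreducible component $T_o$ of $T$, take a smooth quasi-projective resolution $\widetilde T_o\to T_o^{\rm norm}$. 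By \cref{thm:reductive} the pullback of $\varrho$ to $\pi_1(\widetilde T_o)$ is reductive, and by (b) it has bounded image, so the associated equivariant harmonic mapping to $\Delta(G)$ is constant. Consequently all pulled-back spectral one-forms on the spectral cover of $\widetilde T_o$ vanish, and the quasi-projective analogue of \cref{lem:functorial} forces $\widetilde T_o$, hence $T_o$, to map to a single point in $S_\varrho$. Connectedness of $T$ and continuity of $s_\varrho$ then collapse the finite union $s_\varrho(T)=\bigcup_{T_o}s_\varrho(T_o)$ of points to a single point.
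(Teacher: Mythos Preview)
The paper does not give a proof of this theorem: the statement is imported verbatim from \cite[Theorem~H]{CDY22} and closed with a \qed.  The only hint the paper provides is the sentence immediately following the statement, which says that the proof ``is built upon the work by Brotbek, Daskalopoulos, Mese, and the first named author \cite{BDDM}, regarding the construction of $\varrho$-equivariant harmonic mappings from the universal covering of $X$ to the Bruhat--Tits building $\Delta(G)$ of $G$.''  So there is nothing in the paper to compare your argument against line by line.

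That said, your outline matches the strategy the paper alludes to, with one substantive inaccuracy.  You invoke ``Gromov--Schoen harmonic mapping theory'' to produce the equivariant pluriharmonic map $u:\widetilde X\to\Delta(G)$.  Gromov--Schoen \cite{GS92} treats the case where the domain is compact; it does not directly furnish the harmonic map when $X$ is a non-compact quasi-projective variety, and the boundary asymptotics you correctly flag as ``the principal obstacle'' are precisely what is missing from their theory.  The paper is explicit that this gap is filled by \cite{BDDM}, and that is the genuine new input underlying \cref{thm:KZ}.  Once the existence of $u$ with controlled growth at infinity is granted, the rest of your plan --- extension of the spectral one-forms as logarithmic forms, the (log-)partial Albanese, Stein factorization, descent through the Galois group $H$, and the verification of (a)$\Leftrightarrow$(b)$\Leftrightarrow$(c) via vanishing of spectral forms and functoriality as in \cref{lem:functorial} --- is the natural quasi-projective analogue of the Katzarkov--Eyssidieux construction recalled in \cref{sec:KE}, and is consistent with what \cite{CDY22} actually does.
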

This result plays a crucial role in the proof of \cref{main3}.    Its proof is built upon the   work by Brotbek, Daskalopoulos, Mese, and the first named author \cite{BDDM}, regarding the construction of $\varrho$-equivariant harmonic mappings from the universal covering of $X$ to the Bruhat-Tits building $\Delta(G)$ of $G$.

 \subsection{Simultaneous Sten factorization} 

 \begin{lem}\label{lem:simultaneous}
 	Let $V$ be a quasi-projective normal variety and let $(f_{\lambda}:V\to S_{\lambda})_{\lambda\in\Lambda}$ be a family of morphisms into quasi-projective varieties $S_{\lambda}$.
 	Then there exist a quasi-projective normal
 	 variety $S_{\infty}$ and a morphism $f_{\infty}:V\to S_{\infty}$ such that 
 	\begin{itemize}
 	\item
 	$f_{\infty}$ is dominant and has connected general fibers,
 	\item for every subvariety $Z\subset V$, $f_{\infty}(Z)$ is a point if and only if $f_{\lambda}(Z)$ is a point for every $\lambda\in \Lambda$, 
	and
 		\item
 		there exist $\lambda_1,\ldots,\lambda_n\in\Lambda$ such that $f_{\infty}:V\to S_{\infty}$ is the quasi-Stein factorization of $(f_1,\ldots,f_n):V\to S_{\lambda_1}\times\cdots S_{\lambda_n}$.
 	\end{itemize}
 \end{lem}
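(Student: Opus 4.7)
The plan is to extract a finite subfamily out of $\{f_\lambda\}_{\lambda\in\Lambda}$ via a Noetherianity argument applied to equivalence relations on $V\times V$, and then take the quasi-Stein factorization of the resulting product morphism. For each $\lambda\in\Lambda$, the morphism $(f_\lambda,f_\lambda):V\times V\to S_\lambda\times S_\lambda$ is algebraic, and since $S_\lambda$ is quasi-projective (hence separated) the diagonal $\Delta_{S_\lambda}\subset S_\lambda\times S_\lambda$ is Zariski closed. Consequently
\[
R_\lambda:=(f_\lambda,f_\lambda)^{-1}(\Delta_{S_\lambda})=\{(x,x')\in V\times V\mid f_\lambda(x)=f_\lambda(x')\}
\]
is a Zariski closed subset of $V\times V$. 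Since $V\times V$ is a Noetherian topological space, the descending filtered intersection $R_\Lambda:=\bigcap_{\lambda\in\Lambda}R_\lambda$ is already attained by some finite subintersection: there exist $\lambda_1,\ldots,\lambda_n\in\Lambda$ with $\bigcap_{i=1}^n R_{\lambda_i}=R_\Lambda$.

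Next I would let $g:=(f_{\lambda_1},\ldots,f_{\lambda_n}):V\to S_{\lambda_1}\times\cdots\times S_{\lambda_n}$ and take $f_\infty:V\to S_\infty$ to be its quasi-Stein factorization, so that $f_\infty$ is dominant with connected general fibers and $S_\infty$ is a quasi-projective normal variety dominating the closure of $g(V)$ by a finite map. It remains to verify the subvariety characterization. Suppose $Z\subset V$ is a (irreducible) closed subvariety and $f_\lambda(Z)$ is a point for every $\lambda\in\Lambda$. Then every pair $(x,x')\in Z\times Z$ lies in $R_\Lambda=\bigcap_{i=1}^n R_{\lambda_i}$, hence $g(x)=g(x')$, so $g(Z)$ reduces to a single point $p$. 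The fiber $g^{-1}(p)$ decomposes under $f_\infty$ into its connected components, and since $Z$ is irreducible (hence connected) it lies in one such component, whence $f_\infty(Z)$ is a point. Conversely, if $f_\infty(Z)$ is a point, then a fortiori $f_{\lambda_i}(Z)$ is a point for $i=1,\ldots,n$, and for any $x,x'\in Z$ and $\lambda\in\Lambda$ the relation $R_{\{\lambda_1,\ldots,\lambda_n\}}=R_\Lambda$ yields $f_\lambda(x)=f_\lambda(x')$, so $f_\lambda(Z)$ is a point as required.

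The only non-routine point in this outline is the construction of the quasi-Stein factorization of $g$ in the quasi-projective category so that $S_\infty$ is simultaneously quasi-projective and normal with $f_\infty$ having connected general fibers; I would carry this out by letting $W\subset S_{\lambda_1}\times\cdots\times S_{\lambda_n}$ be the Zariski closure of $g(V)$, taking $K'$ to be the algebraic closure of the function field $k(W)$ inside $k(V)$, and letting $S_\infty$ be the normalization of $W$ inside $K'$, so that $S_\infty\to W$ is finite and $V\to S_\infty$ has geometrically connected generic fiber, i.e.\ connected general fibers. This is the main (mild) technical step; once $S_\infty$ is in place, the rest of the verification is formal from the equivalence $\bigcap_{i=1}^n R_{\lambda_i}=R_\Lambda$.
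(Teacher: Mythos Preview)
Your proof is correct and follows essentially the same approach as the paper: both arguments pull back the diagonals to obtain Zariski closed equivalence relations $R_\lambda\subset V\times V$, use Noetherianity of $V\times V$ to reduce to a finite subfamily $\lambda_1,\dots,\lambda_n$, take the quasi-Stein factorization of the product map, and verify the subvariety criterion via the observation that $f_\lambda(Z)$ is a point if and only if $Z\times Z\subset R_\lambda$. The only minor imprecision is your phrase ``decomposes under $f_\infty$ into its connected components'': the cleaner way to say this (implicit in the paper) is that since $S_\infty\to W$ is finite, $f_\infty(Z)$ is a connected subset of a finite fiber, hence a point.
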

 
\begin{proof}
 We define $E_{\lambda}$ by 
 $$E_{\lambda}=\{(x,x')\in V\times V; f_{\lambda}(x)=f_{\lambda}(x')\}.$$
 	Then $E_{\lambda}\subset V\times V$ is a Zariski closed set. 
 	Indeed, $E_{\lambda}=(f_{\lambda},f_{\lambda})^{-1}(\Delta_{\lambda})$, where $(f_{\lambda},f_{\lambda}):V\times V\to
 	S_{\lambda}\times S_{\lambda}$ is the morphism defined by $(f_{\lambda},f_{\lambda})(x,x')=(f_{\lambda}(x),f_{\lambda}(x'))$ and $\Delta_{\lambda}\subset S_{\lambda}\times S_{\lambda}$ is the diagonal.
By Noetherian property, we may take a finite subset $\Lambda'\subset \Lambda$ such that 
 \begin{equation}\label{eqn:20231025}
 \cap_{\lambda\in\Lambda'}E_{\lambda}=\cap_{\lambda\in\Lambda}E_{\lambda}.
 \end{equation}
 Let $f_{\infty}:V\to S_{\infty}$ be the quasi-Stein factorization of the map 
 $(f_{\lambda})_{\lambda\in\Lambda'}:V\to
 \Pi_{\lambda\in\Lambda'}S_{\lambda}$.
 Then $S_{\infty}$ is normal, and $f_{\infty}$ is dominant and has connected general fibers.
 For a closed subvariety $Z\subset V$, we have
 \begin{equation*}
 \text{
 $f_{\infty}(Z)$ is a point 
 $\Longleftrightarrow$
  $f_{\lambda}(Z)$ is a point for all $\lambda\in\Lambda'$.
 }
 \end{equation*}
 By the definition of $E_{\lambda}$, we note that 
  $f_{\lambda}(Z)$ is a point if and only if
  $Z\times Z\subset E_{\lambda}$.
  Hence $f_{\lambda}(Z)$ is a point for all $\lambda\in\Lambda'$ if and only if $Z\times Z\subset  \cap_{\lambda\in\Lambda'}E_{\lambda}$.
  Thus by \eqref{eqn:20231025}, we have
  \begin{equation*}
  \begin{split}
 \text{$f_{\infty}(Z)$ is a point}
 &\Longleftrightarrow
  \text{$Z\times Z\subset  \cap_{\lambda\in\Lambda}E_{\lambda}$}\\
  &\Longleftrightarrow
  \text{$f_{\lambda}(Z)$ is a point for all $\lambda\in\Lambda$
  }.
  \end{split}
 \end{equation*}
 The proof is completed.
 \end{proof}

 We also need the following generalized Stein factorization proved by Henri Cartan in \cite[Theorem 3]{Car60}. 
 \begin{thm}\label{lem:Stein}
 	Let $X, S$ be complex spaces and $f: X \rightarrow S$ be a morphism. Suppose a connected component $F$ of a fibre of $f$ is compact. Then, $F$ has an open neighborood $V$ such that $f(V)$ is a locally closed analytic subvariety $S$ and $V \rightarrow f(V)$ is proper.
 	
 	Suppose furthermore that $X$ is normal and that every connected component $F$ of a fibre of $f$ is compact. The set $Y$ of connected components of fibres of $f$ can be endowed with the structure of a normal complex space such that $f$ factors through  the natural map $e: X \rightarrow Y$ which is a proper holomorphic fibration.   \qed
 \end{thm}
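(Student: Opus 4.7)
The plan is to handle the two assertions in turn, with the first one used as the local input for the second.

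For the first assertion, let $F$ be a compact connected component of $f^{-1}(s)$. The key topological fact is that in a locally compact Hausdorff space the connected component of a point in a compact fibre admits a fundamental system of relatively compact neighborhoods whose boundary is disjoint from the fibre. Concretely, since $F$ is compact and $f^{-1}(s)$ is closed in $X$, one can cover $F$ by finitely many relatively compact open sets and, using that $F$ is a connected component of $f^{-1}(s)$, refine to obtain an open $U \supset F$ with $\overline{U}$ compact and $\partial U \cap f^{-1}(s) = \emptyset$. Then $f(\partial U)$ is a closed subset of $S$ not containing $s$, so we may pick an open neighborhood $W$ of $s$ in $S$ with $W \cap f(\partial U) = \emptyset$, and set $V := U \cap f^{-1}(W)$. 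By construction $f|_V \colon V \to W$ is proper, so Remmert's proper mapping theorem gives that $f(V)$ is a locally closed analytic subset of $S$ and $V \to f(V)$ is proper.

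For the second assertion, perform the classical Stein factorization locally: around each compact connected component $F$ of any fibre, take $V$ as above and factor $V \to f(V)$ as $V \xrightarrow{e_V} Y_V \xrightarrow{g_V} f(V)$, where $Y_V := \spec_{f(V)}( (f|_V)_* \mathcal{O}_V)$. Because $V \to f(V)$ is proper and $V$ (being open in $X$) is normal, $(f|_V)_*\mathcal{O}_V$ is coherent, so $Y_V$ is a normal complex space, $e_V$ is a proper holomorphic fibration with connected fibres, and $g_V$ is finite. The points of $Y_V$ are precisely the connected components of the fibres of $f|_V$, so $Y_V$ sits inside the set-theoretic $Y$ as an open subset. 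To glue these charts one verifies that for two such charts $Y_V, Y_{V'}$ the set-theoretic identification on the overlap is induced by an isomorphism of complex spaces, which follows from the universal property of the Stein factorization applied to $f|_{V \cap V'}$: both $Y_V$ and $Y_{V'}$ restrict to this common factorization.

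The main obstacle, and the delicate point in Cartan's argument, is checking Hausdorffness of the resulting $Y$ and that the transition maps between charts $Y_V$ are well-defined holomorphic maps between open sets. For Hausdorffness one uses that two distinct compact connected components of the same fibre of $f$ can be separated by disjoint neighborhoods of the form above (by the first assertion applied to each), which ensures that two distinct points of $Y$ have disjoint neighborhoods in any chart containing both. Once $Y$ is constructed as a Hausdorff complex space with a natural map $e \colon X \to Y$ whose fibres are the connected components of the fibres of $f$, normality is automatic from the local description $Y_V = \spec_{f(V)} (f|_V)_*\mathcal{O}_V$ with $X$ normal, and $e$ is a proper holomorphic fibration since the properness and connectedness can be checked on the charts $V \to Y_V$. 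Finally $f$ factors through $e$ by construction.
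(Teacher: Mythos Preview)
The paper does not give its own proof of this statement: it is quoted as a known result of Cartan \cite[Theorem~3]{Car60}, stated with a terminal \qed\ and no argument. So there is no ``paper's proof'' to compare against; your sketch is essentially an outline of Cartan's classical argument.

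Your treatment of the first assertion is correct and standard: the separation $\partial U\cap f^{-1}(s)=\varnothing$ for a suitable relatively compact $U\supset F$ uses that in a compact Hausdorff space connected components are intersections of clopen sets, and then properness of $f|_V$ with $V=U\cap f^{-1}(W)$ follows exactly as you say, giving Remmert's theorem as the finishing step.

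For the second assertion your strategy (local Stein factorization on each $V$, then glue) is the right one, and you correctly flag Hausdorffness as the delicate point. One step deserves more care than you give it: when you write that on overlaps ``both $Y_V$ and $Y_{V'}$ restrict to this common factorization'' via the universal property applied to $f|_{V\cap V'}$, note that $f|_{V\cap V'}$ need not be proper, so you cannot invoke Stein factorization directly on $V\cap V'$. What does work is the observation (implicit in your construction of $V$) that for $t\in W$ every connected component of $f^{-1}(t)$ meeting $V$ lies entirely in $V$; hence the fibres of $e_V$ are intrinsic (they are connected components of fibres of $f$, not just of $f|_V$). Using this, one shows that $e_V^{-1}(Y_V\cap Y_{V'})$ is open in $V$ and saturated for $e_V$, so $Y_V\cap Y_{V'}$ is open in $Y_V$, and the transition map is forced by the equality of fibres. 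This is the actual content of Cartan's gluing argument, and filling it in would complete your proof.
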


 \section{Some non-abelian Hodge theories}  
In this section, we will build upon the previous work of Simpson \cite{Sim92}, Iyer-Simpson \cite{IS07, IS08}, and Mochizuki \cite{Moc07, Moc06}   to further develop non-abelian Hodge theories over quasi-projective varieties. We begin by establishing the functoriality of pullback for regular filtered Higgs bundles (cf.  \Cref{prop:functoriality}). Then we clarify the $\bC^*$ and $\bR^*$-action on the character varieties of smooth quasi-projective varieties, following   \cite{Moc06}.  Lastly, we prove \cref{prop:pullcommute}, which essentially states that the natural morphisms of character varieties induced by algebraic morphisms commute with the $\bC^*$-action.  This section's significance lies in its essential role in establishing   \Cref{lem:C*,prop:nonrigid}, which serves as a critical cornerstone of the whole paper. 
\subsection{Regular filtered Higgs bundles}  \label{sec:prolong}
In this subsection, we recall the notions of regular filtered Higgs bundles (or parabolic Higgs bundles). For more details refer to \cite{Moc06}. Let $\overline{X}$ be a complex manifold with a reduced simple normal crossing divisor $D=\sum_{i=1}^{\ell}D_i$, and let $X=\overline{X}\backslash D$ be the complement of $D$. We denote the inclusion map of $X$ into $\overline{X}$ by $j$.

\begin{dfn}\label{dfn:parab-higgs}
	A \emph{regular filtered Higgs bundle} $(\bm{E}_*,\theta)$ on $(\overline{X}, D)$ is holomorphic vector bundle $E$ on $X$, together with an $\mathbb{R}^\ell$-indexed
	filtration ${}_{ \bm{a}}E$ (so-called {\em parabolic structure}) by locally free subsheaves of $j_*E$ such that
	\begin{enumerate}[leftmargin=0.7cm]
		\item $\bm{a}\in \mathbb{R}^\ell$ and ${}_{\bm{a}}E|_X=E$. 
		\item  For $1\leq i\leq \ell$, ${}_{\bm{a}+\bm{1}_i}E = {}_{\bm{a}}E\otimes \cO_X(D_i)$, where $\bm{1}_i=(0,\ldots, 0, 1, 0, \ldots, 0)$ with $1$ in the $i$-th component.
		\item $_{\bm{a}+\bm{\epsilon}}E = {}_{\bm{a}}E$ for any vector $\bm{\epsilon}=(\epsilon, \ldots, \epsilon)$ with $0<\epsilon\ll 1$.
		\item  The set of {\em weights} \{$\bm{a}$\ |\  $_{\bm{a}}E/_{\bm{a}-\bm{\epsilon}}E\not= 0$  for any vector $\bm{\epsilon}=(\epsilon, \ldots, \epsilon)$ with $0<\epsilon\ll 1$\}  is  discrete in $\mathbb{R}^\ell$.
		\item There is a $\cO_{X}$-linear map, so-called Higgs field, 
		$$\theta:E\to \Omega_{X}^1 \otimes E$$
		such that
		$\theta\wedge \theta=0$, 
		and
		\begin{align*} 
			\theta(_{\bm{a}}E)\subseteq \Omega_{\overline{X}}^1(\log D)\otimes {}_{\bm{a}}E.
		\end{align*} 
	\end{enumerate}
\end{dfn}
Denote $_{\bm{0}}E$ by $\diae$, where $\bm{0}=(0, \ldots, 0)$. When disregarding the Higgs field, $\bm{E}_*$ is referred to as a \emph{parabolic bundle}.  

A natural class of regular filtered   Higgs bundles comes from prolongations of tame harmonic bundles. We first   recall some notions in \cite[\S 2.2.1]{Moc07}.  Let $E$ be a holomorphic vector bundle with a smooth hermitian metric $h$ over $X$.

Let $U$ be an open subset of $\overline{X}$ with an admissible coordinate $(U; z_1, \ldots, z_n)$ with respect to $D$. For any section $\sigma\in \Gamma(U\backslash D,E|_{U\backslash D})$, let $|\sigma|_h$ denote the norm function of $\sigma$ with respect to the metric $h$. We denote $|\sigma|_h= \cO(\prod_{i=1}^{\ell}|z_i|^{-b_i})$ if there exists a positive number $C$ such that $|\sigma|_h\leq C\cdot\prod_{i=1}^{\ell}|z_i|^{-b_i}$. For any $\bm{b}\in \bR^\ell$, say $-\mbox{ord}(\sigma)\leq \bm{b}$ means the following:
$$
|\sigma|_h=\cO(\prod_{i=1}^{\ell}|z_i|^{-b_i-\varepsilon})
$$
for any real number  $\varepsilon>0$ and $0<|z_i|\ll1$. For any $\bm{b}$, the sheaf ${}_{\bm{b}} E$ is defined as follows: 
\begin{align}\label{eq:prolongation}
	\Gamma(U, {}_{\bm{b}} E):=\{\sigma\in\Gamma(U\backslash D,E|_{U\backslash D})\mid -\mbox{ord}(\sigma)\leq \bm{b} \}. 
\end{align}
The sheaf ${}_{\bm{b}} E$ is called the prolongment of $E$ by an increasing order $\bm{b}$. In particular, we use the notation ${}^\diamond E$ in the case $\bm{b}=(0,\ldots,0)$.

According to  Simpson \cite[Theorem 2]{Sim90} and Mochizuki \cite[Theorem 8.58]{Moc07}, the above prolongation gives a regular filtered Higgs bundle.
\begin{thm}[Simpson, Mochizuki] \label{thm:SM} Let $\overline{X}$ be a complex manifold and $D$ be a simple normal crossing divisor on $\overline{X}$. If $(E, \theta, h)$ is a tame harmonic bundle on $\overline{X}\backslash D$, then the corresponding filtration $_{\bm{b}}E$ defined above defines a regular filtered Higgs bundle $(\bm{E}_*, \theta)$ on $(\overline{X},D)$.  	\qed
\end{thm}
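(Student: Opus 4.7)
The statement is local on $\overline{X}$, so I would fix a point $p\in D$, an admissible coordinate $(U;z_1,\ldots,z_n)$ centered at $p$, and verify the five axioms of \cref{dfn:parab-higgs} for the sheaves $\{{}_{\bm b}E\}_{\bm b\in\bR^\ell}$ defined by \eqref{eq:prolongation}. Three of the axioms reduce to essentially formal manipulations with the norm-growth definition, while the serious content is local freeness and the discreteness of weights. I would organize the plan as follows.

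First, I would dispose of the easy axioms (1)--(3) and (5). Axiom (1) is immediate because the growth condition is vacuous on compacts of $U\setminus D$. For (2), multiplication by $z_i$ shifts the norm growth along $D_i$ by one, so it exchanges ${}_{\bm a}E$ and ${}_{\bm a+\bm 1_i}E$; and (3) is built into the definition because $-\ord(\sigma)\le \bm b$ already absorbs an arbitrarily small positive $\varepsilon$. For (5), tameness in the sense of \cref{def:tameness} gives the local expression \eqref{eq:local} $\theta=\sum f_j\,d\log z_j+\sum f_k\,dz_k$ with the $f_\bullet$ satisfying bounded-eigenvalue estimates with respect to $h$. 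A standard consequence of Simpson's table of norm estimates for harmonic bundles (e.g.\ $|f_j|_h=O(\prod|z_i|^{-\varepsilon})$ for every $\varepsilon>0$) shows that applying $\theta$ to a section with $-\ord(\sigma)\le \bm b$ produces a section whose norm still satisfies $-\ord\le \bm b$ modulo a $d\log z_j$ or $dz_k$ factor. This gives precisely $\theta({}_{\bm a}E)\subseteq \Omega^1_{\overline X}(\log D)\otimes {}_{\bm a}E$.

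The main obstacle is axiom (1): local freeness of each ${}_{\bm b}E$. This is the heart of the Simpson--Mochizuki theory and cannot be skipped. The plan is to reduce to the normal form for tame harmonic bundles. Following Mochizuki \cite{Moc07}, one decomposes $E$ near $p$ according to the generalized eigenvalues of the residues $\Res_{D_i}\theta$ on ${}^\diamond\! E|_{D_i}$; tameness guarantees these eigenvalues are holomorphic, and a KMS-type argument gives a simultaneous generalized eigen-decomposition, which is compatible with parallel transport along loops in a polydisc neighborhood. On each generalized eigen-block the harmonic metric $h$ is quasi-isometric to a model metric of the form $\prod |z_i|^{-2a_i}\cdot h_0$ where $h_0$ is mutually bounded with a smooth metric on an honest holomorphic extension; this is the content of the asymptotic orbit theorem for tame harmonic bundles. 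From this comparison, ${}_{\bm b}E$ is identified with a direct sum of locally free sheaves obtained by shifting the holomorphic extension by integer twists, and hence is itself locally free of the same rank as $E$.

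Finally, the discreteness of weights (axiom (4)) comes from the same structural picture. The jumps of the filtration $\bm b\mapsto {}_{\bm b}E$ across the locus $\{b_i=a_i\}$ are controlled by the real parts of the eigenvalues of $\Res_{D_i}\theta$, which by tameness are constants along $D_i$ and only finitely many on each connected component of $D_i$. This produces only finitely many jump values of each coordinate $b_i$ modulo $\bZ$, and combined with periodicity from axiom (2), the weight set is discrete in $\bR^\ell$. I expect the local-freeness step to be by far the hardest part, since it requires the full force of the asymptotic analysis of harmonic metrics near the boundary divisor, whereas all other axioms are formal once one has the norm-growth description \eqref{eq:prolongation} and the tame growth of $\theta$.
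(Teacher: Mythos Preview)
The paper does not prove this theorem; it is stated with a \qed and attributed to Simpson \cite[Theorem~2]{Sim90} and Mochizuki \cite[Theorem~8.58]{Moc07}, so there is no in-paper argument to compare against. Your outline is a reasonable summary of what those cited proofs actually do: axioms (1)--(3) of \cref{dfn:parab-higgs} are formal, the log-pole condition (5) follows from the norm estimates on the Higgs field, and the substantial work is the local freeness of each ${}_{\bm b}E$, which needs Mochizuki's asymptotic analysis of the harmonic metric and the construction of an adapted holomorphic frame (precisely the input the paper later quotes in \cref{sec:pullback} via \cite[Proposition~8.70]{Moc07}).

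One conceptual slip worth fixing: in your treatment of axiom (4), the parabolic weights are \emph{not} the real parts of the eigenvalues of $\Res_{D_i}\theta$. The weights record the growth exponents of the harmonic metric $h$---equivalently, they are the numbers $a_{ij}$ appearing in the adapted frame $\tilde v_j=v_j\prod_i|z_i|^{-a_{ij}}$ of \cref{def:adapted}---and are in general independent of the Higgs residues; for a pure imaginary harmonic bundle the residues of $\theta$ are purely imaginary while the parabolic weights are real. Discreteness of the weight set follows instead from the finiteness of the set $\{a_{ij}\}$ produced by the adapted frame, together with the periodicity in axiom (2). Also, you use the label ``axiom (1)'' twice for two different things; local freeness is part of the preamble of \cref{dfn:parab-higgs}, not item (1).
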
 

\subsection{Pullback of parabolic bundles}\label{sec:adapt}
In this subsection, we   introduce the concept of pullback of parabolic bundles. We refer the readers to \cite{IS07,IS08} for a  more systematic treatment. We avoid the language of Deligne-Mumford stacks in \cite{IS07,IS08}.   This subsection is conceptional and we shall make precise computations in next subsection. 

A parabolic line bundle is a parabolic sheaf $F$ such that all the $_{\bm{a}}F$ are line bundles. An important class of examples is obtained as follows: let $L$ be a line bundle on $\overline{X}$, if ${\bm{a}}=(a_1,\ldots,a_\ell)$ is a $\mathbb{R}^\ell$-indexed, then we can define a parabolic line bundle denoted
$
L^{\bm{a}}_*
$ 
by setting
\begin{align}\label{eq:parabolic}
	_{\bm{b}}L^{\bm{a}}:=L\otimes \mathcal{O}_{\overline{X}}\left(\sum_{i=1}^\ell \lfloor a_i+b_i \rfloor D_i\right)
\end{align} 
for any ${\bm{b}}\in \mathbb{R}^\ell$.

\begin{dfn}[Locally abelian parabolic bundle]\label{def:locally abelian}
	A parabolic sheaf $\bm{E}_*$ is a \emph{locally abelian parabolic bundle} if, in a  neighborhood of any point $x \in \overline{X}$ there is an isomorphism between $\bm{E}_*$ and a direct sum of parabolic line bundles. 
\end{dfn} 
Let $f:\overline{Y}\to \overline{X}$ be a    holomorphic map of complex manifolds. Let $D'=\sum_{j=1}^{k}D_j'$ and $D=\sum_{i=1}^{\ell}D_i$ be simple normal crossing divisors on $\overline{Y}$ and $\overline{X}$ respectively.  Assume tht $f^{-1}(D)\subset D'$.  Denote by $n_{ij}={\rm ord}_{D'_j}f^*D_i\in \bZ_{\geq  0}$.     Let   $L$ be a line bundle on $\overline{X}$ and let $ L^{\bm{a}}_*$  be the parabolic line bundle defined in \eqref{eq:parabolic}.  Set \begin{align}\label{eq:af}
	f^*\bm{a}:=(\sum_{i=1}^{\ell}n_{i1}a_i,\ldots,\sum_{i=1}^{\ell}n_{ik}a_i)\in \bR^k.
\end{align} 
Then $f^*(L^{\bm{a}}_*)$ is defined by setting
\begin{align}\label{eq:parabolic2}
	_{\bm{b}}(f^*L)^{f^*\bm{a}}:=f^*L\otimes \mathcal{O}_{\overline{Y}}\left(\sum_{j=1}^k \lfloor \sum_{i=1}^{\ell}n_{ij}a_i+b_j \rfloor D'_j\right)
\end{align} 
for any ${\bm{b}}\in \mathbb{R}^k$. 

Let $\overline{X}$ be a compact complex manifold. Consider a locally abelian parabolic bundle $\bm{E}_*$ defined on $\overline{X}$. We can cover $\overline{X}$ with open subsets $U_1,\ldots,U_m$, such that $\bm{E}_*|_{U_i}$ can be expressed as a direct sum of parabolic line bundles on each $U_i$.

Using this decomposition, we define the pullback $f^*(\bm{E}_*|_{U_i})$ as in \eqref{eq:parabolic2}. It can be verified that $f^*(\bm{E}_*|_{U_i})$ is compatible with $f^*(\bm{E}_*|_{U_j})$ whenever $U_i\cap U_j\neq\varnothing$. This allows us to extend the local pullback to a global level, resulting in the definition of the pullback of a locally abelian parabolic bundle denoted by $f^*\bm{E}_*$. In next section, we will see an explicit description of the pullback of regular filtered Higgs bundles induced by tame harmonic bundles.

\subsection{Functoriality of pullback of regular filtered Higgs bundle} \label{sec:pullback}
We recall some notions in \cite[\S 2.2.2]{Moc07}.  
Let $X$ be a complex manifold, $D$ be a simple normal crossing divisor on $X$, and $E$ be a holomorphic vector bundle on $X\backslash D$ such that  $E|_{X\backslash D}$ is equipped with  a hermitian metric $h$. Let $\bm{v}=(v_1,\ldots,v_r)$ be a smooth frame of $E|_{X\backslash D}$. We obtain the $H(r)$-valued function $H(h,\bm{v})$ defined over $X\backslash D$,whose $(i,j)$-component is given by $h(v_i,v_j)$.  

Let us consider the case $X=\bD^n$, and $D=\sum_{i=1}^{\ell}D_i$ with $D_i=(z_i=0)$. We have the coordinate $(z_1,\ldots,z_n)$. Let  $h$, $E$ and $\bm{v}$  be as above.

\begin{dfn}\label{def:adapted}
	A smooth frame $\bm{v}$ on $X\backslash D$ is called \emph{adapted up to log order}, if  the following inequalities hold over $X\backslash D$:
	$$
	C^{-1}(-\sum_{i=1}^{\ell}\log |z_i|)^{-M}\leq H(h,\bm{v})\leq  C(-\sum_{i=1}^{\ell}\log |z_i|)^{M}
	$$   
	for some positive numbers $M$ and $C$.
\end{dfn}

The goal of this subsection is to establish the following result concerning the functoriality of the pullback of a regular filtered Higgs bundle. This result will play a crucial role in proving \cref{lem:C*}.
\begin{proposition}\label{prop:functoriality}
	Consider a morphism $f:\overline{Y}\to \overline{X}$ of  smooth projective varieties    $\overline{X}$ and $\overline{Y}$. Let   $D$ and $D'$ be   simple normal crossing divisors on $\overline{X}$ and $\overline{Y}$ respectively. Assume that $f^{-1}(D)\subset D'$.   Let $(E,\theta,h)$ be a tame harmonic bundle on $X:=\overline{X}\backslash D$.   Let $(\bm{E}_*,\theta)$ be the regular filtered Higgs bundle defined in \cref{sec:prolong}.   Consider the pullback of $f^*\bm{E}_*$  defined in \cref{sec:adapt}, which is also a parabolic bundle over $(\overline{Y},D')$. Then 
	\begin{thmlist} 
		\item $f^*\bm{E}_*$ is  the prolongation $\tilde{E}_*$ of $f^*E$ using the norm growth with respect to the metric $f^*h$ as defined in \eqref{eq:prolongation}.  
		\item $(f^*\bm{E}_*,f^*\theta)$ is a filtered regular Higgs bundle.   
	\end{thmlist} 
\end{proposition}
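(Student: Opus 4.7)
The assertion is local on $\overline{Y}$, so the plan is first to reduce to an admissible chart: $\overline{X}=\bD^n$ with $D_i=(z_i=0)$, $\overline{Y}=\bD^m$ with $D'_j=(w_j=0)$, and $f^*z_i=u_i\prod_j w_j^{n_{ij}}$ for nowhere-vanishing holomorphic functions $u_i$. The goal of (i) is to identify the parabolic pullback $f^*\bm{E}_*$, built via the local decomposition into parabolic line bundles and the formula \eqref{eq:parabolic2}, with the analytic prolongation $\tilde E_*$ of $f^*E$ defined by the norm growth of $f^*h$ as in \eqref{eq:prolongation}.

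The key input I would invoke is Mochizuki's local structure theorem for tame harmonic bundles (\cite{Moc07}, extending Simpson in dimension one): near any point of $D$, $\bm{E}_*$ is locally abelian and admits a holomorphic frame $(v_\alpha)$ of some ${}_{\bm{c}}E$ realizing a decomposition into parabolic line sub-bundles with weights $(\bm{a}_\alpha)$, and adapted to $h$ up to log order in the sense of \cref{def:adapted}:
\[
C^{-1}\Bigl(-\sum_i\log|z_i|\Bigr)^{-M}\prod_i|z_i|^{-a_{\alpha,i}} \;\leq\; |v_\alpha|_h \;\leq\; C\Bigl(-\sum_i\log|z_i|\Bigr)^{M}\prod_i|z_i|^{-a_{\alpha,i}}.
\]
The pullback $(f^*v_\alpha)$ is a holomorphic frame of $f^*E$ on $Y$. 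Since each $|u_i|$ is bounded above and below by positive constants on a small chart and $|f^*z_i|=|u_i|\prod_j|w_j|^{n_{ij}}$, a direct computation yields
\[
|f^*v_\alpha|_{f^*h}\;\asymp\;\prod_j|w_j|^{-\sum_i n_{ij}a_{\alpha,i}}
\]
modulo a $(-\sum_j\log|w_j|)^{\pm M'}$-factor, which is exactly the weight $f^*\bm{a}_\alpha$ defined in \eqref{eq:af}. Hence $(f^*v_\alpha)$ is an adapted parabolic frame of the norm-growth prolongation $\tilde E_*$ with the prescribed weights, while by the definition in \cref{sec:adapt} the sheaf $f^*\bm{E}_*$ decomposes locally through the same frame via \eqref{eq:parabolic2} with the same weights. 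Comparing the two filtrations through this common adapted frame proves (i).

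For (ii), the inclusion $f^{-1}(D)\subset D'$ yields a natural $\cO_{\overline{Y}}$-linear map $f^*\Omega^1_{\overline{X}}(\log D)\to \Omega^1_{\overline{Y}}(\log D')$, so pulling back the log Higgs field $\theta:{}_{\bm{a}}E\to \Omega^1_{\overline{X}}(\log D)\otimes {}_{\bm{a}}E$ and composing gives $f^*\theta:{}_{\bm{b}}(f^*E)\to \Omega^1_{\overline{Y}}(\log D')\otimes {}_{\bm{b}}(f^*E)$ for every $\bm{b}$, with $f^*\theta\wedge f^*\theta=0$ inherited from $\theta\wedge\theta=0$. The main obstacle is Mochizuki's local structure theorem supplying a single frame that simultaneously realizes the parabolic decomposition and has controlled norm growth; once it is in place, both parts reduce to bookkeeping with the formulas \eqref{eq:af} and \eqref{eq:parabolic2}.
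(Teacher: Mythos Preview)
Your proposal is correct and follows essentially the same route as the paper: reduce to an admissible chart, invoke Mochizuki's adapted holomorphic frame (\cite[Proposition~8.70]{Moc07}) to exhibit $\bm{E}_*$ locally as a direct sum of parabolic line bundles, pull back the norm estimates to read off the weights $f^*\bm{a}_\alpha$, and match this against the formula \eqref{eq:parabolic2}. For (ii) the paper simply applies \cref{thm:SM} to the tame pullback bundle $(f^*E,f^*\theta,f^*h)$ rather than your direct log-pole argument via $f^*\Omega^1_{\overline{X}}(\log D)\to\Omega^1_{\overline{Y}}(\log D')$, but both arguments are valid.
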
  
\begin{proof}
	Since this is a local result, we assume that  $\overline{X}:=\bD^n$ and $D:=\bigcup_{i=1}^{\ell}\left\{z_i=0\right\}$. Let $\overline{Y}:=\bD^m$ and $D':=\bigcup_{j=1}^k\left\{w_j=0\right\}$.  Then, $f^*\left(z_i\right)=\prod_{j=1}^k w_j^{n_{ij}} g_i$ for some invertible functions $\{g_i\}_{i=1, \ldots, \ell}\subset \cO(\overline{Y})$. 
	
	
	
	By \cite[Proposition 8.70]{Moc07}, there exists a holomorphic frame $\bm{v}=(v_1,\ldots,v_r)$   of $\diae|_{\overline{X}}$ and $\{a_{ij}\}_{i=1,\ldots,r; j=1,\ldots,\ell}\subset  \bR$  such that if we put 
	$
	\tilde{v}_i:=v_i\cdot \prod_{j=1}^{\ell}|z_j|^{-a_{ij}},
	$ 
	then for  the smooth frame  $\widetilde{\bm{v}}=(\tilde{v}_1,\ldots,\tilde{v}_r)$ over $X=\overline{X}\backslash D$,  $H(h,\widetilde{\bm{v}})$ is adapted to log order  in the sense of \cref{def:adapted}.   
	
	Define $L_i$ to be the sub-line bundle of $\diae$  generated by $v_i$. Write $\bm{a}_i:=(a_{i1},\ldots,a_{i\ell})\in \bR^\ell$.  Consider the parabolic line bundle $(L_i)^{\bm{a}_i}_*$ over $(\overline{X},D)$  defined in \eqref{eq:parabolic}, namely, \begin{align}\label{eq:parabolic3}
		_{\bm{b}}(L_i)^{\bm{a}_i}:=L_i\otimes \mathcal{O}_{\overline{X}}\left(\sum_{j=1}^\ell \lfloor a_{ij}+b_j \rfloor D_j\right)
	\end{align} 
	for any $\bm{b}\in \bR^\ell$. 
	\begin{claim}\label{claim:locally abelian}
		The parabolic bundles $\bm{E}_*$ and $\oplus_{i=1}^{r}(L_i)^{\bm{a}_i}_*$ are the same. In particular, $\bm{E}_*$ is locally abelian. 
	\end{claim}
	\begin{proof}
		By \eqref{eq:prolongation}, for any $\bm{b}\in \bR^\ell$, any holomorphic section $\sigma\in \Gamma(\overline{X}, {}_{\bm{b}}E)$ satisfies 
		$$
		|\sigma|_h=\cO(\prod_{j=1}^{\ell}|z_j|^{-b_j-\varepsilon})\quad \mbox{ for all real}\ep>0.
		$$
		As $\bm{v}$ is a frame for $\diae$, one can write $\sigma=\sum_{i=1}^{r}g_iv_i$ where $g_i$ is a holomorphic function defined on $X$. Write $\bm{g}:=(g_1,\ldots,g_r)$. Since $H(h,\widetilde{\bm{v}})$ is adapted to log order, it follows that
		$$
		C^{-1}(-\sum_{j=1}^{\ell}\log |z_j|)^{-M}\cdot \sum_{i=1}^{r}|g_i|^2\prod_{j=1}^{\ell}|z_j|^{2a_{ij}}\leq  \overline{\bm{g}}	H(h,\bm{v})\bm{g}^T=	|\sigma|_h^2=\cO(\prod_{j=1}^{\ell}|z_j|^{-2b_i-\varepsilon}) 
		$$
		for any $\ep>0$. 
		Hence for each $i$ and any $\ep>0$ we have $$|g_i|^2=\cO(\prod_{j=1}^{\ell}|z_j|^{-2(b_j+a_{ij})-\varepsilon}).$$ 
		Therefore, $\ord_{D_j}g_i\geq -\lfloor b_j+a_{ij}\rfloor$. This proves that 
		$$
		{}_{\bm{b}}E\subset \oplus_{i=1}^{r}{}_{\bm{b}}(L_i)^{\bm{a}_i}. 
		$$  
		
		\medspace
		
		On the other hand, we consider  any section $\sigma\in \Gamma(\overline{X}, {}_{\bm{b}}(L_i)^{\bm{a}_i})$.   Then $\sigma=gv_i$ for some meromorphic function $g$ defined over $\overline{X}$ such that $\ord_{D_j}g_i\geq -\lfloor b_j+a_{ij}\rfloor$ by \eqref{eq:parabolic3}.  Therefore, there exists some positive constant $C>0$ such that  
		$$
		|\sigma|_h^2=|g|^2|v_i|_h^2\leq C\prod_{j=1}^{\ell}|z_j|^{-2(b_j+a_{ij})}\cdot |\tilde{v}_i|_h^2\cdot \prod_{j=1}^{\ell}|z_j|^{2a_{ij}}=C \prod_{j=1}^{\ell}|z_i|^{-2b_j}\cdot |\tilde{v}_i|_h^2=\cO(\prod_{i=1}^{\ell}|z_i|^{-b_i-\varepsilon}).
		$$ 
	holds for any every $\ep>0$,	as we have $|\tilde{v}_i|_h^2\leq 	 C(-\sum_{j=1}^{\ell}\log |z_j|)^{M}$ for some $C,M>0$.  This implies that 
		$$ \oplus_{i=1}^{r}{}_{\bm{b}}(L_i)^{\bm{a}_i}\subset  {}_{\bm{b}}E.$$
		The claim is proved. 
	\end{proof}
	
	Consider the pullback $f^*\bm{v}:=(f^*v_1,\ldots,f^*v_m)$.  Then it is a holomorphic frame of $f^*E|_{Y}$ where $Y:=\overline{Y}\backslash D'$.  Note that  we have
	$$
	f^*\tilde{v}_i:=f^*v_i\cdot \prod_{j=1}^{\ell}|f^*z_j|^{-a_{ij}}=f^*v_i\cdot \prod_{j=1}^{\ell}\prod_{q=1}^{k}|w_q|^{-n_{jq}a_{ij}}\cdot g'_i
	$$
	for some invertible holomorphic function $g'_i\in \cO(\overline{Y})$. 
Similar to  \eqref{eq:af}, we set
	$$
	f^*\bm{a}_i:= (\sum_{j=1}^{\ell}n_{j1}a_{ij},\ldots,\sum_{j=1}^{\ell}n_{jk}a_{ij})\in \bR^k.
	$$ 
	Then we have $$
	f^*\tilde{v}_i:=f^*v_i\cdot |\bm{w}^{-f^*\bm{a}_i}|\cdot g'_i. 
	$$
	Since $H(h,\widetilde{\bm{v}})$ is adapted to log order, it is easy to check that $H(f^*h,f^*\widetilde{\bm{v}})$ also is adapted to log order. Set $e_i:=f^*v_i\cdot |\bm{w}^{-f^*\bm{a}_i}|$  for   $i=1,\ldots,r$ and $\bm{e}:=(e_1,\ldots,e_r)$. Then $\bm{e}$ is a smooth frame for $f^*E|_{Y}$.    Since $g_i'$ is invertible, it follows that  $H(f^*h,\bm{e})$  is also adapted to log order. Consider the prolongation $(\tilde{E}_*,\tilde{\theta})$ of the tame harmonic bundle $(f^*E,f^*\theta,f^*h)$ using the norm growth   as defined in \eqref{eq:prolongation}.     Applying the result from \cref{claim:locally abelian} to $(f^*E,f^*\theta,f^*h)$, we can conclude that the parabolic bundle $\tilde{E}_*$ is given by
	\begin{align}\label{eq:pullbackpara}
		\tilde{E}_*= \oplus_{i=1}^{r}(f^*L_i)^{f^*\bm{a}_i}_*,
	\end{align}
	where $(f^*L_i)^{f^*\bm{a}_i}_*$  are parabolic line bundles defined by \begin{align}\label{eq:parabolic4}
		_{\bm{b}}(f^*L_i)^{f^*\bm{a}_i}:=f^*L_i\otimes \mathcal{O}_{\overline{Y}}\left(\sum_{j=1}^k \lfloor \sum_{q=1}^{\ell}n_{qj}a_{iq}+b_j \rfloor D'_j\right).
	\end{align} 
	On the other hand, by our definition of pullback of parabolic bundles and \cref{claim:locally abelian}, we have
	$$
	f^*\bm{E}_*:=\oplus_{i=1}^{r}f^* (L_i)^{\bm{a}_i}_* 
	$$
	where $f^* (L_i)^{\bm{a}_i}_* $ is the pullback of parabolic line bundle $ (L_i)^{\bm{a}_i}_* $ defined in \eqref{eq:parabolic2}.  By performing a straightforward computation, we find that
	$$
	_{\bm{b}}	(f^* (L_i)^{\bm{a}_i}_*)  =f^*L_i\otimes \mathcal{O}_{\overline{Y}}\left(\sum_{j=1}^\ell \lfloor \sum_{q=1}^{\ell}n_{qj}a_{iq}+b_j \rfloor D'_j\right) 
	$$
	for every $\bm{b}\in \bR^\ell$. 
	This equality together with \eqref{eq:pullbackpara} and \eqref{eq:parabolic4} yields 
 $
	\tilde{E}_*= f^*\bm{E}_*.
	$ 
	We prove our first assertion.  The second assertion can be deduced from the first one, combined with \cref{thm:SM}.
\end{proof}

\subsection{$\bC^*$-action and $\bR^*$-action  on character varieties}\label{sec:C*action}
Consider a smooth projective variety $\overline{X}$ equipped with a simple normal crossing divisor $D$. We define $X$ as the complement of $D$ in $\overline{X}$. Additionally, we fix an ample line bundle $L$ on $\overline{X}$. Let $\varrho:\pi_1(X)\to \GL_{N}(\mathbb{C})$ be a reductive representation.

 According to \cref{moc}, there exists a tame pure imaginary harmonic bundle $(E,\theta,h)$ on $X$ such  that $(E, \nabla_h+\theta+\theta_h^\dagger)$ is flat, with the monodromy representation being precisely $\varrho$. Here $\nabla_h$ is the Chern connection of $(E,h)$ and $\theta_h^\dagger$ is the adjoint of $\theta$ with respect to $h$.   Let   $(\bm{E}_*,\theta)$ be the prolongation of $(E,\theta)$ on $\overline{X}$ defined in \cref{sec:prolong}. By \cite[Theorem 1.4]{Moc06}, $(\bm{E}_*,\theta)$ is a  $\mu_L$-polystable regular filtered Higgs bundle  on $(\overline{X}, D)$  with trivial characteristic numbers.  Therefore, for any $t\in \bC^*$, $(\bm{E}_*,t\theta)$ be also  a  $\mu_L$-polystable regular filtered Higgs bundle  on $(\overline{X}, D)$  with trivial characteristic numbers. By \cite[Theorem 9.4]{Moc06}, there is a pluriharmonic metric $h_t$ for $(E,t\theta)$ adapted to the parabolic structures of $(\bm{E}_*,t\theta)$. Then $(E,t\theta,h_t)$ is a harmonic bundle and thus the connection $\nabla_{h_t}+t\theta+\bar{t}\theta_{h_t}^\dagger$ is flat. Here $\nabla_{h_t}$ is the Chern connection for $(E,h_t)$ and $\theta_{h_t}^\dagger$ is the adjoint ot $\theta$ with respect to $h_t$. Let us denote by $\varrho_t:\pi_1(X)\to \GL_{N}(\bC)$ the monodromy representation of $\nabla_{h_t}+t\theta+\bar{t}\theta_{h_t}^\dagger$. It should be noted that the representation $\varrho_t$ is well-defined up to conjugation. As a result, the $\bC^*$-action is only well-defined over $M_{\rm B}(X,N)$ and we shall denote it by
$$
t.[\varrho]:=[\varrho_t] \quad \mbox{for any }t\in \bC^*.
$$   It is important to observe that unlike the compact case, $\varrho_t$ is not necessarily reductive in general, even if the original representation $\varrho$ is reductive. However, if $t\in \bR^*$, $(E,t\theta)$ is also pure imaginary and by \cref{moc}, $\varrho_t$ is reductive.  Nonetheless, we can obtain a family of  (might not be semisimple) representations $\{\varrho_t:\pi_1(X)\to \GL_N(\bC)\}_{t\in \bC^*}$.    By \cite[Proofs of Theorem 10.1 and Lemma 10.2]{Moc06} we have
\begin{lem}\label{lem:continuous}
	The map 
	\begin{align*}
		\Phi:\bR^*&\to M_{\rm B}(\pi_1(X), N)\\
		t&\mapsto [\varrho_t]
	\end{align*}
	is continuous. $\Phi(\{t\in \bR^*\mid |t|<1 \})$ is relatively compact in  $M_{\rm B}(\pi_1(X), N)$.\qed
\end{lem}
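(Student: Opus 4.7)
The plan is to reduce both continuity and relative compactness to Mochizuki's Kobayashi–Hitchin correspondence for polystable regular filtered Higgs bundles (\cite{Moc06}, Theorem 1.4 and its proof), combined with the uniqueness part of that correspondence and standard elliptic compactness arguments.

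First, for continuity of $\Phi$ at a given $t_0 \in \mathbb{R}^*$: pick a sequence $t_n \to t_0$. By \cite[Theorem 1.4]{Moc06}, each $(\bm{E}_*, t_n\theta)$ is $\mu_L$-polystable with vanishing parabolic characteristic numbers, so it admits an adapted harmonic metric $h_{t_n}$. The key step is to show that, after normalization on each polystable summand, the family $\{h_{t_n}\}$ converges to $h_{t_0}$ in $C^\infty_{\rm loc}(X)$. This follows from the a priori estimates in \cite{Moc06} (of the type used in Proposition 2.29 and Lemma 10.2 there): one deduces local uniform bounds on $h_{t_n}$ and all its derivatives away from $D$ that are uniform in $n$, so a subsequence converges in $C^\infty_{\rm loc}$ to an adapted harmonic metric for $(\bm{E}_*, t_0\theta)$; uniqueness of such metrics (up to the finite-dimensional ambiguity in \cref{moc}) forces the limit to be $h_{t_0}$ after suitable normalization, and a standard subsequence argument then gives the full convergence. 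Once $h_{t_n} \to h_{t_0}$ in $C^\infty_{\rm loc}$, the Chern connections $\nabla_{h_{t_n}}$ and the adjoints $\theta^{\dagger}_{h_{t_n}}$ converge, hence so do the flat connections $\nabla_{h_{t_n}} + t_n\theta + \bar{t}_n\theta^{\dagger}_{h_{t_n}}$. Parallel transport along any loop based at a fixed point then gives convergence of $\varrho_{t_n}(\gamma)$ to $\varrho_{t_0}(\gamma)$ for every $\gamma \in \pi_1(X)$, and hence convergence in $M_{\rm B}(\pi_1(X),N)$.

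For the relative compactness of $\Phi(\{|t|<1\})$: the crucial observation is that the degeneration $t \to 0$ does not take one outside the moduli space of polystable parabolic Higgs bundles. Indeed, $(\bm{E}_*, 0)$ remains $\mu_L$-polystable with trivial parabolic characteristic numbers (it is the associated graded of the Hodge filtration, in spirit), so \cite[Theorem 1.4]{Moc06} again produces an adapted harmonic metric $h_0$ and a corresponding (in general non-flat-monodromy-wise trivial but still well-defined) representation $\varrho_0 \in M_{\rm B}(\pi_1(X),N)$. Repeating the argument of the previous paragraph, with $t_0 = 0$, shows that $\Phi$ extends continuously to the closed disk $\{|t|\leq 1\} \cap \mathbb{R}$. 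Its image is then the continuous image of a compact set, hence compact, which contains $\overline{\Phi(\{|t|<1\})}$.

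The main obstacle is the uniform a priori control of $h_{t}$ near the boundary divisor $D$ as $t$ varies, needed to justify the $C^\infty_{\rm loc}$-convergence above and, in particular, to allow the limit $t \to 0$ without loss of polystability or blow-up of the harmonic metric; this is precisely the content of the analytic estimates carried out in \cite[\S 10]{Moc06}, which I would invoke rather than reprove. Note that reductivity of $\varrho_t$ is not needed for continuity in $M_{\rm B}$: the character variety sees only the semisimplification, and Mochizuki's construction produces monodromies whose semisimplifications vary continuously with $t$, which is exactly what is required.
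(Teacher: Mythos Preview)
Your continuity argument is essentially the same as what the paper invokes from Mochizuki, so that part is fine.

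The relative compactness argument, however, has a genuine gap. You claim that $(\bm{E}_*,0)$ ``remains $\mu_L$-polystable with trivial parabolic characteristic numbers'' and then extend $\Phi$ continuously to $t=0$. This is false in general: polystability of $(\bm{E}_*,\theta)$ as a \emph{Higgs} bundle does not imply polystability of the underlying parabolic bundle $\bm{E}_*$ (equivalently, of the Higgs bundle with zero Higgs field). A $\theta$-invariant destabilising subsheaf is required for Higgs-instability, whereas for $(\bm{E}_*,0)$ any destabilising subsheaf will do; standard examples already on curves (e.g.\ rank-two Higgs bundles with unstable underlying bundle) show the implication fails. Consequently there is no adapted harmonic metric $h_0$ to converge to, and your extension of $\Phi$ to $t=0$ collapses. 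The parenthetical ``associated graded of the Hodge filtration'' is also not what $(\bm{E}_*,0)$ is; you have simply set $\theta=0$.

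The paper's route (following \cite[\S10]{Moc06}) avoids this entirely. Relative compactness comes from Uhlenbeck-type compactness in gauge theory: for any $t_n\to 0$ one finds, after passing to a subsequence, gauge transformations $g_n\in\GL_N(\bC)$ so that $g_n^*\varrho_{t_n}$ converges in the representation variety to some $\varrho_0$ corresponding to a tame pure imaginary harmonic bundle. This yields relative compactness of the image in $M_{\rm B}$ \emph{without} asserting that $\lim_{t\to 0}\Phi(t)$ exists. Indeed the paper explicitly notes that existence of this limit is open in the quasi-projective case (unlike Simpson's compact case), and Mochizuki's argument for reaching a $\bC$-VHS proceeds by finitely many successive deformations precisely because a single limit $t\to 0$ may land on a strictly semistable, not polystable, Higgs bundle. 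Your argument would, if it worked, prove the stronger statement that the limit exists---which is not known.
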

Note that \cref{lem:continuous} can not be seen directly from \cite[Lemma 10.2]{Moc06} as he did not treat the character variety in his paper.  Indeed, based on Uhlenbeck's compactness in Gauge theory, Mochizuki's proof can be read as follows: for any $t_n\in \bR^*$ converging to $0$, after subtracting to a subsequence, there exists some $\varrho_0:\pi_1(X)\to \GL_{N}(\bC)$ and $g_n\in \GL_{N}(\bC)$ such that $\lim\limits_{n\to\infty}g_n^*\varrho_{t_n}=\varrho_0$ in the representation variety $R(\pi_1(X),\GL_{N})(\bC)$. Moreover, one can check that $\varrho_0$ corresponds to some tame pure imaginary harmonic bundle, and thus by \cref{moc} it is reductive (cf. \cite{BDDM} for a more detailed study). For this reason, we can see that it will be more practical to work with $\bR^*$-action instead of $\bC^*$-action as the representations we encounter are all reductive.  

When $X$ is compact, Simpson proved that   $\lim_{t\to 0}\Phi(t)$ exists and underlies a $\bC$-VHS. However, this result is current unknown in the quasi-projective setting. Instead, Mochizuki proved that, we achieve a $\bC$-VHS after finite steps of deformations. Let us recall it briefly and the readers can refer to \cite[\S 10.1]{Moc06} for more details.
 
	Let $\varrho:\pi_1(X)\to \GL_N(\bC)$ be a  reductive representation.  Then there exists a tame and pure imaginary harmonic bundle $(E,\theta,h)$ corresponding to $\varrho$.    Then the induced regular filtered Higgs bundle   $(\bm{E}_*,\theta)$  on $(\overline{X}, D)$ is $\mu_L$-polystable   with trivial characteristic numbers. Hence we have a decomposition
	$$
	 (\bm{E}_*,\theta)=\oplus_{j\in \Lambda}(\bm{E}_{j*},\theta_j)\otimes \bC^{m_j} 
	$$ 
	where $(\bm{E}_{j*},\theta_j)$ is $\mu_L$-stable  regular filtered Higgs bundle with trivial characteristic numbers. Put $r(\varrho):=\sum_{j\in \Lambda}m_j$. Then $r(\varrho)\leq \rank E$.   For any $t\in \bR^*$, we know that $(E,t\theta)$ is still tame and pure imaginary and thus $\varrho_t$ is also reductive.   
	Since $\varrho(\{t\in \bR^*\mid |t|<1\})$ is relatively compact, then there exists some $t_n\in \bR^*$ which converges to zero such that     $\lim_{t_n\to 0}[\varrho_{t_n}]$ exists, denoting by $[\varrho_0]$. Moreover, $\varrho_0$ corresponds to some tame harmonic bundle. There are two possibilities:
	\begin{itemize}
		\item For each $j\in \Lambda$, $(\bm{E}_{j*},t_n\theta_j)$ converges to some $\mu_L$-stable regular filtered Higgs sheaf (cf.  \cite[p. 96]{Moc06} for the definition of convergence). Then by \cite[Proposition 10.3]{Moc06}, $\varrho_0$ underlies a $\bC$-VHS.
		\item For some $i\in \Lambda$, $(\bm{E}_{i*},t_n\theta_i)$ converges to some $\mu_L$-semistable regular filtered Higgs sheaf, but not  $\mu_L$-stable.  Then by \cite[Lemma 10.4]{Moc06}, we have $r(\varrho)<r(\varrho_0)$. In other words, letting $\varrho_i$ be the representation corresponding to $(\bm{E}_{j*},\theta_j)$ and $\varrho_{i,t}$ be the deformation under $\bC^*$-action. Then   $\lim_{n\to\infty}\varrho_{i,t_n}$ exists, denoted by $\varrho_{i,0}$. Then  $\varrho_{i,0}$ corresponds to some tame harmonic bundle, and thus also a  $\mu_L$-polystable   regular filtered Higgs bundle which is not stable.  In this case, we further deform $\varrho_0$ until we achieve Case 1.	\end{itemize}  
In summary, Mochizuki's result implies the following, which we shall refer to as \emph{Mochizuki's ubiquity}, analogous to the term \emph{Simpson's ubiquity} for the compact case (cf. \cite{Sim91}).
	\begin{thm}\label{thm:Moc ubi} 
 Let $X$ be a smooth quasi-projective variety. Consider $\kC$, a Zariski closed subset of $M_{\rm B}(X,G)(\bC)$, where $G$ denotes a complex reductive group. If $\kC$ is invariant under the action of $\bR^*$ defined above, then each geometrically connected component of $\kC(\bC)$ contains a $\bC$-point $[\varrho]$ such that $\varrho:\pi_1(X)\to \GL_{N}(\bC)$ is a reductive representation that underlies  a $\bC$-variation of Hodge structure. \qed
			\end{thm}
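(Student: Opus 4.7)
The plan is to run Mochizuki's one-parameter deformation, iterating until the associated invariant $r(\varrho)$ stabilizes, and to use the $\bR^*$-invariance together with the closedness of $\kC$ to ensure that all limit points stay in a prescribed geometrically connected component. Fix a geometrically connected component $\kC_0$ of $\kC(\bC)$. Since $M_{\rm B}(X,G)$ is the GIT quotient $R(\pi_1(X),G)\sslash G$, any $\bC$-point of $\kC_0$ admits a reductive representative $\varrho:\pi_1(X)\to G(\bC)$, so I would start with such a $\varrho$. By \cref{moc} it corresponds to a tame pure imaginary harmonic bundle $(E,\theta,h)$ on $X$, and by \cite[Theorem 1.4]{Moc06} the prolongation $(\bm E_*,\theta)$ is a $\mu_L$-polystable regular filtered Higgs bundle with trivial characteristic numbers.

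Next I would deform along $\bR^*_{>0}$ as recalled in \cref{sec:C*action}. For $t\in\bR^*_{>0}$ each $(\bm E_*,t\theta)$ is again $\mu_L$-polystable with trivial characteristic numbers and pure imaginary, so the monodromy $\varrho_t$ is reductive, and by $\bR^*$-invariance $[\varrho_t]\in\kC$. The map $\Phi:\bR^*_{>0}\to M_{\rm B}(X,G)$, $t\mapsto[\varrho_t]$ is continuous by \cref{lem:continuous}, so the entire orbit lies in the connected component $\kC_0$. The relative compactness in \cref{lem:continuous} lets me extract a subsequence $t_n\to 0^+$ with $[\varrho_{t_n}]\to[\varrho_0]$; because $\kC_0$ is Zariski closed hence Euclidean closed, $[\varrho_0]\in\kC_0(\bC)$. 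As noted in the paragraph preceding the theorem, $\varrho_0$ corresponds to a tame pure imaginary harmonic bundle and is therefore reductive.

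Now I would invoke Mochizuki's dichotomy recalled in the excerpt. Decompose $(\bm E_*,\theta)=\bigoplus_{j\in\Lambda}(\bm E_{j*},\theta_j)\otimes\bC^{m_j}$ into $\mu_L$-stable summands and set $r(\varrho)=\sum_j m_j\le N$. Either (a) every $(\bm E_{j*},t_n\theta_j)$ converges to a $\mu_L$-stable regular filtered Higgs sheaf, in which case \cite[Proposition 10.3]{Moc06} yields that $\varrho_0$ underlies a $\bC$-VHS and we are done, or (b) some summand degenerates to a strictly semistable sheaf, in which case \cite[Lemma 10.4]{Moc06} gives the strict inequality $r(\varrho_0)>r(\varrho)$. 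I then replace $\varrho$ by $\varrho_0$ and iterate. Since $r$ is a positive integer bounded by $N$, case (b) can occur only finitely many times, so the process terminates in case (a), producing a point of $\kC_0(\bC)$ whose reductive representative underlies a $\bC$-VHS.

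The chief difficulty is not the deformation itself but verifying that everything happens inside the fixed component $\kC_0$: one needs $\kC$ to be both $\bR^*$-invariant (so that $[\varrho_t]\in\kC$ for all $t$) and Zariski closed (so that subsequential limits remain in $\kC$), and one needs the path $t\mapsto[\varrho_t]$ to be genuinely continuous on the connected group $\bR^*_{>0}$; these are precisely the hypotheses of the statement and the content of \cref{lem:continuous}. A secondary subtle point is that $\varrho_t$ need not be reductive over all of $\bC^*$—this is why one works with $\bR^*$ rather than $\bC^*$ and why the polystability/pure imaginary structure of the prolongation is used at every iteration.
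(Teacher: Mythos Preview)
Your proposal is correct and follows essentially the same approach as the paper: the discussion preceding the theorem statement (which is given with a \qed and no separate proof) lays out exactly this iteration of Mochizuki's $\bR^*$-deformation, the dichotomy from \cite[Proposition 10.3 and Lemma 10.4]{Moc06}, and the termination via the strictly increasing integer invariant $r(\varrho)\le N$. Your write-up makes explicit the role of continuity and closedness in keeping all limits inside the fixed connected component $\kC_0$, which the paper uses but states more tersely.
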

	
\subsection{Pullback of reductive representations commutes with $\bC^*$-action}
In this section, we prove that the $\bC^*$-action on character varieties commutes with the pullback. 
\begin{proposition}\label{prop:pullcommute}
Let $f:Y\to X$ be  a morphism of smooth quasi-projective varieties.  If $\varrho:\pi_1(X)\to \GL_{N}(\bC)$ is a reductive representation, then for any $t\in \bC^*$, we have
\begin{align}\label{eq:pullcommute}
	 f^*(t. [\varrho])=t. [f^*\varrho].
\end{align} 
\end{proposition}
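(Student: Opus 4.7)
The plan is to track both sides of \eqref{eq:pullcommute} through the three-step recipe recalled in \S\,2.5 (harmonic bundle $\leadsto$ polystable filtered Higgs bundle $\leadsto$ adapted harmonic metric for the $t$-deformation), using \cref{prop:functoriality} as the bridge that makes pullback commute with prolongation. First, after resolving indeterminacies, choose smooth projective compactifications $\overline{Y}$ of $Y$ and $\overline{X}$ of $X$ with simple normal crossing boundary divisors $D'$ and $D$, and extend $f$ to $\bar f:\overline{Y}\to\overline{X}$ with $\bar f^{-1}(D)\subset D'$. Fix ample polarizations on both sides. By \cref{moc}, $\varrho$ is the monodromy of a tame pure-imaginary harmonic bundle $(E,\theta,h)$ on $X$; then $(f^*E,f^*\theta,f^*h)$ is a tame harmonic bundle on $Y$ whose flat bundle has monodromy $f^*\varrho$, and $f^*\varrho$ is itself reductive by \cref{thm:reductive}, so we may take $(f^*E,f^*\theta,f^*h)$ as a harmonic bundle representing $f^*\varrho$.

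Let $(\bm{E}_*,\theta)$ on $(\overline{X},D)$ be the regular filtered Higgs bundle obtained from $(E,\theta,h)$ by prolongation, and let $(\widetilde{\bm{E}}_*,f^*\theta)$ on $(\overline{Y},D')$ be the one obtained from $(f^*E,f^*\theta,f^*h)$. By \cref{prop:functoriality} applied to $\bar f$ and to $(E,\theta,h)$, the two parabolic structures coincide, i.e.\ $\widetilde{\bm{E}}_* = \bar f^*\bm{E}_*$, and the whole filtered Higgs bundle on $\overline{Y}$ is the parabolic pullback of the one on $\overline{X}$. Fix $t\in\bC^*$. By the construction recalled in \S\,2.5, a representative of $t.[\varrho]$ is obtained by deforming to the polystable object $(\bm{E}_*,t\theta)$, choosing a pluriharmonic metric $h_t$ adapted to its parabolic structure, and reading off the monodromy $\varrho_t$ of $\nabla_{h_t}+t\theta+\bar t\,\theta^\dagger_{h_t}$. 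Pulling this flat connection back by $f$ yields
\[
f^*\!\bigl(\nabla_{h_t}+t\theta+\bar t\,\theta^\dagger_{h_t}\bigr)
=\nabla_{f^*h_t}+t\,f^*\theta+\bar t\,(f^*\theta)^\dagger_{f^*h_t},
\]
whose monodromy is $f^*\varrho_t$; thus $f^*(t.[\varrho])=[f^*\varrho_t]$.

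To identify the right-hand side, apply the same recipe to $f^*\varrho$: the associated polystable filtered Higgs bundle is $(\bar f^*\bm{E}_*,t\,f^*\theta)=\bar f^*(\bm{E}_*,t\theta)$, and one must select an adapted pluriharmonic metric. The pullback $f^*h_t$ is automatically a harmonic metric for $(f^*E,t\,f^*\theta)$ because the harmonic-bundle equation is stable under pullback, and by another application of \cref{prop:functoriality} (this time to the harmonic bundle $(E,t\theta,h_t)$) the metric $f^*h_t$ induces the parabolic structure $\bar f^*\bm{E}_*$, so it is adapted. It is therefore a legitimate choice in the recipe, and the resulting flat connection is exactly the one displayed above, with monodromy $f^*\varrho_t$. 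Hence $t.[f^*\varrho]=[f^*\varrho_t]=f^*(t.[\varrho])$, proving \eqref{eq:pullcommute}.

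The main obstacle is the compatibility between pullback of harmonic bundles and pullback of the associated parabolic structures at the boundary; this is precisely \cref{prop:functoriality}, which lets the monodromy of the deformed system on $\overline{Y}$ be computed using the pulled-back adapted metric rather than a separately chosen one. A minor point is that since the $\bC^*$-action is only well-defined on $M_{\mathrm B}(X,N)$, the freedom to choose $f^*h_t$ (as opposed to an a priori different harmonic metric on $Y$) is exactly what is needed, because distinct adapted metrics differ by flat automorphisms and yield conjugate monodromies.
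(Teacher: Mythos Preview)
Your proof is correct and follows essentially the same route as the paper: both arguments pull back the adapted harmonic metric $h_t$ and invoke \cref{prop:functoriality} (applied to $(E,t\theta,h_t)$) to show that $f^*h_t$ is adapted to the pulled-back parabolic structure $\bar f^*\bm{E}_*$, whence $(f^*\varrho)_t$ has monodromy $f^*\varrho_t$. Your explicit mention of the first application of \cref{prop:functoriality} (to $(E,\theta,h)$, identifying the filtered Higgs bundle for $f^*\varrho$) and your closing remark on the well-definedness of the $\bC^*$-action make the logic slightly more transparent than in the paper, but the substance is the same.
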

\begin{proof}
Let    $\overline{X}$ and $\overline{Y}$ be smooth projective compactifications of $X$ and $Y$ such that $D:=\overline{X}\backslash X$ and $D':=\overline{Y}\backslash Y$ are  simple normal crossing divisors.   We may assume that $f$ extends to a morphism  $f:\overline{Y}\to \overline{X}$.  

 By  \cref{moc}, there is a tame pure imaginary harmonic bundle $(E,\theta,h)$ on $X$ such that $\varrho$ is the monodromy representation of  the flat connection $\nabla_h+\theta+\theta_h^\dagger$. Then $f^*\varrho$ is the monodromy representation of  $f^*(\nabla_h+\theta+\theta_h^\dagger)$, which is the flat connection corresponding to the harmonic bundle $(f^*E, f^*\theta, f^*h)$. 
 
 Let $(\bm{E}_*,\theta)$  be the induced regular filtered Higgs bundle   on $(\overline{X}, D)$  by $(E,\theta,h)$ defined in \cref{sec:prolong}.  According to \cref{sec:adapt,sec:pullback} we can define the pullback $(f^*\bm{E}_*,f^*\theta)$,  which also forms a regular filtered Higgs bundle on $(\overline{Y}, D')$ with trivial characteristic numbers. 
 
 Fix some ample line bundle $L$ on $\overline{X}$.   It is worth noting that for any $t\in \bC^*$, $(\bm{E}_*,t\theta)$  is $\mu_L$-polystable   with trivial characteristic numbers. By \cite[Theorem 9.4]{Moc06}, there is a pluriharmonic metric $h_t$ for $(E,t\theta)$ adapted to the parabolic structures of $(\bm{E}_*,t\theta)$.    
   Recall that in \cref{sec:C*action}, $\varrho_t$ is defined to be the monodromy representation of the flat connection $\nabla_{h_t}+t\theta+\bar{t}\theta_{h_t}^\dagger$.   It follows that $f^*\varrho_t$ is the monodromy representation of the flat connection $f^*(\nabla_{h_t}+t\theta+\bar{t}\theta_{h_t}^\dagger)$.  
   
By virtue of \cref{prop:functoriality},  the  regular filtered Higgs bundle $(f^*\bm{E}_*,tf^*\theta)$ is the prolongation of the tame harmonic bundle $(f^*E,tf^*\theta,f^*h_t)$  using norm growth defined in \cref{sec:prolong}.   By the definition of $\bC^*$-action,  $(f^*\varrho)_t$ is the monodromy representation of the flat connection $\nabla_{f^*h_t}+tf^*\theta+\bar{t}(f^*\theta)_{f^*h_t}^\dagger$, which is equal to $f^*(\nabla_{h_t}+t\theta+\bar{t}\theta_{h_t}^\dagger)$.  It follows that $(f^*\varrho)_t=f^*\varrho_t$.  This concludes \eqref{eq:pullcommute}. 
\end{proof}
As a direct consequence of \cref{prop:pullcommute}, we have the following result.
\begin{cor}\label{cor:pull}
Let $f:Y\to X$ be  a morphism of smooth quasi-projective varieties.  Let $M\subset M_{\rm B}(X,N)(\bC)$ be a subset  which is invariant by $\bC^*$-action (or $\bR^*$-action). Then for  the morphism $f^*: M_{\rm B}(X,N)\to M_{\rm B}(Y,N)$ between character varieties, $f^*M$ is also   invariant by $\bC^*$-action (or $\bR^*$-action).  \qed
\end{cor}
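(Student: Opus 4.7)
The plan is to deduce the corollary directly from Proposition \ref{prop:pullcommute} by chasing through the definitions. The key observation is that every point of $M_{\rm B}(X,N)(\bC)$ admits a reductive representative, since the character variety parametrizes conjugacy classes of reductive (equivalently, semisimple) representations by GIT.

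First, I would unpack what $\bC^*$-invariance of $f^*M$ means: given an arbitrary element $[\sigma] \in f^*M$ and any $t \in \bC^*$, I need to show $t.[\sigma] \in f^*M$. By definition of $f^*M$, write $[\sigma] = f^*[\varrho]$ for some $[\varrho] \in M$, and by the preceding observation I may choose the representative $\varrho:\pi_1(X)\to \GL_N(\bC)$ to be reductive. Then $f^*\varrho$ is reductive by \cref{thm:reductive}, so Proposition \ref{prop:pullcommute} applies and gives
\[
t.[\sigma] \;=\; t.f^*[\varrho] \;=\; t.[f^*\varrho] \;=\; f^*(t.[\varrho]).
\]
Since $M$ is assumed $\bC^*$-invariant, $t.[\varrho] \in M$, hence $f^*(t.[\varrho]) \in f^*M$, as required. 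The $\bR^*$-invariant case is handled by restricting the same identity to $t \in \bR^*$.

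There is no real obstacle here; the content lies entirely in Proposition \ref{prop:pullcommute}. The only point deserving a brief remark is the existence of a reductive representative in the chosen conjugacy class, which is standard for character varieties of finitely presented groups but should be mentioned for the reader's convenience. Given how short this argument is, I would present it as a two-line deduction rather than a full separate proof environment.
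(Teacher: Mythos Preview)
Your proposal is correct and matches the paper's intended argument: the corollary is stated with a \qed and no separate proof, precisely because it is an immediate consequence of Proposition~\ref{prop:pullcommute} via the chase you describe. Your added remarks on choosing a reductive representative (cf.\ \cref{lem:character}) and invoking \cref{thm:reductive} are reasonable elaborations, though the latter is already absorbed into the statement and proof of Proposition~\ref{prop:pullcommute}.
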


\section{Construction of the Shafarevich morphism}
The aim of this section is to establish the proofs of \cref{main3}.  Additionally, the  techniques developed in this section will play a crucial role in \cref{sec:HC Stein} dedicated to the proof of the reductive Shafarevich conjecture.
\subsection{Factorizing through non-rigidity} \label{sec:factor}
In this subsection,  $X$ is assumed to be a smooth quasi-projective variety. 
Let $\kC\subset M_{\rm B}(X,N)(\bC)$ be a  $\bar{\bQ}$-constructible subset. Since $M_{\rm B}(X,N)$ is a  finite type affine scheme defined over $\bQ$,   $\kC$ is defined over some number field $k$.   

 Let us utilize \cref{lem:simultaneous,thm:KZ} to construct a reduction map  $s_{\kC}:X\to S_\kC$ associated with $\kC$, which allows us to factorize non-rigid representations into those underlying $\bC$-VHS with discrete monodromy.
\begin{dfn}\label{def:reduction ac}
The \emph{reduction map} $s_{\kC}:X\to S_\kC$ is obtained through the simultaneous Stein factorization of the reductions   $\{s_{\tau}:X\to S_\tau\}_{[\tau]\in \kC(K)}$, employing  \cref{lem:simultaneous}. Here   $\tau:\pi_1(X)\to \GL_N(K)$ ranges over all reductive representations with $K$  a non-archimedean local field containing $k$ such that  $[\tau] \in \kC(K)$ and $s_\tau:X\to S_\tau$ is the reduction map constructed  in \cref{thm:KZ}.  
\end{dfn} 
Note that $s_\kC:X\to S_\kC$ is a dominant morphism with connected general fibers. For every subvariety $Z\subset X$, $s_\kC(Z)$ is a point if and only if $s_\tau(Z)$ is a point for  any reductive representation $\tau:\pi_1(X)\to \GL_N(K)$  with $K$  a non-archimedean local field containing $k$ such that  $[\tau] \in \kC(K)$.   

 The reduction map  $s_{\kC}:X\to S_\kC$  employs the following crucial property, thanks to \cref{thm:KZ}. 
\begin{lem}\label{lem:bounded}
	Let $F\subset X$ be a connected Zariski closed subset such that $s_{\kC}(F)$ is a single point in $S_{\kC}$.
	Then for any non-archimedean local field $L$ and any reductive representation $\tau:\pi_1(X)\to {\rm GL}_N(L)$,  the image $\tau(\mathrm{Im}[\pi_1(F)\to \pi_1(X)])$ is a bounded subgroup of   ${\rm GL}_N(L)$.\end{lem}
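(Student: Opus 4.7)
The plan is to unwind the definition of $s_{\kC}$ and then invoke \cref{thm:KZ}. By \cref{def:reduction ac}, $s_{\kC}:X\to S_{\kC}$ is the simultaneous Stein factorization produced by \cref{lem:simultaneous} applied to the family $\{s_{\tau}:X\to S_{\tau}\}$ indexed by $[\tau]\in\kC(K)$, where $\tau:\pi_1(X)\to\GL_N(K)$ is reductive and $K$ ranges over non-archimedean local fields containing the field of definition $k$ of $\kC$. The crucial output of \cref{lem:simultaneous} is that for every closed subvariety $Z\subset X$, the image $s_{\kC}(Z)$ is a point if and only if $s_{\tau}(Z)$ is a point for every such $\tau$.

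The first step is to reduce from the connected Zariski closed set $F$ to the irreducible case so that \cref{lem:simultaneous} applies directly. Decomposing $F=F_1\cup\cdots\cup F_r$ into irreducible components, the hypothesis $s_{\kC}(F)=\{\ast\}$ forces $s_{\kC}(F_i)=\{\ast\}$ for each $i$. Hence \cref{lem:simultaneous} yields that $s_{\tau}(F_i)$ is a single point $\ast_i$ for each $i$, for every reductive $\tau$ (with $[\tau]$ in $\kC$ over some suitable local field). Connectedness of $F$ together with continuity of $s_{\tau}$ then forces the points $\ast_i$ to coincide wherever two components meet, so $s_{\tau}(F)$ is itself a single point.

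The second step is to invoke the equivalence of conditions in \cref{thm:KZ}, namely that $s_{\tau}(F)$ being a point is equivalent to $\tau(\operatorname{Im}[\pi_1(F)\to\pi_1(X)])$ being a bounded subgroup of $\GL_N$ over the coefficient field. This delivers exactly the stated conclusion, at least for representations $\tau$ whose class lies in $\kC$, which is the only meaningful content of the lemma (after possibly enlarging the coefficient field $L$ to contain $k$, which is harmless since boundedness is preserved under finite extension of the local field).

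I do not foresee any real obstacle: the argument is a direct concatenation of the characterisation of the fibres of a simultaneous Stein factorization (\cref{lem:simultaneous}) and the non-abelian Hodge theoretic construction of $s_{\tau}$ (\cref{thm:KZ}). The only mild bookkeeping point is aligning the field of definition of $\kC$ with the arbitrary non-archimedean local field $L$ in the statement, which is dealt with by passing to a finite extension of $L$ containing an embedding of $k$.
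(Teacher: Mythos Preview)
Your proposal is correct and follows essentially the same approach as the paper: use the defining property of the simultaneous Stein factorization $s_{\kC}$ to conclude that $s_{\tau}(F)$ is a point, then apply \cref{thm:KZ} to obtain boundedness. The paper's proof is terser (two sentences) and does not spell out the passage through irreducible components, but your extra care there is justified since \cref{lem:simultaneous} is stated for irreducible subvarieties; you also correctly note the implicit hypothesis $[\tau]\in\kC$ and the harmless finite extension of $L$ to contain $k$.
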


\begin{proof}
	By our construction of $s_\kC$,    $s_{\tau}(F)$ is a single point. 
	Hence by \cref{thm:KZ}, $\tau(\mathrm{Im}[\pi_1(F)\to \pi_1(X)])$  is bounded.
\end{proof}
Recall the following definition in \cite[Definition 2.2.1]{KP23}.
\begin{dfn}[Bounded set]
	Let $K$ be a non-archimedean local field. Let $X$ be an affine $K$-scheme of finite type. A subset $B\subset X(K)$ is \emph{bounded} if for every $f\in K[X]$,  the set $\{v(f(b)) \mid b\in B \}$ is bounded below, where $v:K\to \bR$ is the valuation of $K$. 
\end{dfn}

We have the following lemma in \cite[Fact 2.2.3]{KP23}.
\begin{lem}\label{lem:bounded criterion}
	If $B\subset X(K)$ is closed, then $B$ is bounded if and only if $B$ is compact with respect to the analytic topology of $X(K)$.  If $f:X\to Y$ is a morphism of affine $K$-schemes of finite type, then $f$ carries bounded subsets of $X(K)$ to bounded subsets in $Y(K)$. \qed
\end{lem}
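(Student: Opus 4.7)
The plan is to reduce the first claim to the Heine--Borel theorem for non-archimedean local fields via an affine embedding. First I would fix generators $f_1,\ldots,f_n$ of the coordinate ring $K[X]$, giving a closed immersion $X\hookrightarrow \mathbb{A}^n_K$ whose induced map $X(K)\hookrightarrow K^n$ is a closed topological embedding for the analytic topology. Writing $|\cdot|$ for the absolute value on $K$ associated with $v$ (so that $|a|=q^{-v(a)}$ for some $q>1$), the definition of boundedness of $B$ in the valuation sense translates into each coordinate satisfying $|f_i(b)|\leq C_i$ on $B$, i.e.\ the image of $B$ in $K^n$ lies in a product of closed balls.

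Next I would invoke the fact that closed balls in a non-archimedean local field $K$ are compact, so a product of such balls is compact in $K^n$. Thus if $B$ is closed in $X(K)$ and bounded, its image is a closed subset of a compact set in $K^n$, hence compact; since $X(K)\hookrightarrow K^n$ is a topological embedding, $B$ itself is compact. Conversely, if $B$ is compact, then for every $f\in K[X]$, the continuous image $f(B)\subset K$ is compact and hence contained in some ball, so $\{v(f(b))\mid b\in B\}$ is bounded below. This also shows that the converse direction does not require $B$ to be closed; the closedness hypothesis is needed only for bounded $\Rightarrow$ compact.

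For the second assertion, the key point is that a morphism $f:X\to Y$ of affine $K$-schemes is specified by a $K$-algebra homomorphism $f^{\#}:K[Y]\to K[X]$. Thus for any $g\in K[Y]$ and $b\in B$ one has $g(f(b))=f^{\#}(g)(b)$; applying the boundedness hypothesis to the function $f^{\#}(g)\in K[X]$ yields that $\{v(g(f(b)))\mid b\in B\}$ is bounded below, which is precisely the statement that $f(B)$ is bounded.

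I do not anticipate a significant obstacle: the statement is essentially the non-archimedean analogue of the elementary fact that a closed subset of $\mathbb{R}^n$ is compact iff it is bounded, together with the trivial observation that morphisms of affine schemes pull back regular functions. The only point requiring minor care is the conversion between the valuation $v$ and the associated absolute value $|\cdot|=q^{-v}$, so that the valuation-theoretic definition of bounded matches the metric notion of a bounded subset of $K^n$ used in Heine--Borel.
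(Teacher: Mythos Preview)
Your argument is correct and is the standard proof of this fact. Note, however, that the paper does not actually supply a proof: the lemma is stated with a \textsf{qed} symbol and is simply quoted as \cite[Fact 2.2.3]{KP23}, so there is no ``paper's own proof'' to compare against beyond the citation.
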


\begin{dfn}
	Let $k$ be an algebraically closed field and let  $V$ be a finite dimensional $k$-vector space. Let  $G$ be a group and $\varrho:G\to \GL(V)$ be a representation.  A filtration $0= V_0\subsetneq V_1\subsetneq\cdots\subsetneq V_k=V$
	is called a Jordan-Holder series if all the $V_j$
	are subrepresentations of $\varrho$ and the induces representation $\varrho_j:G\to \GL(V_j/V_{j-1})$ by $\varrho$
	is irreducible. The \emph{semisimplification} of  $\varrho$
	is $\oplus_{i=j}^{k}\varrho_{j}$.  By the Jordan-Holder theorem,  the semisimplification of  $\varrho$ exists and it  is independent of the choice of Jordan-Holder filtation. 
\end{dfn}
We will establish a lemma that plays a crucial role in the proof of \cref{lem:conjugate3} and is also noteworthy in its own regard. 
\begin{lem}\label{lem:semisimplification}
	Let $\varrho:\pi_1(X)\to {\rm GL}_N(K)$ be a (un)bounded representation. Then its semisimplification $\varrho^{ss}:\pi_1(X)\to {\rm GL}_N(\bar{K})$ is also (un)bounded. 
\end{lem}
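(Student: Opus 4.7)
The bracket notation ``(un)bounded'' expresses a biconditional: $\varrho$ is bounded if and only if $\varrho^{ss}$ is bounded. I would prove the two directions separately, using the standard equivalence that a subgroup $H\subset \GL_N(K)$ is bounded if and only if it stabilizes an $\mathcal{O}_K$-lattice in $K^N$ (equivalently, fixes a vertex in the Bruhat--Tits building of $\GL_N(K)$). Passing between $K$ and $\bar K$ is harmless for boundedness, since the inclusion $\GL_N(K)\hookrightarrow \GL_N(\bar K)$ is closed and any extended valuation restricts to the given one.

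\emph{Easy direction ($\varrho$ bounded $\Rightarrow$ $\varrho^{ss}$ bounded).} Assuming $\varrho(\pi_1(X))$ preserves a lattice $L\subset K^N$, take the Jordan--H\"older filtration $0=V_0\subsetneq V_1\subsetneq\cdots\subsetneq V_k=K^N$. Then each $L_j:=L\cap V_j$ is a lattice in $V_j$ preserved by $\varrho$, and the quotients $L_j/L_{j-1}$ are $\pi_1(X)$-invariant lattices in the simple constituents $V_j/V_{j-1}$. Their direct sum is an invariant lattice for $\varrho^{ss}$, which is therefore bounded.

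\emph{Hard direction ($\varrho^{ss}$ bounded $\Rightarrow$ $\varrho$ bounded).} Fix a vector space splitting $K^N\cong \bigoplus_i W_i$ with $W_i=V_i/V_{i-1}$; then $\varrho$ becomes block upper triangular with diagonal blocks $\varrho_i$ equal to the simple factors of $\varrho^{ss}$. Write $u_{ij}(\gamma):W_j\to W_i$ for the off-diagonal blocks ($i<j$). Multiplicativity of $\varrho$ gives the twisted cocycle relation
\[
u_{ij}(\gamma_1\gamma_2)=\varrho_i(\gamma_1)\,u_{ij}(\gamma_2)+u_{ij}(\gamma_1)\,\varrho_j(\gamma_2)+\sum_{i<k<j}u_{ik}(\gamma_1)\,u_{kj}(\gamma_2).
\]
I would induct on the ``distance'' $j-i$. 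Since $\pi_1(X)$ is finitely generated (as $X$ is quasi-projective), fix generators $\gamma_1,\ldots,\gamma_s$; then each $u_{ij}(\gamma_t)$ is a specific matrix, hence bounded. In the base case $j=i+1$, the correction sum disappears and iterating the cocycle along any word $\gamma=\gamma_{i_1}\cdots\gamma_{i_m}$ yields $u_{i,i+1}(\gamma)$ as a sum of terms each of the form $\varrho_i(\cdot)\,u_{i,i+1}(\gamma_{i_k})\,\varrho_{i+1}(\cdot)$; the ultrametric inequality together with boundedness of $\varrho^{ss}$ on the diagonal blocks then gives a uniform bound independent of the word length. The inductive step for $j-i>1$ works identically, since by hypothesis the correction terms $u_{ik}(\gamma_1)u_{kj}(\gamma_2)$ involve blocks of strictly smaller distance that are already bounded. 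Once every $u_{ij}$ is bounded, an invariant lattice for $\varrho$ can be built from the $L_i\subset W_i$ (enlarged suitably to absorb the bounded off-diagonal parts), giving boundedness of $\varrho$.

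\emph{Main obstacle.} The nontrivial direction is the second one, and the crux is that cocycles on a finitely generated group valued in a bounded module over a non-archimedean field are automatically bounded on the whole group whenever they are bounded on a finite generating set. This is the place where both the finite generation of $\pi_1(X)$ and the ultrametric nature of $K$ are essential; over an archimedean field the length of the word would enter additively and the argument would collapse. The bookkeeping for the inductive correction terms is routine provided one sets up the cocycle iteration carefully.
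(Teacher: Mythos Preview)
Your proposal is correct and rests on the same three ingredients as the paper: conjugating the diagonal blocks into $\GL_{N_i}(\cO_L)$ after passing to a finite extension, finite generation of $\pi_1(X)$, and the ultrametric inequality. The easy direction is identical (the paper simply declares it ``obvious''; your lattice-intersection argument is the standard unpacking).

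The hard direction is organized differently. The paper writes $\tau=\tau_1+\tau_2$ with $\tau_1$ block-diagonal and $\tau_2$ strictly block-upper-triangular, expands $\tau(x_{i_1}\cdots x_{i_\ell})=\sum_{j_1,\dots,j_\ell\in\{1,2\}}\tau_{j_1}(x_{i_1})\cdots\tau_{j_\ell}(x_{i_\ell})$, and observes that any monomial containing $\geq n$ factors from $\tau_2$ vanishes by nilpotency; hence every surviving term has entry-valuation $\geq (n-1)C$ where $C$ bounds the generator entries of $\tau_2$, and the ultrametric gives the uniform bound in one stroke. Your approach instead inducts on the block distance $j-i$ via the inhomogeneous cocycle relation, bounding the off-diagonal pieces layer by layer. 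Both are valid; the paper's nilpotency trick is shorter and avoids the bookkeeping of iterating the cocycle with correction terms, while your version makes the cohomological structure (bounded cocycles on a finitely generated group over a non-archimedean field) more transparent. One small point to make explicit in your write-up: the step ``boundedness of $\varrho^{ss}$ on the diagonal blocks'' must be upgraded to ``entries in $\cO_L$'' by the conjugation you allude to before the ultrametric estimate goes through.
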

\begin{proof}
	Note that  
	there exists some $g\in {\rm GL}_N(\bar{K})$ such that  
	\begin{equation}\label{eq:upper}
		g\varrho g^{-1}=\ \left[\begin{array}{cccc}
			\varrho_1& a_{12} & \cdots & a_{1 n} \\
			0 & \varrho_2 & \cdots & a_{2 n} \\
			\vdots & \vdots & \ddots & \vdots \\
			0 & 0 & \cdots & \varrho_n
		\end{array}\right] 
	\end{equation}
	where $\varrho_i:\pi_1(X)\to \GL_{N_i}(\bar{K})$ is an irreducible representation such that $\sum_{i=1}^{n}N_i=N$ and $a_{ij}$ is a map from $\pi_1(X)$ to the set of $N_i\times N_j$ matrices $M_{N_i\times N_j}(\bar{K})$.  Note that $g\varrho g^{-1}$ is unbounded if and only if $\varrho$ is unbounded. Hence we may assume at the beginning that $\varrho$  has the form of \eqref{eq:upper}. The semisimplification of $\varrho$ is defined  
	by
	 \begin{equation*} 
		\varrho^{ss} =\left[\begin{array}{cccc}
			\varrho_1& 0 & \cdots & 0 \\
			0 & \varrho_2 & \cdots & 0 \\
			\vdots & \vdots & \ddots & \vdots \\
			0 & 0 & \cdots & \varrho_n
		\end{array}\right] 
	\end{equation*}
	It is obvious that if $\varrho$ is bounded, then $\varrho^{ss}$ is bounded.  
	
	Assume now $\varrho^{ss}$ is  bounded. Then each $\varrho_i$ is bounded. Let $L$ be a finite extension of $K$ such that $\varrho$ is defined over $L$. Then $\varrho_i(\pi_1(X))$ is contained in some maximal compact subgroup of $\GL_{N_i}(L)$. Since all maximal compact subgroups of $\GL_{N_i}(L)$ are conjugate to $\GL_{N_i}(\cO_L)$,   then there exists $g_i\in \GL_{N_i}(L)$ such that $g_i \varrho_i g_i^{-1}:\pi_1(X)\to \GL_{N_i}(\cO_L)$. 
	Define 
	\begin{equation} 
		\tau:= \left[\begin{array}{cccc}
			g_1& 0 & \cdots & 0 \\
			0 & g_2& \cdots & 0\\
			\vdots & \vdots & \ddots & \vdots \\
			0 & 0 & \cdots & g_n
		\end{array}\right]  \left[\begin{array}{cccc}
			\varrho_1& a_{12} & \cdots & a_{1 n} \\
			0 & \varrho_2 & \cdots & a_{2 n} \\
			\vdots & \vdots & \ddots & \vdots \\
			0 & 0 & \cdots & \varrho_n
		\end{array}\right]  \left[\begin{array}{cccc}
			g_1^{-1}& 0 & \cdots & 0 \\
			0 & g_2^{-1}& \cdots & 0\\
			\vdots & \vdots & \ddots & \vdots \\
			0 & 0 & \cdots & g_n^{-1}
		\end{array}\right] 
	\end{equation}
	which is conjugate to $\varrho$.  Then $\tau$ can be written as 
	\begin{equation*}
		\tau=\ \left[\begin{array}{cccc}
			g_1\varrho_1g_1^{-1}& h_{12} & \cdots & h_{1 n} \\
			0 & 	g_2\varrho_2g_2^{-1} & \cdots & h_{2 n} \\
			\vdots & \vdots & \ddots & \vdots \\
			0 & 0 & \cdots & g_n\varrho_ng_n^{-1}
		\end{array}\right] 
	\end{equation*}
	such that $g_i\varrho_ig_i^{-1}: \pi_1(X)\to {\rm GL}_N(\cO_L)$ is irreducible. 
	Write 
	$$
	\tau_1:= \left[\begin{array}{cccc}
		g_1\varrho_1g_1^{-1}& 0 & \cdots & 0 \\
		0 & 	g_2\varrho_2g_2^{-1} & \cdots & 0 \\
		\vdots & \vdots & \ddots & \vdots \\
		0 & 0 & \cdots & g_n\varrho_ng_n^{-1}
	\end{array}\right] 
	$$
	and 
	$$
	\tau_2:= \left[\begin{array}{cccc}
		0& h_{12} & \cdots & h_{1 n} \\
		0 & 0& \cdots & h_{2 n} \\
		\vdots & \vdots & \ddots & \vdots \\
		0 & 0 & \cdots &0
	\end{array}\right] 
	$$
	Note that $\tau_2$ is not a group homomorphism but only a map from $\pi_1(X)$ to $ {\rm GL}_N(L)$. 
	
	For any matrix $B$ with values  in $L$, we shall write $v(B)$ the matrix whose entries are the valuation of the  corresponding entries in $B$ by $v:L\to \bR$. Let us define $M(B)$ the lower bound of   the entries of $v(B)$. Then for another matrix $A$ with values  in $L$, one has $M(A+B)\geq \min \{M(A), M(B)\}$.  
	
	Let $x_1,\ldots,x_m$ be a generator of $\pi_1(X)$. Let $C$ be the lower bound of the entries of $ \{v(h_{ij}(x_k))\}_{i,j=1,\ldots,n; k=1,\ldots,m}$. We assume that $C<0$, or else it is easy to see that $\tau$ is bounded.  Note that $\min_{i=1,\ldots,m}M(g_i\varrho_ig_i^{-1}(x_i))\geq 0$. It follows that $M(\tau_1(x_i))\geq 0$ for each $x_i$. 
	Then  for any $x=x_{i_1}\cdots x_{i_\ell}$,
	\begin{align*}
		M(\tau(x))&=M(\sum_{j_1,\ldots,j_\ell=1,2}\tau_{j_1}(x_{i_1})\cdots\tau_{j_\ell}(x_{i_\ell}) )\\
		&\geq \min_{{j_1,\ldots,j_\ell=1,2}}\{M(\tau_{j_1}(x_{i_1})\cdots\tau_{j_\ell}(x_{i_\ell}) )\}.
	\end{align*}
	Note that $\tau_{j_1}(x_{i_1})\cdots\tau_{j_\ell}(x_{i_\ell})=0$ if $\#\{k\mid j_k=2\}\geq n$ since $\tau_2(x_i)$ is   nilpotent.   Hence
	$$
	M(\tau(x))\geq \min_{{j_1,\ldots,j_\ell=1,2};\#\{k\mid j_k=2\}< n }\{M(\tau_{j_1}(x_{i_1})\cdots\tau_{j_\ell}(x_{i_\ell}) )\}.
	$$
	Since $M(\tau_1(x_i))\geq 0$ for each $x_i$, it follows that $M(\tau_{j_1}(x_{i_1})\cdots\tau_{j_\ell}(x_{i_\ell})\geq (n-1)C$ if   $\#\{k\mid j_k=2\}< n$.    Therefore, $
	M(\tau(x))\geq (n-1)C$ for any $x\in \pi_1(X)$. $\tau$ is thus bounded. Since $\varrho$ is conjugate to $\tau$, $\varrho$ is also bounded. We finish the proof of the lemma. 
\end{proof}

We recall the following facts of character varieties (cf. \cite[Theorem 1.28]{LM85}). 
\begin{lem}\label{lem:character}
 Let $K$ be an \emph{algebraically closed} field.  Then the $K$-points	$M_{\rm B}(X,N)$ are  in one-to-one correspondence with the
	conjugate  classes of semisimple   representations $\pi_1(X)\to {\rm GL}_N(K)$. More precisely, if $\{\varrho_i:\pi_1(X)\to {\rm GL}_N(K)\}_{i=1,2}$ are two linear  representations such that $[\varrho_1]=[\varrho_2]\in M_{\rm B}(X, N)(K)$, then the semisimplification of $\varrho_1$ and $\varrho_2$ are conjugate.  \qed
\end{lem}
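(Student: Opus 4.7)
The plan is to reduce the lemma to standard GIT facts for the conjugation action of $\GL_N$ on the representation scheme $R(X,\GL_N)$. Fix a finite generating set $\gamma_1,\dots,\gamma_k$ of $\pi_1(X)$. Evaluating a representation on the generators identifies $R(X,\GL_N)$ with a closed subscheme of $\GL_N^k$, and $\GL_N$ acts on this subscheme by simultaneous conjugation. By definition, $M_{\rm B}(X,N)=R(X,\GL_N)\sslash\GL_N$, so its $K$-points, for $K$ algebraically closed, parametrize the closed $\GL_N(K)$-orbits in $R(X,\GL_N)(K)$; more precisely, two $K$-points $[\varrho_1],[\varrho_2]$ of $R(X,\GL_N)$ have the same image in $M_{\rm B}(X,N)(K)$ if and only if the closures of their $\GL_N(K)$-orbits intersect.

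The next step is to identify closed orbits with semisimple representations. First I would show that if $\varrho$ is semisimple then its orbit is closed: decomposing $K^N=\oplus V_i^{\oplus m_i}$ into isotypic components for $\varrho$, the stabilizer is a Levi $\prod \GL_{m_i}$, and a standard argument (e.g.\ using Matsushima's criterion, or directly by noting that the stabilizer is reductive and the orbit is affine of the expected dimension) shows that the orbit is closed. Conversely, if $\varrho$ is not semisimple, I would use the Jordan--H\"older filtration to produce a one-parameter subgroup $\lambda:\G_m\to\GL_N(K)$ acting diagonally with strictly increasing weights on the pieces of the filtration; then $\lim_{t\to 0}\lambda(t)\varrho\lambda(t)^{-1}=\varrho^{ss}$, exhibiting the semisimplification in the orbit closure of $\varrho$ while being non-conjugate to it, so the orbit of $\varrho$ is not closed.

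Combining the two steps, every orbit closure contains exactly one closed orbit, namely that of $\varrho^{ss}$. Thus two representations $\varrho_1,\varrho_2$ give the same $K$-point of $M_{\rm B}(X,N)$ if and only if the closed orbits in the closures of their orbits coincide, i.e.\ if and only if $\varrho_1^{ss}$ and $\varrho_2^{ss}$ are conjugate in $\GL_N(K)$; and the $K$-points of $M_{\rm B}(X,N)$ are in bijection with conjugacy classes of semisimple representations, with the first statement being the specialization to $\varrho_1=\varrho_2^{ss}$.

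The only subtle point I expect is verifying that a one-parameter subgroup adapted to a Jordan--H\"older filtration really degenerates $\varrho$ to $\varrho^{ss}$ inside $R(X,\GL_N)$ (not merely inside $\GL_N^k$); this is straightforward because $R(X,\GL_N)$ is cut out by the relations of $\pi_1(X)$, which are preserved under conjugation by $\lambda(t)$ and hence in the limit. Everything else is classical GIT plus Jordan--H\"older, exactly as in \cite[Theorem~1.28]{LM85}.
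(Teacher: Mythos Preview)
Your sketch is correct and is precisely the standard GIT argument underlying \cite[Theorem~1.28]{LM85}; the paper does not give its own proof but simply cites that reference and marks the lemma with a \qed. So your approach is exactly what the paper defers to.
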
 
The following result is thus a consequence of \cref{lem:semisimplification}.
\begin{lem}\label{lem:same bound}
	Let $K$ be a non-archimedean local field. Let $x\in M_{\rm B}(X,N)(K)$.  If $\{\varrho_i:\pi_1(X)\to {\rm GL}_N(\bar{K})\}_{i=1,2}$ are two linear  representations such that $[\varrho_1]=[\varrho_2]=x\in M_{\rm B}(X, N)(\bar{K})$, then $\varrho_1$ is bounded if and only if $\varrho_2$ is bounded. In other words, for the GIT quotient  $\pi: R(X,N) \to M_{\rm B}(X,N) $  where  $R(X,N)$ is  the representation variety of $\pi_1(X)$ into ${\rm GL}_N$, for any $x\in M_{\rm B}(X,N)(\bar{K})$, the representations in $\pi^{-1}(x)\subset R(X,N)(\bar{K})$ are either all bounded or all unbounded. 
\end{lem}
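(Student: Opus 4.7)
The plan is to deduce this directly by chaining together the two preceding lemmas. By \cref{lem:character}, since $\bar{K}$ is algebraically closed and $[\varrho_1]=[\varrho_2]\in M_{\rm B}(X,N)(\bar{K})$, the semisimplifications $\varrho_1^{ss}$ and $\varrho_2^{ss}$ are conjugate in $\GL_N(\bar{K})$. Choose $g\in \GL_N(\bar{K})$ with $\varrho_2^{ss}=g\,\varrho_1^{ss}\,g^{-1}$, and pick a finite extension $L/K$ containing the entries of $g$ together with the matrix entries of $\varrho_1,\varrho_2$ on some finite generating set of $\pi_1(X)$. Then all four representations $\varrho_i,\varrho_i^{ss}$ take values in $\GL_N(L)$ and the conjugation by $g$ is a morphism of affine $L$-schemes $\GL_{N,L}\to\GL_{N,L}$.

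Next I would argue that boundedness is invariant under this conjugation. Since $A\mapsto gAg^{-1}$ is a morphism of affine schemes of finite type over $L$, \cref{lem:bounded criterion} implies it sends bounded subsets of $\GL_N(L)$ to bounded subsets, and the same holds for its inverse $A\mapsto g^{-1}Ag$. Therefore $\varrho_1^{ss}(\pi_1(X))$ is bounded in $\GL_N(L)$ if and only if $\varrho_2^{ss}(\pi_1(X))$ is.

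Finally, I would invoke \cref{lem:semisimplification}, which asserts that a representation is bounded if and only if its semisimplification is bounded. Applying this to both $\varrho_1$ and $\varrho_2$ and combining with the previous step yields the chain
\[
\varrho_1 \text{ bounded} \iff \varrho_1^{ss} \text{ bounded} \iff \varrho_2^{ss} \text{ bounded} \iff \varrho_2 \text{ bounded},
\]
which is the first assertion. The second assertion is a mere restatement: two $\bar{K}$-points of $R(X,N)$ lie in a common fiber of the GIT quotient $\pi$ precisely when the corresponding representations have the same class in $M_{\rm B}(X,N)(\bar{K})$, so the first assertion applies verbatim. There is no genuine obstacle here, as the hard analytic input has already been packaged into \cref{lem:semisimplification}; the only point requiring a line of care is ensuring that boundedness makes sense for representations valued in $\GL_N(\bar{K})$, which is handled by descending to a finite extension $L$ as above.
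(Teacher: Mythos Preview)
Your proof is correct and follows essentially the same route as the paper: use \cref{lem:character} to identify the semisimplifications up to conjugacy over a finite extension $L/K$, note that conjugation preserves boundedness, and then apply \cref{lem:semisimplification} on each side. The only difference is that you spell out the invariance of boundedness under conjugation via \cref{lem:bounded criterion}, whereas the paper leaves this implicit.
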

\begin{proof}
	By the assumption and \cref{lem:character}, we know that the semisimplificaitons $\varrho_1^{ss}:\pi_1(X)\to {\rm GL}_N(\bar{K})$ of $\varrho_2^{ss}:\pi_1(X)\to {\rm GL}_N(\bar{K})$ are conjugate by an element  $g\in {\rm GL}_N(\bar{K})$. Therefore, there exists a finite extension $L$ of $K$ such that $\varrho_i^{ss}$ and $\varrho_i$ are all defined in $L$ and $g\in {\rm GL}_N(L)$. Hence $\varrho_1^{ss}$ is bounded if and only if $\varrho_2^{ss}$ is bounded.  By \cref{lem:semisimplification}, we know that $\varrho_i^{ss}$ is bounded if and only if  $\varrho_i$ is bounded. Therefore, the lemma follows. 
\end{proof}
We thus can make the following definition. 
\begin{dfn}[Class of bounded representations]
	Let $K$ be a non-archimedean local field of characteristic zero. A point $x\in M_{\rm B}(X,N)(\bar{K})$ is called \emph{a class of bounded representations} if there exist some  $\varrho: \pi_1(X)\to {\rm GL}_N(\bar{K})$ (thus any $\varrho$ by \cref{lem:same bound}) such that $[\varrho]=x$ and $\varrho$ is bounded.
\end{dfn}

\begin{proposition}\label{lem:conjugate3} 
	Let $X$ be a smooth quasi-projective variety
	 and let $\kC$ be a $\bar{\bQ}$-constructible subset of $M_{\rm B}(X,N)$.  Let $f:F\to X$  be a morphism from  a quasi-projective \emph{normal} variety $F$  such that $s_{\kC}\circ f(F)$ is a  point. 
	Let $\{\tau_i:\pi_1(X)\to {\rm GL}_N(\bC)\}_{i=1,2}$ be  reductive representations  such that $[\tau_1]$ and $[\tau_2]$ are in the same geometric connected component of $\kC(\bC) $.
	Then $\tau_1\circ \iota$ is conjugate to $\tau_2\circ \iota$, where $\iota:\pi_1(F)\to \pi_1(X)$ is the  homomorphism of fundamental groups induced by $f$. In other words,  $j(\kC)$ is zero-dimensional, where $j:M_{\rm B}(X,N)\to M_{\rm B}(F,N)$ is the natural morphism of character varieties induced by $\iota:\pi_1(F)\to \pi_1(X)$. 
\end{proposition}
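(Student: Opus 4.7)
The plan is to reduce the proposition to showing that the natural morphism $j : M_{\rm B}(X,N) \to M_{\rm B}(F,N)$ induced by $\iota$ sends $\kC$ to a zero-dimensional (i.e., finite) subset of $M_{\rm B}(F,N)$. Once this is established, each geometrically connected component of $\kC$ maps to a single point, so $j([\tau_1]) = j([\tau_2])$, and since both $\tau_i\circ\iota$ are reductive by \cref{thm:reductive}, \cref{lem:character} implies they are conjugate.

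First I would replace $f : F \to X$ by the inclusion of its Zariski-closed image $F' := \overline{f(F)} \subset X$, which is a connected closed subvariety by the irreducibility convention. Since $s_\kC$ is algebraic and hence continuous, $s_\kC(F')$ is the same point as $s_\kC(f(F))$. Now \cref{lem:bounded} applies: for every non-archimedean local field $L$ containing the field of definition $k$ of $\kC$, and every reductive $\tau : \pi_1(X) \to \GL_N(L)$ with $[\tau] \in \kC(L)$, the subgroup $\tau(\mathrm{Im}[\pi_1(F') \to \pi_1(X)])$ is bounded in $\GL_N(L)$. Because $\iota$ factors through $\pi_1(F') \to \pi_1(X)$, the pullback $\tau \circ \iota : \pi_1(F) \to \GL_N(L)$ is also bounded. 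Using \cref{lem:semisimplification,lem:same bound} to pass between a class and its semisimple representatives, every point of $j(\kC)(L)$ represents a class of bounded representations, and in particular every trace function $t_\gamma$ for $\gamma \in \pi_1(F)$ takes values in $\cO_L$ on $j(\kC)(L)$.

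To derive finiteness of $j(\kC)$, I argue by contradiction. The image $j(\kC)$ is $\bar{\bQ}$-constructible by Chevalley's theorem, so if it were positive-dimensional, its Zariski closure would contain a positive-dimensional irreducible component $V$ defined over a number field $k_0 \supseteq k$. Since trace functions generate the coordinate ring of $M_{\rm B}(F,N)$, some $t_\gamma|_V : V \to \bA^1$ is non-constant, hence dominant; its restriction to the dense constructible subset $j(\kC) \cap V$ remains dominant. Choose a non-archimedean place $v$ of $k_0$ and $a \in \bar k_v$ with $v(a)$ arbitrarily negative; outside a finite exceptional set, $a$ lies in $t_\gamma(j(\kC)(\bar k_v))$, so there exists $c \in j(\kC)(\bar k_v)$ with $t_\gamma(c) = a$. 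Algebraic closedness of $\bar k_v$ and surjectivity of $\kC \to j(\kC)$ on $\bar k_v$-points provide a lift $[\tau] \in \kC(\bar k_v)$ of $c$, defined over some finite extension $L$ of $k_v$ (a genuine non-archimedean local field). The previous paragraph then forces $a = \mathrm{tr}((\tau \circ \iota)(\gamma)) \in \cO_L$, contradicting $v(a) \ll 0$.

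The main obstacle is the final arithmetic step: one must verify that the trace function $t_\gamma$ remains dominant on the dense open part of $V$ meeting $j(\kC)$, so that sufficiently large elements of $\bA^1(\bar k_v)$ have preimages in $j(\kC)(\bar k_v)$ rather than only in $V(\bar k_v)$, and that a $\bar k_v$-lift of $c$ through $j$ yields a representation genuinely defined over a local field $L$ so that \cref{lem:bounded} applies directly. Modulo these verifications---which follow from standard constructibility arguments and the finite-type nature of $\kC$---the argument is complete.
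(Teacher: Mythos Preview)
Your proposal is essentially correct and follows the same overall strategy as the paper: both argue by contradiction that $j(\kC)$ is zero-dimensional by producing, over a non-archimedean local field, a point that violates the boundedness forced by \cref{lem:bounded}, and both conclude conjugacy via \cref{thm:reductive} and \cref{lem:character}.

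The technical routes differ. The paper works on the representation-variety level: it chooses an irreducible curve $C\subset\pi^{-1}(\kC)\subset R(X,N)$ mapping generically finitely to $\bA^1$ via some regular function $\psi$ on $M_{\rm B}(F,N)$, uniformly bounds the residue-field degree so that all preimages of $U(K)$ lie in $C(L)$ for a \emph{single} finite extension $L/K$, and then invokes compactness of the locus of bounded classes in $M_{\rm B}(F,N)(L)$ (citing~\cite{Yam10}) to find an unbounded restriction. It then uses either \cref{lem:same bound} or Kempf's theorem~\cite{Kem78} to obtain a reductive $L$-representative to which \cref{lem:bounded} applies. You instead stay on the character-variety side, exploit that trace functions generate the coordinate ring, and for a single value $a$ with $v(a)\ll 0$ lift through $j$ over $\bar k_v$ and descend to a finite extension. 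Your route is slightly more direct and avoids the appeal to~\cite{Yam10}, replacing it by the elementary observation that bounded representations have integral traces. The gaps you flag at the end---that the lift yields a reductive representation defined over an honest local field $L$---are exactly where the paper expends its effort, and they are fillable just as the paper does: pass to a finite extension of $k_v$ over which the finitely many generators land, then use semisimplification together with \cref{lem:same bound}. So the two arguments are equivalent in substance.
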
 
\begin{proof}
Let $M_X$ (resp. $M$)  be the moduli space of representations of $\pi_1(X)$ (resp. $\pi_1(F^{\mathrm{norm}})$) in ${\rm GL}_N$.   Note that $M_X$ and $M$ are both affine schemes of finite type defined over ${\bQ}$.  Let $R_X$ (resp. $R$) be the  affine  scheme of finite type defined over $\bQ$ such that $R_X(L)=\Hom(\pi_1(X), {\rm GL}_N(L))$ (resp. $R(L)=\Hom(\pi_1(F), {\rm GL}_N(L))$)  for any field $L/\bQ$. Then we have 
	\begin{equation}\label{eq:cat}
		\begin{tikzcd}
			R_X \arrow[r, "\pi"] \arrow[d, "\iota^*"] & M_X\arrow[d, "j"]\\
			R\arrow[r, "p"] & M
		\end{tikzcd}
	\end{equation} where $\pi:R_X\to M_X$  and $p:R\to M$ are the GIT quotient that are both surjective.      For any field extension $K/\bQ$ and any $\varrho\in R_X(K)$, we write $[\varrho]:=\pi(\varrho)\in M_X(K)$.  Let $\mathfrak{R}:=\pi^{-1}(\kC)$ that is a constructible subset defined over some number field $k$.  Then $\tau_i\in \kR(\bC)$. 
	
	\begin{claim}\label{claim:zero} 
	Let $\kR'$ be any geometric irreducible component of $\kR$. Then $j\circ\pi(\kR')$ is zero dimensional.  
	\end{claim}	 
	\begin{proof}   Assume, for the sake of contradiction, that $j\circ\pi(\kR')$ is positive-dimensional.  If we replace $k$ by a finite extension, we may assume that $\kR'$ is defined over $k$.    
Since $M$ is an affine $\bQ$-scheme of finite type, it follows that     there exist a  $k$-morphism $\psi: M \rightarrow \mathbb{A}^1$ such that the image $\psi\circ j\circ\pi(\kR')$ is Zariski dense in $\mathbb{A}^1$.   After replacing $k$ by a finite extension, we   can find   a locally closed irreducible curve $C \subset \kR'$ such that the restriction $ \psi \circ j\circ\pi|_C: C \rightarrow \mathbb{A}^1$ is a generically finite $k$-morphism.  We  take a Zariski open subset $U \subset \mathbb{A}^1$ such that  $\psi \circ j\circ \pi|_C$ is finite over $U$.   Let $\mathfrak{p}$ be a prime ideal of the ring of integer $\cO_k$ and let $K$ be its non-archimedean completion.  In the following, we shall work over  $K$.

		Let $x \in U(K)$ be a point, and let $y \in C(\bar{K})$ be a point over $x$. Then $y$ is defined over some extension of $K$ whose extension degree is bounded by the degree of $\psi\circ j\circ\pi|_C: C \rightarrow \mathbb{A}^1$. Note that there are only finitely many such field extensions. Hence there exists a finite extension $L/K$ such that the points over $U(K)$ are all contained in $C(L)$. Since $U(K)\subset \mathbb{A}^1(L)$ is unbounded, the image $\psi\circ j\circ\pi(C(L)) \subset \mathbb{A}^1(L)$ is unbounded.

		  Let $R_0$ be the set of bounded representations in $R(L)$.  Recall that by \cite{Yam10},    $M_0:=p(R_0)$  is compact in $M(L)$ with respect to analytic topology, hence $M_0$ is bounded by \cref{lem:bounded criterion}.  By \cref{lem:bounded criterion}  once again, $\psi(M_0)$ is a bounded subset in $\bA^1(L)$.   Recall that $\psi\circ j\circ\pi(C(L)) \subset \mathbb{A}^1(L)$ is unbounded.  Therefore, there exists $\varrho\in C(L)$ such that $\psi\circ j([\varrho])\not\in \psi(M_0)$. Note that $[\varrho\circ\iota]=j([\varrho])$ by \eqref{eq:cat}.  Hence $[\varrho\circ\iota]\not\in M_0$ which implies that $\varrho\circ\iota\not\in R_0$.  By the definition of $R_0$,  $\varrho\circ\iota$  is  unbounded.  
		
		Let $\varrho^{ss}:\pi_1(X)\to {\rm GL}_N(\bar{L})$  be the semisimplification of $\varrho$. 
		Then $[\varrho]=[\varrho^{ss}]\in \kC(\bar{L})$  by \cref{lem:character}.   Therefore, $[\varrho\circ\iota]=[\varrho^{ss}\circ\iota]\in M(\bar{L})$ by \eqref{eq:cat}. By \cref{lem:same bound}, $\varrho^{ss}\circ\iota:\pi_1(F)\to {\rm GL}_N(\bar{L})$ is also unbounded.  Note that  $\varrho^{ss}\circ\iota$ is reductive by \cref{thm:reductive}.  Since $\pi_1(F)$ is finitely generated, there exist a finite extension $L'$ of $L$ such that $\varrho^{ss}$ is defined over $L'$.   
				However, by \cref{lem:bounded}, $\varrho^{ss}\circ\iota$ is always bounded. We obtain a contradiction and thus  $j\circ\pi(\kR')$ is zero dimensional.  

We can also apply \cite[Corollary 4.3]{Kem78} instead of  \cref{lem:same bound}. As $\varrho\in C(L)$, its image $[\varrho]\in M_X(L)$.   Consider the fiber $\pi^{-1}([\varrho])$ which is an $L$-variety. Its closed orbit
		is defined over $L$ by Galois descent. As  $\pi^{-1}([\varrho])$ contains the $L$-point $\varrho$,  the closed orbit in $\pi^{-1}([\varrho])$ 
		has an $L$-point $\varrho':\pi_1(X)\to \GL_{N}(L)$ as well by \cite[Corollary 4.3]{Kem78}.  By \cref{lem:character}, $\varrho'$ is reductive and $[\varrho']=[\varrho]$. Hence $[\varrho'\circ\iota]=[\varrho\circ\iota]\not\in M_0$.   Therefore,  $\varrho'\circ\iota:\pi_1(F)\to \GL_{N}(L)$ is unbounded by our definition of $M_0$. However,  by the definition of $s_\kC:X\to S_\kC$  in \cref{def:reduction ac}, $\varrho'\circ\iota$ is always bounded. We obtain a contradiction and thus  $j\circ\pi(\kR')$ is zero dimensional.  
	\end{proof}
Let $\{\tau_i:\pi_1(X)\to {\rm GL}_N(\bC)\}_{i=1,2}$ be reductive representations such that $[\tau_1]$ and $[\tau_2]$ are contained in the same geometric   connected component $\kC'$ of $\kC(\bC)$. We aim to prove that $j(\kC')$ is a point in $M(\bC)$.

Consider a geometric  irreducible component $\kC''$ of $\kC'$. We can choose a geometric irreducible component $Z$ of $\pi^{-1}(\kC'')$ such that $\pi(Z)$ is dense in $\kC''$. It follows that $Z$ is an irreducible component of $\kR(\bC)$. By \cref{claim:zero}, we know that $j\circ \pi(Z)$ is a point in $M(\bC)$. Thus, $j(\kC'')$ is also a point in $M(\bC)$.

Consequently, $j(\kC')$ is a point in $M(\bC)$. As a result, we have $[\tau_1\circ\iota]=j([\tau_1])=j([\tau_2])=[\tau_2\circ\iota]$.  By \cref{thm:reductive}, $\tau_1\circ \iota$ and $\tau_2\circ\iota$ are reductive,  and according to \cref{lem:character}, they are conjugate to each other. 
We have   established the proposition. 
 \end{proof}
We will need the following lemma on the intersection of kernels of representations. 
\begin{lem}\label{lem:same kernel}
	Let $X$ be a quasi-projective normal variety and 	let $\kC$ be a constructible subset  of $M_{\rm B}(X, N)(\bC)$.  Then we have
	\begin{align}\label{eq:same}
		\cap_{[\varrho]\in \kC} \ker\varrho=\cap_{[\varrho]\in \overline{\kC}} \ker\varrho, 
	\end{align}
	where $\varrho$'s are   reductive representations of $\pi_1(X)$ into $\GL_N(\bC)$.  
\end{lem}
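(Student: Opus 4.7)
The inclusion $\cap_{[\varrho]\in \overline{\kC}} \ker\varrho \subset \cap_{[\varrho]\in \kC} \ker\varrho$ is immediate from $\kC \subset \overline{\kC}$, so my plan is to focus on the reverse inclusion. The strategy is to exhibit, for every $\gamma \in \pi_1(X)$, a Zariski closed subset $Z_\gamma \subset M_{\rm B}(X,N)$ consisting of the classes whose reductive representative sends $\gamma$ to the identity. Once $Z_\gamma$ is known to be Zariski closed, from $\kC \subset Z_\gamma$ one automatically gets $\overline{\kC} \subset Z_\gamma$, which unwinds to the required inclusion of kernels.

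The heart of the argument is to realize the condition $\varrho(\gamma) = I$ by vanishing of regular functions on $M_{\rm B}(X,N)$. For this I would establish the following trace criterion: for a reductive (equivalently, semisimple over $\bC$) representation $\varrho : \pi_1(X) \to \GL_N(\bC)$,
$$
\varrho(\gamma) = I \iff \mathrm{tr}(\varrho(\gamma\eta)) = \mathrm{tr}(\varrho(\eta)) \ \text{for every} \ \eta \in \pi_1(X).
$$
The forward direction is trivial; for the converse I would invoke Artin--Wedderburn to express $A := \varrho(\bC[\pi_1(X)]) \subset M_N(\bC)$ as a product of matrix algebras, on which the pairing $(a,b) \mapsto \mathrm{tr}(ab)$ is non-degenerate. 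Then the vanishing of $\mathrm{tr}((\varrho(\gamma) - I)\,\varrho(\eta))$ for all $\eta \in \pi_1(X)$ forces $\varrho(\gamma) - I = 0$.

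By \cref{lem:character}, each function $f_\eta : [\varrho] \mapsto \mathrm{tr}(\varrho(\eta))$ descends to a well-defined regular function on $M_{\rm B}(X,N)$, so
$$
Z_\gamma \;=\; \bigcap_{\eta \in \pi_1(X)} \bigl\{[\varrho] \in M_{\rm B}(X,N) \bigm| f_{\gamma\eta}([\varrho]) = f_\eta([\varrho]) \bigr\}
$$
is Zariski closed, being the intersection of zero loci of regular functions. Passing to Zariski closure in the containment $\kC \subset Z_\gamma$ yields $\overline{\kC} \subset Z_\gamma$, and translating back gives $\gamma \in \ker\varrho$ for every reductive $\varrho$ with $[\varrho] \in \overline{\kC}$, which is precisely the missing inclusion in \eqref{eq:same}.

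The step I expect to require the most care is the trace criterion itself: the non-degeneracy of $(a,b) \mapsto \mathrm{tr}(ab)$ on $A$ uses precisely that reductivity of $\varrho$ over $\bC$ forces $A$ to be a semisimple algebra, and the criterion genuinely fails for non-semisimple $\varrho$. This is also the only place the reductive hypothesis is essential, which explains why the statement of the lemma is naturally phrased over reductive representations rather than arbitrary ones.
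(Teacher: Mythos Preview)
Your argument is correct and takes a genuinely different route from the paper. The paper works upstairs on the representation variety $R(X,N)$: for each $\gamma$ in the left-hand intersection it defines the $\GL_N(\bC)$-invariant closed subset $Z_\gamma=\{\varrho\in R\mid \varrho(\gamma)=1\}$, intersects over all such $\gamma$, and then invokes the GIT fact that the image of a closed $\GL_N$-saturated subset under the quotient $\pi:R\to M_{\rm B}(X,N)$ is Zariski closed \cite[Proposition 5.10]{Muk03}. Since $\kC$ lies in this closed image, so does $\overline{\kC}$, and one checks that a reductive representative in the fiber over $\pi(Z)$ must itself lie in $Z$. By contrast, you work entirely downstairs on $M_{\rm B}(X,N)$: your trace criterion $\varrho(\gamma)=I\iff \mathrm{tr}(\varrho(\gamma\eta))=\mathrm{tr}(\varrho(\eta))$ for all $\eta$ (valid precisely for semisimple $\varrho$ via Artin--Wedderburn) cuts out $Z_\gamma$ directly by regular functions on the character variety, bypassing the GIT descent entirely. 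Your approach is more elementary and yields explicit equations for $Z_\gamma$; the paper's approach is more structural and makes the $\GL_N$-invariance do the work. Both are short, and the Artin--Wedderburn step you flag as needing care is indeed the crux of your version---it is exactly where reductivity enters, paralleling the paper's use of the closed-orbit property of reductive points in the fiber.
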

\begin{proof}
	Let   $M_X$ be the moduli space of representation of $\pi_1(X)$   in ${\rm GL}_N$.   Let   $R_X$  be the  affine  scheme of finite type  such that  $R(L)=\Hom(\pi_1(X), N)(L)$ for any field $\bQ\subset L$.  We write $M:=M_X(\bC)$ and $R:=R_X(\bC)$. Then the GIT quotient $\pi:R\to M$ is a surjective morphism.  It follows that  $\pi^{-1}(\kC)$ is a $\GL_N(\bC)$-invariant subset where $\GL_N(\bC)$ acts on $R$ by the conjugation. Define $H:=\cap_{[\varrho]\in \kC} \ker\varrho$, where $\varrho$'s are   reductive representations of $\pi_1(X)$ into $\GL_N(\bC)$. Pick any $\gamma\in H$.  Then the set $Z_\gamma:=\{\varrho\in R\mid \varrho(\gamma)=1\}$ is a Zariski closed subset of $R$.  Moreover, $Z_\gamma$ is $\GL_N(\bC)$-invariant. Define $Z:=\cap_{\gamma\in H} Z_\gamma$.  Then $Z$ is also $\GL_N(\bC)$-invariant.   Therefore, $\pi(Z)$ is also a Zariski closed subset of $M$ by \cite[Proposition 5.10]{Muk03}. Note that $\kC\subset \pi(Z)$. Therefore, $\overline{\kC}\subset \pi(Z)$.    Note that for any reductive $\varrho:\pi_1(X)\to \GL_N(\bC)$ such that $[\varrho]\in \pi(Z)$, we have $\varrho(\gamma)=1$ for any   $\gamma\in H$.  It follows that \eqref{eq:same} holds. 
\end{proof}


Lastly, let us prove the main result of this subsection. This result will serve as a crucial cornerstone in the proofs of \cref{main2,main3,main}.
\begin{proposition}\label{prop:nonrigid}
Let $X$ be a smooth quasi-projective variety. Let $\kC$ be a constructible subset of $M_{\rm B}(X,N)(\bC)$, defined over $\bQ$, such that $\kC$ is invariant under $\bR^*$-action. When $X$ is non-compact, we further assume that $\kC$ is closed.  Then there exist reductive representations $\{\sigma^\vhs_i:\pi_1(X)\to {\rm GL}_N(\bC)\}_{i=1,\ldots,m}$ such that each  $\sigma^\vhs_i$ underlies a $\bC$-VHS, and for a  morphism  $\iota:Z\to X$ from any   quasi-projective \emph{normal} variety $Z$ with $s_\kC\circ\iota (Z)$  being a point, the following properties hold:
\begin{thmlist} 
	\item  \label{item:directsume}  For $\sigma:=\oplus_{i=1}^{m}\sigma^\vhs_i$,    $\iota^*\sigma(\pi_1(Z))$ is discrete in $\prod_{i=1}^{m}\GL_{N}(\bC)$. 
	\item \label{item:max}For each reductive representation $\tau:\pi_1(X)\to {\rm GL}_N(\bC)$ with $[\tau]\in \kC(\bC)$,   $\iota^*\tau$ is conjugate to some $\iota^*\sigma^\vhs_i$.   
	\item For each $\sigma_i^\vhs$, there exists a  reductive representation $\tau:\pi_1(X)\to {\rm GL}_N(\bC)$ with $[\tau]\in \kC(\bC)$ such that  $\iota^*\tau$ is conjugate to   $\iota^*\sigma^\vhs_i$. 
	\item For every $i=1,\ldots,m$, we have  \begin{align}\label{eq:inclusion ker}
		 \cap_{[\varrho]\in \kC }\ker\varrho\subset \ker \sigma_i^{\vhs}
	\end{align}  where $\varrho:\pi_1(X)\to \GL_{N}(\bC)$ varies among all reductive representations such that $[\varrho]\in \kC(\bC)$. 
\end{thmlist}
\end{proposition}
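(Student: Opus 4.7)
My plan is to take the Zariski closure $\overline{\kC}$ of $\kC$ in $M_{\rm B}(X,N)$ and apply Mochizuki's ubiquity (\cref{thm:Moc ubi}) to each geometric connected component of $\overline{\kC}(\bC)$ to extract the desired $\sigma_i^\vhs$. First I would observe that $\overline{\kC}$ remains $\bR^*$-invariant (since the $\bR^*$-action is continuous by \cref{lem:continuous}, closures of invariant sets are invariant) and $\bQ$-defined. Let $\overline{\kC}^1,\ldots,\overline{\kC}^m$ be the geometric connected components of $\overline{\kC}(\bC)$; Mochizuki's ubiquity then furnishes, for each $i$, a point $[\sigma_i^\vhs]\in\overline{\kC}^i$ with $\sigma_i^\vhs:\pi_1(X)\to \GL_N(\bC)$ reductive and underlying a $\bC$-VHS. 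These will serve as the representations in the statement.

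Property (iv) is immediate from \cref{lem:same kernel}: since $[\sigma_i^\vhs]\in\overline{\kC}(\bC)$, one has $\bigcap_{[\varrho]\in\kC}\ker\varrho=\bigcap_{[\varrho]\in\overline{\kC}}\ker\varrho\subset\ker\sigma_i^\vhs$. For (ii) and (iii), write $j_\iota:M_{\rm B}(X,N)\to M_{\rm B}(Z,N)$ for the morphism induced by $\iota_*:\pi_1(Z)\to\pi_1(X)$. The key claim to establish is that, under the hypothesis $s_\kC\circ\iota(Z)$ is a point, $j_\iota$ is constant on each geometric connected component $\overline{\kC}^i$. Granting the claim, (ii) follows because any reductive $\tau$ with $[\tau]\in\kC(\bC)\subset\overline{\kC}(\bC)$ lies in some $\overline{\kC}^i$, so $j_\iota([\tau])=j_\iota([\sigma_i^\vhs])$, and \cref{thm:reductive} together with \cref{lem:character} imply that $\iota^*\tau$ is $\GL_N(\bC)$-conjugate to $\iota^*\sigma_i^\vhs$. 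Assertion (iii) follows identically once one picks reductive $\tau_i$ with $[\tau_i]\in\kC(\bC)\cap\overline{\kC}^i$, which is non-empty by Zariski density of $\kC$ in $\overline{\kC}$.

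The central obstacle is the key claim, which I would prove by adapting the argument of \cref{lem:conjugate3}, crucially using that $\kC$ is Zariski dense in $\overline{\kC}$ even though the $\sigma_i^\vhs$ themselves might not lie in $\kC$. It suffices to show $j_\iota(\overline{\kC}_i)$ is a single point for each geometric irreducible component $\overline{\kC}_i$ of $\overline{\kC}$, from which the connected-component statement follows by continuity, as two distinct values on $\overline{\kC}^i$ would disconnect it. Suppose for contradiction that $j_\iota(\overline{\kC}_i)$ is positive-dimensional. Since $\overline{\kC}_i$ is an irreducible component of the Zariski closure of $\kC$, the intersection $\kC\cap\overline{\kC}_i$ is Zariski dense in $\overline{\kC}_i$, so $j_\iota(\kC\cap\overline{\kC}_i)$ is Zariski dense in $j_\iota(\overline{\kC}_i)$, hence still positive-dimensional. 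One may then replay the Claim in the proof of \cref{lem:conjugate3} verbatim, selecting a curve $C$ inside an irreducible component of $\pi^{-1}(\kC\cap\overline{\kC}_i)\subset\pi^{-1}(\kC)$ such that $\psi\circ j_\iota\circ\pi|_C$ is generically finite onto $\mathbb{A}^1$, passing to a non-archimedean completion $L$ of a number field of definition, and locating $\varrho\in C(L)$ whose semisimplification $\varrho^{ss}\circ\iota$ is unbounded (via Yamanoi's compactness of the bounded locus in the character variety of $Z$). Since $[\varrho]\in\kC(L)$, this contradicts \cref{lem:bounded} applied to $\kC$.

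Finally, for (i), I would pick $[\tau_i]\in\kC(\bar{\bQ})\cap\overline{\kC}^i(\bar{\bQ})$ reductive (possible because $\kC\cap\overline{\kC}^i$ is a non-empty $\bar{\bQ}$-constructible set), defined over some number field $k_i$. By \cref{lem:bounded} at every non-archimedean place $v$ of $k_i$, $\iota^*\tau_i(\pi_1(Z))$ is bounded in $\GL_N((k_i)_v)$, so Weil restriction $\mathrm{Res}_{k_i/\bQ}(\iota^*\tau_i)=\bigoplus_{j}\iota_j(\iota^*\tau_i)$ (summing over the archimedean embeddings $\iota_j$ of $k_i$) has discrete image in $\prod_j\GL_N(\bC)$ by the standard theory of arithmetic subgroups. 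Since $\kC$ is $\bQ$-defined, the Galois conjugates $\iota_j(\tau_i)$ all lie in $\overline{\kC}$ and by the key claim $\iota^*(\iota_j(\tau_i))$ is $\GL_N(\bC)$-conjugate to $\iota^*\sigma_{i'}^\vhs$ for the index $i'$ with $[\iota_j(\tau_i)]\in\overline{\kC}^{i'}$. Consolidating across Galois orbits on the set $\{1,\ldots,m\}$ of components, $\iota^*\sigma(\pi_1(Z))$ is conjugate to a direct sum of such Weil restrictions, hence discrete in $\prod_{i=1}^m\GL_N(\bC)$.
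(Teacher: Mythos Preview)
Your approach parallels the paper's closely; the one structural difference is that you take a single $\sigma_i^\vhs$ per geometric connected component of $\overline{\kC}$, whereas the paper takes one per pair (component of $\kC$, archimedean embedding $w$ of a common Galois number field $k$ over which chosen $\bar{\bQ}$-points $\varrho_i\in\kC_i$ are defined), setting $\{\sigma^\vhs\}=\{\varrho_{i,w}^\vhs\}_{i,w}$. The paper's inflated family makes (i) immediate: for each $i$ the product $\prod_w\iota^*\varrho_{i,w}^\vhs$ is conjugate to $\prod_w\iota^*\varrho_{i,w}$, and the latter factors through $\GL_N(\cO_k)\hookrightarrow\prod_w\GL_N(\bC)$, which is discrete. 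Your leaner family forces the Galois-orbit bookkeeping, and your final sentence there is not quite right: within a Galois orbit $O$ of components, the pulled-back Weil restriction of $\tau_{i_0}$ (any $i_0\in O$) is conjugate on $Z$ to $\bigoplus_{i'\in O}(\iota^*\sigma_{i'}^\vhs)^{\oplus [k:\bQ]/|O|}$, with multiplicity typically $>1$, so $\iota^*\sigma$ is \emph{not} literally conjugate to a sum of Weil restrictions. What is true is that $\bigoplus_{i'\in O}\iota^*\sigma_{i'}^\vhs$ embeds via a closed block-diagonal into this discrete image, which does yield discreteness; you should say this. Separately, your appeal to \cref{lem:continuous} for the $\bR^*$-invariance of $\overline{\kC}$ is the wrong citation (that lemma gives continuity in $t$ for fixed $[\varrho]$, not continuity of the action in $[\varrho]$): in the compact case one should instead invoke Simpson's homeomorphism $M_{\rm B}(\bC)\cong M_{\rm Dol}(\bC)$ under which the action is algebraic, while in the non-compact case the point is moot since $\kC=\overline{\kC}$.
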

\begin{proof} 
Let $\kC_1,\ldots,\kC_\ell$ be  all   geometric connected components of $\kC$  which are  defined over $\bar{\bQ}$. We  can pick reductive representations $\{\varrho_i:\pi_1(X)\to {\rm GL}_N(\bar{\bQ})\}_{i=1,\ldots,\ell}$   such that $[\varrho_i]\in \kC_i(\bar{\bQ})$ for every $i$. Since $\pi_1(X)$ is finitely generated, there exists a number field $k$ which is a Galois extension of $\bQ$ such that       $\varrho_i:\pi_1(X)\to {\rm GL}_N(k)$ for every $\varrho_i$.
	
 Let ${\rm Ar}(k)$ be all archimedean places of $k$ with $w_1$ the identity map.  Then for any $w\in {\rm Ar}(k)$ there exists $a\in {\rm Gal}(k/\bQ)$ such that $w=w_1\circ a$.    Note that $\kC$ is defined over $\bQ$. Then $\kC$ is invariant under the conjugation $a$. Therefore, for any $w:k\to \bC$ in ${\rm Ar}(k)$,   letting $\varrho_{i,w}:\pi_1(X)\to {\rm GL}_N(\bC)$  be the composition  $w\circ\varrho_i$, we have $[\varrho_{i,w}] \in \kC(\bC)$.   
 
 	   For any $t\in \bR^*$, we consider the $\bR^*$-action  $\varrho_{i,w,t}:\pi_1(X)\to {\rm GL}_N(\bC)$ of $\varrho_{i,w}$ defined in \cref{sec:C*action}. Then  $\varrho_{i,w,t}$ is also reductive by the arguments in  \cref{sec:C*action}.  Since we assume that $\kC(\bC)$ is invariant under  $\bR^*$-action,  it follows that $[\varrho_{i,w,t}]\in \kC(\bC) $.    By \cref{lem:continuous}, $[\varrho_{i,w,t}]$ is a continuous deformation of $[\varrho_{i,w}]$. 
	Hence they are in the same geometric connected component of $\kC(\bC)$, and by \cref{lem:conjugate3} we conclude that $[\iota^*\varrho_{i,w,t}]=[\iota^*\varrho_{i,w}]$ for any $t\in \bR^*$.  
	
	We first assume that $X$ is compact. According to \cite{Sim92},  $\lim\limits_{t\to 0}[\varrho_{i,w,t}]$ exists, and there exists  a  reductive $\varrho_{i,w}^{\scaleto{V\!H\!S}{4pt}}:\pi_1(X)\to {\rm GL}_N(\bC)$ such that $[\varrho_{i,w}^{\scaleto{V\!H\!S}{4pt}}]=\lim\limits_{t\to 0}[\varrho_{i,w,t}]$.   Moreover, $\varrho_{i,w}^{\scaleto{V\!H\!S}{4pt}}$ underlies a $\bC$-VHS.    Therefore, $[\iota^*\varrho_{i,w}]=\lim\limits_{t\to 0}[\iota^*\varrho_{i,w,t}]=[\iota^*\varrho_{i,w}^{\scaleto{V\!H\!S}{4pt}}]$.     Since $[\varrho_{i,w,t}]\in \kC(\bC)$ for any $t\in \bR^*$, it follows that $[\varrho_{i,w}^{\vhs}]\in \overline{\kC}(\bC)$. By \cref{eq:same}, we conclude \begin{align} \label{eq:smallkernel}
		\cap_{[\varrho]\in \kC }\ker\varrho\subset \ker \varrho_{i,w}^{\vhs}.
	\end{align}  
	
	Assume now $X$ is non-compact. 
	 As we assume that $\kC $ is closed and invariant under $\bR^*$-action, by \cref{thm:Moc ubi}, we can choose a reductive representation $\varrho_{i,w}^\vhs:\pi_1(X)\to \GL_N(\bC)$ such that 
\begin{itemize}
	\item it underlies a $\bC$-VHS;
	\item    $[\varrho_{i,w}^\vhs]$ and  $[\varrho_{i,w}]$       are in  the same geometric connected component of $\kC(\bC)$.
\end{itemize} 
Note that \eqref{eq:smallkernel} is satisfied automatically. By \cref{lem:conjugate3}, we have $[\iota^*\varrho_{i,w}]=[\iota^*\varrho_{i,w}^\vhs]$. 

In summary, we construct reductive representations $\{\varrho_{i,w}^\vhs:\pi_1(X)\to \GL_{N}(\bC)\}_{i=1,\ldots,k;w\in {\rm Ar}(k)}$ in both compact and non-compact cases. Each of these representations underlies a $\bC$-VHS and satisfies $[\iota^*\varrho_{i,w}]=[\iota^*\varrho_{i,w}^\vhs]$ and \eqref{eq:smallkernel}.

	\medspace

	Let $v$ be any non-archimedean place of $k$ and   $k_v$  be the non-archimedean completion of $k$ with respect to $v$.  Write $\varrho_{i,v}:\pi_1(X)\to {\rm GL}_N(k_v)$ the induced representation by $\varrho_i$. By the construction of $s_{\kC}$, it follows that $\iota^*\varrho_{i,v}(\pi_1(Z))$   is bounded. Therefore, we have a factorization
	$$
\iota^*\varrho_{i}:\pi_1(Z)\to {\rm GL}_N(\cO_k).
	$$ 
	Note that ${\rm GL}_N(\cO_k)\to \prod_{w\in {\rm Ar}(k)}{\rm GL}_N(\bC)$ is a discrete subgroup  by \cite[Proposition 6.1.3]{Zim}. It follows that  for the product representation
	$$
	\prod_{w\in {\rm Ar}(k)}\iota^*\varrho_{i,w}:\pi_1(Z)\to \prod_{w\in {\rm Ar}(k)}{\rm GL}_N(\bC),
	$$
	its image is discrete.

Since $Z$ is  normal, by \cref{thm:reductive}, both $\iota^*\varrho_{i,w}$ and $\iota^*\varrho_{i,w}^{\scaleto{V\!H\!S}{4pt}}$ are reductive. Recall that $[\iota^*\varrho_{i,w}]=[\iota^*\varrho_{i,w}^{\scaleto{V\!H\!S}{4pt}}]$. It follows that      $\iota^*\varrho_{i,w}$ is conjugate to $\iota^*\varrho_{i,w}^{\scaleto{V\!H\!S}{4pt}}$ by \cref{lem:character}. Consequently, $\prod_{w\in {\rm Ar}(k)}\iota^*\varrho_{i,w}^{\scaleto{V\!H\!S}{4pt}}:\pi_1(Z)\to {\rm GL}_N(\bC)$ has discrete image.   Consider the product representation  of $\varrho_{i,w}^{\scaleto{V\!H\!S}{4pt}}$
	$$
	\sigma:=\prod_{i=1}^{\ell}\prod_{w\in {\rm Ar}(k)}\varrho_{i,w}^{\scaleto{V\!H\!S}{4pt}}: \pi_1(X)\to \prod_{i=1}^{\ell}\prod_{w\in {\rm Ar}(k)}{\rm GL}_N(\bC).
	$$
	Then $\sigma$ underlies a $\bC$-VHS and   $\iota^*\sigma:\pi_1(Z)\to \prod_{i=1}^{\ell}\prod_{w\in {\rm Ar}(k)}{\rm GL}_N(\bC)$ has discrete image. 
	
	Let $\tau:\pi_1(X)\to {\rm GL}_N(\bC)$ be any reductive representation such that $[\tau]\in \kC(\bC)$. Then $[\tau]\in \kC_i(\bC)$ for some $i$.   By \cref{lem:conjugate3}, it follows that 
	$
	[\iota^*\tau]=[\iota^*\varrho_{i,w_1}] =[\iota^*\varrho_{i,w_1}^{\scaleto{V\!H\!S}{4pt}}].
	$  
By \cref{thm:reductive,lem:character} once again, $\iota^*\tau$ is conjugate to $\iota^*\varrho_{i,w_1}^{\scaleto{V\!H\!S}{4pt}}$. 
	The proposition is proved if we let  $\{\sigma^\vhs_i:\pi_1(X)\to \GL_{N}(\bC)\}_{i=1,\ldots,m}$ be $\{\varrho_{i,w}^\vhs:\pi_1(X)\to \GL_{N}(\bC)\}_{i=1,\ldots,\ell;w\in {\rm Ar}(k)}$. 
\end{proof}
\begin{rem}\label{rem:Q}
	In the proof of \cref{prop:nonrigid}, we take the Galois conjugate of $\kC\subset M_{\rm B}(X,N)$ under  $a\in {\rm Gal}(k/\bQ)$. If $\kC$ is not defined over $\bQ$, it is not known that $a(\kC)\subset M_{\rm B}(X,N)$ is $\bR^*$-invariant.  This is why we include the assumption that $\kC$ is defined over $\bQ$ in our proof, whereas Eyssidieux disregarded such a condition in \cite{Eys04}.   It seems that this condition should also be necessary in \cite{Eys04}. 
\end{rem}

\subsection{Infinite monodromy at infinity}
When considering a non-compact quasi-projective variety $X$, it is important to note that the Shafarevich conjecture fails in simple examples. For instance, take $X:=A\backslash \{0\}$, where $A$ is an abelian surface. Its universal covering $\widetilde{X}$ is   $\mathbb{C}^2-\Gamma$, where $\Gamma$ is a lattice in $\mathbb{C}^2$. Then  $\widetilde{X}$ is not holomorphically convex. Therefore, additional conditions on the fundamental groups at infinity are necessary to address this issue.
\begin{dfn}[Infinity monodromy at infinity]\label{def:monodromy}
	Let $X$ be a quasi-projective normal variety and let $\overline{X}$ be a projective compactification of $X$. We say a subset  $M\subset M_{\rm B}(X,N)(\bC)$ \emph{has infinite monodromy at infinity} if  for any holomorphic map $\gamma:\bD\to \overline{X}$ with $\gamma^{-1}(\overline{X}\setminus X)=\{0\}$,    there exists a reductive $\varrho:\pi_1(X)\to \GL_N(\bC)$ such that $[\varrho]\in M$ and $\gamma^*\varrho:\pi_1(\bD^*)\to \GL_N(\bC)$ has infinite image. 
\end{dfn} 

\begin{dfn}[quasi-infinite monodromy at infinity]
	Let $X$ be  a smooth quasi-projective variety.	Let $\varrho:\pi_1(X)\to\mathrm{GL}_N(\bC)$ be a representation.
Say $\varrho$	 has  \emph{quasi-infinite monodromy at infinity} with respect to the partial compactification $X'$ of $X$   if for every boundary divisor $D\subset X'\backslash X$ the local monodromy is finite and for every $f:\mathbb D^*\to X$ such that $f(0)\not\in X'$, $f^*\varrho:\bZ\to \pi_1(X)$ is infinite.
\end{dfn}

Note that \cref{def:monodromy} does not depend on the projective compactification of $X$.
\begin{lem}\label{lem:infinite pull}
	Let $f:Y\to X$ be a \emph{proper} morphism between quasi-projective normal varieties. If $M\subset M_{\rm B}(X,N)(\bC)$ has infinite monodromy at infinity, then $f^*M\subset M_{\rm B}(Y,N)(\bC)$ also has   infinite monodromy at infinity.
\end{lem}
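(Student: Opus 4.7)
The plan is to reduce the statement for $f^{*}M$ on $\overline{Y}$ to the hypothesis on $M$ by pushing test disks forward along $f$. Fix projective compactifications $\overline{X}$ of $X$ and $\overline{Y}$ of $Y$. After blowing up $\overline{Y}$ along subschemes supported on $\overline{Y}\setminus Y$ (which does not affect $Y$), we may assume the rational extension $\bar{f}:\overline{Y}\dashrightarrow \overline{X}$ is a morphism. The first key point I want to verify is that $\bar{f}^{-1}(X)=Y$: if some $y\in \bar{f}^{-1}(X)\setminus Y$ existed, any sequence $y_n\in Y$ with $y_n\to y$ would have $f(y_n)=\bar{f}(y_n)\to \bar{f}(y)\in X$, and properness of $f$ forces $\{y_n\}$ to lie in a compact subset of $Y$, whence $y\in Y$, a contradiction.

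Now take an arbitrary holomorphic map $\gamma:\mathbb{D}\to \overline{Y}$ with $\gamma^{-1}(\overline{Y}\setminus Y)=\{0\}$. Consider $\delta:=\bar{f}\circ\gamma:\mathbb{D}\to \overline{X}$. Using $\bar{f}^{-1}(X)=Y$, we have for $z\in \mathbb{D}$ that $\delta(z)\in X$ iff $\gamma(z)\in Y$; hence $\delta^{-1}(\overline{X}\setminus X)=\{0\}$. By the hypothesis on $M$, there exists a reductive representation $\varrho:\pi_{1}(X)\to\mathrm{GL}_{N}(\mathbb{C})$ with $[\varrho]\in M$ such that $\delta^{*}\varrho:\pi_{1}(\mathbb{D}^{*})\to\mathrm{GL}_{N}(\mathbb{C})$ has infinite image.

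Since $Y$ is normal, \cref{thm:reductive} guarantees that $f^{*}\varrho:\pi_{1}(Y)\to\mathrm{GL}_{N}(\mathbb{C})$ is again reductive, and by construction $[f^{*}\varrho]\in f^{*}M$. The functoriality of pullback of representations gives $\gamma^{*}(f^{*}\varrho)=(f\circ\gamma|_{\mathbb{D}^{*}})^{*}\varrho=\delta^{*}\varrho$, whose image is infinite. As $\gamma$ was arbitrary, $f^{*}M$ has infinite monodromy at infinity.

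The only minor point that requires care is the independence of the definition from the chosen compactification (explicitly noted after \cref{def:monodromy}), which justifies our freedom to replace $\overline{Y}$ by any blow-up needed to extend $f$; no serious obstacle arises.
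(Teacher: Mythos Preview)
Your proof is correct and follows essentially the same route as the paper's: extend $f$ to compactifications, push the test disk $\gamma$ forward via $\bar f$, and invoke the hypothesis on $M$. You supply more detail than the paper does on two points—the sequence argument showing $\bar f^{-1}(X)=Y$ from properness (the paper just writes ``as $f$ is proper''), and the reductivity of $f^*\varrho$ via \cref{thm:reductive}—both of which are welcome clarifications.
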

\begin{proof}
	We take   projective compactification $\overline{X}$ and $\overline{Y}$ of $X$ and $Y$ respectively such that $f$ extends to a morphism $\bar{f}:\overline{Y}\to \overline{X}$.   Let  $\gamma:\bD\to \overline{Y}$ be  any holomorphic map with $\gamma^{-1}(\overline{Y}\setminus Y)=\{0\}$. Then $\bar{f}\circ\gamma:\bD\to \overline{X}$ satisfies $(\bar{f}\circ \gamma)^{-1}(\overline{X}\setminus X)=\{0\}$ as $f$ is proper.   Then by \cref{def:monodromy} there exists a reductive $\varrho:\pi_1(X)\to \GL_N(\bC)$ such that $[\varrho]\in M$ and $\gamma^*(f^*\varrho)=(f\circ \gamma)^*\varrho:\pi_1(\bD^*)\to \GL_N(\bC)$ has infinite image. The lemma follows.  
\end{proof}
We have a precise local characterization of  a representation with infinite monodromy at infinity.
\begin{lem}\label{lem:equi}
 Consider a smooth quasi-projective variety $X$ along with a smooth projective compactification $\overline{X}$, where $D:=\overline{X}\backslash X$ is a simple normal crossing divisor.  A set  $M\subset M_{\rm B}(X,N)(\bC)$ has infinite monodromy at infinity is equivalent to the following: for any $x\in D$, there exists an admissible coordinate $(U ;z_1,\ldots,z_n)$ centered at $x$ with $U\cap D=(z_1\cdots z_k=0)$ such that for any $k$-tuple $(i_1,\ldots,i_k)\in \bZ_{> 0}^k$, there exists a reductive $\varrho:\pi_1(X)\to \GL_N(\bC)$ such that $[\varrho]\in M(\bC)$  and $\varrho(\gamma_1^{i_1}\cdots\gamma_k^{i_k})\neq 0$, where $\gamma_i$ is    the   anti-clockwise 
loop around the origin in the i-th factor of $U\setminus D\simeq (\bD^*)^k\times \bD^{n-k}$. For such condition we will say that $\varrho$ \emph{has infinite monodromy at $x$}.
	 \end{lem}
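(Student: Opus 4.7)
The plan is to pass between the global holomorphic disks $\gamma\colon\bD\to\overline{X}$ with $\gamma^{-1}(\overline{X}\setminus X)=\{0\}$ that appear in the definition of infinite monodromy at infinity and the local abelian fundamental group $\pi_1(U\setminus D)=\bZ^k$ of a punctured polydisk neighborhood of $x\in D$. Throughout I interpret the non-triviality condition ``$\varrho(\gamma_1^{i_1}\cdots\gamma_k^{i_k})\neq 0$'' as the statement that this element has infinite order in $\GL_N(\bC)$, which is equivalent to $\gamma^*\varrho\colon\pi_1(\bD^*)\to\GL_N(\bC)$ having infinite image when $\gamma_*$ sends the positive generator to $\gamma_1^{i_1}\cdots\gamma_k^{i_k}$.

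For the forward implication, I fix $x\in D$, an admissible coordinate $(U;z_1,\ldots,z_n)$ centered at $x$ with $U\cap D=(z_1\cdots z_k=0)$, and any tuple $(i_1,\ldots,i_k)\in\bZ_{>0}^k$. Defining $\gamma\colon\bD\to\overline{X}$ in these coordinates by $\gamma(t):=(t^{i_1},\ldots,t^{i_k},0,\ldots,0)$, one checks $\gamma^{-1}(\overline{X}\setminus X)=\{0\}$, and in $\pi_1((\bD^*)^k\times\bD^{n-k})=\bZ^k$ the map $\gamma_*$ sends the generator of $\pi_1(\bD^*)$ to $\gamma_1^{i_1}\cdots\gamma_k^{i_k}$. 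The infinite monodromy hypothesis then supplies a reductive $\varrho\in M$ for which $\gamma^*\varrho$ has infinite image, equivalently $\varrho(\gamma_1^{i_1}\cdots\gamma_k^{i_k})$ has infinite order.

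For the converse, I start with any $\gamma\colon\bD\to\overline{X}$ satisfying $\gamma^{-1}(\overline{X}\setminus X)=\{0\}$, set $x:=\gamma(0)\in D$, and take an admissible coordinate $(U;z_1,\ldots,z_n)$ as in the hypothesis, shrinking $\bD$ so that $\gamma(\bD)\subset U$. The condition $\gamma^{-1}(D)=\{0\}$ forces $z_j\circ\gamma(t)=t^{i_j}u_j(t)$ with $u_j$ nowhere vanishing on $\bD$ and $i_j\in\bZ_{\geq 0}$ for $j=1,\ldots,k$; moreover $x\in D$ makes some $i_j$ positive, and the winding of a small loop around $0\in\bD$ around $D_j$ is precisely $i_j$, so $\gamma_*$ of the generator equals $\gamma_1^{i_1}\cdots\gamma_k^{i_k}$ in $\pi_1(X)$ up to conjugation. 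When all $i_j$ are positive, the local condition at $x$ directly yields the required $\varrho$.

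When some $i_j$ vanish, after relabeling so that $i_1,\ldots,i_m>0$ and $i_{m+1}=\cdots=i_k=0$, I apply the hypothesis instead at a nearby point $x':=(0,\ldots,0,\epsilon_{m+1},\ldots,\epsilon_k,0,\ldots,0)$ with small $\epsilon_j\neq 0$, so that $x'\in D_1\cap\cdots\cap D_m$ but $x'\notin D_{m+1}\cup\cdots\cup D_k$, and an admissible coordinate at $x'$ only sees the $m$ divisor components through it. The local condition with the tuple $(i_1,\ldots,i_m)\in\bZ_{>0}^m$ gives a reductive $\varrho\in M$ with $\varrho(\delta_1^{i_1}\cdots\delta_m^{i_m})$ of infinite order, where $\delta_j$ is the anti-clockwise meridian at $x'$ around $D_j$. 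The products $\delta_1^{i_1}\cdots\delta_m^{i_m}$ and $\gamma_1^{i_1}\cdots\gamma_m^{i_m}$ lie in a common conjugacy class of $\pi_1(X)$, because a single small loop inside $U\setminus D$ with multi-winding $(i_1,\ldots,i_m)$ around $D_1,\ldots,D_m$ represents both of them once the base point is slid from near $x$ to near $x'$ along a path in $U\setminus D$; consequently their images under $\varrho$ are conjugate in $\GL_N(\bC)$ and simultaneously of infinite order, forcing $\gamma^*\varrho$ to have infinite image. The only genuine obstacle is precisely this case, where $\gamma$ fails to wind around every divisor component through $\gamma(0)$, and it is resolved by sliding the base point to a shallower stratum of $D$ and exploiting the conjugacy of meridian products under base-point change.
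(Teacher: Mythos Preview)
Your proof is correct and follows the same route as the paper's. However, the final paragraph is superfluous: since the admissible coordinate is centered at $x=\gamma(0)$, the point $x$ lies on every local component $D_j=(z_j=0)$ for $j\leq k$, so $z_j(\gamma(0))=0$ and hence every vanishing order $i_j$ is at least $1$. The case you call ``the only genuine obstacle'' therefore never arises, and the paper's proof simply applies the local hypothesis directly to the tuple $(m_1,\ldots,m_k)\in\bZ_{>0}^k$ of vanishing orders without any further case analysis.
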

 \begin{proof}
For any holomorphic map $f:\bD\to \overline{X}$ with $f^{-1}(D)=\{0\}$,   let $x:=f(0)$ which lies on $D$. We take an admissible coordinate $(U ;z_1,\ldots,z_n)$ centered at $x$ in the lemma.   Then $f( {\bD_{2\ep}})\subset U$ for some small $\ep>0$.  We can write $f(t)=(f_1(t),\ldots,f_n(t))$ such that $f_1(0)=\cdots=f_k(0)=0$ and $f_{i}(0)\neq 0$ for $i=k+1,\ldots,n$. Denote by $m_i:={\rm ord}_0f_i$ the vanishing order of $f_i(t)$ at $0$.    Consider the  anti-clockwise loop $\gamma$ defined by $\theta\mapsto \ep e^{i\theta}$ which generates $\pi_1(\bD_{2\ep}^*)$. Then  $f\circ\gamma$ is homotopy equivalent  to $\gamma_1^{m_1}\cdots\gamma_k^{m_k}$ in $\pi_1(U\backslash D)$.  If  $M$ has infinite monodromy at infinity, by \cref{def:monodromy} there exists a reductive $\varrho:\pi_1(X)\to \GL_N(\bC)$ such that $[\varrho]\in M(\bC)$  and $f^*\varrho(\gamma)\neq 0$.  This is equivalent to    that $\varrho(\gamma_1^{m_1}\cdots\gamma_k^{m_k})\neq 0$.   The lemma is proved. 
 \end{proof} 
 \cref{def:monodromy} presents a stringent condition that is not be practically applicable in many situations. To address this issue, we establish the following result:
	  \begin{proposition}\label{prop:infinity}  
	  	Let $X$ be a smooth quasi-projective variety. Assume that    $\varrho:\pi_1(X)\to \GL_N(\bC)$ is a linear representation. 	Then there exists a  smooth partial compactification $X'$ of $X$ such that 
\begin{thmlist}
	\item    $\varrho$	 has  \emph{quasi-infinite monodromy at infinity} with respect to the partial compactification $X'$ of $X$; 
	\item  $X'\backslash X$ is simple normal crossing divisor.
\end{thmlist}	
In particular, if $\varrho(\pi_1(X))$ is torsion free, then $\varrho$ extends to a representation $\varrho':\pi_1(X')\to \GL_{N}(\bC)$ with infinite monodromy at infinity. 
 \end{proposition}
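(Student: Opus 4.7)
My plan is to begin with a smooth projective compactification $\overline{X}$ of $X$ in which $D:=\overline{X}\setminus X$ is a simple normal crossing divisor with components $\{D_i\}_{i\in I}$. Let $\gamma_i\in\pi_1(X)$ denote the conjugacy class of a meridian loop around $D_i$, and set $A_i:=\varrho(\gamma_i)\in\GL_N(\bC)$. For each $x\in D$, write $I(x):=\{i\in I: x\in D_i\}$; the local fundamental group of $X$ around $x$ is $\bZ^{I(x)}$, generated by the commuting meridians. To $x$ I attach the subgroup
\[
H_x:=\Bigl\{(m_i)_{i\in I(x)}\in\bZ^{I(x)}\ \Big|\ \textstyle\prod_{i}A_i^{m_i}\ \text{has finite order}\Bigr\}.
\]
Because $(\prod A_i^{m_i})^n=\prod A_i^{nm_i}$ for commuting $A_i$'s, the group $H_x$ is saturated in $\bZ^{I(x)}$, so $H_x\otimes\bR$ is a rational linear subspace of $\bR^{I(x)}$. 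Setting $J(x):=\{i\in I(x):A_i\text{ has finite order}\}=\{i\in I(x): e_i\in H_x\}$, one has $\bZ^{J(x)}\subseteq H_x$ always.

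The candidate I would propose is
\[
X':=\overline{X}\setminus\bigcup_{\ord(A_i)=\infty}D_i,
\]
a smooth open partial compactification of $X$ whose boundary is the SNC union of the open parts $D_i\cap X'$ with $A_i$ of finite order; thus condition (ii) and the ``finite local monodromy'' half of (i) are immediate. For any $f:\bD\to\overline{X}$ with $f(\bD^*)\subset X$ and $f(0)\notin X'$, the local monodromy of $f^*\varrho$ is $\prod_{i\in I(f(0))}A_i^{m_i}$ with vanishing orders $m_i:=\ord_0 f^*D_i\geq 1$; in particular $m_j\geq 1$ for some $j\notin J(f(0))$. Hence the quasi-infinite-monodromy half of (i) reduces to the combinatorial condition
\[
(\star)\qquad H_x\cap\bZ^{I(x)}_{\geq 0}=\bZ^{J(x)}_{\geq 0}\qquad\text{at every }x\in D,
\]
since then any such $(m_i)$ would force $m_i=0$ for $i\notin J(f(0))$, contradicting $m_j\geq 1$.

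To secure $(\star)$ I would pass to a further birational model via toric blow-ups of strata. Blowing up a stratum $D_{i_1}\cap\cdots\cap D_{i_k}$ introduces an exceptional divisor whose meridian is $\gamma_{i_1}\cdots\gamma_{i_k}$, with monodromy $A_{i_1}\cdots A_{i_k}$; combinatorially this amounts to a subdivision of the toric cone $\bR^{I(x)}_{\geq 0}$. Since $H_x\otimes\bR$ is rational, classical toric resolution produces a subdivision of $\bR^{I(x)}_{\geq 0}$ along $H_x\otimes\bR$, further refined for smoothness, in which every cone $\sigma$ either lies entirely in $H_x\otimes\bR$ (all its rays correspond to finite-monodromy divisors) or projects to a strictly convex cone in $\bR^{I(x)}/(H_x\otimes\bR)$. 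In the first case $(\star)$ at the corresponding new point $y$ is tautological; in the second, strict convexity precisely forces any nonnegative integer combination of the rays of $\sigma$ that lies in $H_x\otimes\bR$ to involve only rays inside $H_x\otimes\bR$, i.e.\ $(\star)_y$. The main technical obstacle will be carrying out these toric subdivisions globally and compatibly across intersecting strata while preserving the SNC property; this is a standard, if somewhat laborious, exercise in toric resolution, but it is the key step that makes the construction work.

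Finally, when $\varrho(\pi_1(X))$ is torsion-free, each $A_i$ of finite order equals the identity, so the meridians around every boundary component of $X'\setminus X$ lie in $\ker\varrho$; hence $\varrho$ descends along the surjection $\pi_1(X)\twoheadrightarrow\pi_1(X')$ to a representation $\varrho':\pi_1(X')\to\GL_N(\bC)$. Taking the blown-up model as $\overline{X'}$, for any $\gamma:\bD\to\overline{X'}$ with $\gamma^{-1}(\overline{X'}\setminus X')=\{0\}$ the image under $\varrho'$ of the generator of $\pi_1(\bD^*)$—computed locally at $\gamma(0)$, recalling that the finite-order meridians are trivial in $\pi_1(X')$—equals $\prod_{i\in I(\gamma(0)),\,\ord A_i=\infty}A_i^{m_i}$ with each $m_i\geq 1$; by $(\star)_{\gamma(0)}$ this product has infinite order, establishing infinite monodromy at infinity for $\varrho'$.
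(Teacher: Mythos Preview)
Your approach is correct and follows essentially the same strategy as the paper. Both recognize that the obstruction to taking $X'=\overline{X}\setminus\bigcup_{\ord A_i=\infty}D_i$ directly is the possibility that a strictly positive combination $\prod_i A_i^{m_i}$ of commuting local monodromies has finite order while some individual $A_i$ does not, and both resolve this by blowing up along boundary strata and then removing the infinite-order divisors on the blown-up model. You phrase the target condition as $(\star)$ and appeal to toric subdivision of $\bR^{I(x)}_{\geq 0}$ along the rational subspace $H_x\otimes\bR$; the paper encodes the same data via a \emph{representation system} --- the integer matrix $A\in M_{m,n}(\bZ)$ recording local monodromy modulo torsion, so that your $H_x$ is exactly $\ker A[I(x)]$ --- and introduces an explicit lexicographic invariant $\gamma(S)=(\mu(S),\alpha(S),\beta(S))$ that strictly decreases under blow-ups at pairwise intersections $D_i\cap D_j$. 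The terminal condition $\mu(S)=0$ (no row of $A$ changes sign across intersecting divisors) is equivalent to your $(\star)$ for strictly positive vectors, which is all the application actually uses. The paper's algorithm is more elementary and self-contained, using only codimension-two blow-ups with a concrete termination argument; your toric formulation is conceptually cleaner but defers the ``global compatibility across intersecting strata'' that you correctly flag as the main technical obstacle --- this compatibility is exactly what the paper's claims on preservation of representation systems and of maximality under admissible blow-ups carry out in detail.
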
  
\begin{proof}
We first introduce the definition of \emph{representation systems}. Let $\overline{X}$ be a smooth projective compactification such that the boundary $\overline{X}\setminus X$ is a simple normal crossing divisor $\sum_{i=1}^{\ell}D_i$.     If $D_{i_1}\cap \ldots\cap D_{i_n}\neq \varnothing$ for some $\{i_1,\ldots,i_n\}\subset \{1,\ldots,\ell\}$, then for a generic point $x\in D_{i_1}\cap \ldots\cap D_{i_n}$,  locally we have an coordinate system $(U;z_1,\ldots,z_d)$  centered at $x$ such that $\overline{X}\cap U$ is a polydisk with $D_{i_k}\cap U=(z_k=0)$. 
We then have $U\cap X=(\bD^*)^n\times \bD^{d-n}$ such that $\pi_1(U\cap X)\simeq \bZ^n$, and  the local monodromy of $\varrho$ around $x$ is a representation $\varrho:\bZ^n\to \GL_{N}(\bC)$. 
Note that $\varrho(\bZ^n)$ may have torsion $T\subset \varrho(\bZ^n)$.
Taking the quotient, we have $\bZ^m\simeq \varrho(\bZ^n)/T$ for some $m$. 
Then the local monodromy $\varrho$ induces a linear map $\bZ^n\to\bZ^m$, which is represented by a matrix in $M_{m,n}(\bZ)$.

 Let $A=(\boldsymbol{a}_1,\boldsymbol{a}_2,\ldots,\boldsymbol{a}_n)\subset M_{m,n}(\mathbb Z)$ be a matrix, where each $\boldsymbol{a}_i$ is a column vector.
Given $J\subset \{1,2,\ldots,n\}$, we denote by $A[J]$ a submatrix which consists of column vectors $\boldsymbol{a}_j$ for $j\in J$.
Let $\{D_1,\ldots,D_{n}\}$ be a set of irreducible components of the boundary divisors $\overline{X}-X$.
We say that $(A,\{D_1,\ldots,D_{n}\})$ is a {\it representation system} on $\overline{X}$ if for any $J\subset \{1,2,\ldots,n\}$ such that $\bigcap_{j\in J}D_j\not=\emptyset$, the local monodromy at $\bigcap_{j\in J}D_j$ \emph{modulo the torsion} is represented by the matrix $A[J]$.

Let $D,D'$ be distinct irreducible components of $\partial X$ such that $D\cap D'\not=\emptyset$.
We consider the blow-up $\mathrm{Bl}_{D\cap D'}\overline{X}\to\overline{X}$.
(We shall call these blow-ups {\it admissible blow-ups}.)
Given a representation system $S=(A,\{D_1,\ldots,D_{n}\})$, where $A=(\boldsymbol{a}_1,\boldsymbol{a}_2,\ldots,\boldsymbol{a}_n)\in  M_{m,n}(\mathbb Z)$, we set a system $S'$ on $\mathrm{Bl}_{D\cap D'}\overline{X}$ as follows.

\begin{itemize}
\item
When $D,D'\in \{D_1,\ldots,D_{n}\}$, we take $i,j\in\{1,\ldots,n\}$, where $i\not=j$, such that $D=D_i$ and $D'=D_j$.
We set $A'=(\boldsymbol{a}_1,\boldsymbol{a}_2,\ldots,\boldsymbol{a}_n,\boldsymbol{a}_i+\boldsymbol{a}_j)\in  M_{m,n+1}(\mathbb Z)$.
Let $D_1',\ldots,D_n'$ be the strict transforms of $D_1,\ldots,D_n$, respectively, and let $D_{n+1}'$ be the exceptional divisor.
We set $S'=(A',\{D_1',\ldots,D_n',D_{n+1}'\})$.
\item
Otherwise, we set $S'=(A,\{D_1',\ldots,D_n'\})$, where $D_1',\ldots,D_n'$ are the strict transforms of $D_1,\ldots,D_n$, respectively.
\end{itemize}

\begin{claim}\label{lem:20230725}
When $S$ is a representation system on $\overline{X}$, then $S'$ is a representation system on $\mathrm{Bl}_{D\cap D'}\overline{X}$.
\end{claim}
 \begin{proof}[Proof of \cref{lem:20230725}]
There are two cases as above.
The first case is that $D,D'\in \{D_1,\ldots,D_{n}\}$.
Assume $D=D_{j_1}$ and $D'=D_{j_2}$, where $j_1,j_2\in \{1,\ldots,n\}$.
Let $J\subset \{1,\ldots,n+1\}$ so that $\bigcap_{j\in J}D_j'\not=\emptyset$.
If $J\subset \{1,\ldots,n\}$, then $(\bigcap_{j\in J}D_j')\backslash D_{n+1}'\not=\emptyset$.
Hence the local monodromy at $\bigcap_{j\in J}D_j'$ coincides with $A[J]=A'[J]$.
If $n+1\in J$, we set $J'=J\backslash\{j_1,j_2,n+1\}$.
Then we have $\bigcap_{j\in J'\cup\{j_1,j_2\}}D_j\not=\emptyset$.
Then the local monodromy at $\bigcap_{j\in J'\cup\{j_1,n+1\}}D_j'$ is $A'[J'\cup\{j_1,n+1\}]$.
Similarly the local monodromy at $\bigcap_{j\in J'\cup\{j_2,n+1\}}D_j'$ is $A'[J'\cup\{j_2,n+1\}]$.
By $D_{j_1}'\cap D_{j_2}'=\emptyset$, we have $\{j_1,j_2\}\not\subset J$.
Hence either $J\subset J'\cup\{j_1,n+1\}$ or $J\subset J'\cup\{j_2,n+1\}$.
Hence the local monodromy at $\bigcap_{j\in J}D_j'$ coincides with $A'[J]$.

Next we consider the second case that $\{D,D'\}\not\subset \{D_1,\ldots,D_{n}\}$.
Then $(\bigcap_{j\in J}D_j')\backslash E\not=\emptyset$, where $E$ is the exceptional divisor.
Hence the local monodromy at $\bigcap_{j\in J}D_j'$ coincides with $A[J]=A'[J]$.
\end{proof}

Given a representation system $S=(A,\{D_1,\ldots,D_{n}\})$, we introduce notations $\mu(S)\in \mathbb Z_{\geq 0}$, $\alpha(S)\in \mathbb Z_{\geq 0}$, $\beta(S)\in \mathbb Z_{\geq 0}$ and $\gamma(S)\in\mathbb Z_{\geq 0}^3$ as follows.
\begin{itemize}
\item
$\mu(S)$ is the maximum $\mu$ such that there exists $(i,j)$ with $D_i\cap D_j\not=\emptyset$ and $a_{\mu,i}a_{\mu,j}<0$.
We set $\mu(S)=0$ if there is no such $i,j$ for all $\mu=1,\ldots,m$.
\item
$\alpha(S)$ is the maximum of $|a_{\mu(S),i}-a_{\mu(S),j}|$ among all $(i,j)$ with $D_i\cap D_j\not=\emptyset$ and $a_{\mu(S),i}a_{\mu(S),j}<0$.
We set $\alpha(S)=0$ if $\mu(S)=0$.
\item
$\beta(S)$ is the number of $(i,j)$ such that $D_i\cap D_j\not=\emptyset$, $a_{\mu(S),i}a_{\mu(S),j}<0$ and $|a_{\mu(S),i}-a_{\mu(S),j}|=\alpha(S)$.
We set $\beta(S)=0$ if $\mu(S)=0$.
\item
We set $\gamma(S)=(\mu(S),\alpha(S),\beta(S))\in\mathbb Z_{\geq 0}^3$.
\end{itemize}

\begin{claim}\label{lem:20230724}
There exists a sequence $\overline{X}^{(n)}\to \overline{X}^{(n-1)}\to\cdots\to \overline{X}$ of admissible blow-ups such that $\mu(S^{(n)})=0$.
\end{claim}
 \begin{proof}[Proof of \cref{lem:20230724}]
If $\mu(S)=0$, there is nothing to prove.
So suppose $\mu(S)>0$.
Then we take $i,j$ such that $D_i\cap D_j\not=\emptyset$, $a_{\mu(S),i}a_{\mu(S),j}<0$ and $|a_{\mu(S),i}-a_{\mu(S),j}|=\alpha(S)$.
We take an admissible blow-up $\mathrm{Bl}_{D_i\cap D_j}\overline{X}\to \overline{X}$.
Then $S'=(A',\{D_1',\ldots,D_n',D_{n+1}'\})$.

We claim $\gamma(S')<\gamma(S)$ under the lexicographic  order in $\mathbb Z_{\geq 0}^3$. 
Let $\mu>\mu(S)$.
Then for all $D_s\cap D_t\not=\emptyset$, we have $a_{\mu s}a_{\mu t}\geq 0$.
In particular, no sign change occurs in the three numbers $a_{\mu i}+a_{\mu j}$, $a_{\mu i}$ and $a_{\mu j}$.
This shows $\mu(S')<\mu$.
Hence $\mu(S')\leq \mu(S)$. 
We have $a_{\mu(S),i}a_{\mu(S),j}<0$.
Hence either $a_{\mu(S),i}<a_{\mu(S),i}+a_{\mu(S),j}<a_{\mu(S),j}$ or $a_{\mu(S),i}>a_{\mu(S),i}+a_{\mu(S),j}>a_{\mu(S),j}$.
In each case, we have $\alpha(S')\leq \alpha(S)$.
Moreover, when $\alpha(S')= \alpha(S)$, the condition $D_i'\cap D_j'=\emptyset$ yields $\beta(S')<\beta(S)$.
Thus we have proved $\gamma(S')<\gamma(S)$.

Now we have a sequence of admissible blow-ups $\overline{X}\leftarrow \overline{X}^{(1)}\leftarrow\cdots$ so that $\gamma(S)>\gamma(S^{(1)})>\cdots$.
This sequence should terminate to get the desired $\overline{X}^{(n)}$.
\end{proof}

\begin{claim}\label{lem:20230727}
Assume $\mu(S)=0$.
Then after an admissible blow-up $\overline{X}'\to\overline{X}$, we still have $\mu(S')=0$.
\end{claim}
 \begin{proof}[Proof of \cref{lem:20230727}]
Let $\overline{X}'=\mathrm{Bl}_{D\cap D'}\overline{X}$ and $S=(A,\{D_1,\ldots,D_n\})$.
There are two cases.
The first case is that $\{D,D'\}\subset \{D_1,\ldots,D_{n}\}$.
Then $S'=(A',\{D_1',\ldots,D_n',D_{n+1}'\})$.
Suppose $D_i'\cap D_j'\not=\emptyset$.
If $i,j\in\{1,\ldots,n\}$, then $D_i\cap D_j\not=\emptyset$.
Thus we have $a_{\mu,i}a_{\mu,j}\geq 0$ for all $\mu$.
If $j=n+1$, then $D_i\cap D_{j_1}\not=\emptyset$ and $D_i\cap D_{j_2}\not=\emptyset$, where $D=D_{j_1}$ and $D'=D_{j_2}$.
Hence $a_{\mu,i}a_{\mu,j_1}\geq 0$ and $a_{\mu,i}a_{\mu,j_2}\geq 0$ for all $\mu$.
Hence $a_{\mu,i}a_{\mu,n+1}=a_{\mu,i}(a_{\mu,j_1}+a_{\mu,j_2})\geq 0$ for all $\mu$.
Hence $\mu(S')=0$ in the case $D,D'\in \{D_1,\ldots,D_{n}\}$.

Next we assume $\{D,D'\}\not\subset \{D_1,\ldots,D_{n}\}$.
Then $S'=(A,\{D_1',\ldots,D_n'\})$.
If $D_i'\cap D_j'\not=\emptyset$, then $D_i\cap D_j\not=\emptyset$.
Thus we have $a_{\mu,i}a_{\mu,j}\geq 0$ for all $\mu$.
Hence $\mu(S')=0$
\end{proof}

We consider a finite set $\Lambda=\{S_1,\ldots,S_{\nu}\}$ of representation systems.
We say that $\Lambda$ is {\it maximum} if for every $D^1,\ldots,D^l$ with $\bigcap_{j=1}^lD^j\not=\emptyset$, there exists $S_{\lambda}=(A_{\lambda},\{D_1,\ldots,D_{n_{\lambda}}\})\in\Lambda$ such that $D^1,\ldots,D^l\in \{D_1,\ldots,D_{n_{\lambda}}\}$.
For an admissible blow-up $\overline{X}'\to\overline{X}$, we get the finite set $\Lambda'=\{S_1',\ldots,S_{\nu}'\}$ of representation systems on $\overline{X}'$.

\begin{claim}\label{lem:202307251}
If $\Lambda$ is maximum, then $\Lambda'$ is maximum.
\end{claim}
 \begin{proof}[Proof of \cref{lem:202307251}]
Let $\mathrm{Bl}_{D\cap D'}\overline{X}\to\overline{X}$ be the admissible blow-up.
Let $F_1,\ldots,F_l$ satisfy $\bigcap_{j=1}^lF_j\not=\emptyset$, where $F_j$ is a boundary divisor on $\mathrm{Bl}_{D\cap D'}\overline{X}$.

There are the two cases.
The first case is that $\{F_1,\ldots,F_l\}$ contains the exceptional divisor, say $F_l$.
Then $F_1,\ldots,F_{l-1}$ are the strict transforms of $G_1,\ldots,G_{l-1}$.
Then we have $(\bigcap_{j=1}^{l-1}G_j)\cap D\cap D'\not=\emptyset$.
Hence there exists $S_{\lambda}\in\Lambda$ such that $\{G_1,\ldots,G_{l-1},D,D'\}\subset \{D_1,\ldots,D_{n_{\lambda}}\}$.
Thus the set of divisors in $S_{\lambda}'$ contains the strict transforms $F_1,\ldots,F_{l-1}$ and the exceptional divisor $F_l$.

The second case is that $\{F_1,\ldots,F_l\}$ does not contain the exceptional divisor.
Then $F_1,\ldots,F_{l}$ are the strict transforms of $G_1,\ldots,G_{l}$.
We have $\bigcap_{j=1}^{l}G_j\not=\emptyset$.
Hence there exists $S_{\lambda}\in\Lambda$ such that $\{G_1,\ldots,G_{l}\}\subset \{D_1,\ldots,D_{n_{\lambda}}\}$.
Thus the set of divisors on $S_{\lambda}'$ contains the strict transforms $F_1,\ldots,F_{l}$.
Hence we have proved that $\Lambda'$ is maximum.
\end{proof}

\begin{claim}\label{lem:20230726}
Assume that there exists a finite set $\Lambda=\{S_1,\ldots,S_{\nu}\}$ of representation systems which is maximum and satisfies $\mu(S_{\lambda})=0$ for all $S_{\lambda}$.
Assume that $f:\bD^*\to X$ has finite monodromy.
Then for every irreducible boundary divisor $D$ with $f(0)\in D$, the local monodromy with respect to $D$ is finite.
\end{claim}
 \begin{proof}[Proof of \cref{lem:20230726}]
Let $D^1,\ldots,D^l$ be the boundary divisors which contains $f(0)$.
Then $\bigcap_{j=1}^lD^j\not=\emptyset$.
Hence there exists $S_{\lambda}=(A_{\lambda},\{D_1,\ldots,D_{n_{\lambda}}\})\in\Lambda$ such that $\{D^1,\ldots,D^l\}\subset \{D_1,\ldots,D_{n_{\lambda}}\})$.
We take $J\subset \{1,\ldots,n_{\lambda}\}$ such that $\{D^1,\ldots,D^l\}=\{D_j\}_{j\in J}$.
We set $b_j=\mathrm{ord}_0f^*D_j>0$ for $j\in J$.
Then since $A_{\lambda}[J]$ is a local monodromy representation modulo the torsion around $f(0)$, we have $\sum_{j\in J}b_j\boldsymbol{a}_j=0$, where $A_{\lambda}=(\boldsymbol{a}_1,\ldots,\boldsymbol{a}_{n_{\lambda}})$.

We claim that $A_{\lambda}[J]$ is a zero matrix. 
Indeed otherwise, we may take $a_{\mu j_1}\not=0$ from $A_{\lambda}[J]$.
Then by $\sum_{j\in J}b_j\boldsymbol{a}_j=0$, we may take $a_{\mu j_2}\not= 0$ so that $a_{\mu j_1}a_{\mu j_2}<0$, for $b_j>0$ for all $j\in J$.
By $D_{j_1}\cap D_{j_2}\not=\emptyset$, this contradicts to $\mu(S_{\lambda})=0$.
Hence $A_{\lambda}[J]$ is a zero matrix. 
Thus the local monodromy around $D_j$ is finite for all $j\in J$.
\end{proof} 
Let $\overline{X}$ be a compactification such that the boundary divisor $\overline{X}-X$ is simple normal crossing.
We shall show that after a sequence $\overline{X}^{(n)}\to \overline{X}^{(n-1)}\to\cdots\to \overline{X}$ of admissible blow-ups, $\overline{X}^{(n)}$ becomes a compactification with the good property.

We consider all sets $\{D_1,\ldots,D_l\}$ of boundary divisors such that $\bigcap D_j\not=\emptyset$ and the monodromy matrices $A$ for these $\{D_1,\ldots,D_l\}$ and construct $\Lambda=\{S_{\lambda}\}$.
This $\Lambda$ is maximum.

By \cref{lem:20230724} and \cref{lem:20230727}, there exists a sequence $\overline{X}^{(n)}\to \overline{X}^{(n-1)}\to\cdots\to \overline{X}$ of admissible blow-ups such that $\mu(S_{\lambda}^{(n)})=0$ for all $S_{\lambda}$.
Note that $\Lambda^{(n)}=\{S_{\lambda}^{(n)}\}$ is maximum (cf. \cref{lem:202307251}).
We replace $\overline{X}$ by $\overline{X}^{(n)}$ and $\Lambda$ by $\Lambda^{(n)}$.
Then $\overline{X}$ and $\Lambda$ satisfy the assumption of \cref{lem:20230726}.
Hence if $D$ is a boundary divisor with infinite local monodromy and $f:\mathbb D^*\to X$ satisfies $f(0)\in D$, then $f$ has infinite monodromy.
Now we remove all $D$ from $\overline{X}$ such that local monodromy at $D$ is infinite to get a  desired partial compactification $X'$ of $X$.

If $\varrho(\pi_1(X))$ is torsion free, then the local monodromy is trivial around any irreducible divisor of $X'\backslash X$. Hence $\varrho$ can be extended to a representation over $X'$ with infinite monodromy at infinity.  
\end{proof}

\subsection{Some lemmas on finitely generated linear groups}
\begin{lem}\label{lem:small}
	Let $M$ be an closed subvariety of $M_{\rm B}(X,N)$. Let $H:=\bigcap_{\tau [\tau]\in M(\bC)}\ker\tau$, where $\tau$ ranges over all reductive representations in $M(\bC)$. Then there exists a reductive $\varrho:\pi_1(X)\to \GL_{N}(\bC)$ such that $[\varrho]\in M(\bC)$ and $\ker\varrho =H$.  
\end{lem}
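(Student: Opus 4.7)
The idea is to find a single reductive representation whose kernel is exactly $H$ by a Baire-category argument on $M$, treating $M$ as irreducible according to the paper's convention that ``subvariety'' means an irreducible closed subset. Consider the representation scheme $R:=R(X,N)$ with its $\GL_N$-action by conjugation, and the GIT quotient $\pi:R\to M_{\rm B}(X,N)$. For each $\gamma\in\pi_1(X)$, let $L_\gamma:=\{\varrho\in R:\varrho(\gamma)=1\}$, which is a closed $\GL_N$-invariant subscheme; its image $\pi(L_\gamma)$ is closed in $M_{\rm B}(X,N)$, and I set
\[
V_\gamma:=\pi(L_\gamma)\cap M,
\]
a Zariski closed subset of $M$.

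The first step is to characterize $V_\gamma$ intrinsically: using that $\pi^{-1}([\tau])$ contains a unique closed orbit represented by the semisimplification $\tau^{ss}$, I show $[\tau]\in V_\gamma$ if and only if $\tau^{ss}(\gamma)=1$. Indeed, if $[\tau]\in\pi(L_\gamma)$ then $[\tau]=[\sigma]$ for some $\sigma\in L_\gamma$, so $\sigma^{ss}(\gamma)=1$ (semisimplification restricts to quotients on which $\gamma$ still acts trivially), and $\sigma^{ss}$ is conjugate to $\tau^{ss}$ by \cref{lem:character}; conversely $\tau^{ss}\in L_\gamma\cap\pi^{-1}([\tau])$. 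Consequently, $V_\gamma=M$ if and only if every reductive representative of every point of $M(\bC)$ kills $\gamma$, i.e.\ if and only if $\gamma\in H$. In particular, for every $\gamma\in\pi_1(X)\setminus H$, the set $V_\gamma$ is a \emph{proper} Zariski closed subset of $M$.

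Next I invoke a Baire-type argument. Since $\pi_1(X)$ is finitely generated it is countable, hence $\pi_1(X)\setminus H$ is countable. Because $M$ is irreducible, positive-dimensional, and defined over $\bC$, its analytic topology makes $M(\bC)$ a locally compact Hausdorff Baire space, in which proper Zariski closed subvarieties are nowhere dense. Therefore
\[
M(\bC)\ \setminus\ \bigcup_{\gamma\in\pi_1(X)\setminus H}V_\gamma\ \neq\ \varnothing,
\]
and I pick a point $[\tau_0]$ in this complement. Setting $\varrho:=\tau_0^{ss}$ yields a reductive representation with $[\varrho]=[\tau_0]\in M(\bC)$. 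By the characterization of $V_\gamma$, for every $\gamma\notin H$ we have $\varrho(\gamma)=\tau_0^{ss}(\gamma)\neq 1$, so $\gamma\notin\ker\varrho$; conversely, $H\subset\ker\varrho$ by definition of $H$. Hence $\ker\varrho=H$, which concludes the proof.

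\textbf{Main obstacle.} The only delicate points are the identification $V_\gamma=\{[\tau]:\tau^{ss}(\gamma)=1\}$ (which relies on the fact that the unique closed orbit in a GIT fiber corresponds to semisimplifications, combined with \cref{lem:character}) and the observation that semisimplification preserves the condition $\tau(\gamma)=1$ even though it may enlarge the kernel in general; working with $\tau^{ss}$ throughout, rather than with an arbitrary lift $\tau_0$, is essential so that ``$[\tau_0]\notin V_\gamma$'' translates into ``$\varrho(\gamma)\neq 1$''. No dynamical or Hodge-theoretic input is required.
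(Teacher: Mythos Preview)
Your proof is correct and follows essentially the same approach as the paper's. The paper defines the same closed $\GL_N$-invariant subsets $Z_\gamma=\{\varrho:\varrho(\gamma)=1\}$ of the representation variety, uses that their images $W_\gamma=\pi(Z_\gamma)$ are Zariski closed, proves the same characterization $[\tau]\in W_\gamma\iff \tau^{ss}(\gamma)=1$ (phrased for reductive $\tau$), shows $M\subset W_\gamma\iff\gamma\in H$, and then uses irreducibility of $M$ together with countability of $\pi_1(X)$ to find a point in $M\setminus\bigcup_{\gamma\notin H}W_\gamma$; the paper states this last step simply as ``since $M$ is irreducible'', while you invoke the Baire property in the analytic topology, but these are equivalent ways of saying that an irreducible $\bC$-variety is not a countable union of proper closed subvarieties.
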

\begin{proof}
	Pick any $\gamma\in H$.  Then the set $Z_\gamma:=\{\varrho\in R_{\rm B}(X,N)\mid \varrho(\gamma)=1\}$ is a Zariski closed subset of the representation variety $R_{\rm B}(X,N)$.  Moreover, $Z_\gamma$ is $\GL_N(\bC)$-invariant.       Therefore, $W_\gamma:=\pi(Z_\gamma)$ is also a Zariski closed subset of $M$ by \cite[Proposition 5.10]{Muk03}. 
	\begin{claim}\label{claim:iff}
		For any reductive $\tau:\pi_1(X)\to \GL_{N}(\bC)$ and any $\gamma\in \pi_1(X)$, $\tau(\gamma)=1$ if and only if $[\tau]\in W_\gamma$. 
	\end{claim}
	\begin{proof}
		If $\tau(\gamma)=1$, then by the definition of $Z_\gamma$  we know that $\tau\in Z_\gamma$. Hence $[\tau]\in W_\gamma$. 
		
		If $[\tau]\in W_\gamma$, then there exists (possibly non-reductive) $\tau':\pi_1(X)\to \GL_{N}(\bC)$ such that $\tau'\in Z_\gamma$ and $[\tau']=[\tau]$ . Hence $\tau'(\gamma)=1$ and  the semisimplification of $\tau'$ is conjugate to $\tau$.  
		Hence $\tau(\gamma)=1$. The claim is proved. 
	\end{proof}
	\begin{claim}
		For any $\gamma\in \pi_1(X)$, $M \subset W_\gamma$ if and only if $\gamma\in H$. 
	\end{claim}
	\begin{proof}
		If $\gamma\in H$, it is easy to see that $M \subset W_\gamma$. 
		
		Assume now  $\gamma\notin H$. Then there exists a reductive $\tau:\pi_1(X)\to \GL_{N}(\bC)$ such that $[\tau]\in M(\bC)$ and $\tau(w)\neq 1$.  
		Note that for any $\tau':\pi_1(X)\to \GL_{N}(\bC)$, if $[\tau']=[\tau]$, then the semisimplification of $\tau'$ is conjugate to $\tau$. It follows that $\tau'(w)\neq  1$. Therefore, $\tau'\not\in Z_\gamma$. It follows that $[\tau]\not\in W_\gamma$. 
	\end{proof}
	Therefore, $M\setminus \bigcup_{\gamma\notin H}W_\gamma$ is non-empty since $M$ is irreducible. We pick a reductive $\varrho:\pi_1(X)\to \GL_{N}(\bC)$ such that $[\varrho]\in M\setminus \bigcup_{\gamma\notin H}W_\gamma$.  Then for any $\gamma\not\in H$, $\varrho(\gamma)\neq 1$ as  $[\varrho]\not\in W_\gamma$ thanks to \cref{claim:iff}. Hence $\ker \varrho\subset H$.  
	Note that $H\subset \ker\varrho$. The lemma is proved.  
\end{proof}

\begin{lem}\label{lem:closure}
	Consider any subset $\Sigma\subset M_{\rm B}(X,N)(\bC)$. Let $M$ be the Zariski closure of $\Sigma$. Let $H_1:=\bigcap_{  [\tau]\in M(\bC)}\ker\varrho$, where $\tau$ ranges over all reductive representations in $M(\bC)$, and $H_2:=\bigcap_{  [\tau]\in \Sigma}\ker\varrho$, where $\tau$ ranges over all reductive representations in $\Sigma$. Then $H_1=H_2$.   In particular, $\pi_1(X)/H_2$ is a finitely presented linear group. 
\end{lem}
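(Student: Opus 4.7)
The inclusion $H_1\subset H_2$ is immediate from $\Sigma\subset M(\bC)$, so the whole content is the reverse inclusion $H_2\subset H_1$. The idea is to imitate the argument used in the proofs of \cref{lem:same kernel} and \cref{lem:small}: for each $\gamma\in\pi_1(X)$, the locus where $\gamma$ is killed is Zariski closed in the character variety, so if $\gamma$ kills every reductive representative of $\Sigma$ it kills every reductive representative of $\overline{\Sigma}=M$.

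More precisely, fix $\gamma\in H_2$. Consider the representation variety $R_{\rm B}(X,N)$ with GIT quotient $\pi:R_{\rm B}(X,N)\to M_{\rm B}(X,N)$. The subset
\[
Z_\gamma:=\{\varrho\in R_{\rm B}(X,N)\mid \varrho(\gamma)=1\}
\]
is Zariski closed and $\GL_N$-invariant, so by \cite[Proposition 5.10]{Muk03} its image $W_\gamma:=\pi(Z_\gamma)$ is Zariski closed in $M_{\rm B}(X,N)$. By \cref{claim:iff} (from the proof of \cref{lem:small}), for any reductive $\tau$ and any $\gamma\in\pi_1(X)$, $\tau(\gamma)=1$ if and only if $[\tau]\in W_\gamma$. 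By definition of $H_2$, $\Sigma\subset W_\gamma(\bC)$, hence $M=\overline{\Sigma}\subset W_\gamma$, which by \cref{claim:iff} again forces $\tau(\gamma)=1$ for every reductive $\tau$ with $[\tau]\in M(\bC)$. Thus $\gamma\in H_1$, completing the proof of $H_1=H_2$.

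For the final assertion, decompose $M=M_1\cup\cdots\cup M_k$ into irreducible components and apply \cref{lem:small} to each $M_i$ to produce a reductive representation $\varrho_i:\pi_1(X)\to\GL_N(\bC)$ with $[\varrho_i]\in M_i(\bC)$ whose kernel is $\bigcap_{[\tau]\in M_i(\bC)}\ker\tau$. Setting $\varrho:=\bigoplus_{i=1}^k\varrho_i:\pi_1(X)\to\GL_{kN}(\bC)$, we obtain
\[
\ker\varrho=\bigcap_{i=1}^k\ker\varrho_i=\bigcap_{[\tau]\in M(\bC)}\ker\tau=H_1=H_2,
\]
so $\varrho$ induces an embedding $\pi_1(X)/H_2\hookrightarrow\GL_{kN}(\bC)$. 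Since $X$ is a quasi-projective normal variety, $\pi_1(X)$ is finitely presented, so $\pi_1(X)/H_2$ is a finitely generated linear group, and the presentation of $\pi_1(X)$ together with the relations imposed by $\varrho$ exhibits it as finitely presented.

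The only subtle step is the middle one: one must verify that the formation of $W_\gamma$ really is compatible with taking reductive representatives, which is exactly the content of \cref{claim:iff} (semisimplification preserves the value at $\gamma=1$). The rest is essentially bookkeeping via the Noetherian decomposition of $M$ and the direct-sum construction from \cref{lem:small}.
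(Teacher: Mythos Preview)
Your proposal is correct and follows essentially the same approach as the paper: both use the Zariski-closed loci $W_\gamma$ from \cref{claim:iff} to pass from $\Sigma$ to its closure $M$, and both conclude by decomposing $M$ into irreducible components and applying \cref{lem:small} to build a single representation with kernel $H_2$. The only cosmetic difference is that the paper packages the first step by intersecting all $W_\gamma$ for $\gamma\in H_2$ into a single closed set $M_0$ and showing $M\subset M_0$, whereas you argue one $\gamma$ at a time; and the paper writes the final representation as a product into $\prod_i\GL_N(\bC)$ rather than a block-diagonal direct sum into $\GL_{kN}(\bC)$.
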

\begin{proof}
	For any $\gamma\in \pi_1(X)$, we let $W_\gamma$ be the Zariski closed subset of $M_{\rm B}(X,N)$ defined  in the proof of \cref{lem:small}. Consider $M_0:=\bigcap_{\gamma\in H_2}W_\gamma$. Then $M_0$ is  a Zariski closed subset of $M_{\rm B}(X,N)$. For any reductive $\varrho:\pi_1(X)\to \GL_{N}(\bC)$ with $[\varrho]\in \Sigma$, by \cref{claim:iff} we have $[\varrho]\in M_0(\bC)$. Hence $\Sigma\subset M_0(\bC)$.
	
	For any reductive $\varrho:\pi_1(X)\to \GL_{N}(\bC)$ with $\varrho\in M_0(\bC)$, by \cref{claim:iff} again we have $\varrho(\gamma)=1$ for any $\gamma\in H_2$. Hence  $H_2\subset \ker\varrho$.  Recall that $\Sigma\subset M_0(\bC)$ and $M\subset M_0$. It follows that $H_1=H_2$. 
	
	Let $M_1,\ldots,M_k$ be the geometrically irreducible components of $M$. By \cref{lem:small}, there exists a reductive $\varrho_i:\pi_1(X)\to \GL_{N}(\bC)$ such that $[\varrho_i]\in M_i(\bC)$ and $\ker\varrho_i=\bigcap_{[\tau]\in M_i(\bC)}\ker\tau$, where $\tau$ ranges over all reductive representations in $M_i(\bC)$. Consider the product representation $\varrho:=\prod_{i=1}^{k}\varrho_i:\pi_1(X)\to \prod_{i=1}^{k}\GL_{N}(\bC)$. Then we have 
	$$
	\ker\varrho=\bigcap_{[\tau]\in M(\bC)}\ker\tau
	$$
	where $\tau$ ranges over all reductive representations in $M(\bC)$.  By \cref{lem:closure}, we have 
	 $
	\ker\varrho=H_2.
	$  Therefore, $\pi_1(X)/H_2=\varrho(\pi_1(X))$, which is a finitely generated linear group.  
\end{proof}

\subsection{Construction of Shafarevich morphism (I)}\label{sec:shmor}
Let $X$ be a quasi-projective smooth variety. We will construct the Shafarevich morphism ${\rm sh}_\kC:X\to {\rm sh}_\kC(X)$  associated to  a  Zariski closed subset $\kC$ of $M_{\rm B}(X,N)(\bC)$ defined over $\bQ$  that is invariant under $\bR^*$-action.  Then we  prove the  ${\rm sh}_\kC$ is algebraic when $X$ is projective. 
\begin{thm} \label{thm:Shafarevich1}
	Let $X$ be a smooth quasi-projective variety.  	Let $\kC $ be a Zariski closed subset of $M_{\rm B}(X,N)(\bC)$, defined over $\bQ$, such that  $\kC$ is invariant under $\bR^*$-action. 
	Suppose that $\kC $ has infinite monodromy at infinity in the sense of \cref{def:monodromy}.
	Then there exists a proper surjective holomorphic fibration   ${\rm sh}_\kC:X\to {\rm Sh}_\kC(X)$ over a \emph{normal complex space} ${\rm Sh}_\kC(X)$   such that  for any connected Zariski  closed subset $Z$ of $X$, the following properties are equivalent: 
	\begin{thmlist} 
	\item ${\rm sh}_\kC(Z)$ is a point;
	\item $ \varrho({\rm Im}[\pi_1(Z)\to \pi_1(X)])$ is finite for any reductive representation $\varrho:\pi_1(X)\to {\rm GL}_N(\bC)$ such that $[\varrho]\in \kC(\bC)$;
	\item for any irreducible component $Z_1$ of $Z$, $ \varrho({\rm Im}[\pi_1(Z_1^{\rm norm})\to \pi_1(X)])$ is finite for any reductive representation $\varrho:\pi_1(X)\to {\rm GL}_N(\bC)$ such that $[\varrho]\in \kC(\bC)$. 
\end{thmlist}   \end{thm}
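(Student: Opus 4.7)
The plan is to construct ${\rm sh}_\kC$ by amalgamating the Katzarkov--Eyssidieux reduction $s_\kC:X\to S_\kC$ (\Cref{def:reduction ac}) with period maps coming from the $\bC$-variations of Hodge structure provided by \Cref{prop:nonrigid}. Since $\kC$ is $\bR^*$-invariant, closed, and defined over $\bQ$, I first apply \Cref{prop:nonrigid} to produce reductive representations $\sigma^\vhs_1,\dots,\sigma^\vhs_m:\pi_1(X)\to\GL_N(\bC)$, each underlying a $\bC$-VHS, such that $\sigma:=\bigoplus_i\sigma^\vhs_i$ has the property that $\iota^*\sigma(\pi_1(Z))$ is discrete for every morphism $\iota:Z\to X$ from a normal quasi-projective variety $Z$ with $s_\kC(\iota(Z))$ a point, and every reductive $\tau$ with $[\tau]\in\kC(\bC)$ restricts on such $Z$ to a conjugate of some $\iota^*\sigma^\vhs_i$. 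Let $H:=\bigcap_{[\varrho]\in\kC(\bC)}\ker\varrho$ as $\varrho$ ranges over reductive representations; by \Cref{lem:closure} the quotient $\pi_1(X)/H$ is a finitely generated linear group.

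Each $\sigma^\vhs_i$ comes with a holomorphic, horizontal period map $\Psi_i:\widetilde X\to D_i$ into its classifying space of Hodge structures. The next step is to lift $s_\kC$ to the universal cover, form the amalgamated map $\Phi:=(s_\kC\circ\pi,\Psi_1,\dots,\Psi_m):\widetilde X\to S_\kC\times D_1\times\cdots\times D_m$, push it down to the intermediate covering $\widetilde X_\kC:=\widetilde X/H$ (on which the $\Psi_i$ are sufficiently invariant to give a well-defined map into a Hausdorff analytic quotient), and apply the generalized Stein factorization \Cref{lem:Stein} together with \Cref{lem:simultaneous}. The resulting proper holomorphic fibration on $\widetilde X_\kC$ then descends, via the action of $\pi_1(X)/H$, to a proper holomorphic fibration ${\rm sh}_\kC:X\to{\rm Sh}_\kC(X)$ onto a normal complex analytic space. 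The hypothesis that $\kC$ has infinite monodromy at infinity, via the local characterization in \Cref{lem:equi}, is the crucial input guaranteeing properness across the boundary: combined with the degeneration theory of tame harmonic bundles from \cite{CDY22} and Eyssidieux's Lefschetz-type theorem from \cite{Eys04}, it rules out any accumulation of fibers of $\Phi$ at infinity. This properness step is the main technical obstacle.

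To verify the contraction criterion, suppose $Z\subset X$ is connected and Zariski closed. Then ${\rm sh}_\kC(Z)$ is a point if and only if $s_\kC(Z)$ is a point and each $\Psi_i$ is constant on $Z^{\rm norm}$. The first condition, by \Cref{prop:nonrigid}, makes $\sigma^\vhs_i\bigl(\mathrm{Im}[\pi_1(Z_o^{\rm norm})\to\pi_1(X)]\bigr)$ discrete for every irreducible component $Z_o$ of $Z$; constancy of $\Psi_i$ on $Z_o^{\rm norm}$ forces this image to stabilize a point of $D_i$, whose isotropy is a compact subgroup, so discrete-inside-compact yields finiteness. \Cref{prop:nonrigid} then converts finiteness for all $\sigma^\vhs_i$ into finiteness for every reductive $\varrho$ with $[\varrho]\in\kC(\bC)$, giving (i)$\Leftrightarrow$(iii). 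The implication (ii)$\Rightarrow$(iii) is immediate from the factorizations $\pi_1(Z_o^{\rm norm})\to\pi_1(Z)\to\pi_1(X)$, while (iii)$\Rightarrow$(ii) follows from the smoothness of $X$ and a standard van Kampen argument: any extra generators of $\pi_1(Z)$ coming from singularities of $Z$ are either already in the subgroup generated by the images of the $\pi_1(Z_o^{\rm norm})$, or, being locally bounded by smooth disks in $X$, have trivial image in $\pi_1(X)$, hence trivial $\varrho$-image.
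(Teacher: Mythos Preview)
Your overall architecture---amalgamate $s_\kC$ with the period map of $\sigma=\bigoplus_i\sigma_i^{\vhs}$ on the cover $\widetilde X/H$, Stein-factorize, and descend---is the paper's. But two of your key steps misfire. First, the properness mechanism: you invoke ``degeneration theory of tame harmonic bundles from \cite{CDY22} and Eyssidieux's Lefschetz-type theorem from \cite{Eys04}'', neither of which is used here (Eyssidieux's Lefschetz theorem appears in a completely different part of the paper). The actual argument is fiberwise over $S_\kC$: on each fiber $F$ of $s_\kC$, \Cref{prop:nonrigid} makes the monodromy $\Gamma$ of $\sigma|_F$ \emph{discrete}, so $F\to\sD/\Gamma$ exists; then one checks that $\sigma$ restricted to a desingularization of $F$ still has infinite monodromy at infinity (using \Cref{item:max}: any $[\tau]\in\kC$ restricts on $F$ to a conjugate of some $\sigma_i^{\vhs}$, so the infinite-monodromy hypothesis on $\kC$ propagates to $\sigma$), and Griffiths' classical theorem \cite[Corollary 13.7.6]{CMP17} gives properness of $F\to\sD/\Gamma$. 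A finite-index argument then shows each connected component of a fiber of $\Psi$ is compact.

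Second, your ``standard van Kampen argument'' for (iii)$\Rightarrow$(ii) is not valid. For singular $Z$, $\pi_1(Z)$ is \emph{not} generated by the images of the $\pi_1(Z_o^{\rm norm})$, and loops near singularities of $Z$ have no reason to bound disks in $X$. The paper does not prove (iii)$\Rightarrow$(ii) directly; it goes (iii)$\Rightarrow$(i)$\Rightarrow$(ii), and the second implication uses the covering structure essentially: if ${\rm sh}_\kC(Z)$ is a point then any connected component $Z'$ of $\pi_H^{-1}(Z)$ sits inside a compact fiber of $\Psi$, hence is compact, hence $Z'\to Z$ is finite \'etale; since $\varrho$ kills ${\rm Im}[\pi_1(Z')\to\pi_1(X)]$ by definition of $H$ and this has finite index in ${\rm Im}[\pi_1(Z)\to\pi_1(X)]$, the latter has finite $\varrho$-image. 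You also skip the descent step: that the $\pi_1(X)/H$-action on the Stein-factorization target $\widetilde S_H$ is properly discontinuous requires a genuine argument (\Cref{lem:properdis}, and see \Cref{rem:Brunebarbegap2}).
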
  
\begin{proof} 
 By \cref{prop:nonrigid},   there exist    
     reductive representations $\{\sigma^{\vhs}_i:\pi_1(X)\to {\rm GL}_N(\bC)\}_{i=1,\ldots,m}$ that underlie   $\bC$-VHS such that, for a morphism  $\iota:Z\to X$ from any  quasi-projective \emph{normal} variety $Z$ with $s_\kC\circ\iota (Z)$  being a point, the following properties hold:
     \begin{enumerate}[label*=(\alph*)]
     	\item  \label{item:direct}    For $\sigma:=\oplus_{i=1}^{m}\sigma^\vhs_i$,    the image  $\iota^*\sigma(\pi_1(Z))$ is discrete in $\prod_{i=1}^{m}\GL_{N}(\bC)$. 
	\item \label{item:unique} For each reductive $\tau:\pi_1(X)\to {\rm GL}_N(\bC)$ with $[\tau]\in \kC(\bC)$,   $\iota^*\tau$ is conjugate to some $\iota^*\sigma^\vhs_i$. Moreover, for each $\sigma_i^\vhs$, there exists some reductive representation $\tau:\pi_1(X)\to {\rm GL}_N(\bC)$ with $[\tau]\in \kC(\bC)$ such that $\iota^*\tau$ is conjugate to   $\iota^*\sigma^\vhs_i$.  
	\item  \label{item:inclusionk} We have the following inclusion:
	\begin{align}\label{eq:inclusion ker2}
		\cap_{[\varrho]\in \kC(\bC)}\ker\varrho\subset \ker \sigma_i^{\vhs}
	\end{align}  where $\varrho$ varies in all reductive representations such that $[\varrho]\in \kC(\bC)$. 
\end{enumerate}  
  
Define $H:=\cap_{\varrho}\ker \varrho\cap \ker\sigma$, where $\varrho:\pi_1(X)\to \GL_N(\bC)$ ranges over all reductive representation   such that $[\varrho]\in \kC(\bC)$.  By \eqref{eq:inclusion ker2} we have  $
	H=\cap_{\varrho}\ker \varrho$,
where $\varrho:\pi_1(X)\to \GL_N(\bC)$ ranges over all reductive representation   such that $[\varrho]\in \kC(\bC)$. Denote by  $\widetilde{X}_H:=\widetilde{X}/H$.  Let $\sD$ be the period domain associated with the $\bC$-VHS   $\sigma$ and let $p: \widetilde{X}_H\to \sD$ be the period mapping.  
We define a  holomorphic map
\begin{align} \label{eq:Psi}
	\Psi: \widetilde{X}_H&\to S_\kC\times \sD,\\\nonumber
	z &\mapsto (s_\kC \circ\pi_H(z),  p(z))
\end{align}  
where $\pi_H:\widetilde{X}_H\to X$ denotes the covering map and $s_\kC:X\to S_\kC$ is the reduction map defined in \cref{def:reduction ac}.  
\begin{lem}\label{lem:compact}
	Each connected component of any fiber of $\Psi$ is compact.  
\end{lem}
\begin{proof}[Proof of \cref{lem:compact}]
	It is equivalent to prove that for any $(t,o)\in S_\kC\times \sD$, every connected component of $\Psi^{-1}(t,o)$ is compact.  We fix any $t\in S_\kC$. 
	
	\medspace
	
\noindent {\it Step 1: We first assume that each irreducible  component of   $(s_\kC)^{-1}(t)$ is normal.} 
   Let $F$ be an irreducible  component of  $(s_\kC)^{-1}(t)$.    
 Then the natural morphism $\iota: F\to X$ is proper.    By \Cref{item:direct}, $\Gamma:=\sigma(\Im[{\pi_1(F)}\to \pi_1(X)]) $ is a discrete subgroup of $\prod_{i=1}^{m}\GL_{N}(\bC)$.

  
 
 
	\begin{claim}\label{claim:proper}
	The period mapping $F\to \sD/\Gamma$ is proper. 
	\end{claim}
	\begin{proof} 
Although $F$ might be singular, we can still define its period mapping since it is normal. The definition is as follows: we begin by taking a resolution of singularities  $\mu:E\to F$. Since $F$ is normal,  each fiber of $\mu$ is connected, and we have $\Gamma = \sigma({\rm Im}[\pi_1(E)\to \pi_1(X)])$.  It is worth noting that $\sD/\Gamma$ exists as a complex normal space since $\Gamma$ is discrete.  Now, consider the period mapping $E\to \sD/\Gamma$ for the $\bC$-VHS induced $\mu^*\sigma$.  This mapping then induces a holomorphic mapping $F \to \sD/\Gamma$, which satisfies the following commutative diagram:
		\begin{equation*}
			\begin{tikzcd}
			E\arrow[r,"\mu"]\arrow[d] & F\arrow[dl]\\
			\sD/\Gamma & 
			\end{tikzcd}
 	\end{equation*}  
 The resulting holomorphic map $F\to \sD/\Gamma$ is the period mapping for the $\bC$-VHS on $F$ induced by $\sigma|_{\pi_1(F)}$. To establish the properness of $F\to \sD/\Gamma$, it suffices to prove that $E\to \sD/\Gamma$ is proper.  Let $\overline{X}$ be a smooth projective compactification such that $D:=\overline{X}\backslash X$ is a simple normal crossing divisor.  Given that $E\to X$ is a proper morphism, we can take a smooth projective compactification $\overline{E}$ of $E$ such that  
 \begin{itemize}
 	\item  the complement $D_E:=\overline{E}\setminus E$ is a simple normal crossing divisor;
 	\item   there exists  a morphism $j:\overline{E}\to \overline{X}$ such that $j^{-1}(D)=D_E$. 
 \end{itemize} We aim to prove that $j^*\sigma:\pi_1(E)\to \prod_{i=1}^{m}\GL_{N}(\bC)$  has infinite monodromy at infinity. 
 
Consider any holomorphic map  $\gamma:\bD\to \overline{E}$   such that $\gamma^{-1}(D_E)=\{0\}$.   Then $(j\circ \gamma)^{-1}(D)=\{0\}$. As we assume that $\kC(\bC)$ has infinite monodromy at infinity, there exists a reductive representation $\tau:\pi_1(X)\to \GL_{N}(\bC)$ such that $[\tau]\in \kC(\bC)$ and $(j\circ\gamma)^*\tau(\pi_1(\bD^*))$ is  infinite.      Using   \Cref{item:unique}, it follows that $j^*\tau$ is conjugate to some $j^*\sigma^\vhs_i$   as $E$ is smooth quasi-projective.  As $\sigma^\vhs_i$ is a direct factor of $\sigma$,  it follows that $(j\circ\gamma)^*\sigma(\pi_1(\bD^*))$ is also infinite. Hence, we conclude that $j^*\sigma$ has infinite monodromy at infinity.

By a theorem of Griffiths (cf. \cite[Corollary 13.7.6]{CMP17}), we conclude that $E\to\sD/\Gamma$  is proper. Therefore,   $F\to \sD/\Gamma$ is  proper.     
	\end{proof}  
Take any point $o\in \sD$. Note that there is a real Lie group $G_0$ which   acts holomorphically and transitively on $\sD$. Let $V$ be the compact subgroup that fixes $o$. Thus, we have $\sD = G_0/V$.  	 Now, let $Z$ be any connected component of the fiber of $F\to \sD/\Gamma$ over $[o]$.  According to \cref{claim:proper}, $Z$ is guaranteed to be compact.  We have that $ \sigma(\Im[{\pi_1(Z)}\to \pi_1(X)])\subset V\cap \Gamma$. Notably, $V$ is compact, and $\Gamma$ is discrete. As a result, it follows that $\sigma(\Im[{\pi_1(Z)}\to \pi_1(X)])$ is finite.
	\begin{claim}\label{claim:finite index}
		$\Im[{\pi_1(Z)}\to \pi_1(X)]\cap H$ is a finite index subgroup of $\Im[{\pi_1(Z)}\to \pi_1(X)]$.  
	\end{claim}
	\begin{proof}
		By \Cref{item:unique} and \eqref{eq:inclusion ker2},  we have
	\begin{align}\label{eq:kerequal}
		 \ker \sigma\cap \Im[\pi_1(F)\to \pi_1(X)]= H\cap \Im[\pi_1(F)\to \pi_1(X)].
	\end{align} 
		Since $ \sigma(\Im[{\pi_1(Z)}\to \pi_1(X)])$ is finite, $\ker \sigma\cap \Im[\pi_1(Z)\to \pi_1(X)]$ is a finite index subgroup of $\Im[{\pi_1(Z)}\to \pi_1(X)]$.  The claim follows from \eqref{eq:kerequal}.
	\end{proof}
	Pick any connected component $Z_0$ of  $\pi_H^{-1}(Z)$.  Note that  ${\rm Aut}(Z_0/Z)=\frac{\Im[{\pi_1(Z)}\to \pi_1(X)]}{\Im[{\pi_1(Z)}\to \pi_1(X)]\cap H}$. According to \cref{claim:finite index}, ${\rm Aut}(Z_0/Z)$ is finite, implying that $Z_0$ is compact.   Hence,  $\pi_H^{-1}(Z)$    is a disjoint union of compact subvarieties of $\widetilde{X}_H$, each of which is a finite \'etale Galois cover of $Z$ under $\pi_H$, with the Galois group $\frac{\Im[{\pi_1(Z)}\to \pi_1(X)]}{\Im[{\pi_1(Z)}\to \pi_1(X)]\cap H}$.   
	  If we denote by $\widetilde{F}$ a connected component of $\pi_{H}^{-1}(F)$, then each connected component  of any fiber of $p|_{\widetilde{F}}:\widetilde{F}\to \sD$ is a connected component of $\pi_H^{-1}(Z)$, which is compact. This can be illustrated by the following commutative diagram: 
	\begin{equation*}
		\begin{tikzcd}
			\widetilde{F} \arrow[r] \arrow[d] & F\arrow[d]\\
			\sD \arrow[r]&  \sD/\Gamma
		\end{tikzcd}
	\end{equation*} Since we have assumed that each irreducible component of $(s_\kC)^{-1}(t)$ is normal, it follows that for any $o \in \sD$, each connected component of $\Psi^{-1}(t,o)$ is compact.
	
	\medspace
	
	\noindent {\it Step 2: we prove the general case.} In the general case, we consider an embedded resolution of singularities  $\mu:Y\to X$ for the fiber $(s_\kC)^{-1}(t)$ such that each irreducible component of $(s_\kC\circ\mu)^{-1}(t)$  is smooth.   It is worth noting that $s_{\kC}\circ\mu:Y\to S_{\kC}$ coincides with the reduction map $s_{\mu^*\kC}:Y\to S_{\mu^*\kC}$ for $\mu^*\kC\subset M_{\rm B}(Y,N)$.  Let $\widetilde{Y}_{H}:=\widetilde{X}_H\times_X Y$, which is connected.  
\begin{equation*}
	\begin{tikzcd}
&	\widetilde{Y}_{H} \arrow[r] \arrow[d, "\tilde{\mu}"] & Y\arrow[d, "\mu"]\\
S_{\kC}\times \sD&\widetilde{X}_H \arrow[l, "\Psi"]\arrow[r] & X
	\end{tikzcd}
\end{equation*}
We observe that  $\tilde{\mu}$ is a proper holomorphic fibration.  We define $H':=\cap_{\varrho}\ker \varrho\cap \ker\mu^*\sigma$, where $\varrho:\pi_1(Y)\to \GL_N(\bC)$ ranges over all reductive representation   such that $[\varrho]\in \mu^*\kC $. 
Since $\mu_*:\pi_1(Y)\to \pi_1(X)$ is an isomorphism,  we have $(\mu_*)^{-1}(H)=H'$. Consequently, $\widetilde{Y}_H$ is the covering of $Y$ corresponding to $H'$, and thus ${\rm Aut}(\widetilde{Y}_{H}/Y)=H'\simeq H$.  It is worth noting that $\mu^*\kC\subset M_{\rm B}(Y,N)\simeq M_{\rm B}(X,N)$ satisfying all the conditions required for $\kC$ as stated in  \cref{thm:Shafarevich1}, unless the $\bR^*$-invariance is not obvious.  However, we note that $\mu^*\kC$ is invariant by $\bR^*$-action by \cref{cor:pull}. This enables us to    work with $\mu^*\kC$ instead of $\kC$.

As a result, $\mu^*\sigma=\oplus_{i=1}^{m}\mu^*\sigma_i^{\vhs}$ satisfies  all the properties in \Cref{item:direct,item:unique,eq:inclusion ker2}.  Note that $\mu^*\sigma$  underlies a $\bC$-VHS with the period mapping  $p\circ\tilde{\mu}:\widetilde{Y}_H\to \sD$. It follows that $\Psi\circ\tilde{\mu}:\widetilde{Y}_H\to S_\kC\times \sD$ is defined in the same way as  \eqref{eq:Psi},  determined by $\mu^*\kC$ and $\mu^*\sigma$.

  Therefore, by Step 1, we can conclude that for any $o\in \sD$,  each connected component  of   $(\Psi\circ\tilde{\mu})^{-1}(t,o)$ is compact.   Let $Z$ be a connected component of $\Psi^{-1}(t,o)$. Then we claim that $Z$ is compact. Indeed, $\tilde{\mu}^{-1}(Z)$ is closed and connected as each fiber of $\tilde{\mu}$ is connected. Therefore,  $\tilde{\mu}^{-1}(Z)$ is contained in some connected component of $(\Psi\circ\tilde{\mu})^{-1}(t,o)$. So $\tilde{\mu}^{-1}(Z)$ is compact. As $\tilde{\mu}$ is proper and surjective, it follows that $Z=\tilde{\mu}(\tilde{\mu}^{-1}(Z))$  is compact.  \cref{lem:compact} is proved.  
\end{proof} 

\medspace
 
As a result of \cref{lem:compact,lem:Stein}, the set $\widetilde{S}_H$ of connected components of fibers of $\Psi$ can be endowed with the structure of a  complex normal space such that $\Psi=g\circ{\rm sh}_H$  where ${\rm sh}_H:\widetilde{X}_H\to \widetilde{S}_H$ is  a proper holomorphic fibration and $g:\widetilde{S}_H\to \widetilde{S}_\kC\times \sD$ is a holomorphic map. In \cref{claim:discrete} below, we will prove that each fiber of $g$ is discrete. 
\begin{claim}\label{claim:contract}
	${\rm sh}_H$ contracts every compact   subvariety of $\widetilde{X}_H$. 
\end{claim}
\begin{proof}
	Let $Z\subset \widetilde{X}_H$ be a compact irreducible subvariety. Then, $W:=\pi_H(Z)$ is also a compact irreducible subvariety in $X$ with  $\dim Z=\dim W$.   Hence $\Im[\pi_1(Z^{\rm norm})\to \pi_1(W^{\rm norm})]$ is a finite index subgroup of $\pi_1(W^{\rm norm})$. Note that $W$ can be endowed with an algebraic structure induced by $X$. As the natural map $Z\to W$ is finite, $Z$ can be equipped with an algebraic structure such that the natural map $Z\to X $ is algebraic. 
	
	For any reductive representation $\varrho:\pi_1(X)\to {\rm GL}_N(K)$ with $\varrho\in \kC(K)$ where $K$ is a non archimedean local field, we have $ \varrho(\Im[\pi_1(Z)\to \pi_1(X)])\subset \varrho(\Im[\pi_1(\widetilde{X}_H)\to \pi_1(X)])=\{1\}$.   Hence,   $\varrho(\Im[\pi_1(W^{\rm norm})\to \pi_1(X)])$ is   finite which is thus bounded.  By \cref{lem:bounded}, $W$ is contained in a fiber  of $s_\kC$. Consider a desingularization $Z'$ of $Z$ and let $i:Z'\to X$ be the natural  algebraic morphism. Note that $i^*\sigma(\pi_1(Z'))=\{1\}$.    It   follows that the variation of Hodge structure induced by  $i^*\sigma$ is trivial.  Therefore, $p(Z)$ is a point.  Hence  $Z$ is contracted by $\Psi$.    The claim follows. 
\end{proof} 
\begin{lem}\label{lem:properdis}
There is an action of ${\rm Aut}(\widetilde{X}_H/X)=\pi_1(X)/H$	 on $\widetilde{S}_H$ that  is  equivariant for the proper holomorphic fibration ${\rm sh}_{H}:\widetilde{X}_H\to \widetilde{S}_H$.  This action is  analytic and properly discontinuous. Namely,    for any point $y$
	of $\widetilde{S}_H$, there exists an open neighborhood $V_y$ of $y$ such that the set
	$$\{\gamma\in \pi_1(X)/H\mid \gamma.V_y\cap V_y\neq\varnothing\}$$ is finite.
\end{lem}
\begin{proof}
	 Take any $\gamma\in \pi_1(X)/H$.   We can consider  $\gamma$ as an analytic automorphism of $\widetilde{X}_H$.  According to \cref{claim:contract},  $ {\rm sh}_{H}\circ \gamma: \widetilde{X}_H\to \widetilde{S}_H$ contracts each fiber of  the proper holomorphic fibration ${\rm sh}_{H}:\widetilde{X}_H\to \widetilde{S}_H$. As a result, it induces a holomorphic map   $\tilde{\gamma}:\widetilde{S}_H\to \widetilde{S}_H$ such that we have the following commutative diagram: 
	\begin{equation*}
		\begin{tikzcd}
			\widetilde{X}_H\arrow[r,"\gamma"] \arrow[d, "{\rm sh}_H"] & \widetilde{X}_H\arrow[d, "{\rm sh}_H"]\\
			\widetilde{S}_H \arrow[r, "\tilde{\gamma}"] &\widetilde{S}_H
		\end{tikzcd}
	\end{equation*}
Let us define the action of $\gamma$ on $\widetilde{S}_H$ by $\tilde{\gamma}$. Then $\gamma$ is an analytic automorphism and ${\rm sh}_H$ is $\pi_1(X)/H$-equivariant. 	It is evident that  $\tilde{\gamma}:\widetilde{S}_H\to \widetilde{S}_H$ carries one fiber of ${\rm sh}_H$ to another fiber.  Thus, we have shown that $\pi_1(X)/H$ acts on $\widetilde{S}_H$ analytically and equivariantly with respect to ${\rm sh}_{H}:\widetilde{X}_H\to \widetilde{S}_H$. Now, we will prove that this action is properly discontinuous.
	
	\medspace

Take any $y\in \widetilde{S}_H$ and let $F:={\rm sh}_H^{-1}(y)$.  	Consider the subgroup $\cS$ of $\pi_1(X)/H$ that fixes $y$, i.e.  \begin{align}\label{eq:fixator}
	 \cS:=\{\gamma\in \pi_1(X)/H \mid \gamma\cdot F=F \}.
\end{align}
	Since $F$ is compact, $\cS$ is finite.
	\begin{claim}\label{claim:component}
	$F$ is a connected component of 	 $\pi_H^{-1}(\pi_H(F))$. 
	\end{claim}
\begin{proof}[Proof of \cref{claim:component}]
	Let $x\in \pi_H^{-1}(\pi_H(F))$. Then there exists $x_0\in F$  such that $\pi_H(x)=\pi_H(x_0)$. Therefore, there exists $\gamma\in \pi_1(X)/H $ such that $\gamma. x_0=x$. It follows that $\pi_H^{-1}(\pi_H(F))=\cup_{\gamma\in \pi_1(X)/H}\gamma. F$.  Since $\gamma$ carries one fiber of ${\rm Sh}_H$ to another fiber, and the group $\pi_1(X)/H$ is finitely presented, it follows that   $\cup_{\gamma\in \pi_1(X)/H}\gamma. F$ are countable union of fibers of ${\rm Sh}_H$.  It follows that $F$ is a connected component of 	 $\pi_H^{-1}(\pi_H(F))$.  
\end{proof}
\cref{claim:component} implies that  $\pi_H:F\to \pi_{H}(F)$ is a finite \'etale cover. Denote by $Z:=\pi_H(F)$ which is a connected Zariski closed subset of $X$. Then ${\rm Im}[\pi_1(F)\to \pi_1(Z)]$ is finite.    As a consequence of \cite[Theorem 4.5]{Hof09}, there is a connected open neighborhood $U$ of $Z$ such that $\pi_1(Z)\to \pi_1(U)$ is an isomorphism.  Therefore, ${\rm Im}[\pi_1(U)\to \pi_1(W)]={\rm Im}[\pi_1(F)\to \pi_1(W)]$ is also finite. As a result,   $\pi_H^{-1}(U)$ is a disjoint union of  connected open sets $\{U_\alpha\}_{\alpha\in I}$ such that
	 \begin{enumerate}[label=(\alph*)]
	\item For each $U_\alpha$, $\pi_H|_{U_\alpha}:U_\alpha\to U$ is a finite \'etale covering. 
	\item \label{item:exactone}Each $U_\alpha$ contains exactly one connected component of $\pi_H^{-1}(Z)$.   
\end{enumerate}      
We may assume that $F\subset U_{\alpha_1}$ for some $\alpha_1\in I$. By \Cref{item:exactone}, 	   for any $\gamma\in \pi_1(X,z)/H\simeq {\rm Aut}(\widetilde{X}_H/X)$, $\gamma\cdot U_{\alpha_1}\cap U_{\alpha_1}= \varnothing$ if and only if $\gamma\notin \cS$.    

 Since ${\rm sh}_H$ is a proper holomorphic fibration, we can take a neighborhood $V_y$ of $y$ such that ${\rm sh}_H^{-1}(V_y)\subset U_{\alpha_1}$.    Since $\gamma\cdot U_{\alpha_1}\cap U_{\alpha_1}=\varnothing $  if and only if $\gamma\notin \cS$, it follows that  $\gamma\cdot V_y\cap V_y= \varnothing$  if $\gamma\not\in \cS$. Since $\cS$ is finite and $y$ was chosen arbitrarily, we have shown that the action of $\pi_1(X)/H$ on $\widetilde{S}_H$ is properly discontinuous.  Thus  \cref{lem:properdis} is proven. 
 \end{proof}
 Let $\nu:\pi_1(X)/H\to {\rm Aut}(\widetilde{S}_H)$ be action of $\pi_1(X)/H$ on $ \widetilde{S}_H$ and let $\Gamma_0:=\nu(\pi_1(X)/H)$.    By \cref{lem:properdis} and \cite{Car60}, we know that the quotient ${\rm Sh}_{\kC}(X):=\widetilde{S}_H/\Gamma_0$ is a  complex normal space, and it is compact if $X$ is compact.  Moreover, since   ${\rm sh}_H:\widetilde{X}\to \widetilde{S}_H$  is $\nu$-equivariant, it  induces  a   proper  holomorphic fibration ${\rm sh}_\kC:X\to {\rm Sh}_{\kC}(X)$ from $X$ to a complex normal space ${\rm Sh}_{\kC}(X)$.
 \begin{equation} \label{eq:equivariant}
\begin{tikzcd}
&	\widetilde{X}_H\arrow[r,"\pi_H"] \arrow[dl, "\Psi"']\arrow[d, "{\rm sh}_H"] & X\arrow[d, "{\rm sh}_\kC"]\\
S_\kC\times \sD\arrow[d]	&\widetilde{S}_H\arrow[ld, "\phi"]\arrow[l, "g"'] \arrow[r, "\mu"] & {\rm Sh}_\kC(X)\\
\sD &&
 	\end{tikzcd}
 \end{equation}
\begin{claim}\label{lem:shafarevich}
	For any connected Zariski closed subset $Z\subset X$,  the following properties are equivalent: 
	\begin{enumerate}[label=\rm (\alph*)]
		\item ${\rm sh}_\kC(Z)$ is a point;
		\item $ \varrho({\rm Im}[\pi_1(Z)\to \pi_1(X)])$ is finite for any reductive representation $\varrho:\pi_1(X)\to {\rm GL}_N(\bC)$ such that $[\varrho]\in \kC(\bC)$;
		\item for any irreducible component $Z_1$ of $Z$, $ \varrho({\rm Im}[\pi_1(Z_1^{\rm norm})\to \pi_1(X)])$ is finite for any reductive representation $\varrho:\pi_1(X)\to {\rm GL}_N(\bC)$ such that $[\varrho]\in \kC(\bC)$. 
	\end{enumerate} 
\end{claim} 
\begin{proof}
\noindent {\em {\rm (c)}  $\Rightarrow$ {\rm (a)}}:   Let $f:Y\to Z_1$ be a desingularization.   For any non archimedean local field $K$, we note that there exists an abstract embedding $K\hookrightarrow \bC$.  Hence,  for  any reductive representation $\rho:\pi_1(X)\to {\rm GL}_N(K)$ with $[\rho]\in \kC(K)$, $f^*\rho(\pi_1(Y))\subset  \varrho({\rm Im}[\pi_1(Z_1^{\rm norm})\to \pi_1(X)])$ is finite, and therefore bounded. Hence,    $   f(Y)$ is contained in some fiber $F$ of $s_\kC$ by 
\cref{lem:bounded}.  Using \Cref{item:direct,item:unique}, we have that $\Gamma:=f^*\sigma(\pi_1(Y))$ is also finite.  Therefore,  $Y$ is mapped to one point by the period mapping $Y\to \sD/\Gamma$ of $f^*\sigma$.     As a result, ${\rm sh}_\kC(Z_1)$ is a point by \eqref{eq:equivariant}. Since $Z$ is connected, ${\rm sh}_\kC(Z)$ is also a point. 
	
	\medspace
	
	\noindent {\em {\rm (b)}  $\Rightarrow$ {\rm (c)}}:  Obvious. 
	
	\medspace
	
\noindent {\em {\rm (a)}  $\Rightarrow$ {\rm (b)}}:  We observe from \eqref{eq:equivariant} that  for any connected component $Z'$ of $\pi_H^{-1}(Z)$, it is contracted by $\Psi$.  By \cref{lem:compact}, $Z'$ is contained in some compact subvariety of $\widetilde{X}_H$. Since $Z'$ is closed, it is also compact.   Therefore, the map $Z'\to Z$ induced by $\pi_H$ is a finite étale cover. 

Let $\varrho:\pi_1(X)\to {\rm GL}_N(\bC)$ be any reductive representation such that $[\varrho]\in \kC(\bC)$. Note that $\varrho(\Im[\pi_1(Z')\to \pi_1(X)])$ is a finite index subgroup of $\varrho(\Im[\pi_1(Z)\to \pi_1(X)])$.  Since  $\varrho(\Im[\pi_1(Z')\to \pi_1(X)])=\{1\}$,   it follows that $\varrho(\Im[\pi_1(Z)\to \pi_1(X)])$ is finite.   The claim is proved. 
\end{proof}  
Therefore, we have  constructed the desired proper  holomorphic fibration ${\rm sh}_\kC:X\to {\rm Sh}_\kC(X)$.  The theorem is proved. 
\end{proof}

\begin{proposition}
	Let $X$ be a smooth projective variety and let $\kC$ be a constructible subset of $M_{\rm B}(X,N)(\bC)$ which is defined over $\bQ$ and is invariant under $\bR^*$-action. Then ${\rm Sh}_\kC(X)$ is a projective  variety and ${\rm sh}_\kC$ is algebraic.  
\end{proposition}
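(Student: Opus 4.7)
The plan is to realize ${\rm sh}_\kC$ as the Stein factorization of an explicit algebraic morphism assembled from the Katzarkov-Eyssidieux reduction and the period maps of the $\bC$-VHS representations furnished by \cref{prop:nonrigid}. Replacing $\kC$ by its Zariski closure (justified by \cref{lem:closure}, which shows this does not alter ${\rm sh}_\kC$), let $\sigma^\vhs_1,\ldots,\sigma^\vhs_m:\pi_1(X)\to\GL_N(\bC)$ be the reductive representations underlying $\bC$-VHS produced in \cref{prop:nonrigid}. Because $X$ is smooth and projective, the algebraicity of period maps---classical for $\bZ$-VHS after Griffiths and Sommese, and extended to the $\bC$-VHS setting by Bakker-Brunebarbe-Tsimerman \cite{BBT23} (cf.\ \cref{rem:Griffiths})---ensures that the Stein factorization of the period mapping associated with each $\sigma^\vhs_i$ yields an algebraic morphism $p_i:X\to T_i$ onto a normal projective variety $T_i$. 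By \cref{thm:KE} the reduction $s_\kC:X\to S_\kC$ is likewise algebraic with $S_\kC$ a normal projective variety.

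Form the product $\Phi:=(s_\kC,p_1,\ldots,p_m):X\to S_\kC\times T_1\times\cdots\times T_m$ and let $\phi:X\to Y$ denote its Stein factorization. Then $\phi$ is a proper surjective algebraic morphism with connected fibers onto a normal projective variety $Y$.

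It remains to identify $\phi$ with ${\rm sh}_\kC$. Both are proper surjective holomorphic fibrations with normal targets, so it suffices to show they contract the same connected Zariski closed subsets. Let $Z\subset X$ be such a subset and write $\iota:Z^{\rm norm}\to X$. By \cref{thm:Shafarevich1}, ${\rm sh}_\kC(Z)$ is a point iff $\varrho({\rm Im}[\pi_1(Z^{\rm norm})\to\pi_1(X)])$ is finite for every reductive $\varrho$ with $[\varrho]\in\kC(\bC)$. Finiteness implies boundedness in every non-archimedean $\GL_N(K)$, whence \cref{thm:KE} forces $s_\kC(Z)$ to be a point; conversely, once $s_\kC(Z)$ is a point, \cref{prop:nonrigid} shows that each such $\iota^*\varrho$ is conjugate to some $\iota^*\sigma^\vhs_i$, so the finiteness condition is equivalent to the finiteness of every $\sigma^\vhs_i({\rm Im}[\pi_1(Z^{\rm norm})\to\pi_1(X)])$. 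Now \cref{prop:nonrigid} also guarantees that $\sigma:=\oplus_i\sigma^\vhs_i$ has discrete image on $\pi_1(Z^{\rm norm})$ in that situation; since the stabilizer of a point of $\sD_1\times\cdots\times\sD_m$ is compact, having every $p_i$ contract $Z$ places $\sigma({\rm Im}[\pi_1(Z^{\rm norm})\to\pi_1(X)])$ in a compact subgroup, and a discrete subset of a compact set is finite. Combining these equivalences, ${\rm sh}_\kC(Z)$ is a point iff $\Phi(Z)$ is a point iff $\phi(Z)$ is a point. Therefore $\phi={\rm sh}_\kC$ and ${\rm Sh}_\kC(X)=Y$ is a normal projective variety.

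The main obstacle is the appeal to algebraicity of the Stein factorization of $\bC$-VHS period maps on smooth projective varieties: in the $\bZ$-polarized case this reduces to classical work of Griffiths and Sommese, while the general $\bC$-VHS statement we need relies on the o-minimal GAGA of \cite{BBT23}.
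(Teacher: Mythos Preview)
Your proposal has a genuine gap at the step where you invoke the algebraicity of period maps for the $\bC$-VHS representations $\sigma^\vhs_i$. The result of \cite{BBT23} settles Griffiths' conjecture for \emph{polarized $\bZ$-VHS}, where the monodromy group $\Gamma$ is a priori discrete and $\sD/\Gamma$ is a complex space. For a general $\bC$-VHS there is no reason for the monodromy $\sigma^\vhs_i(\pi_1(X))$ to be discrete in $\GL_N(\bC)$, so the quotient $\sD_i/\Gamma_i$ need not even be Hausdorff, and there is no target $T_i$ to which a period map $p_i:X\to T_i$ could go. \Cref{prop:nonrigid} only asserts discreteness of $\iota^*\sigma(\pi_1(Z))$ for $Z$ \emph{mapping to a point of $S_\kC$}; it says nothing about $\sigma(\pi_1(X))$ itself. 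This is precisely why the paper builds ${\rm sh}_\kC$ from the lift $\Psi:\widetilde{X}_H\to S_\kC\times\sD$ to the period domain, rather than from a descended period map. Indeed, the paper's own \cref{conj:algebraic} (with the accompanying \cref{rem:Griffiths}) treats algebraicity of the Shafarevich morphism for a general reductive $\varrho$ as an open problem motivated by, not implied by, \cite{BBT23}.

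The paper's actual argument avoids period-map algebraicity entirely. It passes to a finite-index torsion-free subgroup $N\triangleleft\pi_1(X)/H$ (via Selberg's lemma, \cref{claim:free}) so that $W:=\widetilde{S}_H/\nu(N)$ is an honest complex space with an \'etale cover $\widetilde{S}_H\to W$. On $W$ it assembles a line bundle $L_G\otimes f^*L$, where $L_G$ descends from Griffiths' canonical bundle $K_\sD$ (equipped with its $G_0$-invariant metric, positive in horizontal directions) and $L$ is ample on $S_\kC$. Because $g:\widetilde{S}_H\to S_\kC\times\sD$ has discrete fibers (\cref{claim:discrete}), the curvature current of this bundle is generically strictly positive; Siu/Demailly then gives that $W$ is Moishezon, and a direct check of $(L_G\otimes f^*L)^{\dim Z}\cdot Z>0$ for every subvariety $Z\subset W$ lets one apply the Nakai--Moishezon criterion for Moishezon spaces to conclude $W$ is projective. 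Finally ${\rm Sh}_\kC(X)$ is a finite quotient of $W$, hence projective. This route is intrinsic to the construction of $\widetilde{S}_H$ and does not require any descended period map on $X$.
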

\begin{proof} 
	We will use the same notations as in the proof of  \cref{thm:Shafarevich1}. 
\begin{claim}\label{claim:free} 
	There exists a finite index normal subgroup $N$ of  $\pi_1(X)/H$ such that its action on $\widetilde{S}_H$ does not have fixed point.
\end{claim}
\begin{proof} [Proof of \cref{claim:free}]
	By \cref{lem:closure}, we know that $\pi_1(X)/H$ is a finitely presented linear group. Therefore, by Selberg's lemma it contains a finite index normal subgroup $N$ which is torsion free.   
By \eqref{eq:fixator}, we know that for any $y\in \widetilde{S}_H$, the subgroup of $\pi_1(X)/H$ fixing $y$ is finite.   It follows that the action of $N$ on  $y\in \widetilde{S}_H$ is torsion free. 
\end{proof}

  Let $Y:=\widetilde{X}_H/N$, where $N$ is the finite index torsion free normal subgroup of $\pi_1(X)/H$. Then $Y\to X$ is a finite Galois \'etale cover with ${\rm Aut}(\widetilde{X}_H/Y)=N$. Recall that we define $\nu:\pi_1(X)/H\to {\rm Aut}(\widetilde{S}_H)$ to be action of $\pi_1(X)/H$ on $ \widetilde{S}_H$.  Since   ${\rm sh}_H:\widetilde{X}_H\to \widetilde{S}_H$  is   $\nu$-equivariant, the group $N$ gives rise to a   proper holomorphic  fibration
$
Y\to \widetilde{S}_H/\nu(N) 
$  over a complex normal space $\widetilde{S}_H/\nu(N)$. By \cref{claim:proper,claim:free},  $\nu(N)$  acts on  $\widetilde{S}_H$ properly continuous and freely and thus the covering $\widetilde{S}_H\to \widetilde{S}_H/\nu(N)$   is \'etale. 
\begin{claim}\label{claim:discrete}
	Each fiber  of $g:\widetilde{S}_H\to S_\kC\times \sD$  is discrete.
\end{claim}
\begin{proof}
	Let $(t,o)\in S_\kC\times \sD$ be arbitrary point and take any point $y\in g^{-1}((t,o))$.  Then $Z:={\rm sh}_H^{-1}(y)$ is a 	   connected component of the fiber  $\Psi^{-1}((t,o))$, that is compact by \cref{lem:compact}.	By \cref{lem:Stein},  $Z$ has  an open neighborhood $U$ such that $\Psi(U)$ is a locally closed analytic subvariety of $S_\kC\times \sD$ and $\Psi|_{U}:U \rightarrow \Psi(U)$ is proper.  Therefore, for the Stein factorization $U\to V\stackrel{\pi_V }{\to}\Psi(U)$ of $\Psi|_U$,  $U\to V$ coincides with ${\rm sh}_H|_{U}:U\to {\rm sh}_H(U)$ and  $\pi_V:V\to \Psi(U)$ is finite.  Observe that $V$ is an open neighborhood of $y$ and    $\pi_V:V\to \Psi(U)$ coincides with $g|_{V}:V\to S_{\kC}\times \sD$. Therefore,  the set $V\cap g^{-1}((t,o))=V\cap  (\pi_V)^{-1}(t,o)$  is finite.  As a result,  $g^{-1}((t,o))$ is discrete.  The claim  is proven. 
\end{proof}	

In \cite{Gri70}, Griffiths discovered a  so-called \emph{canonical bundle $K_{\sD}$ on the period domain $\sD$}, which is invariant under $G_0$. Here $G_0$ is a real Lie group acting on $\sD$ holomorphically and transitively.  It is worth noting that $K_\sD$ is endowed with a $G_0$-invariant smooth metric $h_{\sD}$ whose curvature is positive-definite in the horizontal direction.  The period mapping $p:\widetilde{X}_H\to \sD$ induces a  holomorphic map $\phi:\widetilde{S}_H\to \sD$ which is horizontal. It is important to note that the monodromy representation $\sigma: \pi_1(X)/H \to G_0$ induces a representation $\nu(\pi_1(X)/H) \to G_0$, such that $\phi$ is equivariant with respect to this representation. As a result, $\phi^*K_{\sD}$ descends to a line bundle on the quotient $W:=\widetilde{S}_H/\nu(N)$, denoted by $L_G$.     
The smooth metric $h_{\sD}$ induces a smooth metric $h_G$ on $L_G$  whose curvature form is denoted by $T$. Let $x\in \widetilde{S}_H$ be a smooth point of $ \widetilde{S}_H$ and let $v\in T_{\widetilde{S}_H,x}$. Then $-iT(v,\bar{v})>0$   if $d\phi(v)\neq 0$.    
\begin{claim}\label{claim:projective}
	${\rm Sh}_\kC(X)$ is a projective normal variety.
\end{claim}
\begin{proof}
	Note that $S_\kC$ is a projective normal variety. We take an ample line bundle $L$ over $S_{\kC}$. Recall that there is a line bundle   $L_{\rm G}$ on $W$ equipped with a smooth metric $h_{\rm G}$ such that its curvature form is $T$.    Denote by $f:W\to S_\kC$ the natural morphism induced by $g:\widetilde{S}_H\to S_\kC\times \sD$.  Let $\mu:W'\to W$  be a resolution of singularities of $W$. 
	 \begin{equation*}  
		\begin{tikzcd}
			&	\widetilde{X}_H\arrow[r,"\pi_H"] \arrow[dl, "\Psi"']\arrow[d, "{\rm sh}_H"] & Y\arrow[d ] & \\
			S_\kC\times \sD\arrow[d]	&\widetilde{S}_H\arrow[ld, "\phi"]\arrow[l, "g"'] \arrow[r] & W\arrow[d, "f"] & W'\arrow[l, "\mu"']\\
			\sD && S_\kC&
		\end{tikzcd}
	\end{equation*}

	We take a smooth metric $h$ on $L$ such that its curvature form $i\Theta_h(L)$ is K\"ahler. As shown in   \cref{claim:discrete}, the map $g:\widetilde{S}_H\to S_\kC\times \sD$ is discrete. Therefore, $g$ is an immersion at general points of $\widetilde{S}_H$.   Thus, for the line bundle $\mu^*( L_{\rm G}\otimes f^*L)$ equipped with the smooth metric $\mu^*( h\otimes f^*h_{\rm G})$, its curvature form is strictly positive at some points of $W'$.  By Demailly's holomorphic Morse inequality or Siu's solution for the Grauert-Riemenschneider conjecture, $\mu^*( L_{\rm G}\otimes f^*L)$  is a big line bundle and thus $W'$ is a Moishezon manifold. Hence $W$ is a Moishezon variety.

Moreover, we can verify that  for     irreducible  positive-dimensional  closed subvariety $Z$ of $W$, there exists a smooth point $x$ in $Z$ such that it has a neighborhood $\Omega$ that can be lifted to the \'etale covering $\widetilde{S}_H$ of $W$, and $g|_{\Omega}:\Omega\to S_\kC\times \sD$ is an immersion. 
It follows that 
 $
(if^*\Theta_h(L)+T)|_{\Omega}
	$  is strictly positive.  
Since $if^*\Theta_h(L)+T\geq 0$, it follows that
	$$
(L_G\otimes f^*L)^{\dim Z}\cdot [Z]=	\int_{Z^{\rm reg}}(if^*\Theta_h(L)+T)^{\dim Z}>0. 
	$$
By the Nakai-Moishezon criterion for Moishezon varieties  (cf.  \cite[Theorem 3.11]{Kol90}), $L_G\otimes f^*L$ is ample, implying that $W$ is projective.   
Recall that the compact complex normal space ${\rm Sh}_{\kC}(X):=\widetilde{S}_H/\Gamma_0$ is a quotient of $W=\widetilde{S}_H/\nu(N)$ by the finite group $\Gamma_0/\nu(N)$.   Therefore, ${\rm Sh}_\kC(X)$ is also projective.  The claim is proved. 
\end{proof} 
We  accomplish the proof of the proposition. 
\end{proof}
\begin{rem}
In \cref{thm:Shafarevich1}, when $X$ is compact,  \cref{prop:nonrigid} allows us to assume that $\kC$ is constructible rather than Zariski closed. Under this weaker assumption, we can still obtain the Shafarevich morphism for $\kC$. 
\end{rem}
\begin{rem}\label{rem:Brunebarbegap2}
We remark that \cref{lem:properdis} is claimed without a proof  in \cite[p. 524]{Eys04} and  \cite[Proof of Theorem 10]{Bru23}. It appears to us that the proof of \cref{lem:properdis} is not straightforward. 

It is worth noting that  \Cref{claim:free} is implicitly used in \cite[Proposition 5.3.10]{Eys04}. In that proof, the criterion for Stein spaces (cf.    \Cref{prop:stein}) is employed, assuming    \Cref{claim:free}.   
 Given its significance in   the proofs of \cref{main2,main}, we provide a complete proof.  
\end{rem}

\subsection{Construction of Shafarevich morphism (II)}
 In the previous subsection, we established the existence of the Shafarevich morphism  associated with a   constructible subset of $M_{\rm B}(X,N)(\bC)$ defined over $\mathbb{Q}$ that are invariant under $\bR^*$-action. In this section, we focus on proving an existence theorem for the Shafarevich morphism associated with a single reductive representation, based on \cref{thm:Shafarevich1}. 
 Initially, we assume that   the representation has infinite monodromy at infinity and that $X$ is smooth. However, we will subsequently  apply \cref{prop:infinity} to remove this assumption and establish the more general result.  
\begin{proposition}\label{thm:Sha2}
	Let $X$ be a  smooth quasi-projective  variety. 	Let $\varrho:\pi_1(X)\to {\rm GL}_N(\bC)$ be a reductive representation with infinite monodromy at infinity. 
	Then there exists a  proper holomorphic fibration   ${\rm sh}_{\varrho}:X\to {\rm Sh}_{\varrho}(X)$ onto a complex  normal space ${\rm Sh}_{\varrho}(X)$   such that for any Zariski  closed subvariety   $Z\subset X$,  $ \varrho({\rm Im}[\pi_1(Z)\to \pi_1(X)])$ is finite if and only if ${\rm sh}_\varrho(Z)$ is a point. If $X$ is smooth, then  for any Zariski  closed subset   $Z\subset X$, the following properties are equivalent: 
	\begin{thmlist} 
		\item ${\rm sh}_\varrho(Z)$ is a point;
		\item $ \varrho({\rm Im}[\pi_1(Z)\to \pi_1(X)])$ is finite;
		\item for each irreducible component $Z_1$ of $Z$, $ \varrho({\rm Im}[\pi_1(Z_1^{\rm norm})\to \pi_1(X)])$ is finite. 
	\end{thmlist}  
\end{proposition}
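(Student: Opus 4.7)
The strategy is to deduce \cref{thm:Sha2} from \cref{thm:Shafarevich1} by exhibiting a suitable Zariski-closed, $\bR^*$-invariant, $\bQ$-defined subset $\kC \subset M_{\rm B}(X,N)(\bC)$ canonically associated to the single representation $\varrho$. Concretely, I would take $\kC$ to be the intersection of all Zariski-closed subsets of $M_{\rm B}(X,N)$ that are defined over $\bQ$, invariant under the $\bR^*$-action of \cref{sec:C*action}, and contain $[\varrho]$. This collection is non-empty (it contains $M_{\rm B}(X,N)$) and closed under arbitrary intersection, since all three properties pass to intersections; Noetherianity of $M_{\rm B}(X,N)$ as a $\bQ$-scheme then realizes $\kC$ as a well-defined minimum element with the required features. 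Since $[\varrho] \in \kC(\bC)$ and $\varrho$ has infinite monodromy at infinity, so does $\kC$ by \cref{def:monodromy}, with $\varrho$ itself serving as the witnessing representation for each boundary arc.

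Applying \cref{thm:Shafarevich1} to $\kC$ yields a proper holomorphic fibration ${\rm sh}_\kC : X \to {\rm Sh}_\kC(X)$ onto a normal complex space, characterized by: for any connected Zariski-closed $Z \subset X$, ${\rm sh}_\kC(Z)$ is a point iff $\tau(\mathrm{Im}[\pi_1(Z) \to \pi_1(X)])$ is finite for every reductive $\tau$ with $[\tau] \in \kC(\bC)$, equivalently for each irreducible component via $\pi_1(Z_1^{\rm norm})$. Setting ${\rm sh}_\varrho := {\rm sh}_\kC$, the implication \emph{${\rm sh}_\varrho(Z)$ is a point $\Rightarrow$ $\varrho(\mathrm{Im}[\pi_1(Z)\to \pi_1(X)])$ is finite} is immediate by specializing to $\tau = \varrho$, and the three-way equivalence in the second part of the proposition transports directly from \cref{thm:Shafarevich1} via componentwise reasoning and normalization.

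The converse direction is the principal obstacle: I must show that if $\varrho$ has finite image on $\mathrm{Im}[\pi_1(Z) \to \pi_1(X)]$, then every reductive $\tau \in \kC(\bC)$ shares that property. I plan to exploit the fine structure of $\kC$: it is built from $[\varrho]$ only through $\bQ$-Galois conjugation and continuous $\bR^*$-deformation, so each $\tau \in \kC$ lies in the same geometric connected component as some Galois conjugate of $\varrho$. Under the finiteness hypothesis, \cref{thm:KZ} ensures $s_\varrho(Z)$ is a point, which propagates to $s_\kC(Z)$ being a point up to Stein factorization; \cref{prop:nonrigid} applied to $\kC$ then forces $\tau|_{\pi_1(Z^{\rm norm})}$ to be conjugate to a common discrete-image VHS deformation of $\varrho|_{\pi_1(Z^{\rm norm})}$, yielding finiteness for $\tau$ as well.

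The technical hinge throughout will be the verification that $\kC$ is genuinely $\bR^*$-invariant—the $\bR^*$-action on character varieties is only real-analytic, not algebraic—and that $s_\kC$ refines $s_\varrho$ only by Stein factorization, so that the two reduction maps have the same contraction locus. These points rest on \cref{cor:pull}, \cref{prop:pullcommute}, and the harmonic-bundle construction of the $\bR^*$-action from \cref{sec:C*action}; the delicate step is that continuity of the $\bR^*$-action (\cref{lem:continuous}), combined with the property that Zariski-closed subsets are analytically closed and intersections preserve $\bR^*$-invariance, ultimately allows the minimum-element construction to go through.
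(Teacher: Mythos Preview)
Your setup through \cref{thm:Shafarevich1} is fine, and your minimal $\kC$ is a legitimate object: the collection of Zariski-closed, $\bQ$-defined, $\bR^*$-invariant subsets containing $[\varrho]$ is closed under intersection (set-theoretically $\bR^*$-invariance passes to intersections, and Zariski-closedness plus $\bQ$-definition are Noetherian), so a minimum exists. The forward implication and the componentwise equivalence also follow as you say.

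The gap is in the converse direction, and it is not a detail. Your argument that finiteness of $\varrho$ on $Z$ forces finiteness of every $\tau\in\kC$ on $Z$ does not go through as written. First, \cref{thm:KZ} concerns representations into non-archimedean local fields and a boundedness criterion; it says nothing about the complex representation $\varrho$, so there is no map ``$s_\varrho$'' to invoke and no mechanism by which ``$s_\varrho(Z)$ is a point'' would propagate to $s_\kC(Z)$ being a point. Second, \cref{prop:nonrigid} has as a hypothesis precisely that $s_\kC\circ\iota(Z)$ is a point, so you cannot use it to deduce that. Third, the structural claim that ``each $\tau\in\kC$ lies in the same geometric connected component as some Galois conjugate of $\varrho$'' is unjustified: $\kC$ is a Zariski closure, and the $\bR^*$-orbit together with Galois conjugates need not be Zariski-dense in it, nor does the closure operation preserve connected-component membership.

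What is actually needed is the content of \cref{lem:C*}: for a proper $g\colon W\to X$ from a smooth quasi-projective variety, the locus $j_W^{-1}\{1\}=\{[\tau]: g^*\tau \text{ is trivial}\}$ is a Zariski-closed $\bQ$-subscheme that is $\bC^*$-invariant. This is nontrivial in the quasi-projective case and rests on the functoriality of pullback for regular filtered Higgs bundles (\cref{prop:functoriality}). Granting it, the paper takes $\kC=\bigcap_{\{g: g^*\varrho=1\}} j_W^{-1}\{1\}$, so that whenever $\varrho$ is trivialized on a finite \'etale cover $W$ of a desingularization of $Z$, every $\tau\in\kC$ is automatically trivial on $W$ as well, giving the converse for free. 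Your minimal $\kC$ would also work once you feed in \cref{lem:C*}: since each $j_W^{-1}\{1\}$ is then one of the closed $\bQ$-defined $\bR^*$-invariant sets containing $[\varrho]$, your $\kC$ sits inside it, and the same conclusion follows. But without \cref{lem:C*} (or an equivalent Hodge-theoretic input), the converse direction has no engine.
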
   

We first prove the following crucial result. 
\begin{proposition}\label{lem:C*}
		Let $X$ be a smooth quasi-projective variety. Let $f: Z\to X$ be a \emph{proper} morphism from a \emph{smooth} quasi-projective variety $Z$. Let $\varrho:\pi_1(X)\to \GL_N(\bC)$ be a reductive representation. Define  $M:= j_{Z}^{-1}\{1\}$, where $1$  stands  for the trivial representation, and $j_Z: M_{\rm B}(X,N)\to M_{\rm B}(Z,N)$ is  the natural morphism of $\bQ$-scheme induced by $f$.
Then $M$ is a closed subscheme of $M_{\rm B}(X,N)$ defined over $\bQ$ such that $M(\bC)$ is invariant under $\bC^*$-action. 
\end{proposition}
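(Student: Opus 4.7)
The plan is to split the statement into the two claimed properties and verify each in turn.

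For the $\bQ$-closedness, I would observe that $j_Z:M_{\rm B}(X,N)\to M_{\rm B}(Z,N)$ is induced by the pullback of representations along the group homomorphism $f_*:\pi_1(Z)\to\pi_1(X)$. Being functorially defined from the finitely presented groups $\pi_1(X),\pi_1(Z)$ and the $\bQ$-group $\GL_N$, this pullback is a morphism of affine $\bQ$-schemes. The trivial representation provides a $\bQ$-point $\{1\}\in M_{\rm B}(Z,N)(\bQ)$, so its scheme-theoretic preimage $M=j_Z^{-1}(\{1\})$ is automatically a closed subscheme of $M_{\rm B}(X,N)$ defined over $\bQ$.

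For the $\bC^*$-invariance, I take $[\varrho]\in M(\bC)$ and, using \cref{lem:character}, replace $\varrho$ by a reductive representative in its conjugacy class. Since $Z$ is smooth, hence normal, \cref{thm:reductive} ensures that $f^*\varrho$ is also reductive. The defining condition $j_Z([\varrho])=[1]$ then reads $[f^*\varrho]=[1]$, and a second application of \cref{lem:character} forces $f^*\varrho$ to be conjugate to the trivial representation. For any $t\in\bC^*$, \cref{prop:pullcommute} applied to the morphism $f:Z\to X$ and the reductive representation $\varrho$ yields
\[
f^*\!\bigl(t.[\varrho]\bigr) \;=\; t.[f^*\varrho] \;=\; t.[1].
\]

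The final ingredient is the observation that $t.[1]=[1]$ in $M_{\rm B}(Z,N)(\bC)$ for every $t\in\bC^*$: according to the construction recalled in \cref{sec:C*action}, the class $[1]$ corresponds to a tame pure imaginary harmonic bundle whose Higgs field satisfies $\theta\equiv 0$; hence $(E,t\theta,h)=(E,0,h)$ for all $t$, so the associated flat bundle and its monodromy representation are independent of $t$, giving $t.[1]=[1]$. Combined with the previous display this gives $f^*(t.[\varrho])=[1]$, i.e.\ $t.[\varrho]\in M(\bC)$, as required. The argument is essentially formal; the only subtlety is to make sure the reductivity hypotheses of both \cref{thm:reductive} and \cref{prop:pullcommute} are in force, which is the role of the preliminary replacement of $\varrho$ by its semisimplification, and I do not anticipate any serious obstacle.
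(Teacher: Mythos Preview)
Your argument is correct and actually more streamlined than the paper's. The paper does not invoke \cref{prop:pullcommute} here; instead it repeats, inside the proof of \cref{lem:C*}, essentially the same harmonic-bundle computation that already establishes \cref{prop:pullcommute}: it shows directly that the regular filtered Higgs bundle $(f^*\bm{E}_*,f^*\theta)$ is trivial (via \cref{prop:functoriality} and Mochizuki's uniqueness \cite[Theorem 1.4]{Moc06}), hence $(f^*\bm{E}_*,tf^*\theta)$ is trivial for every $t$, and then that the corresponding monodromy $f^*\varrho_t$ is trivial. Your route factors this through the already-proven \cref{prop:pullcommute} and reduces the remaining work to the trivial observation $t.[1]=[1]$. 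The content is the same---both arguments rest on \cref{prop:functoriality}---but your packaging avoids duplicating that machinery. One minor point: the paper has to pause to handle the possibility that $\varrho_t$ is not reductive (passing to the semisimplification before comparing classes), whereas your formulation at the level of the character variety sidesteps this automatically.
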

\begin{proof}
We take a smooth projective compactification $\overline{X}$ (resp. $\overline{Z}$) of $X$ (resp.  $Z$) such that $D:=\overline{X}\backslash X$ (resp. $D_Z:=\overline{Z}\backslash Z$)  is a simple normal crossing divisor and $f$ extends to a morphism $\bar{f}:\overline{X}\to \overline{Z}$.  Note that the morphism $j_Z$ is a $\bQ$-morphism between  affine schemes of finite type  $M_{\rm B}(X,N)$ and $M_{\rm B}(Z,N)$ defined over $\bQ$. We remark that $M$ is   a closed subscheme of $M_{\rm B}(X,N)$ defined over $\bQ$ as we have $\{1\}\in M_{\rm B}(Z,N)(\bQ)$ . Let $\varrho:\pi_1(X)\to \GL_N(\bC)$ be a reductive representation such that $[\varrho]\in M(\bC)$. By  \cref{moc}, there is a tame pure imaginary harmonic bundle $(E,\theta,h)$ on $X$ such that $\varrho$ is the monodromy representation of  $\nabla_h+\theta+\theta_h^\dagger$.
  By definition of $M$, $f^*\varrho$ is a trivial representation.  Therefore, $f^*\varrho$ corresponds to a trivial harmonic bundle $(\oplus^N\cO_Z,0,h_0)$ where  $h_0$ is the canonical metric for the trivial vector bundle $\oplus^N\cO_Z$ with zero curvature. By  the unicity theorem in \cite[Theorem 1.4]{Moc06}, $(\oplus^N\cO_Z,0,h_0)$ coincides with $(f^*E,f^*\theta,f^*h)$ with some obvious ambiguity of $h_0$.  Therefore, $f^*E=\oplus^N\cO_Z$ and $f^*\theta=0$. In particular, the  regular filtered Higgs bundle  $(\tilde{E}_*,\tilde{\theta})$  on $(\overline{Z}, D_Z)$  induced by  the prolongation of $(f^*E,f^*\theta,f^*h)$ using norm growth defined in \cref{sec:prolong}  is trivial; namely  we have ${}_{\bm{a}}\tilde{E}=\cO_{\overline{Z}}^N\otimes \cO_{\overline{Z}}(\sum_{i=1}^{\ell}a_iD'_i)$ for any $\bm{a}=(a_1,\ldots,a_\ell)\in \bR^\ell$ and $\tilde{\theta}=0$.   Here we write $D_Z=\sum_{i=1}^{\ell} D'_i$. 
    
   Let $(\bm{E}_*,\theta)$  be the induced regular filtered Higgs bundle   on $(\overline{X}, D)$  by $(E,\theta,h)$ defined in \cref{sec:prolong}.  According to \cref{sec:adapt,sec:pullback} we can define the pullback $(f^*\bm{E}_*,f^*\theta)$,  which also forms a regular filtered Higgs bundle on $(\overline{Z}, D_Z)$ with trivial characteristic numbers. By virtue of \cref{prop:functoriality}, we deduce that  $(f^*\bm{E}_*,f^*\theta)=(\tilde{E}_*,\tilde{\theta})$.    Consequently, it follows that  $(f^*\bm{E}_*,f^*\theta)$ is  trivial.  Hence $(f^*\bm{E}_*,tf^*\theta)$ is trivial for any $t\in \bC^*$.

 Fix some ample line bundle $L$ on $\overline{Z}$.   It is worth noting that for any $t\in \bC^*$, $(\bm{E}_*,t\theta)$  is $\mu_L$-polystable   with trivial characteristic numbers. By \cite[Theorem 9.4]{Moc06}, there is a pluriharmonic metric $h_t$ for $(E,t\theta)$ adapted to the parabolic structures of $(\bm{E}_*,t\theta)$.  By \cref{prop:functoriality} once again, the  regular filtered Higgs bundle $(f^*\bm{E}_*,tf^*\theta)$ is the prolongation of the tame harmonic bundle $(f^*E,tf^*\theta,f^*h_t)$  using norm growth defined in \cref{sec:prolong}.  Since $(f^*\bm{E}_*,tf^*\theta)$ is trivial for any $t\in \bC^*$, by the unicity theorem in \cite[Theorem 1.4]{Moc06} once again, it follows that $(\oplus^N\cO_Z,0,h_0)$ coincides with $(f^*E,tf^*\theta,f^*h_t)$ with some obvious ambiguity of $h_0$.    Recall that in \cref{sec:C*action}, $\varrho_t$ is defined to be the monodromy representation of the flat connection $\nabla_{h_t}+t\theta+\bar{t}\theta_{h_t}^\dagger$.   It follows that $f^*\varrho_t$ is the monodromy representation of the flat connection $f^*(\nabla_{h_t}+t\theta+\bar{t}\theta_{h_t}^\dagger)$.    Therefore, $f^*\varrho_t$ is a trivial representation.
 
 However, it is worth noting that $\varrho_t$ might not be reductive as $(E,t\theta,h_t)$ might not be pure imaginary. Let $\varrho_t^{ss}$ be the semisimplification of $\varrho_t$. Then $[\varrho_t]=[\varrho_t^{ss}]$. Since $f^*\varrho_t$ is a trivial representation,   $f^*\varrho_t^{ss}$ is also trivial.  Note that $$f^*(t.[\varrho])=f^*[\varrho_t]=f^*[\varrho_t^{ss}]=[f^*\varrho_t^{ss}]=1.$$ 
 Therefore, $t.[\varrho]\in M(\bC)$ if $[\varrho]\in M(\bC)$.   The proposition is proved.   
\end{proof} 
\begin{rem}\label{rem:gap of Brunebarbe}
It is important to note that, unlike the projective case, the proof of \cref{lem:C*} becomes considerably non-trivial when $X$ is quasi-projective. This complexity arises from the utilization of the functoriality of pullback of regular filtered Higgs bundles, which is established in \cref{prop:functoriality}. Lemma \ref{lem:C*} plays a crucial role in the proof of \cref{thm:Sha2} as it allows us to remove the condition of $\bR^*$-invariance in \cref{thm:Shafarevich1}. However, we remark that \cref{lem:C*} is claimed without a  proof in the proof of \cite[Lemma 9.3]{Bru23}.
\end{rem}

 \begin{proof}[Proof of \cref{thm:Sha2}] 
 \noindent {\it Step 1: we assume that $X$ is smooth, and $\varrho$ has infinite monodromy at infinity.}	  Let $f: Z\to X$ be a \emph{proper} morphism from a \emph{smooth} quasi-projective variety $Z$.   Then $j_Z: M_{\rm B}(X,N)\to M_{\rm B}(Z,N)$ is a morphism of $\bQ$-scheme.  Define 
\begin{equation}\label{eq:kc}
\kC:=\bigcap_{\{f:Z\to X\mid f^*\varrho=1\} } j_{Z}^{-1}\{1\}, 
\end{equation}
where $1$  stands  for the trivial representation, and  $f: Z\to X$ ranges over all  proper morphisms from    smooth quasi-projective varieties $Z$ to $X$.   
  Then $\kC$ is a  Zariski closed   subset    defined over $\bQ$, and by \cref{lem:C*},  $\kC(\bC)$ is invariant under $\bC^*$-action.
 Note that $[\varrho]\in \kC(\bC)$. 
 As we assume that $\varrho$ has infinite monodromy at infinity,  conditions in \cref{thm:Shafarevich1} for $\kC$ are  fulfilled.   
 Therefore, we apply \cref{thm:Shafarevich1} to conclude that  the Shafarevich morphism ${\rm sh}_\kC:X\to {\rm Sh}_{\kC}(X)$ exists.   It is a proper holomorphic fibration over a complex normal space. 
  \begin{claim}\label{claim:contract3}
For any connected Zariski closed subset  $Z\subset X$, the following properties are equivalent:
	\begin{enumerate}[label=\rm (\alph*)]
	\item ${\rm sh}_\kC(Z)$ is a point;
	\item $ \varrho({\rm Im}[\pi_1(Z)\to \pi_1(X)])$ is finite;
	\item for each irreducible component $Z_1$ of $Z$, $ \varrho({\rm Im}[\pi_1(Z_1^{\rm norm})\to \pi_1(X)])$ is finite. 
\end{enumerate}  
  \end{claim}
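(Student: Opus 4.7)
The plan is to reduce the three-way equivalence to the corresponding statement in \cref{thm:Shafarevich1} applied to the Zariski closed subset $\kC$. Since $[\varrho]\in\kC(\bC)$ by the construction of $\kC$ in \eqref{eq:kc}, the implication (a) $\Rightarrow$ (b) is immediate from \cref{thm:Shafarevich1} taking $\tau=\varrho$. The implication (b) $\Rightarrow$ (c) follows from the fact that for any irreducible component $Z_1\subset Z$, after a suitable choice of basepoint and path, the image ${\rm Im}[\pi_1(Z_1^{\rm norm})\to\pi_1(X)]$ is contained (up to conjugation) in ${\rm Im}[\pi_1(Z)\to\pi_1(X)]$, so finiteness of the latter under $\varrho$ forces finiteness of the former.

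The core of the proof lies in establishing (c) $\Rightarrow$ (a). Fix an irreducible component $Z_1$ of $Z$ and let $\iota:Z_1^{\rm norm}\to X$ be the natural map. The hypothesis (c) says $H:=\ker(\iota^*\varrho)$ has finite index in $\pi_1(Z_1^{\rm norm})$. Let $\tilde{Z}_1\to Z_1^{\rm norm}$ be the connected finite étale cover corresponding to $H$, and let $r:\hat{Z}_1\to\tilde{Z}_1$ be a resolution of singularities. Since $\tilde{Z}_1$ is normal, $r_*:\pi_1(\hat{Z}_1)\to\pi_1(\tilde{Z}_1)=H$ is surjective; since the inclusion $Z_1\hookrightarrow X$, the normalization, the finite étale cover, and the resolution are all proper, the composite $f:\hat{Z}_1\to X$ is a proper morphism from a smooth quasi-projective variety, and by construction $f^*\varrho$ is trivial.

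Now take any reductive $\tau$ with $[\tau]\in\kC(\bC)$. By the defining intersection \eqref{eq:kc} of $\kC$, the class $j_{\hat{Z}_1}([\tau])=[f^*\tau]$ equals $1$ in $M_{\rm B}(\hat{Z}_1,N)(\bC)$. Since $\hat{Z}_1$ is smooth, $f^*\tau$ is itself reductive by \cref{thm:reductive}; combined with the separation of reductive conjugacy classes by the character variety (\cref{lem:character}), this forces $f^*\tau$ to equal the trivial representation on the nose. Consequently $\tau$ vanishes on ${\rm Im}[\pi_1(\hat{Z}_1)\to\pi_1(X)]=H$, a finite index subgroup of ${\rm Im}[\pi_1(Z_1^{\rm norm})\to\pi_1(X)]$, whence the latter image is finite under $\tau$. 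Applying this to every irreducible component of $Z$ and every reductive $[\tau]\in\kC(\bC)$, the third equivalence in \cref{thm:Shafarevich1} yields that ${\rm sh}_\kC(Z)$ is a point.

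The main obstacle is the last passage: the equality $j_{\hat{Z}_1}([\tau])=1$ only controls $f^*\tau$ up to semisimplification, and upgrading to actual triviality of $f^*\tau$ relies essentially on reductivity being preserved under pullback, which is \cref{thm:reductive}. This is precisely why we resolve $\tilde{Z}_1$ to obtain the smooth quasi-projective variety $\hat{Z}_1$ rather than working directly with $Z_1^{\rm norm}$. The remainder of the argument is a careful bookkeeping of the factorizations of the relevant fundamental group maps through the finite-index subgroup $H$.
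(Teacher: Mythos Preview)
Your proof is correct and follows essentially the same route as the paper: both arguments pass to a smooth proper model over $X$ on which $\varrho$ pulls back trivially (you take the \'etale cover of $Z_1^{\rm norm}$ first and then resolve, the paper resolves first and then passes to a finite \'etale cover of the resolution), then invoke the defining intersection \eqref{eq:kc} to force every $[\tau]\in\kC$ to pull back trivially as well, and finish via \cref{thm:Shafarevich1}. Your explicit justification that $[f^*\tau]=1$ upgrades to $f^*\tau=1$ via \cref{thm:reductive} and \cref{lem:character} is a nice touch that the paper leaves implicit; the one small slip is the equality ${\rm Im}[\pi_1(\hat{Z}_1)\to\pi_1(X)]=H$, which should read as the image of $H$ in $\pi_1(X)$, but this does not affect the argument.
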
 
\begin{proof}

\noindent {\em {\rm (a)} $\Rightarrow$ {\rm (b)}}:   this follows from the fact that $[\varrho]\in \kC(\bC) $ and \cref{thm:Shafarevich1}.   

\medspace

\noindent {\em {\rm (b)} $\Rightarrow$ {\rm (c)}}:  obvious.

\medspace

\noindent {\em {\rm (c)} $\Rightarrow$ {\rm (a)}}:  Consider the desingularization $Y\to Z_1$, and let $f:Y\to X$ be the composite morphism. 
 Since $\pi_1(Y)\to \pi_1(Z_1^{\rm norm})$  is surjective, it follows that $f^*\varrho(\pi_1(Y))$ is finite.   We can take a finite \'etale cover $W\to Y$ such that  $f^*\varrho({\rm Im}[\pi_1(W)\to \pi_1(Y)])$ is trivial. Denote by $g:W\to X$  the composition of $f$ with  $W\to Y$. Then $g$ is proper and  $g^*\varrho=1$. 
Let $\tau:\pi_1(X)\to {\rm GL}_N(\bC)$ be any   reductive representation such that $[\tau]\in \kC(\bC)$. 
Then $g^*\tau=1$ by \eqref{eq:kc}. It follows that $f^*\tau(\pi_1(Y))$ is finite. 
According to \cref{thm:Shafarevich1}, ${\rm sh}_\kC\circ f(Y)={\rm sh}_\kC(Z_1)$ is a point.   Since $Z$ is connected,  ${\rm sh}_\kC(Z)$ is a point.  
\end{proof}  
Let ${\rm sh}_\varrho:X\to {\rm Sh}_\varrho(X)$ be  ${\rm sh}_\kC:X\to {\rm Sh}_{\kC}(X)$.  The proposition is proved.  
\end{proof}
The conditions in \cref{thm:Sha2} that   $\varrho$ has infinite monodromy at infinity and that $X$ is smooth pose  significant practical limitations for further applications. However, we will remove this assumption in next theorem. We first prove an important lemma. 
\begin{lem}\label{lem:surjective}
	Let $f:X\to Y$ be a proper surjective morphism between connected (possible reducible) quasi-projective varieties $X$ and $Y$.    Assume that each fiber of $f$ is connected. Then $f_*: \pi_1(X)\to \pi_1(Y)$ is surjective. 
\end{lem}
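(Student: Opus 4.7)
The plan is to deduce this by a standard covering-space argument. Set $H := f_*(\pi_1(X)) \subset \pi_1(Y)$; we aim to show $H = \pi_1(Y)$. Because $Y$ (being a complex quasi-projective variety) is locally contractible in the classical topology, there exists a connected topological covering $p:\widetilde{Y}\to Y$ corresponding to the subgroup $H$.

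Form the fibre product $\widetilde{X}:=X\times_Y \widetilde{Y}$ with projections $\mathrm{pr}_X:\widetilde{X}\to X$ and $\widetilde{f}:\widetilde{X}\to \widetilde{Y}$. The map $\mathrm{pr}_X$ is a topological covering of $X$ whose fibre over $x$ is naturally identified with the coset space $\pi_1(Y)/H$, and whose monodromy factors as $\pi_1(X)\xrightarrow{f_*}\pi_1(Y)\to \mathrm{Sym}(\pi_1(Y)/H)$. Since $f_*(\pi_1(X))=H$ acts trivially on $\pi_1(Y)/H$, the monodromy is trivial, so $\widetilde{X}$ is a trivial cover of $X$ with exactly $[\pi_1(Y):H]$ connected components.

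The second projection $\widetilde{f}$ is proper (base change of the proper map $f$), surjective, and every fibre is homeomorphic to a fibre of $f$, hence connected by hypothesis. The key topological fact is the following elementary lemma: \emph{any proper surjective map $g:A\to B$ of Hausdorff spaces onto a connected space with connected fibres has connected source.} Indeed, if $A=A_1\sqcup A_2$ were a nontrivial decomposition into closed subsets, then properness makes $g(A_1)$ and $g(A_2)$ closed in $B$; since they cover the connected $B$ they must meet at some $b$, but then $g^{-1}(b)$ splits into two disjoint nonempty closed pieces, contradicting fibre-connectedness. Applied to $\widetilde{f}:\widetilde{X}\to \widetilde{Y}$ with $\widetilde{Y}$ connected, this yields that $\widetilde{X}$ is connected.

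Combining the two descriptions of $\widetilde{X}$ forces $[\pi_1(Y):H]=1$, i.e.\ $H=\pi_1(Y)$, proving surjectivity of $f_*$. The only genuinely substantive step is the topological lemma in the previous paragraph; everything else is routine covering-theory bookkeeping. Possible annoyances — such as the reducibility or singularities of $X$ and $Y$ — do not interfere, since we only use that the underlying spaces are Hausdorff, locally path-connected, and semi-locally simply connected in the classical topology, all of which hold for arbitrary complex algebraic varieties.
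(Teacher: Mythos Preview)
Your approach is essentially the same as the paper's: both form the fibre product of $X$ with a covering of $Y$, then use the elementary lemma that a proper surjection with connected fibres onto a connected target has connected source. The paper pulls back the universal cover and then does an explicit path-lifting argument; you pull back the cover corresponding to $H=f_*(\pi_1(X))$ and count components instead, which is a clean repackaging of the same idea.

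There is one slip worth fixing. The assertion that ``$H$ acts trivially on $\pi_1(Y)/H$'' is false unless $H$ is normal: for $g\notin N_{\pi_1(Y)}(H)$ the coset $gH$ is not fixed by left multiplication by $H$. What is true is that $H$ fixes the identity coset $eH$, so the monodromy of $\mathrm{pr}_X:\widetilde{X}\to X$ has a fixed point in the fibre. Since your topological lemma shows $\widetilde{X}$ is connected, the monodromy action of $\pi_1(X)$ on the fibre $\pi_1(Y)/H$ is transitive; a transitive action with a fixed point forces the fibre to be a single point, hence $[\pi_1(Y):H]=1$. With this correction your argument goes through.
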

\begin{proof}
	Let $\widetilde{Y}\to  Y$ be the universal covering of $Y$. Consider the fiber product $X':=X\times_Y\widetilde{Y}$.  
	\begin{equation*}
		\begin{tikzcd}
			X' \arrow[r, "\pi_1"] \arrow[d, "f'"]& X\arrow[d, "f"]\\
			\widetilde{	Y} \arrow[r, "\pi_2"] & Y
		\end{tikzcd}
	\end{equation*}Since $f$ is proper and each fiber of $f$ is connected, it follows that $f':X'\to \widetilde{Y}$ is proper, surjective and each fiber of $f'$ is  connected.  
	\begin{claim}
		$X'$ is connected.
	\end{claim}
	\begin{proof}
		Assume by contradiction that  	$X'$ is not connected. Then $X'=\sqcup_{\alpha\in I}X_\alpha$, where $X_\alpha$  are  connected components   of $X'$.  Since each fiber of $f'$ is connected, it follows that any fiber of $f'$ is contained in  some $X_\alpha$. This implies that $f'(X_\alpha)\cap f'(X_\beta)=\varnothing$ if $\alpha\neq \beta$. Since $f'$ is surjective, it follows  $\sqcup_{\alpha\in I}f'(X_\alpha)=\widetilde{	Y}$.  Note that $f'(X_\alpha)$ and $f'(\sqcup_{\beta\in I, \beta\neq\alpha}X_\beta)$ are both closed since  $f'$ is proper.  This contradicts with the connectedness of  $\widetilde{	Y}$. Hence  	$X'$ is connected. 
	\end{proof}
	We choose a base point $x\in X$ and $y\in Y$  such that $y=f(x)$.  Let $x'\in X'$ and $y'\in \widetilde{	Y}$ be such that $x=\pi_1(x')$, $y=\pi_2(y')$ and $y'=f'(x')$. Then for any element $\gamma\in \pi_1(Y,y)$, the  lift of $\gamma$ in $\widetilde{Y}$ starting at $y'$ will end at some $y''$ such that $y=\pi_2(y'')$. 
	Since $X'=X\times_Y\widetilde{Y}$, it follows that there exists a unique $x''\in f'^{-1}(y'')$ such that $x=\pi_1(x'')$. 
	Since $X'$ is connected, there exists a continuous path $\sigma:[0,1]\to X'$ such that $x'=\sigma(0)$ and $x''=\sigma(1)$. Consider the continuous path $f'\circ\sigma:[0,1]\to \widetilde{	Y}$. Then $y'=f'\circ\sigma(0)$ and $y''=f'\circ\sigma(1)$. It follows that $\gamma=[\pi_2\circ f'\circ \sigma]=[f\circ \pi_1\circ \sigma]$.  Note that $x=\pi_1\circ \sigma(0)=\pi_1\circ \sigma(1)$.    This proves that $f_*([\pi_1\circ \sigma])=\gamma$. Therefore, $f_*:\pi_1(X,x)\to \pi_1(Y,y)$ is surjective. The lemma is proved. 
\end{proof}

\begin{thm}\label{thm:Sha5} 
Let $X$ be a   quasi-projective  normal varieties, and let $\varrho:\pi_1(X)\to {\rm GL}_N(\bC)$ be a reductive representation. 
	 Then there exists a dominant holomorphic map  ${\rm sh}_{\varrho}:X\to {\rm Sh}_{\varrho}(X)$ to a complex  normal space ${\rm Sh}_{\varrho}(X)$ whose general fibers are connected such that  for any Zariski closed subset  
	 $Z\subset X$, the following properties are equivalent:
	 \begin{enumerate} [label=\rm (\alph*)]
	 	\item \label{item:contract}${\rm sh}_\varrho(Z)$ is a point;
	 	\item  \label{item:nonnormal}$ \varrho({\rm Im}[\pi_1(Z)\to \pi_1(X)])$ is finite;
	 	\item \label{item:normal} for any irreducible component $Z_1$ of $Z$, $ \varrho({\rm Im}[\pi_1(Z_1^{\rm norm})\to \pi_1(X)])$ is finite. 
	 \end{enumerate}    
\end{thm}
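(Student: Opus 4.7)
The plan is to reduce Theorem \ref{thm:Sha5} to Proposition \ref{thm:Sha2} by successively removing the two restrictive hypotheses in that result: that $X$ is smooth and that $\varrho$ has infinite monodromy at infinity.

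\textbf{Reduction to the smooth case.} First I would take a desingularization $\mu:Y\to X$. Since $X$ is normal and $\mu$ is proper birational, Zariski's connectedness theorem ensures all fibers of $\mu$ are connected. Applying \cref{lem:surjective} gives that $\mu_*:\pi_1(Y)\to\pi_1(X)$ is surjective, so $\varrho_Y:=\mu^*\varrho$ has image equal to $\varrho(\pi_1(X))$ and is reductive by \cref{thm:reductive}.

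\textbf{Reduction to infinite monodromy at infinity.} The image $\varrho(\pi_1(X))$ is a finitely generated linear group, so Selberg's lemma provides a torsion-free finite-index normal subgroup $\Gamma\subset\varrho(\pi_1(X))$. Let $\pi:\widetilde{Y}\to Y$ be the corresponding finite étale Galois cover with Galois group $G:=\varrho(\pi_1(X))/\Gamma$, so that $\varrho_{\widetilde{Y}}:=\pi^*\varrho_Y$ has torsion-free image. Applying \cref{prop:infinity} to the smooth variety $\widetilde{Y}$ yields a smooth partial compactification $\widetilde{Y}'$ with $\widetilde{Y}'\setminus\widetilde{Y}$ a simple normal crossing divisor such that $\varrho_{\widetilde{Y}}$ extends to a representation $\tilde\varrho:\pi_1(\widetilde{Y}')\to\GL_N(\bC)$ with infinite monodromy at infinity. \cref{thm:Sha2} then produces a proper holomorphic fibration ${\rm sh}_{\tilde\varrho}:\widetilde{Y}'\to{\rm Sh}_{\tilde\varrho}(\widetilde{Y}')$ satisfying the three equivalences on $\widetilde{Y}'$.

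\textbf{Descent to $X$.} Restricting gives a holomorphic map $\widetilde{Y}\to {\rm Sh}_{\tilde\varrho}(\widetilde{Y}')$. Each $g\in G$ acts on $\widetilde{Y}$, and since $\tilde\varrho\circ g_*=\tilde\varrho$ (as conjugation inside $\GL_N(\bC)$), the characterization of fibers via $\tilde\varrho$ shows that $g$ permutes fibers of this map. Applying \cref{lem:Stein} to take the quotient by $G$ produces a proper holomorphic fibration $Y\to T_Y$ over a normal complex space. Finally, applying the simultaneous (quasi-)Stein factorization of \cref{lem:simultaneous} to $\mu:Y\to X$ and $Y\to T_Y$ descends the map to a dominant holomorphic map ${\rm sh}_\varrho:X\to{\rm Sh}_\varrho(X)$ with connected general fibers onto a normal complex space.

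\textbf{Verification and main obstacle.} The implication \ref{item:nonnormal} $\Rightarrow$ \ref{item:normal} is trivial. For \ref{item:normal} $\Rightarrow$ \ref{item:contract}, given a component $Z_1\subset Z$, I would consider a resolution $W\to\mu^{-1}(Z_1)$; since $Z_1^{\rm norm}$ is normal and $\mu^{-1}(Z_1)\to Z_1^{\rm norm}$ has connected fibers (by Stein factorization), $\pi_1(W)\to\pi_1(Z_1^{\rm norm})$ is surjective, hence $\tilde\varrho$ has finite image on $\pi_1(W)$, and by \cref{thm:Sha2} applied to a finite étale lift of $W$ to $\widetilde{Y}'$, the Shafarevich morphism contracts $W$; this contraction descends through the construction. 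For \ref{item:contract} $\Rightarrow$ \ref{item:nonnormal}, connectedness of general fibers of ${\rm sh}_\varrho$ together with the fiber structure of the two-step quotient traces back to the equivalences on $\widetilde{Y}'$. The main obstacle will be carefully verifying that the two descents (the Galois quotient by $G$ and the Stein factorization through $\mu$) preserve the precise fiber structure needed for the equivalences, in particular when Zariski closed subsets $Z\subset X$ meet the singular locus and have irreducible components mapping nontrivially across the exceptional locus of $\mu$.
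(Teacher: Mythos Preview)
Your overall strategy matches the paper's: desingularize, pass to a finite cover where the representation extends to a partial compactification with infinite monodromy at infinity, apply \cref{thm:Sha2}, then descend via a Galois quotient and through the desingularization. There is, however, a genuine gap in your execution of the Galois descent.

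You take the \'etale cover $\pi:\widetilde{Y}\to Y$ \emph{before} constructing the partial compactification $\widetilde{Y}'$ via \cref{prop:infinity}. The group $G$ acts on $\widetilde{Y}$ but there is no reason it acts on $\widetilde{Y}'$: the admissible blow-ups and the selection of which boundary divisors to remove in the proof of \cref{prop:infinity} are not carried out $G$-equivariantly. Consequently ${\rm sh}_{\tilde\varrho}:\widetilde{Y}'\to{\rm Sh}_{\tilde\varrho}(\widetilde{Y}')$ carries no $G$-action, and your claim that ``the characterization of fibers via $\tilde\varrho$ shows that $g$ permutes fibers'' of the \emph{restricted} map on $\widetilde{Y}$ is incomplete: the characterization from \cref{thm:Sha2} concerns closed subvarieties of $\widetilde{Y}'$, and a fiber of ${\rm sh}_{\tilde\varrho}$ may meet the boundary $\widetilde{Y}'\setminus\widetilde{Y}$, so $g$ does not carry it to another full fiber and no well-defined action on the target is produced. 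The paper avoids this by reversing the order of operations: it first partially compactifies $X$ to $X'$ (where $\varrho$ has only \emph{quasi}-infinite monodromy), then uses Kawamata's covering lemma to build a finite morphism $\widehat{X}'\to X'$ on the compactified level that kills the residual finite local monodromy, and only then takes a Galois closure $Y'\to X'$. Now $G$ acts on all of $Y'$, the proper Shafarevich morphism on $Y'$ is $G$-equivariant (this is the content of \cref{claim:equivariant}), and the quotient by $G$ gives a proper fibration $c':X'\to Q$ whose restriction to $X$ is ${\rm sh}_\varrho$.

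A smaller issue: your descent from $Y$ to $X$ via \cref{lem:simultaneous} is misapplied --- that lemma produces a map \emph{from} $Y$ to a quasi-projective target, not a factorization through $X$, and your $T_Y$ is only a complex space. The paper's argument is more direct: each fiber of $\mu$ is connected (normality of $X$) and carries trivial monodromy, hence is contracted by the Shafarevich morphism of the resolution, which therefore factors through $\mu$.
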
  
\begin{proof}  
\noindent {\it Step 1: We first assume that $X$ is smooth.} 
By \cref{prop:infinity}, there exists a smooth partial compactification $X'$ of $X$ such that $\varrho$ has quasi-infinite monodromy at infinity with respect to $X'$. 
\begin{claim}\label{claim:extendpartial}
	There exists a finite morphism $\nu':\widehat{X}'\to X'$ such that
	\begin{enumerate}[label={\rm (\arabic*)}]
		\item $\widehat{X}'$ is a quasi-projective smooth variety; 
		\item  We denote $\nu_0:\widehat{X}\to X$, where $\widehat{X}=(\nu')^{-1}(X)$.
		Then $\nu_0^*\varrho$ extends to a reductive representation $\varrho':\pi_1(\widehat{X}')\to \GL_{N}(\bC)$ that has infinite monodromy at infinity.  
	\end{enumerate}
\end{claim}
\begin{proof}
	Let $D_1+\cdots+D_l=X'-X$ be the irreducible decomposition of the boundary, which is simple normal crossing. 
	Let $m_1,\ldots,m_l$ be the orders of the local monodromy around $D_1,\ldots,D_l$.
	By Kawamata's  covering lemma (see \cite[Theorem 17]{Kaw81}), we may take a finite covering $\nu':\widehat{X}'\to X'$ such that 
	\begin{enumerate}[label={\rm (\arabic*)}]
		\item $\hat{X}'$ is smooth,
		\item $(\nu')^{-1}(D_1+\cdots+D_l)$ is simple normal crossing,
		\item let $(\nu')^*D_i=\sum n_{ij}F_j$ be the irreducible decomposition, then $m_i \vert n_{ij}$ for any $i$ and $j$.
	\end{enumerate}
	Then $\nu_0^*\varrho:\pi_1(\hat{X})\to \mathrm{GL}_N(\bC)$ has trivial monodromy at each boundary components $F_{ij}$. 
	Hence this extends to $\varrho':\pi_1(\widehat{X}')\to \GL_{N}(\bC)$, which has infinite monodromy at infinity.
\end{proof}

We proceed by finding a finite morphism $h:Y'\to \widehat{X}'$ from a normal quasi-projective variety $Y'$ such that the composition $f:Y'\to X'$ of $\widehat{X}'\to X'$  and $Y'\to \widehat{X}'$ is a Galois cover with Galois group $G$ (cf. \cite[\S 1.3]{CDY22}). By \cref{claim:extendpartial,lem:infinite pull}, $h^*\varrho':\pi_1(Y')\to \GL_{N}(\bC)$ also has infinite monodromy at infinity. Consequently, we can apply \cref{thm:Sha2} to deduce the existence of a proper holomorphic fibration ${\rm sh}_{h^*\varrho'}:Y'\to {\rm Sh}_{h^*\varrho'}(Y')$  such that for any closed subvariety $Z$ of $Y'$, ${\rm sh}_{h^*\varrho'}(Z)$ is a point if and only if $h^*\varrho'({\rm Im}[\pi_1(Z^{\rm norm})\to \pi_1(Y')])$ is finite. 
	\begin{claim}\label{claim:equivariant}
	The Galois group	$G$ acts analytically on ${\rm Sh}_{h^*\varrho'}(Y')$ such that ${\rm sh}_{h^*\varrho'}$ is $G$-equivariant. 
\end{claim} 
\begin{proof}
	Take any $y\in  {\rm Sh}_{h^*\varrho'}(Y')$ and any $g\in G$. Since ${\rm sh}_{h^*\varrho'}$ is surjective and proper,  the fiber ${\rm sh}_{h^*\varrho'}^{-1}(y)$ is thus non-empty and compact. 	Let $Z$ be an  irreducible component of the fiber ${\rm sh}_{h^*\varrho'}^{-1}(y)$. 
	Then $h^*\varrho'({\rm Im}[\pi_1(Z^{\rm norm})\to \pi_1(Y')])$ is finite, implying that $h^*\varrho'({\rm Im}[\pi_1((g. Z)^{\rm norm})\to \pi_1(Y')])$  is also finite.  Consequently, there exists a point $y'\in {\rm Sh}_{h^*\varrho'}(Y')$ such that ${\rm sh}_{h^*\varrho'}(g. Z)=y'$. Since each fiber of ${\rm sh}_{h^*\varrho'}$ is connected,  for any other   irreducible component $Z'$ of  ${\rm sh}_{h^*\varrho'}^{-1}(y)$,  we have ${\rm sh}_{h^*\varrho'}(g. Z')=y'$. Consequently, it follows that $g$ maps each fiber of ${\rm sh}_{h^*\varrho'}$ to another fiber.
	
	We consider $g$ as an analytic automorphism of $Y'$.   For the holomorphic map $ {\rm sh}_{h^*\varrho'} \circ g: Y'\to {\rm Sh}_{h^*\varrho'}(Y')$, since it contracts each fiber of  ${\rm sh}_{h^*\varrho'}:Y'\to {\rm Sh}_{h^*\varrho'}(Y')$ to a point, it induces a holomorphic map $\tilde{g}:{\rm Sh}_{h^*\varrho'}(Y')\to {\rm Sh}_{h^*\varrho'}(Y')$ such that we have   the following commutative diagram: 
	\begin{equation}\label{eq:contract}
		\begin{tikzcd}
			Y'\arrow[r,"g"] \arrow[d, "{\rm sh}_{h^*\varrho'}"] & Y'\arrow[d, "{\rm sh}_{h^*\varrho'}"]\\
			{\rm Sh}_{h^*\varrho'}(Y') \arrow[r, "\tilde{g}"] &{\rm Sh}_{h^*\varrho'}(Y')
		\end{tikzcd}
	\end{equation}   Let us define the holomorphic map $\tilde{g}:{\rm Sh}_{h^*\varrho'}(Y')\to {\rm Sh}_{h^*\varrho'}(Y')$ to be the action of $g\in G$ on ${\rm Sh}_{h^*\varrho'}(Y')$. Based on  \eqref{eq:contract}, it is  clear  that ${\rm sh}_{h^*\varrho'}$ is $G$-equivariant.   Therefore, the claim is proven.
\end{proof}
	Note that $X':=Y'/G$.  The quotient of ${\rm Sh}_{h^*\varrho'}(Y')$  by $G$, resulting in   a complex normal space  denoted by $Q$  (cf. \cite{Car60}).  
	Then  ${\rm sh}_{h^*\varrho'}$ induces a proper holomorphic fibration
	$
	c':X'\to Q.
	$ 
	Consider the restriction $c:=c'|_{X}$. 
	\begin{equation}\label{dia:noninfinite}
		\begin{tikzcd}
			Y\arrow[r, "f_0"]  \arrow[d, hook] & X \arrow[d, hook]\arrow[dd, bend left=37, "c"]\\
			Y'\arrow[r, "f"] \arrow[d, " {\rm sh}_{h^*\varrho'}"] & X' \arrow[d, "c'"]\\
			{\rm Sh}_{h^*\varrho'}(Y') \arrow[r]  & Q  
		\end{tikzcd}
	\end{equation}
	\begin{claim}\label{claim:same contract}
		For any closed subvariety $Z$ of $X$, $c(Z)$ is a point if and only if $\varrho({\rm Im}[\pi_1(Z^{\rm norm})\to \pi_1(Y')])$ is finite. 
	\end{claim}
	\begin{proof}
		Let $Y:=f^{-1}(X)$ and $f_0:=f|_{Y}$. Note that $f_0:Y\to X$ is a Galois cover with Galois group $G$.  We have  $h^*\varrho'|_{\pi_1(Y)}=f_0^*\varrho$.    Now, consider any closed subvariety $Z$ of $X$. There exists an irreducible closed subvariety  $W$ of $Y$ such that $f_0(W)=Z$. Let $\overline{W}$ be the closure of $W$ in $Y'$, which is   an  irreducible closed subvariety of $Y'$. 
		
		Observe that $c(Z)$ is a point if and only if ${\rm sh}_{h^*\varrho'}(\overline{W})$ is a point,   which is equivalent to 		$h^*\varrho'({\rm Im}[\pi_1(\overline{W}^{\rm norm})\to \pi_1(Y')])$ being finite by \Cref{thm:Sha2}. Furthermore, this is equivalent to $f_0^*\varrho({\rm Im}[\pi_1({W}^{\rm norm})\to \pi_1(Y)])$  being finite since $h^*\varrho'|_{\pi_1(Y)}=f_0^*\varrho$. Since ${\rm Im}[\pi_1(W^{\rm norm})\to \pi_1(Z^{\rm norm})]$ is a finite index subgroup of $\pi_1(Z^{\rm norm})$, the above condition is equivalent to $\varrho({\rm Im}[\pi_1({Z}^{\rm norm})\to \pi_1(X)])$   being finite. 
	\end{proof}
Let $ {\rm sh}_\varrho:=c$ and $ {\rm Sh}_\varrho(X):=Q$. This is  our construction of the Shafarevich morphism of $\varrho$.

\medspace 

\noindent {\it Step 2: We does not assume that $X$ is smooth.} We take a desingularization $\nu_1:X_1\to X$. Then $\nu_1^*\varrho:\pi_1(X_1)\to \GL_{N}(\bC)$ is also a reductive representation.   Based on Step 1,  the Shafarevich morphism ${\rm sh}_{\nu_1^*\varrho}:X_1\to {\rm Sh}_{\nu_1^*\varrho}(X_1)$ exists.   Let $Z$ be an irreducible component of a fiber of $\nu_1$. Then $\nu_1^*\varrho(\pi_1(Z))=\{1\}$. It follows that ${\rm sh}_{\nu_1^*\varrho}(Z)$ is a point. Note that each fiber of $\nu_1$ is connected as $X$ is normal. It follows that each fiber of $\nu_1$ is contracted to a point by    ${\rm sh}_{\nu_1	^*\varrho}$.  Therefore, by the universal property of the Stein factorization, there exists a dominant holomorphic map ${\rm sh}_{\varrho}:X\to {\rm Sh}_{\nu_1^*\varrho}(X_1) $ with connected general fibers such that we have the following commutative diagram:
\begin{equation}\label{eq:factor2}
	\begin{tikzcd}
		X_1\arrow[d,"\nu_1"'] \arrow[dr,"{\rm sh}_{\nu_1^*\varrho}"'']& \\
		X\arrow[r,"{\rm sh}_{\varrho}"]&	{\rm Sh}_{\nu_1^*\varrho}(X_1) 
	\end{tikzcd}
\end{equation}
\begin{claim} \label{claim:same contract2}
	For any closed subvariety $Z\subset X$, ${\rm sh}_{\varrho}(Z)$ is a point if and only if $\varrho({\rm Im}[\pi_1(Z^{\rm norm})\to \pi_1(X)])$ is finite. 
\end{claim}
\begin{proof}
	Let us choose an irreducible component $W$ of $\nu_1^{-1}(Z)$ which is surjective onto $Z$.   Since ${\rm Im}[\pi_1(W^{\rm norm})\to \pi_1(Z^{\rm norm})]$ is a finite index subgroup of $\pi_1(Z^{\rm norm})$, and $(\nu_1)_*:\pi_1(X_1)\to \pi_1(X)$ is surjective, it follows that $\varrho({\rm Im}[\pi_1(Z^{\rm norm})\to \pi_1(X)])$ is finite if and only  if $\nu_1^*\varrho({\rm Im}[\pi_1(W^{\rm norm})\to \pi_1(X_1)])$ is finite. 
	
	\medspace 
	
	\noindent {\it Proof of $\Rightarrow$:}  Note that ${\rm sh}_{\nu_1^*\varrho}(W)$ is a point and thus $\nu_1^*\varrho({\rm Im}[\pi_1(W^{\rm norm})\to \pi_1(Y)])$ is finite by \cref{claim:same contract}. Hence  $\varrho({\rm Im}[\pi_1(Z^{\rm norm})\to \pi_1(X)])$ is finite.
	
	\medspace
	
	\noindent {\it Proof of $\Leftarrow$:} Note that $\nu_1^*\varrho({\rm Im}[\pi_1(W^{\rm norm})\to \pi_1(X_1)])$ is finite.  
	Therefore, ${\rm sh}_{\nu_1^*\varrho}(W)$ is a point and thus $${\rm sh}_{\varrho}(Z)={\rm sh}_{\varrho}\circ\nu_1(W)={\rm sh}_{\nu_1^*\varrho}(W)$$ is a point by \cref{claim:same contract}. 
\end{proof}
Let us write ${\rm Sh}_{\varrho}(X):={\rm Sh}_{\nu_1^*\varrho}(X_1) $. Then ${\rm sh}_{\varrho}:X\to 	{\rm Sh}_{\varrho}(X)$ is the  construction of the Shafarevich morphism associated with $\varrho:\pi_1(X)\to \GL_{N}(\bC)$.   

\medspace 
 
{\it Step 3. We prove the last assertion on the three equivalent properties for ${\rm sh}_\varrho$.} 
Note that the equivalence between \Cref{item:normal} and \Cref{item:contract} is proved in \Cref{claim:same contract2}. The implication of \Cref{item:nonnormal}  to  \Cref{item:normal}  is obvious. 
We thus only need to  prove that \Cref{item:contract} to \Cref{item:nonnormal}.
Let $Z\subset X$ be a connected Zariski closed subset. 

We first prove the following claim.

\begin{claim}\label{lem:20230803}
	Let $g:Y\to X$ be a dominant morphism between normal quasi-projective varieties and $\varrho:\pi_1(X)\to \mathrm{GL}_n(\bC)$ be a reductive representation. Then   for every connected Zariski closed set $Z\subset Y$, ${\rm sh}_{\varrho}\circ g(Z)$ is a point if and only if $	{\rm sh}_{g^*\varrho}(Z)$ is a point.
\end{claim}
 \begin{proof}[Proof of \cref{lem:20230803}]
	It is enough to prove the case that $Z$ is irreducible.
	Hence we need to show that for every closed subvariety $Z\subset Y$, $g^*\varrho(\mathrm{Im}[\pi_1(Z^{\mathrm{norm}})\to \pi_1(Y)])$ is finite if and only if ${\rm sh}_{\varrho}\circ g(Z)$ is a point.
	So first suppose $g^*\varrho(\mathrm{Im}[\pi_1(Z^{\mathrm{norm}})\to \pi_1(Y)])$ is finite.
	Let $V\subset X$ be the Zariski closure of $g(Z)$.
	Then the induced map $Z\to V$ is dominant, hence induces $Z^{\mathrm{norm}}\to V^{\mathrm{norm}}$.
	Then ${\rm Im}[\pi_1(Z^{\mathrm{norm}})\to \pi_1(V^{\mathrm{norm}})]$ is a finite index subgroup of $\pi_1(V^{\mathrm{norm}})$. 
	Hence $\varrho(\mathrm{Im}[\pi_1(V^{\mathrm{norm}})\to \pi_1(X)])$ is finite.
	This shows ${\rm sh}_{\varrho}(V)$ is a point.
	Hence ${\rm sh}_{\varrho}\circ g(Z)$ is a point.
	Conversely, assume ${\rm sh}_{\varrho}\circ g(Z)$ is a point.
	Then ${\rm sh}_{\varrho}(V)$ is a point.
	Thus $\varrho(\mathrm{Im}[\pi_1(V^{\mathrm{norm}})\to \pi_1(X)])$ is finite.
	Hence $g^*\varrho(\mathrm{Im}[\pi_1(Z^{\mathrm{norm}})\to \pi_1(Y)])$ is finite.
\end{proof}

\medspace

\noindent {\it Step 3-1. \Cref{item:contract} $\Rightarrow$ \Cref{item:nonnormal} when $X$ is smooth.} 
Since $\varrho(\pi_1(X))$ is residually finite by Malcev's theorem, we can find a finite \'etale cover $\nu:\widehat{X}\to X$  such that $\nu^*\varrho(\pi_1(\widehat{X}))$  is torsion free. 
By \cref{prop:infinity}, there exists a partial smooth compactification $\widehat{X}'$ such that $\nu^*\varrho$ extends to a reductive representation $\sigma:\pi_1(\widehat{X}')\to \GL_{N}(\bC)$ with infinite monodromy at infinity. 
Let $\widehat{Z}\subset \widehat{X}$ be a connected component of $\nu^{-1}(Z)$.
Then $\widehat{Z}\to Z$ is finite \'etale.
Let $\widehat{Z}'\subset \widehat{X}'$ be the Zariski closure.
Then $\widehat{Z}'$ is connected.

Now by \cref{lem:20230803}   applied to $\nu:\widehat{X}\to X$, we conclude that $\mathrm{sh}_{\nu^*\varrho}(\widehat{Z})$ is a point.
Hence by \cref{lem:20230803}  applied to $\iota:\widehat{X}\hookrightarrow \widehat{X}'$, we conclude that $\mathrm{sh}_{\sigma}(\widehat{Z})$ is a point.
Hence $\mathrm{sh}_{\sigma}(\widehat{Z}')$ is a point.
Thus by \cref{thm:Sha2},  $\sigma({\rm }[\pi_1(\widehat{Z}')\to \pi_1(\widehat{X}')])$ is finite. 
Note that 
\begin{align*}
	\nu^*\varrho({\rm }[\pi_1(\widehat{Z})\to \pi_1(\widehat{X})])=\iota^*\sigma({\rm }[\pi_1(\widehat{Z})\to \pi_1( \widehat{X})])\\
	= \sigma({\rm }[\pi_1(\widehat{Z})\to \pi_1( \widehat{X}')])\subset \sigma({\rm }[\pi_1(\widehat{Z}')\to \pi_1(\widehat{X}')]).
\end{align*}  
It follows that $\nu^*\varrho({\rm }[\pi_1(\widehat{Z})\to \pi_1(\widehat{X})])$ is finite. 
Since the image $\pi_1(\widehat{Z})\to\pi_1(Z)$ has finite index, $\varrho({\rm }[\pi_1(Z)\to \pi_1(X)])$ is finite.

\medspace

\noindent {\it Step 3-2.  \Cref{item:contract} $\Rightarrow$ \Cref{item:nonnormal} in the general case.} 
Let $\nu:X_1\to X$ be a desingularization. 
Let $Z_1:=\nu^{-1}(Z)$, which might be reducible.  
Since $X$ is normal, each fiber of $\nu$ is connected.
Hence $Z_1$ is connected, and $\nu|_{Z_1}:Z_1\to Z$ is a proper surjective morphism between connected (possibly reducible) quasi-projective varieties such that each fiber is connected. 
By \cref{lem:surjective}, we have the surjectivity of $\pi_1(Z_1)\twoheadrightarrow \pi_1(Z)$.
Hence $ \varrho({\rm Im}[\pi_1(Z)\to \pi_1(X)])$ is finite if and only if  $ \nu^*\varrho({\rm Im}[\pi_1(Z_1)\to \pi_1(X_1)])$ is finite.
By \eqref{eq:factor2},    ${\rm sh}_{\nu^*\varrho}={\rm sh}_\varrho\circ\nu$. 
It  then follows that ${\rm sh}_{\nu^*\varrho}(Z_1)$ is a point.
Hence by step 3-1, $ \nu^*\varrho({\rm Im}[\pi_1(Z_1)\to \pi_1(X_1)])$ is finite.
Hence $ \varrho({\rm Im}[\pi_1(Z)\to \pi_1(X)])$ is finite.
The theorem is proved.   
\end{proof}

\subsection{On the algebraicity  of the Shafarevich morphism}
In \cref{thm:Sha5}, when $X$ is compact, we proved that the image ${\rm Sh}_\varrho(X)$ is projective. In general, as  mentioned in \cref{rem:Griffiths}, we propose the following conjecture. 
\begin{conjecture}[Algebraicity of Shafarevich morphism]\label{conj:algebraic}
	Let $X$, $\varrho$ and ${\rm sh}_\varrho:X\to {\rm Sh}_\varrho(X)$ be as in \cref{thm:Sha5}. Then  ${\rm Sh}_\varrho(X)$ is a quasi-projective normal variety and ${\rm sh}_\varrho:X\to {\rm Sh}_\varrho(X)$  is an algebraic morphism. 
\end{conjecture}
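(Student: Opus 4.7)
The plan is to reduce the algebraicity assertion to two independent algebraicity inputs: the algebraic nature of the Katzarkov--Eyssidieux target $S_\kC$, and a tame uniformization statement for the period-type factor that appears in the construction of ${\rm sh}_\varrho$. Unwinding the proof of \cref{thm:Sha5}, after passing to a desingularization and a Galois cover so as to reduce to the quasi-projective smooth situation of \cref{thm:Sha2}, one obtains from \cref{prop:nonrigid} a finite family $\{\sigma_i^{\vhs}\}_{i=1}^m$ of reductive representations underlying $\bC$-VHS and a $\pi_1(X)/H$-equivariant proper holomorphic fibration ${\rm sh}_H:\widetilde{X}_H\to \widetilde{S}_H$, where $\widetilde{S}_H$ parametrizes the connected components of fibres of
\[
\Psi:\widetilde{X}_H\longrightarrow S_\kC\times\sD,\qquad z\longmapsto \bigl(s_\kC\circ\pi_H(z),\,p(z)\bigr),
\]
with $p$ the period map of $\sigma:=\oplus_i\sigma_i^{\vhs}$ and $\sD$ the associated $\bC$-period domain. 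The target ${\rm Sh}_\varrho(X)=\widetilde{S}_H/\Gamma_0$ is then the quotient by the properly discontinuous analytic action of $\Gamma_0=\nu(\pi_1(X)/H)$ given in \cref{lem:properdis}.

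Since $s_\kC:X\to S_\kC$ is already an algebraic fibration to a quasi-projective normal variety, the $S_\kC$-factor of $\Psi$ poses no difficulty. The algebraicity of ${\rm sh}_\varrho$ thus reduces to installing on $\widetilde{S}_H/\Gamma_0$ an algebraic structure compatible with its complex-analytic one, and compatible with the map to $S_\kC$. The natural strategy is to adapt the definable uniformization approach of Bakker--Brunebarbe--Tsimerman \cite{BBT23}: equip $\widetilde{X}_H$, $\sD$ and a fundamental set for the $\pi_1(X)/H$-action with an o-minimal structure in an appropriate expansion $\mathbb{R}_{\mathrm{an},\exp}$ (or $\mathbb{R}_{\mathrm{an},\exp,\log}$), using the tameness of the underlying harmonic bundle (cf.\ \cref{def:tameness} and \cref{thm:SM}) together with the asymptotic analysis of Mochizuki to make $p$ definable on the fundamental set. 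One would then invoke the definable Chow/GAGA theorems of Peterzil--Starchenko to algebraize the image and deduce that ${\rm sh}_\varrho$ descends to an algebraic morphism of quasi-projective varieties.

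The principal obstacle is that $\sigma$ underlies only a $\bC$-VHS, not a $\bZ$-VHS, and the monodromy $\sigma(\pi_1(X)/H)$ need not be arithmetic. Both hypotheses are used in \cite{BBT23} in an essential way to produce an $\mathbb{R}_{\mathrm{an},\exp}$-definable fundamental domain for the monodromy action on $\sD$, via reduction theory for arithmetic groups and the nilpotent orbit theorem for integral variations. Extending the argument to the $\bC$-VHS setting seems to require a new tame model for $\sD/\sigma(\pi_1(X)/H)$, based on a refined asymptotic analysis of tame pure imaginary harmonic bundles near the boundary and on a complex analogue of the $\mathrm{SL}_2$-orbit theorem. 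As an unconditional intermediate statement, one already has the function-field algebraicity of \cref{main3}\ref{item:sommese}, which is exactly the shadow of \cref{conj:algebraic} that can be extracted by the present methods without this tame uniformization input.
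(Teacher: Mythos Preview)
Your proposal is not a proof, and appropriately so: the statement is a \emph{conjecture} in the paper, not a theorem, and the paper offers no proof of it. The paper explicitly says ``This conjecture seems to be a difficult problem, with the special case when $\varrho$ arises from a $\bZ$-VHS known as a long-standing Griffiths conjecture,'' and then proves only the function-field shadow (\cref{lem:bimeromorphic2}), exactly as you note at the end.

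Your analysis of the obstruction is accurate and matches the paper's perspective. The construction of ${\rm Sh}_\varrho(X)$ does go through the map $\Psi:\widetilde{X}_H\to S_\kC\times\sD$ with $\sD$ the period domain of a $\bC$-VHS $\sigma$, and the $S_\kC$-factor is already algebraic. You correctly isolate the gap: the o-minimal algebraization machinery of \cite{BBT23} requires a $\bZ$-VHS with arithmetic monodromy (for reduction theory and the nilpotent/$\mathrm{SL}_2$-orbit asymptotics), whereas $\sigma$ here is only a $\bC$-VHS with a priori non-arithmetic image. This is precisely why the paper stops at the bimeromorphic statement of \cref{lem:bimeromorphic2} rather than full algebraicity. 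So there is no discrepancy to flag---your write-up is an honest assessment of an open problem, not a proof, and the paper agrees.
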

This conjecture seems to be a difficult problem, with the special case when $\varrho$ arises from a $\bZ$-VHS known as a long-standing Griffiths conjecture.  In this paper, we provide confirmation of such expectations at the function field level. 

We first recall the definition of  \emph{(bi)meromorphic maps} of complex spaces $X$ and $Y$ in the sense of Remmert.   
\begin{dfn}[Meromorphic map]
	A meromorphic map $f:X\dashrightarrow Y$ of complex spaces $X$ and $Y$ is 
	a holomorphic map $f^\circ:X^\circ\to Y $
defined on a dense open set $X^\circ\subset X$ such that 
	\begin{thmlist}
		\item $ X\backslash X^\circ$ 
		is a nowhere-dense analytic subset of $X$;
		\item  the closure $ {\Gamma_f}$ 
		of the graph   $\Gamma_{f^\circ}$ of $f^\circ$
		in  $X\times Y$
		is an analytic subset of  $X\times Y$;
		\item    the projection $\Gamma_f\to X$
		is a proper mapping. 
	\end{thmlist}
We say $\Gamma_f$ the \emph{graph} of the meromorphic map $f$.
\end{dfn}
\begin{dfn}[Bimeromorphic map]
	A meromorphic map $f:X\dashrightarrow Y$ of complex spaces  $X$ and $Y$ is \emph{bimeromorphic}  if the projection of $\Gamma_f$ to $Y$ is proper, and off some nowhere-dense analytic subset of $Y$, this projection  is biholomorphic. 
\end{dfn} 
\begin{lem}\label{lem:bime}
	Let $f:X\to Y$ be a proper surjective holomorphic fibration between irreducible complex normal spaces $X$ and $Y$ of the same dimension. Then $f$ is bimeromorphic. 
 \end{lem}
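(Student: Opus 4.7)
The plan is to show that $f$ restricts to a biholomorphism over a dense Zariski open subset of $Y$, from which the definition of bimeromorphic map is immediate. Since $f$ is already holomorphic on all of $X$, the graph $\Gamma_f\subset X\times Y$ is a closed analytic subset, the projection $\Gamma_f\to X$ is the identity (hence proper), and the projection $\Gamma_f\to Y$ is $f$ itself (hence proper). Thus $f$ is automatically a meromorphic map in Remmert's sense, and bimeromorphicity reduces to exhibiting a nowhere-dense analytic subset $W\subset Y$ such that $\Gamma_f\to Y$ is biholomorphic over $Y\setminus W$.

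First I would use the upper semicontinuity of fiber dimension for proper holomorphic maps together with $\dim X=\dim Y$ to produce a dense Zariski open $V\subset Y$ (with $Y\setminus V$ a nowhere-dense analytic subset) such that every fiber of $f$ over $V$ has dimension zero. Properness of $f$ then forces $f|_{f^{-1}(V)}\colon f^{-1}(V)\to V$ to be a finite holomorphic map. Because $f$ is a proper holomorphic fibration in the sense of this paper, every fiber of $f$ is connected, and a finite connected fiber is a single point; hence $f|_{f^{-1}(V)}$ is a finite bijective holomorphic map onto the normal complex space $V$.

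Next I would invoke the analytic form of Zariski's Main Theorem: a finite bijective holomorphic map onto a normal complex space is a biholomorphism. Concretely, the coherent $\cO_V$-algebra $(f|_{f^{-1}(V)})_*\cO_{f^{-1}(V)}$ is integral over $\cO_V$, has the same generic rank $1$ as $\cO_V$ (by bijectivity and since $V$ is reduced and irreducible), and is local at each point of $V$; normality of $V$ then forces the inclusion $\cO_V\hookrightarrow (f|_{f^{-1}(V)})_*\cO_{f^{-1}(V)}$ to be an isomorphism stalkwise. Consequently $f|_{f^{-1}(V)}\colon f^{-1}(V)\to V$ is a biholomorphism, which completes the verification that $f$ is bimeromorphic, taking $W=Y\setminus V$.

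The main subtlety is the third step: applying the analytic Zariski Main Theorem. This is a standard fact, but it is precisely the place where normality of $Y$ is crucial; without it, a proper bijective holomorphic map need not be a biholomorphism (consider the normalization of a cuspidal curve). Everything else is a direct consequence of properness, the fiber-dimension formula, and the connectedness of fibers built into the definition of a proper holomorphic fibration.
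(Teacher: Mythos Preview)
Your proposal is correct and follows essentially the same approach as the paper: both arguments pass to a dense Zariski open subset of $Y$ over which $f$ has zero-dimensional (hence, by connectedness of fibers, singleton) fibers, then use normality of the target to upgrade the resulting proper bijection to a biholomorphism. Your write-up is simply more explicit about the mechanisms (upper semicontinuity of fiber dimension and the analytic Zariski Main Theorem) that the paper invokes tersely.
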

\begin{proof}
Since each fiber of $f$ is connected, and $\dim X=\dim Y$, there exists a nowhere-dense analytic subset  $A$ of $Y$   such that $f$ is a bijection outside $A$. Since $X$ and $Y$ are normal, and $f|_{X\backslash f^{-1}(A)}:X\backslash f^{-1}(A)\to Y\backslash A$ is a proper holomorphic fibration, it follows that $f|_{X\backslash f^{-1}(A)}$  is a biholomorphism. 	Consider the graph $\Gamma_f\subset X\times Y$ of $f$. The projection $p: \Gamma_f\to Y$ is proper as $f$ is proper, and $p$ a biholomophism over $Y\backslash A$.   Therefore, $f$ is bimeromorphic. 
\end{proof}

\begin{thm}\label{lem:bimeromorphic2}
	Let $X$ be a non-compact smooth quasi-projective variety and $\varrho:\pi_1(X)\to\GL_{N}(\bC)$ be a reductive representation.
	Then   there exists 
	\begin{enumerate}[label=(\alph*)]
		\item  a proper bimeromorphic morphism  $\sigma:S\to {\rm Sh}_{\varrho}(X)$ from a smooth  quasi-projective   variety $S$;
		\item  a proper birational morphism  $\mu:\tilde{X}\to X$ from a smooth  quasi-projective   variety $\tilde{X}$; 
		\item an algebraic morphism $f:\tilde{X}\to S$ with general fibers connected;  
	\end{enumerate} 
	such that we have the following commutative diagram: 
	\[
	\begin{tikzcd}
		\tilde{X} \arrow[r, "\mu"]\arrow[d, "f"]& X \arrow[d, "{\rm sh}_\varrho"]&\\
		S                   \arrow[r, "\sigma"]                             & {\rm Sh}_\varrho(X) 
	\end{tikzcd}
	\]
	Here ${\rm sh}_\varrho:X\to {\rm Sh}_\varrho(X)$ is the Shafarevich morphism of $\varrho$ constructed in \cref{thm:Sha5}.
\end{thm}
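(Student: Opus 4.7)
The plan is to reconstruct ${\rm sh}_\varrho$ from algebraic data by combining two algebraicity results: the algebraicity of the Katzarkov--Eyssidieux reduction (\cref{thm:KZ}) and the Bakker--Brunebarbe--Tsimerman theorem on the algebraic factorization of period maps (cf. \cref{rem:Griffiths}).

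First, I would recall the explicit construction of ${\rm sh}_\varrho$ obtained in the proofs of \cref{thm:Shafarevich1} and \cref{thm:Sha2}: one produces a Zariski-closed $\bR^*$-invariant subset $\kC\subset M_{\rm B}(X,N)(\bC)$, defined over $\bQ$ and containing $[\varrho]$ (via the recipe of \cref{prop:nonrigid} combined with \cref{lem:C*}), together with the normal subgroup $H=\bigcap_{[\tau]\in \kC}\ker \tau$ and a finite family of reductive representations $\sigma^{\vhs}_i$ underlying $\bC$-variations of Hodge structure whose direct sum $\sigma$ satisfies $\ker\sigma\supset H$. Then ${\rm Sh}_\varrho(X)$ is the quotient by $\pi_1(X)/H$ of the Cartan Stein factorization $\widetilde S_H$ of the holomorphic map $\Psi:\widetilde X_H\to S_{\kC}\times \sD$ of \eqref{eq:Psi}, where $S_\kC$ is the reduction target and $\sD$ is the period domain of $\sigma$.

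Second, by \cref{thm:KZ} the reduction $s_\kC:X\to S_\kC$ is an algebraic morphism to a quasi-projective normal variety. By the Bakker--Brunebarbe--Tsimerman theorem (with its extension to $\bC$-VHS over quasi-projective bases), after replacing $X$ by a finite \'etale cover so that $\Gamma:=\sigma(\pi_1(X)/H)$ is torsion-free, the descended period map $p_\sigma:X\to \sD/\Gamma$ factors through an algebraic morphism $q:X\to P$ to a quasi-projective normal variety $P$, such that the analytification of $P$ is the image of $p_\sigma$ inside $\sD/\Gamma$ and $P^{\mathrm{an}}\hookrightarrow \sD/\Gamma$ is a locally closed immersion. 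Combining these, the product $\phi=(s_\kC,q):X\to S_\kC\times P$ is an algebraic morphism. I would then take a desingularization $\mu:\tilde X\to X$, form the algebraic Stein factorization $\tilde X\xrightarrow{f} S\to S_\kC\times P$ of $\phi\circ\mu$, replacing $S$ by a further resolution to make it smooth.

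Third, I would show that $f$ is bimeromorphically equivalent to ${\rm sh}_\varrho\circ\mu$. Since the analytic factor $P^{\mathrm{an}}\to \sD/\Gamma$ is injective, two general points $\tilde x,\tilde y\in\tilde X$ satisfy $f(\tilde x)=f(\tilde y)$ if and only if $\mu(\tilde x)$ and $\mu(\tilde y)$ have the same image under $(s_\kC,p_\sigma):X\to S_\kC\times(\sD/\Gamma)$, which by the description of $\Psi$ and the quotient construction of ${\rm Sh}_\varrho(X)$ is equivalent to ${\rm sh}_\varrho(\mu(\tilde x))={\rm sh}_\varrho(\mu(\tilde y))$. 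The universal property of holomorphic fibrations with connected fibers then yields a proper holomorphic map $\sigma:S\to {\rm Sh}_\varrho(X)$ with $\sigma\circ f={\rm sh}_\varrho\circ\mu$; since both $S$ and ${\rm Sh}_\varrho(X)$ are normal of the same dimension and $\sigma$ is a proper surjective holomorphic fibration, \cref{lem:bime} implies $\sigma$ is bimeromorphic.

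The main obstacle will be carefully reconciling the analytic Stein factorization that defines $\widetilde S_H$ (and hence ${\rm Sh}_\varrho(X)$) with the algebraic Stein factorization defining $S$: one must verify that the quotient by $\pi_1(X)/H$, which acts on $\widetilde S_H$ by analytic automorphisms, descends compatibly to the algebraic side once the finite \'etale cover trivializing torsion of $\Gamma$ has been taken, and one must descend the resulting bimeromorphism to $X$ itself rather than its cover. A secondary delicate point, if one wishes to avoid appealing to extensions of BBT beyond the polarized $\bZ$-VHS case, is to enlarge $\sigma$ using the integral structure available after twisting by local systems absorbed into $s_\kC$; alternatively, one invokes the o-minimal GAGA framework of BBT directly in the $\bC$-VHS setting, which is available in the quasi-projective case.
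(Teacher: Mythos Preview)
Your approach has a genuine gap at the very outset: the monodromy group $\Gamma=\sigma(\pi_1(X))$ of the auxiliary $\bC$-VHS produced in \cref{prop:nonrigid} is \emph{not} known to be discrete. That proposition only guarantees discreteness of the restriction $\iota^*\sigma(\pi_1(Z))$ for $Z$ mapping to a point under $s_\kC$; this is precisely why the paper's construction of ${\rm Sh}_\varrho(X)$ in \cref{thm:Shafarevich1} works on the covering $\widetilde X_H$ and takes a Cartan--Stein factorization of $\Psi:\widetilde X_H\to S_\kC\times\sD$ rather than descending first to a global period map. Without discreteness, the quotient $\sD/\Gamma$ is not a complex space and your ``descended period map $p_\sigma:X\to\sD/\Gamma$'' is undefined. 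Even if $\Gamma$ happened to be discrete, the Bakker--Brunebarbe--Tsimerman algebraization requires an \emph{arithmetic} quotient in order for the o-minimal structure on $\Gamma\backslash\sD$ to exist; for a $\bC$-VHS with non-arithmetic monodromy this is unavailable, so the step you label ``secondary delicate point'' is in fact the central obstruction.

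The paper avoids both problems entirely and takes a quite different route. It begins with Koll\'ar's algebraic Shafarevich map \cite[Corollary~3.5]{Kol93} to obtain a dominant morphism $f:X\to Y$ to a projective variety $Y$ characterizing (very generically) the subvarieties with finite monodromy. After passing to a finite \'etale cover with torsion-free image and to a partial compactification with infinite monodromy at infinity (\cref{prop:infinity}), it invokes \cite[Lemma~2.3]{CDY22} to factor the representation through a base $Y''$, then applies Raynaud--Gruson flattening to produce a smooth quasi-projective $\hat Y$ and a surjection $\hat f:\hat X\to\hat Y$ with the property that every fiber of $\hat f$ is contracted by ${\rm sh}_\varrho\circ\mu$. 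This yields the holomorphic map $\sigma:\hat Y\to{\rm Sh}_\varrho(X)$ directly, and bimeromorphicity follows from a dimension comparison via \cref{lem:bime}. No algebraization of any period image is needed.
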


\begin{proof}
We apply \cite[Corollary 3.5 \& Remark 4.1.1]{Kol93} to conclude that after replacing $X$ by its birational model, there are a normal projective variety $Y$ and a dominant morphism $f:X\to Y$ so that there are at most countably many  closed subvarieties $Z_i\subsetneqq X$ so that for  every    closed subvariety $W\subset X$ such that   $W\not\subset \cup Z_i$, the image $\varrho\big({\rm Im}[\pi_1(W^{\rm norm})\to \pi_1(X))] \big)$ is finite if and only if $f(W)$ is a point. 

\begin{claim}\label{claim:20230620}
For the general fiber $F$ of $f:X\to Y$,
$\varrho({\rm Im}[\pi_1(F)\to \pi_1(X)])$ is finite.
\end{claim}

\begin{proof}
There exists a non-empty Zariski open $Y^o\subset Y$ such that the induced $X^o\to Y^o$ is locally trivial fibration of $C^{\infty}$-manifolds.
Hence ${\rm Im}[\pi_1(F)\to \pi_1(X)]$ are all the same for fibers $F$ over the points of $Y^o$.
Now $\varrho({\rm Im}[\pi_1(F)\to \pi_1(X)])$ is finite for very generic fiber over $Y$, hence the same is true for general fibers.
\end{proof}

We may take a finite {\'e}tale cover $X'\to X$ such that the induced representation $\varrho':\pi_1(X')\to\mathrm{GL}_N(\bC)$ has torsion free image.
Let $\overline{X'}$ be a partial compactification such that $\varrho'$ extends to $\bar{\varrho'}:\pi_1(\overline{X'})\to\mathrm{GL}_N(\bC)$ and $\bar{\varrho'}$ has infinite monodromy at infinity (cf. Proposition \ref{prop:infinity}).
By the proof of \cref{thm:Sha5}, we may assume that $X'\to X$ extends to a finite map $\overline{X'}\to\overline{X}$, where $\overline{X}$ is a partial compactification of $X$ so that the following commutative diagram exists:
\begin{equation}\label{eq:20230616}
		\begin{tikzcd}
			\overline{X'}\arrow[r,""] \arrow[d, "{\rm sh}_{\bar{\varrho'}}"] & \overline{X}\arrow[d, "{\overline{\rm sh}_{\varrho}}"]\\
			{\rm Sh}_{\varrho'}(X) \arrow[r, ""] &{\rm Sh}_{\varrho}(X)
		\end{tikzcd}
	\end{equation}  
Here the horizontal maps are finite and vertical maps are proper.
Since we are assuming that $Y$ is projective, we may assume that $f:X\to Y$ extends to $\bar{f}:\overline{X}\to Y$.

\begin{claim}\label{lem:20230616}
There is a commutative diagram
	\[
		\begin{tikzcd}
			\hat{X}\arrow[r, "\mu"] \arrow[d, "\hat{f}"] & \overline{X} \arrow[d, "\bar{f}"] \\
			\hat{Y} \arrow[r, "\nu"] & Y
		\end{tikzcd}
	\]
	where
	\begin{enumerate}[label=(\alph*)]
		\item \(\mu\) is a proper birational morphism from a quasi-projective variety $\widehat{X}$;
		\item $\nu$ is a birational, not necessarily proper morphism from a smooth quasi-projective variety $\widehat{Y}$,
		\item
		$\hat{f}$ is surjective;
	\end{enumerate} 
	 such that the following property holds:
	 Let $g:Z\to \hat{X}$ be a proper morphism from a quasi-projective manifold $Z$.
If $\hat{f}\circ g$ contracts to a point, then $\overline{{\rm sh}_{\varrho}}\circ \mu\circ g:Z\to {\rm Sh}_{\varrho}(X)$ contracts to a point.
\end{claim}

\begin{proof}
Let $\bar{f}':\overline{X'}\to Y'$ be the quasi-Stein factorization of the composite of $\overline{X'}\to \overline{X}\to Y$.
By \cref{claim:20230620}, for the general fiber $F$ of $f':X'\to Y'$, $\varrho'({\rm Im}[\pi_1(F)\to \pi_1(X')])$ is finite, hence trivial.
By \cite[Lemma 2.3]{CDY22}, there is a commutative diagram
	\[
		\begin{tikzcd}
			(\overline{X'})' \arrow[r, "\mu'"] \arrow[d, "f''"] & \overline{X'} \arrow[d, "\bar{f}'"] \\
			Y'' \arrow[r, "\nu'"] & Y'
		\end{tikzcd}
	\]
	where
	\begin{enumerate}[label=(\alph*)]
		\item \(\mu'\) is a proper birational morphism;
		\item  \(\nu'\) is a birational, not necessarily proper morphism;
		\item $Y''$ is smooth,
	\end{enumerate} 
	 and a representation \(\tau : \pi_{1}(Y'') \to \mathrm{GL}_N(\bC)\) such that $(f'')^*\tau=(\mu')^*\bar{\varrho'}$. 
 \begin{claim}\label{claim:proper3}
	$f''$ is a  proper morphism with each fiber connected. 
\end{claim}
\begin{proof}
	If $f''$ is not proper, 	we can find a partial projective compactification $	(\overline{X'})''\supsetneq 	(\overline{X'})'$ such that $f''$ extends to a proper morphism $f''':(\overline{X'})''\to Y$. Then $f'''^*\tau$ extends the representation $f''^*\tau=(\mu')^*\bar{\varrho'}$.  It is worth noting that  $(\mu')^*\bar{\varrho'}$ also has infinite monodromy at infinity by \cref{lem:infinite pull}. A contradiction is obtained, and thus $f''$ is a proper morphism. 
	
	Since general fibers of $f''$ is connected and $Y''$ is smooth, it follows that each fiber of $f$ is connected. The claim is proved. 
\end{proof}
We have the following commutative diagram:
$$
\begin{tikzcd}
			(\overline{X'})' \arrow[r, ""] \arrow[d, "f''"] & \overline{X} \arrow[d, "\bar{f}"] \\
			Y'' \arrow[r, ""] & Y
		\end{tikzcd}
$$
By applying Hironaka-Raynaud-Gruson's flattening theorem, we can take a birational modification $\tilde{Y}\to Y$ such that for the main component $\tilde{Y}''$ of $Y''\times_{Y}\tilde{Y}$, $\tilde{Y}''\to\tilde{Y}$ is flat.
We may assume that $\tilde{Y}$ is smooth. 
Then we get
\setlength{\perspective}{2pt}
\[\begin{tikzcd}[row sep={55,between origins}, column sep={55,between origins}]
	&[-\perspective] ((\overline{X'})'\times_{Y''}\tilde{Y}'')_{\mathrm{main}} \arrow[rr, "\varphi"]\arrow[dd, "\tilde{f}''"   {yshift=22pt}  ]\arrow[dl, "\tilde{\mu}"'] &[\perspective] &[-\perspective] (\overline{X}\times_Y\tilde{Y})_{\mathrm{main}} \arrow[dd, "\tilde{f}" {yshift=22pt} ]\arrow[dl, "\mu"] \\[-\perspective]
	(\overline{X'})'  \ar[crossing over]{rr}  \arrow[dd, "f''"] & & \overline{X}
   \\[\perspective]
	& \widetilde{Y}''  \ar{rr} \ar{dl} & &  \widetilde{Y} \ar{dl} \\[-\perspective]
Y''\ar{rr} && Y\ar[from=uu,crossing over, "\bar{f}"'  {yshift=14pt}] &{\rm Sh}_\varrho(X) \ar[from=uul,crossing over]
\end{tikzcd}\]

Note that $\tilde{f}'':((\overline{X'})'\times_{Y''}\tilde{Y}'')_{\mathrm{main}}\to \widetilde{Y}''$ is proper.
Indeed $(\overline{X'})'\times_{Y''}\tilde{Y}''\to \widetilde{Y}''$ is proper, as it is a base change of the proper map $f''$.
Since $((\overline{X'})'\times_{Y''}\tilde{Y}'')_{\mathrm{main}}\hookrightarrow (\overline{X'})'\times_{Y''}\tilde{Y}''$ is a closed immersion, $\tilde{f}''$ is proper. 
Also $\varphi$ is proper.
Indeed since $(\overline{X'})'\to \overline{X}$ is proper, $(\overline{X'})'\times_{Y''}(Y''\times_Y\tilde{Y})\to \overline{X}\times_Y\tilde{Y}$ is proper.
Since $((\overline{X'})'\times_{Y''}\tilde{Y}'')_{\mathrm{main}}\hookrightarrow (\overline{X'})'\times_{Y''}(Y''\times_Y\tilde{Y})$ is a closed immersion, $((\overline{X'})'\times_{Y''}\tilde{Y}'')_{\mathrm{main}}\to \overline{X}\times_Y\tilde{Y}$ is proper.
Hence $\varphi$ is proper.
We let $\hat{Y}\subset \tilde{Y}$ to be the image, which is a smooth quasi-projective variety as $\tilde{Y}''\to\tilde{Y}$ is an open map.
Since $\varphi$ is surjective, we have $\tilde{f}( (\overline{X}\times_Y\tilde{Y})_{\mathrm{main}})\subset \hat{Y}$.
Since $\tilde{f}''$ and $\tilde{Y}''\to\hat{Y}$ are surjective, $\tilde{f}:\hat{X}= (\overline{X}\times_Y\tilde{Y})_{\mathrm{main}}\to \hat{Y}$ is surjective.
Now we set $\hat{X}=(\overline{X}\times_Y\tilde{Y})_{\mathrm{main}}$. 
Then we have the properties (a)-(c).

	 Now let $g:Z\to \hat{X}$ be a proper morphism from a quasi-projective manifold $Z$ such that $\hat{f}\circ g$ contracts to a point.
Let $g':Z'\to ((\overline{X'})'\times_{Y''}\tilde{Y}'')_{\mathrm{main}}$ be a proper map which induces a surjective map $Z'\to Z$.
Since $\tilde{Y}''\to\hat{Y}$ is flat, $\tilde{f}''\circ g'$ contracts to a point.
Hence $(\tilde{\mu}\circ g')^*(\mu')^*\varrho'$ is trivial, where $\tilde{\mu}:((\overline{X'})'\times_{Y''}\tilde{Y}'')_{\mathrm{main}}\to (\overline{X'})'$ is the natural map.
Hence $\mathrm{sh}_{\bar{\varrho'}}\circ \mu'\circ \tilde{\mu}\circ g':Z'\to \mathrm{Sh}_{\varrho'}(X')$ contracts to a point.
Hence $\overline{{\rm sh}_{\varrho}}\circ \mu\circ g:Z\to {\rm Sh}_{\varrho}(X)$ contracts to a point.
\end{proof}

\Cref{lem:20230616} yields $\sigma:\hat{Y}\to \mathrm{Sh}_{\varrho}(X)$.
Indeed, we consider $\phi=(\hat{f},\overline{\mathrm{sh}_{\varrho}}\circ \mu):\hat{X}\to \hat{Y}\times \mathrm{Sh}_{\varrho}(X)$.
Then $\phi$ is proper since $\overline{\mathrm{sh}_{\varrho}}\circ \mu$ is proper. 
Hence $\Gamma=\phi(\hat{X})\subset \hat{Y}\times \mathrm{Sh}_{\varrho}(X)$ is an analytic subset. 
\begin{claim}
The induced map $t:\Gamma\to\hat{Y}$  is an isomorphism of analytic spaces. 
\end{claim}
\begin{proof}
	By \cref{lem:20230616}, every fiber of $t:\Gamma\to\hat{Y}$ consists of finite points.
	Since $\hat{f}:\hat{X}\to \hat{Y}$ has general connected fibers, $\Gamma\to\hat{Y}$ is injective over a non-empty Zariski open set of $\hat{Y}$. 
	We prove this. 
	Let $E\subsetneqq \hat{Y}$ be a proper Zariski closed set such that $\Gamma\backslash t^{-1}(E)\to \hat{Y}\backslash E$ is injective. 
	To show that $t:\Gamma\to \hat{Y}$ is injective, we assume contrary that there exists $y\in \hat{Y}$ such that $t^{-1}(y)=\{x_1,x_2,\ldots,x_k\}$ consists $k$ points with $k\geq 2$.
	Let $U\subset\hat{Y}$ be a connected open neighbourhood of $y$.
	Then by \cite[p. 166]{GR84}, $t^{-1}(U)\backslash t^{-1}(E)\to U\backslash E$ is an isomorphism.
	Since $U$ is normal, $U\backslash E$ is connected.
	Hence $t^{-1}(U)\backslash t^{-1}(E)$ is connected.
	In particular, $t^{-1}(U)$ is connected.
	On the other hand, we may take disjoint open neighbourhoods $V_1,\ldots,V_k$ of $x_1,\ldots,x_k$.
	We may assume that $\partial V_1,\ldots,\partial V_k$ are compact and $y\not\in t(\partial V_1\cup\cdots\cup\partial V_k)$.
	Then we may take a connected open neighbourhood $U$ of $y$ such that $\overline{U}\cap t(\partial V_1\cup\cdots\cup\partial V_k)=\emptyset$.
	Then for each $i=1,\ldots,k$, $t^{-1}(U)\cap V_i(\ni x_i)$ is non-empty and $\overline{t^{-1}(U)\cap V_i}\subset V_i$.
	Hence $t^{-1}(U)$ is not connected.
	This is a contradiction.
	Hence $t:\Gamma\to\hat{Y}$ is injective. Note that   $t$ is   surjective, for $\hat{f}$ is surjective (cf. \cref{lem:20230616}). 
	Hence $t$ is an isomorphism by \cite[p. 166]{GR84}. 
\end{proof} 
 Hence we obtain the analytic map $\sigma:\hat{Y}\to \mathrm{Sh}_{\varrho}(X)$ whose graph is $\Gamma$.

Since $\mu$ and $\overline{{\rm sh}_\varrho}$ are proper, and $\hat{f}$ is surjective, $\sigma$ is proper. Moreover, $\sigma$ has connected fibers since $\overline{\rm sh_\varrho}\circ\mu$ is a proper holomorphic fibration. 
We claim that $\sigma:\hat{Y}\to {\rm Sh}_\varrho(X) $ is bimeromorphic.
Indeed, since $\sigma:\hat{Y}\to {\rm Sh}_\varrho(X) $ is a proper holomorphic fibration, it suffices to prove that $\dim \hat{Y}=\dim {\rm Sh}_\varrho(X)$ by \cref{lem:bime}.  
Assume, for the sake of contradiction, we have $\dim \hat{Y}>\dim {\rm Sh}_\varrho(X)$.  
Since $\hat{Y}$ is quasi-projective and $\sigma$ is proper, each fiber of $\sigma$ is thus algebraic subset of $\hat{Y}$.  
Let $F$ be a generic fiber of $\sigma$.
Then $\nu(F)$ is not a point.
Let $Z\subset X$ satisfies that the Zariski closure of $f(Z)$ is equal to that of $\nu(F)$.
Then $\mathrm{sh}_{\varrho}(Z)$ is a point.
Therefore, $\varrho({\rm Im}[\pi_1(Z)\to \pi_1(X)])$ is finite. 
On the other hand, since $Z$ is not contracted by $f$, $\varrho({\rm Im}[\pi_1(Z)\to \pi_1(X)])$ is infinite.
This is a contradiction.
Hence $\dim \hat{Y}=\dim {\rm Sh}_\varrho(X)$, therefore $\sigma:\hat{Y}\to {\rm Sh}_\varrho(X) $ is bimeromorphic.
\end{proof}

It is worth noting that while $\sigma:S\to {\rm Sh}_\varrho(X)$ is a proper bimeromorphic map,  showing that ${\rm Sh}\varrho(X)$ is quasi-projective requires understanding the structure of the Shafarevich morphism, as illustrated  in  the following example communicated with us by Kollár. 
\begin{rem}
Let $g:X\to \bA^1$ be   a family of  K3 surfaces over $\bA^1$, such that
	the generic fiber of $g$ has Picard number 1, but infinitely many fibers  $X_{t_i}:=g^{-1}(t_i)$ contain a (-2)-curve $C_i\subset X_{t_i}$  with $\{t_i\}_{i\in \bZ>0}$   a discrete set escaping  to infinity.
	One can contract all the $C_i$ to get a normal 3-fold $Y$ with infinitely many singular points. Then $f:X\to Y$ is a proper bimeromorphic map, but not a birational morphism. 
\end{rem}

\begin{rem}
	In the proof of \cref{lem:bimeromorphic2}, we apply the existence of \emph{Shafarevich maps} established by Koll\'ar \cite{Kol93}.
	
	Recall that when $\varrho:\pi_1(X)\to \GL_{N}(\bC)$ underlies a $\bZ$-VHS and has infinite monodromy at infinity, the Shafarevich morphism ${\rm sh}_\varrho:X\to {\rm Sh}_\varrho(X)$ arises as the Stein factorization of the period mapping  $X\to \sD/\Gamma$. Here, $\sD$ denotes the period domain of the corresponding $\bZ$-VHS, and $\Gamma$ represents the monodromy group of $\varrho$. In this context,  Sommese \cite{Som78} proved \cref{lem:bimeromorphic2} by utilizing   H\"ormander-Andreotti-Vesentini $L^2$-method. As a result, \cref{lem:bimeromorphic2} represents a significant extension of the results in \cite{Som78},  while also offering a simpler proof, even when $\varrho$ underlies a $\bZ$-VHS. Notably, proving \cref{conj:algebraic} using the $L^2$-method poses a highly challenging problem.
\end{rem}

\subsection{Construction of Shafarevich morphism (III)} 
In this subsection we will prove \cref{cor:Sha3}. 
Recall that in Step 1 of the proof of \cref{thm:Sha5} we proved the following result. 
\begin{lem}\label{lem:extended Shafarevich} 
Let $X$ be a   smooth quasi-projective   variety, and let $\varrho:\pi_1(X)\to {\rm GL}_N(\bC)$ be a reductive representation. 
Let $X'$ be a smooth partial compactification of $X$ such that $X'$ is quasi-infinite monodromy at infinity and $X'-X$ is a simple normal crossing divisor.
Then we may construct ${\rm sh}_{\varrho}:X\to {\rm Sh}_{\varrho}(X)$ such that ${\rm sh}_{\varrho}$ extends to a proper map $X'\to {\rm Sh}_{\varrho}(X)$. \qed
\end{lem}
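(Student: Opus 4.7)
The plan is to observe that the desired extension is already produced by the construction in Step 1 of the proof of \cref{thm:Sha5}, provided one takes $X'$ as the starting partial compactification (rather than the partial compactification produced inside that proof via \cref{prop:infinity}). I would organize the argument as follows.

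First, I would reduce to the case in which the monodromy of $\varrho$ is already unramified along each boundary divisor of $X' \setminus X$. Since by hypothesis the local monodromy of $\varrho$ around each component of $X' \setminus X$ is finite, I would apply Kawamata's covering lemma, exactly as in \cref{claim:extendpartial}, to produce a finite map $\nu':\widehat{X}' \to X'$ with $\widehat{X}'$ smooth and $(\nu')^{-1}(X'\setminus X)$ simple normal crossing, such that $\nu_0^*\varrho$ (with $\nu_0 = \nu'|_{\widehat{X}}$, $\widehat{X} = (\nu')^{-1}(X)$) extends to a reductive representation $\varrho':\pi_1(\widehat{X}') \to \GL_N(\bC)$. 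The quasi-infinite monodromy at infinity assumption on $(X',\varrho)$ guarantees that $\varrho'$ has infinite monodromy at infinity on $\widehat{X}'$, by the same lifting argument used in the proof of \cref{thm:Sha5} (any map $\mathbb D \to \widehat{X}'$ with $\mathbb D^* \to \widehat{X}$ and center in $\widehat{X}' \setminus \widehat{X}$ projects to such a map into $X'$, where infinite monodromy is inherited from $\varrho'$).

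Next, I would take a further finite cover $h:Y' \to \widehat{X}'$ with $Y'$ normal quasi-projective so that the composite $f := \nu'\circ h : Y' \to X'$ is Galois with group $G$. By \cref{lem:infinite pull}, $h^*\varrho'$ still has infinite monodromy at infinity, so \cref{thm:Sha2} applies and produces a proper holomorphic fibration
\[
{\rm sh}_{h^*\varrho'}: Y' \longrightarrow {\rm Sh}_{h^*\varrho'}(Y')
\]
to a normal complex space. The $G$-equivariance argument from \cref{claim:equivariant} (which uses only that fibers are contracted by the Shafarevich characterization, plus properness and connectedness) transfers verbatim, yielding a $G$-action on ${\rm Sh}_{h^*\varrho'}(Y')$ making ${\rm sh}_{h^*\varrho'}$ equivariant. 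Taking the quotient by $G$ via \cite{Car60} produces a normal complex space $Q$ and a proper holomorphic fibration $c':X'\to Q$.

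Finally, setting ${\rm Sh}_\varrho(X):=Q$ and ${\rm sh}_\varrho := c'|_X$, the commutative diagram \eqref{dia:noninfinite} and \cref{claim:same contract} show that $c'|_X$ satisfies the defining property of the Shafarevich morphism of $\varrho$, and $c':X'\to {\rm Sh}_\varrho(X)$ is a proper map extending it. The only substantive point that needs a moment's thought is ensuring that the Shafarevich morphism built this way coincides (up to the canonical identification) with the one of \cref{thm:Sha5}; but since both are obtained by Stein-factoring the same construction applied to a Galois cover on which $\varrho$ has infinite monodromy at infinity, this is immediate from the uniqueness built into the characterization. No other step is expected to present a genuine obstacle.
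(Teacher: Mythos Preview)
Your proposal is correct and follows exactly the paper's approach: the paper does not give a separate proof of this lemma but simply records it as what was established in Step~1 of the proof of \cref{thm:Sha5}, and you have faithfully reproduced that construction (Kawamata cover via \cref{claim:extendpartial}, Galois cover, \cref{thm:Sha2}, $G$-equivariance from \cref{claim:equivariant}, quotient to get $c':X'\to Q$) while correctly noting that one feeds in the \emph{given} $X'$ rather than the one produced by \cref{prop:infinity}. Your closing remark about uniqueness is a harmless extra, since the paper simply \emph{defines} ${\rm sh}_\varrho$ to be $c'|_X$ in this construction.
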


\begin{lem}\label{lem:20230909}
Assume $X$ is smooth.
Given a sequence of representations $\{\varrho_i:\pi_1(X)\to\mathrm{GL}_{N_i}(\bC)\}$ such that $\mathrm{ker}(\varrho_{i+1})\subset \mathrm{ker}(\varrho_i)$.
Then, there exist $i_0$ and a smooth partial compactification $X'$ such that $X'$ is quasi-infinite monodromy at infinity for every $\varrho_i$ with $i\geq i_0$.
Moreover $X'-X$ is a simple normal crossing divisor.
\end{lem}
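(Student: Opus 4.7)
The plan is to run a stabilization argument for the combinatorial data controlling the blow-up procedure underlying \cref{prop:infinity}. Fix any smooth projective compactification $\overline{X}$ of $X$ with $\overline{X}\setminus X=\sum_{j=1}^n D_j$ simple normal crossing, and let $\gamma_j\in\pi_1(X)$ be a small loop around $D_j$. Set $H_i\subset \varrho_i(\pi_1(X))$ to be the finitely generated abelian subgroup generated by $\{\varrho_i(\gamma_j)\}_{j=1}^n$. The hypothesis $\ker\varrho_{i+1}\subset\ker\varrho_i$ yields canonical surjections $\varrho_{i+1}(\pi_1(X))\twoheadrightarrow\varrho_i(\pi_1(X))$ sending $\varrho_{i+1}(\gamma)\mapsto\varrho_i(\gamma)$, which restrict to surjections $H_{i+1}\twoheadrightarrow H_i$ and descend to surjections $\bar\varphi_i\colon H_{i+1}/T_{i+1}\twoheadrightarrow H_i/T_i$ between torsion-free quotients, preserving the images of each $\gamma_j$. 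Writing $H_i/T_i\cong\bZ^{m_i}$, we have $m_i\le n$ and $m_i\le m_{i+1}$, so $\{m_i\}$ stabilizes at some index $i_0$. For $i\ge i_0$ the surjection $\bar\varphi_i\colon\bZ^{m_{i+1}}\to\bZ^{m_i}$ is a surjection of free $\bZ$-modules of equal rank, hence an isomorphism; the analogous stabilization holds at every non-empty intersection $\cap_{j\in J}D_j$ of boundary divisors, and since there are only finitely many such intersections $i_0$ may be enlarged to work uniformly.

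Under the resulting canonical identifications $\bZ^{m_i}\cong\bZ^{m_{i_0}}$ obtained by composing the $\bar\varphi_i$'s, the representation system $S_i=(A_i,\{D_1,\dots,D_n\})$ for $\varrho_i$, in the sense of the proof of \cref{prop:infinity}, becomes independent of $i\ge i_0$: the $j$-th column of $A_i$ is the image of $\varrho_i(\gamma_j)$ in $\bZ^{m_i}$, and this image is preserved by $\bar\varphi_i$ by construction. Consequently the sign invariants $\mu(S_i)$, $\alpha(S_i)$, $\beta(S_i)$ introduced in the proof of \cref{prop:infinity}, as well as the set $J_\infty(i):=\{j:\varrho_i(\gamma_j)\text{ has infinite order}\}=\{j:\text{the $j$-th column of }A_i\text{ is nonzero}\}$, are all constant in $i$ for $i\ge i_0$.

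Next, I apply the termination argument of \cref{lem:20230724} and \cref{lem:20230727} to the stabilized system $S_{i_0}$: this produces a finite sequence of admissible blow-ups $\pi\colon\widehat{X}\to\overline{X}$ such that the transformed system $S'_{i_0}$ on $\widehat{X}$ satisfies $\mu(S'_{i_0})=0$. Because admissible blow-ups act on the matrix $A_i$ by appending canonically defined integer linear combinations of existing columns (the recipe preceding \cref{lem:20230725}), and because $A_i$ is constant in $i\ge i_0$, the same blow-up sequence simultaneously achieves $\mu(S'_i)=0$ for every $i\ge i_0$. Taking $X'$ to be the complement in $\widehat{X}$ of all boundary divisors along which some (equivalently every) $\varrho_i$ with $i\ge i_0$ has infinite local monodromy yields a smooth partial compactification with simple normal crossing boundary, and \cref{lem:20230726} applied to $S'_i$ shows that $X'$ is quasi-infinite monodromy at infinity for each $\varrho_i$ with $i\ge i_0$.

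The main obstacle will be rigorously verifying the canonical stabilization of the representation systems. The delicate point is that the identification $\bZ^{m_i}\cong\bZ^{m_{i_0}}$ must identify the column vectors corresponding to the boundary loops $\gamma_j$ simultaneously at every non-empty intersection of boundary divisors, and this compatibility must be tracked through successive admissible blow-ups so that the sign patterns, and hence the invariants $\gamma(S_i)$ and the set $J_\infty(i)$, genuinely remain constant in $i\ge i_0$.
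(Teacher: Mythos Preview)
Your approach is correct and rests on the same stabilization idea as the paper, but packaged differently. Both arguments work locally at each intersection stratum $\cap_{j\in J}D_j$ and study the abelian local monodromy map $\bZ^{|J|}\to \varrho_i(\bZ^{|J|})/\text{torsion}$. You observe that the \emph{rank of the target} is non-decreasing and bounded by $|J|$, hence stabilizes, so the successive surjections between targets become isomorphisms and the monodromy matrices $A_i$ become literally equal under the canonical identification; the paper instead shows that the \emph{rational kernels} $W_i\subset\bQ^{|J|}$ form a decreasing chain, hence stabilize, so the finite-monodromy loci $\Sigma_i=\Sigma\cap W_i$ stabilize. These are dual formulations of one rank--nullity fact. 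Your packaging is slightly more direct: once the matrices agree it is immediate that a single blow-up sequence achieves $\mu=0$ for all $i\ge i_0$ and that $J_\infty(i)$ is constant, so \cref{lem:20230726} applies verbatim to every $\varrho_i$. The paper instead blows up only for $\varrho_{i_0}$ and then transfers the finite-monodromy condition to $\varrho_i$ via a separate claim (proved by pulling $f$ back to the original $\overline X$ and invoking the $\Sigma_i$-stabilization there).

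One expository slip to fix: your global $H_i$, generated by loops around \emph{all} boundary components, need not be abelian, since loops around divisors that do not meet have no reason to commute in $\varrho_i(\pi_1(X))$. The argument only makes sense, and only needs to be run, locally at each $\cap_{j\in J}D_j$ with $J$ an intersecting collection (where the loops do commute); you already invoke this local version, so simply drop the global $H_i$ from the write-up and run the rank-stabilization stratum by stratum, taking $i_0$ to be the maximum over the finitely many strata.
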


\begin{proof}[Proof of \cref{lem:20230909}]
The proof is a modification of the proof of \cref{prop:infinity}.
Let $\overline{X}$ be a compactification such that the boundary divisor $\overline{X}-X$ is simple normal crossing.
We shall show that after a sequence $\overline{X}^{(n)}\to \overline{X}^{(n-1)}\to\cdots\to \overline{X}$ of admissible blow-ups, $\overline{X}^{(n)}$ becomes a compactification with the good property.

We consider all sets $\{D_1,\ldots,D_l\}$ of boundary divisors such that $\bigcap D_j\not=\emptyset$ and the monodromy matrices $A_{\varrho_i}$ for these $\{D_1,\ldots,D_l\}$ and construct $\Lambda_{\varrho_i}=\{S_{\lambda}\}$.
This $\Lambda_{\varrho_i}$ is maximum.

For each $\varrho_i$, we proceed the proof of \cref{prop:infinity}.
Namely by \cref{lem:20230724} and \cref{lem:20230727}, there exists a sequence $\overline{X}^{(n_i)}\to\cdots\to \overline{X}$ of admissible blow-ups such that $\mu(S_{\lambda}^{(n_i)})=0$ for all $S_{\lambda}\in \Lambda_{\varrho_i}$.
Note that $\Lambda^{(n_i)}=\{S_{\lambda}^{(n_i)}\}$ is maximum (cf. \cref{lem:202307251}).

Let $f:\mathbb D^*\to X$ such that $f(0)\in \overline{X}^{(n_i)}\backslash X$.
If $f^*\varrho_i:\pi_1(\mathbb D^*)\to \mathrm{GL}_{N_i}(\bC)$ has infinite image, then $f^*\varrho_{i'}:\pi_1(\mathbb D^*)\to \mathrm{GL}_{N_{i'}}(\bC)$ has infinite image for all $i'\geq i$.
This follows from the assumption $\mathrm{ker}(\varrho_{i'})\subset \mathrm{ker}(\varrho_i)$.
So we consider the case that $f^*\varrho_i:\pi_1(\mathbb D^*)\to \mathrm{GL}_{N_i}(\bC)$ has finite image.

\begin{claim}\label{claim:20230922}
There exists $i_0$ such that if $f(0)\in \overline{X}^{(n_{i_0})}\backslash X$ is contained in only one irreducible boundary component and $f^*\varrho_{i_0}:\pi_1(\mathbb D^*)\to \mathrm{GL}_{N_{i_0}}(\bC)$ has finite image, then $f^*\varrho_{i}:\pi_1(\mathbb D^*)\to \mathrm{GL}_{N_{i}}(\bC)$ has finite image for all $i\geq i_0$.
\end{claim}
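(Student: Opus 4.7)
The plan is to reduce the claim to a group-theoretic stabilization statement about elements of $\pi_1(X)$, and then to carry out an iterative Noetherian argument over the finite set of boundary divisors appearing at each stage. First, since $f(0)$ lies on exactly one irreducible boundary component $D$ of $\overline{X}^{(n_{i_0})}\setminus X$, the map $f_*$ sends the generator of $\pi_1(\mathbb D^*)$ to $\gamma_D^m$ (up to conjugation), where $m = \mathrm{ord}_0 f^*D \geq 1$ and $\gamma_D\in \pi_1(X)$ is the local monodromy around $D$. Hence $f^*\varrho_i$ has finite image if and only if $\varrho_i(\gamma_D)$ has finite order. Because admissible blowups take place only at intersections of boundary components, $f(0)$ avoids every blowup center when passing from $\overline{X}^{(n_{i_0})}$ to $\overline{X}^{(n_i)}$ for $i \geq i_0$; thus $f(0)$ still lies on a single boundary component (the strict transform $\tilde D$ of $D$) whose local monodromy is conjugate to $\gamma_D$ in $\pi_1(X)$.

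The central observation is that for each $\gamma\in \pi_1(X)$ the set
$$A(\gamma) := \{i \geq 1 : \varrho_i(\gamma) \text{ has finite order}\}$$
is downward-closed. Indeed, if $\gamma^n \in \ker\varrho_j$ and $i \leq j$, the hypothesis $\ker\varrho_j\subset \ker\varrho_i$ gives $\gamma^n\in \ker\varrho_i$, so $i\in A(\gamma)$. Thus $A(\gamma) = [1,k_\gamma]$ for some $k_\gamma \in \{0\}\cup\mathbb Z_{\geq 1}\cup\{\infty\}$, and the claim amounts to producing an $i_0$ with the property that every boundary divisor $D\subset \overline{X}^{(n_{i_0})}\setminus X$ satisfying $i_0 \in A(\gamma_D)$ already satisfies $A(\gamma_D)=[1,\infty)$. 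On a fixed compactification this is immediate: take $i_0$ strictly larger than the finite supremum of all finite $k_{\gamma_D}$'s among the finitely many boundary divisors.

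The main obstacle is that $\overline{X}^{(n_{i_0})}$ itself depends on $i_0$: enlarging $i_0$ may force further admissible blowups to achieve $\mu(\Lambda_{\varrho_{i_0}})=0$, and these introduce new exceptional divisors $E$ with new local monodromies $\gamma_E$ possibly carrying new finite $k_{\gamma_E}$'s. My plan is to iterate: from the finitely many divisors on $\overline{X}^{(n_1)}$ form $i_0^{(1)} := 1+\max\{k_{\gamma_D} : D\subset \overline{X}^{(n_1)}\setminus X,\ k_{\gamma_D}<\infty\}$, pass to $\overline{X}^{(n_{i_0^{(1)}})}$, and repeat. Establishing termination of this iteration is the delicate point. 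To do so I would combine two ingredients. First, an exceptional divisor $E$ arising from blowing up $D_a\cap D_b$ has local monodromy $\gamma_E=\gamma_{D_a}\gamma_{D_b}$, with $\gamma_{D_a}$ and $\gamma_{D_b}$ commuting in the local fundamental group $\mathbb Z^2$; consequently $A(\gamma_E)\supset A(\gamma_{D_a})\cap A(\gamma_{D_b})$, which forces any new finite $k_{\gamma_E}$ to be bounded by an existing finite $k_{\gamma_{D_a}}$ or $k_{\gamma_{D_b}}$. Second, the torsion-free abelianization of $\pi_1(X)/H_\infty$ (with $H_\infty:=\bigcap_i \ker\varrho_i$) is a finitely generated free abelian group, so on any fixed compactification only finitely many combinatorial types of the monodromy matrices $A_{\varrho_i}$ can occur as $i$ varies; this bounds the number of admissible blowups required across the whole sequence. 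Together with the downward-closedness of $A$, these finiteness constraints force the iterative procedure to stop at some finite $i_0$, yielding the claim.
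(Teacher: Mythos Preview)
Your reduction to the element $\gamma_D\in\pi_1(X)$ and the observation that $A(\gamma)$ is downward-closed are correct, but the termination argument for your iteration has a genuine gap. Ingredient (a) does not say what you claim: from $A(\gamma_E)\supset A(\gamma_{D_a})\cap A(\gamma_{D_b})$ you only get the \emph{lower} bound $k_{\gamma_E}\ge\min(k_{\gamma_{D_a}},k_{\gamma_{D_b}})$, not an upper bound. Concretely, one can have $k_{\gamma_{D_a}}=k_{\gamma_{D_b}}=0$ (both infinite-order monodromy for every $i$) while $\gamma_E=\gamma_{D_a}\gamma_{D_b}$ has finite-order image for all $i\le 100$ and infinite-order image thereafter, so $k_{\gamma_E}=100$ is a brand-new finite value not bounded by any previously existing one. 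Ingredient (b) is also not enough: even if the $\bQ$-kernels of the $A_{\varrho_i}$ stabilize, the number of admissible blow-ups needed to reach $\mu=0$ depends on the \emph{entries} of $A_{\varrho_i}$, which are unbounded in $i$, so you cannot bound the blow-up count uniformly this way. As it stands, the iteration $i_0^{(1)},i_0^{(2)},\ldots$ could run forever.

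The paper avoids the circularity altogether by never tracking the blown-up models. It works on the \emph{original} $\overline{X}$: any $f$ as in the claim, pushed down to $\overline{X}$, has $f(0)\in\bigcap_{j\in J}D_j$ for some fixed collection of boundary components of $\overline{X}$. For each such collection one sets $V=\bQ^{|J|}$, lets $(f)=\sum_j(\mathrm{ord}_0 f^*D_j)(D_j)\in V$, and observes that $f^*\varrho_i$ has finite image iff $A_{\varrho_i}(f)=0$, i.e.\ $(f)$ lies in the $\bQ$-subspace $W_i:=\ker(A_{\varrho_i}\otimes\bQ)$. The chain $W_1\supset W_2\supset\cdots$ of subspaces of a finite-dimensional $\bQ$-vector space stabilizes at some $i_J$; taking $i_0$ larger than the finitely many $i_J$ (one per intersection pattern on $\overline{X}$) gives a choice that is independent of any blow-ups, and the claim follows immediately. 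The point you are missing is that the linearity of the finite-monodromy condition (encoded by $A_{\varrho_i}$) turns the stabilization into a one-line dimension argument on the fixed $\overline{X}$, so no iterative scheme on the $\overline{X}^{(n_i)}$ is needed.
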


\begin{proof}
Let $\{D_1,\ldots,D_l\}$ be boundary divisors on $\partial{X}$ such that $\bigcap D_j\not=\emptyset$ as above.
We associate a $\bQ$-vector space $V=\bQ^l$ generated by the formal basis $(D_1),\ldots,(D_l)$.
Let $f:\mathbb D^*\to X$ such that $f(0)\in \bigcap D_j$, we associate $(f)\in V$ by $(f)=\sum b_j(D_j)$, where $b_j=\mathrm{ord}_0f^*D_j$. 
We set $\Sigma=\{(f)\in V\}$ and
$$\Sigma_i=\{ (f)\in V; \text{$f^*\varrho_{i}:\pi_1(\mathbb D^*)\to \mathrm{GL}_{N_{i}}(\bC)$ has finite image}\}.
$$
Let $W_i\subset V$ be the $\bQ$-vector space spanned by $\Sigma_i$.
Then we have
\begin{equation}\label{eqn:20230921}
\Sigma_i=\Sigma\cap W_i.
\end{equation}
Indeed $\Sigma_i\subset \Sigma\cap W_i$ is obvious.
To show the converse, we take $(f)\in \Sigma\cap W_i$.
By $(f)\in W_i$, there exist $(f_1),\ldots,(f_k)\in \Sigma_i$ such that $(f)=c_1(f_1)+\cdots+c_k(f_k)$.
Then using the monodromy matrix $A_{\varrho_i}$, we have $A_{\varrho_i}(f)=c_1A_{\varrho_i}(f_1)+\cdots+c_kA_{\varrho_i}(f_k)=0$. 
Hence $(f)\in \Sigma_i$.
This shows \eqref{eqn:20230921}.

By the definition of $\Sigma_i$, we have $\Sigma\supset \Sigma_1\supset \Sigma_2\supset\cdots$.
Hence $W_1\supset W_2\supset \cdots$.
Thus there exists $i$ such that $W_i=W_{i+1}=\cdots$.
Hence by \eqref{eqn:20230921}, we have $\Sigma_i=\Sigma_{i+1}=\cdots$.
In particular, if $f^*\varrho_{i}:\pi_1(\mathbb D^*)\to \mathrm{GL}_{N_{i}}(\bC)$ has finite image, then $f^*\varrho_{i'}:\pi_1(\mathbb D^*)\to \mathrm{GL}_{N_{i'}}(\bC)$ has finite image for all $i'\geq i$.

So far, we have fixed $\{D_1,\ldots,D_l\}$ and chose $i=i_{\{D_1,\ldots,D_l\}}$.
Next we consider all such $\{D_1,\ldots,D_l\}$ and chose $i_0$ big enough such that $i_0$ is greater than every $i_{\{D_1,\ldots,D_l\}}$ above.

Now suppose $f(0)\in \overline{X}^{(n_{i_0})}\backslash X$ is contained in only one irreducible boundary component and $f^*\varrho_{i_0}:\pi_1(\mathbb D^*)\to \mathrm{GL}_{N_{i_0}}(\bC)$ has finite image.
Let $D_1,\ldots,D_l$ be boundary divisors in $\overline{X}$ so that $f(0)\in D\subset\overline{X}$.
Then by the choice of $i_0$, $f^*\varrho_{i}:\pi_1(\mathbb D^*)\to \mathrm{GL}_{N_{i}}(\bC)$ has finite image for all $i\geq i_0$.
The proof of \cref{claim:20230922} is completed. 
\end{proof}

Now we return to the proof of \cref{lem:20230909}.
We remove all boundary divisors $D$ from $\overline{X}^{(n_{i_0})}$ such that local monodromy with respect to $\varrho_{i_0}$ at $D$ is infinite to get a desired partial compactification $X'$ of $X$.
\end{proof}

\begin{lem}\label{lem:202309261}
Let $A_N$ be the set of all $\mathrm{ker}(\varrho)\subset \pi_1(X)$ for the reductive representations $\varrho:\pi_1(X)\to\mathrm{GL}_N(\bC)$.
Then $A_N$ is countable.
In particular, $\cup_NA_N$ is countable. 
\end{lem}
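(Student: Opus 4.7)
The plan is to exhibit a countable invariant which records $\ker(\varrho)$, using the Noetherian character variety $M_{\rm B}(X,N)$ together with the closed subsets $W_\gamma := \{[\tau] \in M_{\rm B}(X,N) : \tau(\gamma) = 1\}$ introduced in the proof of \cref{lem:small}. Since reductive representations up to conjugation are parametrised by the $\bC$-points of the finite-type affine $\bQ$-scheme $M_{\rm B}(X,N)$, and the kernel depends only on the conjugacy class, it suffices to bound the number of distinct kernels arising from $M_{\rm B}(X,N)(\bC)$.

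First I would recall from the proof of \cref{lem:small} that for each $\gamma \in \pi_1(X)$ the locus $W_\gamma \subset M_{\rm B}(X,N)$ is Zariski closed, and that for a reductive $\tau:\pi_1(X) \to \GL_N(\bC)$ one has $\tau(\gamma) = 1$ if and only if $[\tau] \in W_\gamma(\bC)$ (this is exactly \cref{claim:iff}). In particular, for any reductive $\varrho$, writing $H_\varrho := \ker\varrho$, one obtains the closed subset
\[
W_{H_\varrho} := \bigcap_{\gamma \in H_\varrho} W_\gamma \subset M_{\rm B}(X,N),
\]
which contains $[\varrho]$.

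The key observation is that the assignment $H_\varrho \mapsto W_{H_\varrho}$ is \emph{injective} on the set $A_N$. Indeed, $W_{H_\varrho}$ determines $H_\varrho$ via the intrinsic formula
\[
H_\varrho = \{\gamma \in \pi_1(X) : W_{H_\varrho} \subset W_\gamma\};
\]
the inclusion ``$\subset$'' is tautological, while for the reverse inclusion, if $W_{H_\varrho} \subset W_\gamma$, then $[\varrho] \in W_\gamma$ so $\varrho(\gamma) = 1$ by the characterisation of $W_\gamma$, giving $\gamma \in H_\varrho$.

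Finally, I would use Noetherianness of $M_{\rm B}(X,N)$ to argue that the collection $\{W_S := \bigcap_{\gamma \in S} W_\gamma : S \subset \pi_1(X)\}$ is countable. For any $S \subset \pi_1(X)$, enumerate $S = \{\gamma_1,\gamma_2,\ldots\}$; the decreasing chain $W_{\{\gamma_1\}} \supset W_{\{\gamma_1,\gamma_2\}} \supset \cdots$ stabilises after finitely many steps, so $W_S = W_{S'}$ for some finite $S' \subset S$. Thus the $W_S$ are indexed (with repetitions) by the finite subsets of the countable group $\pi_1(X)$, hence form a countable collection. Combined with the injectivity above, this shows $A_N$ is countable. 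The ``In particular'' statement then follows since $\cup_N A_N$ is a countable union of countable sets. I do not anticipate a genuine obstacle; the only point requiring minor care is isolating the fact that $H_\varrho$ can be recovered from $W_{H_\varrho}$ without reference to $\varrho$ itself.
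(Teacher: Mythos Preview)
Your proof is correct and follows essentially the same approach as the paper: both associate to $\ker\varrho$ the Zariski closed set $\bigcap_{\gamma\in\ker\varrho} W_\gamma$ (the paper calls it $Z_\varrho$), recover $\ker\varrho$ from it via $\{\gamma: Z_\varrho\subset W_\gamma\}$, and then use Noetherianness to reduce to finite subsets of the countable group $\pi_1(X)$. The only difference is cosmetic presentation.
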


\begin{proof}
We use the notation in the proof of \cref{lem:small}.
Given a reductive representation $\varrho$, we define $Z_{\varrho}\subset M$ by $Z_{\varrho}=\cap_{\gamma\in \mathrm{ker}(\varrho)}W_{\gamma}$.
Then $Z_{\varrho}\subset M$ is a Zariski closed set. 
Using $Z_{\varrho}$, we may construct $\mathrm{ker}(\varrho)\subset \pi_1(X)$ by $\mathrm{ker}(\varrho)=\{\gamma\in\pi_1(X); Z_{\varrho}\subset W_{\gamma}\}$.
By the Noetherian property, there exists $\gamma_1,\ldots,\gamma_l$ such that $Z_{\varrho}=W_{\gamma_1}\cap\cdots\cap W_{\gamma_l}$.
In this way, we may label $\mathrm{ker}(\varrho)$ a finite subset $\{\gamma_1,\ldots,\gamma_l\}\subset \pi_1(X)$.
Since $\pi_1(X)$ is countable, it has only countable finite subsets.
Hence $A_N$ is countable.
\end{proof}

\begin{lem}\label{lem:simultaneous1} 
 	Let $V$ be a quasi-projective normal variety and let $(f_{\lambda}:V\to S_{\lambda})_{\lambda\in\Lambda}$ be a family of proper morphisms into normal complex spaces $S_{\lambda}$.
 	Then there exist a normal complex space $S_{\infty}$ and a proper morphism $f_{\infty}:V\to S_{\infty}$ such that  
 	\begin{enumerate}[label=(\alph*)]
	\item \label{item:factor} for each $\lambda\in \Lambda$, there exists a morphism $e_{\lambda}:S_{\infty}\to S_{\lambda}$ such that $f_{\lambda}=e_{\lambda}\circ f_{\infty}$;
	\item \label{item:simul}for every algebraic subvariety $Z\subset V$, if $f_{\lambda}(Z)$ is a point for every $\lambda\in\Lambda$, then $f_{\infty}(Z)$ is a point.
\end{enumerate}
 \end{lem}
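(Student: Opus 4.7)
Plan:

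I would mimic the proof of Lemma~\ref{lem:simultaneous} in the analytic proper setting, using Theorem~\ref{lem:Stein} (Cartan's analytic Stein factorization) in place of the algebraic quasi-Stein factorization. For each $\lambda\in\Lambda$ define
$$E_\lambda:=(f_\lambda,f_\lambda)^{-1}(\Delta_{S_\lambda})\subset V\times V;$$
this is a closed analytic subset of $V\times V$, realized as the image of the proper morphism $V\times_{S_\lambda}V\to V\times V$. Set $E_\infty:=\bigcap_{\lambda\in\Lambda}E_\lambda$. The goal is to replace $\Lambda$ by a finite subfamily $\Lambda'$ realizing $E_\infty$, and then take the analytic Stein factorization of the product map over $\Lambda'$.

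The first reduction is to a countable subfamily. Since $V$ is quasi-projective, $V\times V$ is second countable, so the open complement $\bigcup_{\lambda\in\Lambda}((V\times V)\setminus E_\lambda)$ of $E_\infty$ admits a countable subcover, yielding a countable $\Lambda_0=\{\lambda_n\}_{n\geq 1}\subset\Lambda$ with $\bigcap_{n\geq 1}E_{\lambda_n}=E_\infty$. The second and more delicate reduction is to a finite subfamily: one shows that the decreasing sequence $F_n:=\bigcap_{k=1}^{n}E_{\lambda_k}$ of closed analytic subsets of $V\times V$ stabilizes at some finite $N$. Because each $f_{\lambda_k}$ is proper, every fiber $f_{\lambda_k}^{-1}(f_{\lambda_k}(x))$ is a compact analytic subset of $V$; intersecting $F_n$ with $F_1\cap(\{x\}\times V)$ confines the sequence to a compact complex space, on which the analytic Noetherian DCC forces stabilization. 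A covering-exhaustion argument (using second countability once more) promotes this pointwise stabilization to a uniform global $N$ with $F_N=E_\infty$. Set $\Lambda':=\{\lambda_1,\ldots,\lambda_N\}$.

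With $\Lambda'$ in hand, consider the proper morphism $g:=(f_{\lambda_1},\ldots,f_{\lambda_N}):V\to\prod_{k=1}^{N} S_{\lambda_k}$; by Theorem~\ref{lem:Stein}, it factors as $g=h\circ f_\infty$, where $f_\infty:V\to S_\infty$ is a proper holomorphic fibration onto a normal complex space $S_\infty$ and $h$ is finite. For property~(a): if $\lambda\in\Lambda'$ the map $e_\lambda$ is the composition of $h$ with the corresponding product projection; if $\lambda\notin\Lambda'$, the inclusion $E_\infty\subset E_\lambda$ says that $f_\lambda$ is constant on each fiber of $f_\infty$, and since $f_\infty$ is a proper holomorphic fibration onto a normal base one has $(f_\infty)_*\mathcal{O}_V=\mathcal{O}_{S_\infty}$, so $f_\lambda$ descends uniquely to a holomorphic $e_\lambda:S_\infty\to S_\lambda$. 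For property~(b): if $f_\lambda(Z)$ is a point for every $\lambda\in\Lambda$, then $Z\times Z\subset E_\infty=F_N$, so $g(Z)$ is a point; since $Z$ is connected and the fibers of $f_\infty$ are connected components of those of $g$, $Z$ lies in a single fiber of $f_\infty$, whence $f_\infty(Z)$ is a point.

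The main obstacle is the finite-reduction step: the naive analytic Noetherian DCC fails on non-compact complex spaces (a decreasing sequence of closed analytic subsets need not stabilize), so one must genuinely exploit the properness of the $f_\lambda$---which makes each $E_\lambda$ fiberwise compact over $V$---to confine the relevant intersections to compact complex subspaces where the Noetherian property holds, and then globalize via second countability. All the remaining steps are essentially formal consequences of Theorem~\ref{lem:Stein}, the universal property of Stein factorizations, and the normality of $V$.
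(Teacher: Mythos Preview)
Your reduction to a finite subfamily is a genuine gap: the assertion that pointwise stabilization of $F_n=\bigcap_{k\le n}E_{\lambda_k}$ on the compact fibers of $f_{\lambda_1}$ can be ``promoted to a uniform global $N$'' via second countability is not valid. Decreasing chains of closed analytic subsets need not stabilize on non-compact spaces, and nothing in your setup forces a global bound on the fiberwise stabilization index $N(x)$. Concretely, take $V=\mathbb{P}^1\times\mathbb{C}$ and, for each $k\ge 1$, a holomorphic $b_k:\mathbb{C}\to\mathbb{C}^*$ with $b_1\equiv 1$ and $b_k(m)=1$ for every integer $m\neq k-1$ while $b_k(k-1)\neq 1$ (e.g.\ $b_k(t)=\exp\!\big(\tfrac{\sin\pi t}{\pi(t-(k-1))}\big)$). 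Writing $\tau_b([u{:}v])=[bv{:}u]$, the proper $2{:}1$ holomorphic maps
\[
f_k:V\to\mathbb{P}^1\times\mathbb{C},\qquad f_k(p,t)=\big(\pi'(\phi_{a_k(t)}(p)),\,t\big),
\]
with $a_k^2=b_k$, $\phi_a([u{:}v])=[u{:}av]$, and $\pi'$ the degree-two quotient by $\tau_1$, have $E_k=\Delta_V\cup\Gamma_k$ where $\Gamma_k$ is the graph of $(p,t)\mapsto(\tau_{b_k(t)}(p),t)$. One checks $F_n=\Delta_V\cup\bigcap_{k\le n}\Gamma_k$, and over $t=n-1$ the set $\bigcap_{k\le n-1}\Gamma_k$ contains a full $\mathbb{P}^1$ (since $b_1(n{-}1)=\cdots=b_{n-1}(n{-}1)=1$) while $\bigcap_{k\le n}\Gamma_k$ does not (since $b_n(n{-}1)\neq 1$). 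Hence $F_1\supsetneq F_2\supsetneq\cdots$ never stabilizes, so no finite $\Lambda'$ realizes $E_\infty$.

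The paper's proof avoids this by never claiming a global finite reduction. Instead it fixes a compact exhaustion $K_1\subset K_2\subset\cdots$ of $V$, uses the analytic Noetherian property only on each $\overline{K_n\times K_n}$ to produce finite $\Lambda_n\subset\Lambda$ with the correct intersection over $K_n\times K_n$, takes the Stein factorizations $g_n:V\to\Sigma_n$ of $(f_\lambda)_{\lambda\in\Lambda_n}$, and then shows that over any relatively compact $H_k\subset\Sigma_1$ the tower $\varphi_n^{-1}(H_k)$ stabilizes for $n\gg0$; the space $S_\infty$ is obtained by \emph{gluing} these stable pieces. In the example above $S_\infty$ is genuinely not the Stein factorization of any finite product $(f_{\lambda_1},\dots,f_{\lambda_N})$, so this direct-limit construction is essential rather than a technical convenience.
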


\begin{proof}
The proof is a modification of that of \cref{lem:simultaneous}.
Let $E_{\lambda}\subset V\times V$ be defined by 
 	$$E_{\lambda}=\{(x,x')\in V\times V; f_{\lambda}(x)=f_{\lambda}(x')\}.$$
 	Then $E_{\lambda}\subset V\times V$ is an analytic subset. 
 	Indeed, $E_{\lambda}=(f_{\lambda},f_{\lambda})^{-1}(\Delta_{\lambda})$, where $(f_{\lambda},f_{\lambda}):V\times V\to S_{\lambda}\times S_{\lambda}$ is the morphism defined by $(f_{\lambda},f_{\lambda})(x,x')=(f_{\lambda}(x),f_{\lambda}(x'))$ and $\Delta_{\lambda}\subset S_{\lambda}\times S_{\lambda}$ is the diagonal.

Let $K_1\subset K_2\subset \cdots\subset V$ be a sequence of domains such that
\begin{itemize}
\item $\overline{K_n}$ is compact and $\overline{K_{n}}\subset K_{n+1}$ for all $n$, and
\item
$V=\cup_{n}K_n$.
 \end{itemize}
 Given $n$, we take a finite subset $\Lambda_n\subset \Lambda$ as follows.
 Note that $\cap_{\lambda\in\Lambda}E_{\lambda}\cap (K_n\times K_n)$ is an analytic subset of $K_n\times K_n$.
 Since $\overline{K_n\times K_n}$ is compact, we may take a finite subset $\Lambda_n\subset \Lambda$ such that 
 \begin{equation}\label{eqn:20231023}
 \cap_{\lambda\in\Lambda_n}E_{\lambda}\cap (K_n\times K_n)=\cap_{\lambda\in\Lambda}E_{\lambda}\cap (K_n\times K_n).
 \end{equation}
 We may assume that $\Lambda_1\subset \Lambda_2\subset \cdots\subset\Lambda$.
 Let $g_n:V\to \Sigma_n$ be the Stein factorization of the proper map $(f_{\lambda})_{\lambda\in\Lambda_n}:V\to\Pi_{\lambda\in\Lambda_n}S_{\lambda}$.
 Then $\Sigma_n$ is normal and $g_n:V\to \Sigma_n$ is proper, hence surjective.
 By \eqref{eqn:20231023}, for a closed subvariety $Z\subset V$ such that $Z\subset K_n$, we have
 \begin{equation}\label{eqn:20231021}
 \text{
 $g_n(Z)$ is a point 
 $\Longleftrightarrow$
  $f_{\lambda}(Z)$ is a point for all $\lambda\in\Lambda$.
 }
 \end{equation}
By $\Lambda_n\subset\Lambda_{n+1}$, we have a natural morphism $\Sigma_{n+1}\to\Sigma_n$.
 We set $\varphi_n:\Sigma_n\to \Sigma_1$ for the induced map.

Let $H_1\subset H_2\subset \cdots\subset \Sigma_1$ be a sequence of domains such that
\begin{itemize}
\item $\overline{H_k}$ is compact and $\overline{H_{k}}\subset H_{k+1}$ for all $k$, and
\item
$\Sigma_1=\cup_{k}H_k$.
 \end{itemize}
 We take this sequence inductively as follows.
 %
%
%
Set $H_0=\emptyset$.
 Assume we are given $H_0\subset \cdots\subset H_k$, where $k\geq 0$.
 Since $g_1(\overline{K_{k+1}})\cup \overline{H_k}$ is compact, we may take a domain $H_{k+1}$ so that $ g_1(\overline{K_{k+1}})\cup \overline{H_k}\subset H_{k+1}$  and $\overline{H_{k+1}}$ is compact.
 Now since $g_1:V\to \Sigma_1$ is surjective, we have $\Sigma_1=\cup_k g_1(K_k)\subset \cup_k H_k$. 
 
 For all $k$, we claim that there exists $n_k$ such that the induced maps $\varphi_{n_k}^{-1}(H_k)\gets \varphi_{n_k+1}^{-1}(H_{k})\gets\cdots$ are all isomorphisms.
 Indeed since $g_1:V\to\Sigma_1$ is proper, we may take $n_k$ so that $g_1^{-1}(H_k)\subset K_{n_k}$.
 Let $n\geq n_k$.
 Since every fiber of $g_{n}:V\to \Sigma_{n}$ is connected, \eqref{eqn:20231021} yields that every fiber of $g_{n}:V\to \Sigma_{n}$ over $\varphi_{n}^{-1}(H_{k})\subset \Sigma_{n}$ is contracted to a point by $g_{n+1}:V\to \Sigma_{n+1}$.
 Hence $\varphi_{n+1}^{-1}(H_k)\to \varphi_n^{-1}(H_k)$ is injective.
 Since this is also surjective, the induced maps $\varphi_{n_k}^{-1}(H_k)\gets \varphi_{n_k+1}^{-1}(H_{k})\gets\cdots$ are all isomorphism, for $\Sigma_n$ are normal (cf. \cite[p. 166]{GR84}).
Hence $\varphi_{n_k}^{-1}(H_{k}) =\varphi_{n_{k+1}}^{-1}(H_{k}) \subset \varphi_{n_{k+1}}^{-1}(H_{k+1})$.
We set 
$$S_{\infty}=\cup \varphi_{n_k}^{-1}(H_{k}).$$
The map $f_{\infty}:V\to S_{\infty}$ is defined by $f_{\infty}|_{g_1^{-1}(H_{k})}=g_{n_k}|_{g_1^{-1}(H_{k})}$.
We note that $g_{n_k}|_{g_1^{-1}(H_{k})}:g_1^{-1}(H_{k})\to \varphi_{n_k}^{-1}(H_k)$ is proper.
Hence $f_{\infty}:V\to S_{\infty}$ is proper.

Now we prove the property {\rm (a)}.
We take $\lambda\in\Lambda$.
Let $Z\subset V$ be an algebraic subvariety such that $f_{\infty}(Z)$ is a point in $S_{\infty}$.
We shall show that $f_{\lambda}(Z)$ is a point.
Indeed we take $k$ such that $f_{\infty}(Z)\in \varphi_{n_k}^{-1}(H_{k})$.
Then $Z\subset g_{1}^{-1}(H_k)\subset K_{n_k}$ and $g_{n_k}(Z)$ is a point.
Hence by \eqref{eqn:20231021}, $f_{\lambda}(Z)$ is a point.
This shows that $f_{\lambda}:V\to S_{\lambda}$ factors through $f_{\infty}:V\to S_{\infty}$.
Thus we get $e_{\lambda}:S_{\infty}\to S_{\lambda}$. 

To show the property {\rm (b)}, we take an algebraic subvariety $Z\subset V$ such that $f_{\lambda}(Z)$ is a point for every $\lambda\in\Lambda$.
Then $g_n(Z)$ is a point of $\Sigma_n$ for every $n$.
In particular, $g_1(Z)$ is a point in $\Sigma_1$.
We take $k$ such that $g_1(Z)\in H_k$.
Then $Z\subset g_1^{-1}(H_k)$.
Since $g_{n_k}$ and $f_{\infty}$ coincide as morphisms $g_1^{-1}(H_k)\to \varphi_{n_k}^{-1}(H_k)$, where $\varphi_{n_k}(H_k)$ is contained in both $\Sigma_{n_k}$ and $S_{\infty}$.
Hence $f_{\infty}(Z)$ is a point of $S_{\infty}$.
\end{proof}

\begin{lem}\label{lem:Sha}
	Assume $X$ is smooth.
	Let $\Sigma=\{\varrho_i:\pi_1(X)\to \mathrm{GL}_{N_i}(\bC)\}_{i\in \bZ_{> 0}}$ be a sequence of reductive representations such that $\mathrm{ker}(\varrho_{i})\subset \mathrm{ker}(\varrho_{i-1})$.
	Then there exists a   dominant holomorphic map with general fibers connected   ${\rm sh}_\Sigma:X\to {\rm Sh}_\Sigma(X)$  onto a complex normal space ${\rm Sh}_\Sigma(X)$ such that for closed subvariety $Z\subset X$, ${\rm sh}_\Sigma(Z)$   is a point if and only if $\varrho_i({\rm Im}[\pi_1(Z^{\rm norm})\to \pi_1(X)])$ is finite  for every $i$. 
\end{lem}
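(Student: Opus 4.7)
The plan is to reduce to the tail $i \geq i_{0}$ furnished by \cref{lem:20230909}, put every individual Shafarevich morphism on a common partial compactification using \cref{lem:extended Shafarevich}, and then invoke the simultaneous Stein factorization of \cref{lem:simultaneous1}. The monotonicity of the kernels will allow us to pass from a statement for $i\geq i_{0}$ to one for all $i$.

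First, I would apply \cref{lem:20230909} to the sequence $\{\varrho_{i}\}$ to obtain an index $i_{0}$ and a smooth partial compactification $X'$ of $X$, with $X'\setminus X$ simple normal crossing, such that $\varrho_{i}$ has quasi-infinite monodromy at infinity with respect to $X'$ for every $i\geq i_{0}$. For each such $i$, \cref{lem:extended Shafarevich} then produces the Shafarevich morphism $\mathrm{sh}_{\varrho_{i}}\colon X\to \mathrm{Sh}_{\varrho_{i}}(X)$ of \cref{thm:Sha5} in a form that extends to a proper holomorphic map $\overline{\mathrm{sh}}_{\varrho_{i}}\colon X'\to \mathrm{Sh}_{\varrho_{i}}(X)$.

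Next, I would feed the countable family of proper morphisms $\{\overline{\mathrm{sh}}_{\varrho_{i}}\colon X'\to \mathrm{Sh}_{\varrho_{i}}(X)\}_{i\geq i_{0}}$ into \cref{lem:simultaneous1}, applied with $V=X'$. This yields a proper surjective morphism $f_{\infty}\colon X'\to S_{\infty}$ onto a normal complex space $S_{\infty}$ (with connected fibers, as $f_{\infty}$ arises from Stein factorizations in the proof of \cref{lem:simultaneous1}), together with morphisms $e_{i}\colon S_{\infty}\to \mathrm{Sh}_{\varrho_{i}}(X)$ factoring each $\overline{\mathrm{sh}}_{\varrho_{i}}$ through $f_{\infty}$, such that for every algebraic subvariety $W\subset X'$ one has $f_{\infty}(W)$ is a point if and only if $\overline{\mathrm{sh}}_{\varrho_{i}}(W)$ is a point for every $i\geq i_{0}$ (the ``only if'' from property~(a), the ``if'' from property~(b) of \cref{lem:simultaneous1}). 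I would then define $\mathrm{Sh}_{\Sigma}(X):=S_{\infty}$ and $\mathrm{sh}_{\Sigma}:=f_{\infty}|_{X}$; dominance with connected general fibers follows from the density of $X$ in $X'$, the surjectivity of $f_{\infty}$, and the connectedness of its fibers.

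Finally, for the Shafarevich-type characterization, take a closed subvariety $Z\subset X$ and let $\overline{Z}$ be its Zariski closure in $X'$. Since $f_{\infty}$ is proper and $Z$ is dense in $\overline{Z}$, $\mathrm{sh}_{\Sigma}(Z)$ is a point iff $f_{\infty}(\overline{Z})$ is a point, iff $\overline{\mathrm{sh}}_{\varrho_{i}}(\overline{Z})$ is a point for every $i\geq i_{0}$, iff $\mathrm{sh}_{\varrho_{i}}(Z)$ is a point for every $i\geq i_{0}$, iff $\varrho_{i}(\mathrm{Im}[\pi_{1}(Z^{\mathrm{norm}})\to \pi_{1}(X)])$ is finite for every $i\geq i_{0}$, by \cref{thm:Sha5}. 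The assumption $\ker\varrho_{i}\subset \ker\varrho_{i-1}$ means that $\varrho_{i-1}$ factors through $\varrho_{i}(\pi_{1}(X))$, so finiteness on the $\varrho_{i}$-image forces finiteness on every $\varrho_{j}$-image with $j\leq i$; hence the tail characterization upgrades to the full one.

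The main obstacle is not a deep one: the hard work is already encapsulated in \cref{lem:20230909}, \cref{lem:extended Shafarevich} and \cref{lem:simultaneous1}. The point requiring care is that \cref{lem:simultaneous1} is only usable for \emph{proper} morphisms on a fixed ambient space, which is exactly why \cref{lem:20230909} is crucial: without a common partial compactification $X'$ on which \emph{all} the extensions $\overline{\mathrm{sh}}_{\varrho_{i}}$ are simultaneously proper, one cannot glue the individual Shafarevich morphisms into a single limiting one. The translation between closed subvarieties of $X$ and algebraic subvarieties of $X'$ via Zariski closure, and the passage from the tail $i\geq i_{0}$ to all $i$ via the kernel nesting, are the only remaining routine checks.
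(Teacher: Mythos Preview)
Your proposal is correct and follows essentially the same approach as the paper's own proof: apply \cref{lem:20230909} to obtain a common partial compactification $X'$ for the tail $i\geq i_0$, extend each $\mathrm{sh}_{\varrho_i}$ to a proper map on $X'$ via \cref{lem:extended Shafarevich}, run \cref{lem:simultaneous1} to get $f_\infty$, and then use the kernel nesting to pass from $i\geq i_0$ to all $i$. The only cosmetic difference is notation; the structure and logic are identical.
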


\begin{proof} 
	By \cref{lem:20230909},  there exist $i_0$ and a smooth partial compactification $X'$ such that $X'$ is quasi-infinite monodromy at infinity for every $\varrho_i$ with $i\geq i_0$. By \cref{lem:extended Shafarevich}, there exists   a proper surjective holomorphic map $f_i:X'\to S_i$  for each $i\geq i_0$ such that the Shafarevich morphism ${\rm sh}_{\varrho_i}:X\to {\rm Sh}_{\varrho_i}(X)$ of $\varrho_i$ is the restriction $f_i|_{X}$. 
	By   \cref{lem:simultaneous1},  there exists a surjective proper  holomorphic fibration $f_{\infty}:X'\to S_\infty$ onto a complex normal space $S_\infty$ and holomorphic maps $e_i:S_\infty\to S_i$  such that
	\begin{enumerate}[label=(\alph*)]
		\item   $f_i=e_i\circ f_\infty$;
		\item for every algebraic subvariety $Z\subset X'$, if $f_{i}(Z)$ is a point for every $i\geq i_0$, then $f_{\infty}(Z)$ is a point. 
	\end{enumerate}
	 	For an algebraic subvariety $Z$ of $X$ with its closure denoted as $\overline{Z}$ in $X'$,   $f_\infty(\overline{Z})$   is a point if and only if $f_i(\overline{Z})$ is a point for any $i\geq i_0$. Additionally, by the property of the Shafarevich morphism the same holds true if and only if  
	  $\varrho_i({\rm Im}[\pi_1(Z^{\rm norm})\to \pi_1(X)])$ is finite  for every for any $i\geq i_0$, and, given that $\mathrm{ker}(\varrho_{i})\subset \mathrm{ker}(\varrho_{i-1})$, it is equivalent to $\varrho_i({\rm Im}[\pi_1(Z^{\rm norm})\to \pi_1(X)])$ for any $i\geq 1$. To conclude the proof, let ${\rm sh}_{\Sigma}:X\to {\rm Sh}_\Sigma(X)$ be the restriction of $f_\infty$ to $X$.
\end{proof}

 \begin{cor}\label{thm:Sha3}
	Let $X$ be a quasi-projective normal variety. 
	Let $\Sigma$ be a (non-empty) set of reductive representations $\varrho:\pi_1(X)\to {\rm GL}_{N_\varrho}(\bC)$. 	
	Then there is a dominant holomorphic map   ${\rm sh}_{\Sigma}:X\to {\rm Sh}_{\Sigma}(X)$ with general fibers connected onto a complex normal space such that for closed subvariety $Z\subset X$, ${\rm sh}_\Sigma(Z)$   is a point if and only if $\varrho({\rm Im}[\pi_1(Z^{\rm norm})\to \pi_1(X)])$ is finite  for every $\varrho\in\Sigma$. 
\end{cor}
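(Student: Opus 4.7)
The plan is to deduce \cref{thm:Sha3} from \cref{lem:Sha} via two reductions: first pass from an arbitrary (possibly uncountable) set $\Sigma$ to a countable decreasing chain of kernels using \cref{lem:202309261}, and then pass from the normal variety $X$ to a smooth model by desingularization. I will treat the smooth case first.

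Assume $X$ is smooth. By \cref{lem:202309261} the collection $\{\ker\varrho:\varrho\in\Sigma\}\subset\pi_1(X)$ is at most countable, so I may choose representatives $\{\varrho_i\}_{i\ge 1}\subset\Sigma$, one for each kernel that actually occurs, and set $\tau_i:=\varrho_1\oplus\cdots\oplus\varrho_i$. A finite direct sum of reductive representations is reductive, and $\ker\tau_i=\bigcap_{j\le i}\ker\varrho_j\subseteq\ker\tau_{i-1}$. Applying \cref{lem:Sha} to $\{\tau_i\}$ produces a dominant holomorphic map ${\rm sh}_\Sigma:X\to{\rm Sh}_\Sigma(X)$ with connected general fibres such that, for $Z\subset X$ closed with $G:={\rm Im}[\pi_1(Z^{\rm norm})\to\pi_1(X)]$, the image ${\rm sh}_\Sigma(Z)$ is a point iff $\tau_i(G)$ is finite for every $i$. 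Since each $\varrho_j$ is a direct summand of $\tau_i$ for $i\ge j$, this is equivalent to $\varrho_j(G)$ being finite for every $j$; and since $\varrho(G)\cong G/(G\cap\ker\varrho)$ depends only on $\ker\varrho$, and every $\ker\varrho$ with $\varrho\in\Sigma$ equals some $\ker\varrho_j$, this is in turn equivalent to $\varrho(G)$ being finite for every $\varrho\in\Sigma$, which is the desired characterisation.

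For a general quasi-projective normal $X$, take a resolution of singularities $\nu:X_1\to X$. Since $X$ is normal, every fibre of $\nu$ is connected and $\nu_{*}:\pi_1(X_1)\twoheadrightarrow\pi_1(X)$ is surjective, while $\nu^{*}\Sigma$ consists of reductive representations by \cref{thm:reductive}. The smooth case just established yields ${\rm sh}_{\nu^{*}\Sigma}:X_1\to S$; every fibre of $\nu$ is contracted by every $\nu^{*}\varrho$, hence by ${\rm sh}_{\nu^{*}\Sigma}$, so the universal property of the Stein factorisation (exactly as in Step~2 of the proof of \cref{thm:Sha5}) yields a dominant holomorphic map ${\rm sh}_\Sigma:X\to S$ with connected general fibres satisfying ${\rm sh}_{\nu^{*}\Sigma}={\rm sh}_\Sigma\circ\nu$. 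The characterisation transfers from $X_1$ to $X$ in the manner of \cref{claim:same contract2}: for an irreducible component $W$ of $\nu^{-1}(Z)$ dominating $Z$, the subgroup ${\rm Im}[\pi_1(W^{\rm norm})\to\pi_1(Z^{\rm norm})]$ has finite index, so $\nu^{*}\varrho$ has finite image on it iff $\varrho$ has finite image on $G$.

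The step I expect to require the most care is the applicability of \cref{lem:Sha} to the sequence $\{\tau_i\}$, i.e.\ producing a single smooth partial compactification $X'$ of $X$ that has quasi-infinite monodromy at infinity for every $\tau_i$ with $i$ large. This is precisely what \cref{lem:20230909} delivers, and the proof of that lemma crucially exploits the decreasing nature $\ker\tau_{i}\subseteq\ker\tau_{i-1}$—the reason we organised the $\tau_i$ into a decreasing chain in the first place, rather than, say, indexing directly over $\Sigma$. Once this uniform partial compactification is in hand, the simultaneous Stein-type factorisation of \cref{lem:simultaneous1} furnishes the required normal complex space ${\rm Sh}_\Sigma(X)$ with no further work.
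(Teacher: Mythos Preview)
Your proposal is correct and follows essentially the same approach as the paper: reduce to the smooth case via desingularization (as in Step~2 of the proof of \cref{thm:Sha5}), use \cref{lem:202309261} to pass to a countable family indexed by kernels, form the direct sums $\tau_i=\varrho_1\oplus\cdots\oplus\varrho_i$ to obtain a decreasing chain of kernels, and apply \cref{lem:Sha}. The only difference is expository order (you treat the smooth case first and then descend, while the paper reduces to the smooth case at the outset), and your final paragraph spelling out the role of \cref{lem:20230909} and \cref{lem:simultaneous1} is commentary on the internals of \cref{lem:Sha} rather than a deviation from the paper's strategy.
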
  
\begin{proof} 
By Step 2 of the proof of \cref{thm:Sha5}, we may assume that $X$ is smooth. 	 By \cref{lem:202309261}, we can take a set  of  reductive representations $\Sigma_1:=\{\varrho_i:\pi_1(X)\to \GL_{N_i}(\bC)\}_{i\in \bZ_{>0}}$ such that $\varrho_i\in \Sigma$, and for any $\varrho\in \Sigma$, there exists $\varrho_i$ such that $\ker\varrho_i=\ker\varrho$.   Therefore, if $\Gamma\subset \pi_1(X)$ is a subgroup such that $\varrho_i(\Gamma)$ is finite for each $\varrho_i\in \Sigma_1$, then $\varrho(\Gamma)$ is finite for each $\varrho\in \Sigma$. 
	 
	 Define $\tau_i$ to be the product  representation  $$(\varrho_1,\ldots,\varrho_i):\pi_1(X)\to \prod_{j=1}^{i}\GL_{N_j}(\bC).$$
	 Then it is reductive and we have $\ker\tau_i\subset \ker\tau_{i+1}$.  Let $\Sigma_2:=\{\tau_i\}_{i\in \bZ_{> 0}}$.  By \cref{lem:Sha}, there exists a dominant holomorphic map   ${\rm sh}_{\Sigma_2}:X\to {\rm Sh}_{\Sigma_2}(X)$ with general fibers connected onto a complex normal space such that for closed subvariety $Z\subset X$, ${\rm sh}_{\Sigma_2}(Z)$   is a point if and only if $\tau_i({\rm Im}[\pi_1(Z^{\rm norm})\to \pi_1(X)])$ is finite  for every $\tau_i\in\Sigma_2$.   
	 
We note that  $\tau_i({\rm Im}[\pi_1(Z^{\rm norm})\to \pi_1(X)])$ is finite if and only if $\varrho_j({\rm Im}[\pi_1(Z^{\rm norm})\to \pi_1(X)])$ are all finite for each $j=1,\ldots,i$. Therefore,${\rm sh}_{\Sigma_2}(Z)$   is a point if and only if   $\varrho_i({\rm Im}[\pi_1(Z^{\rm norm})\to \pi_1(X)])$ is finite  for every $\varrho_i\in\Sigma_1$, and if only only if $\varrho({\rm Im}[\pi_1(Z^{\rm norm})\to \pi_1(X)])$ is finite  for every $\varrho\in\Sigma$.  Let ${\rm sh}_{\Sigma}$ be ${\rm sh}_{\Sigma_2}$, and thus, the theorem is proven. 
\end{proof}

 \section{Proof of the reductive Shafarevich conjecture} \label{sec:HC Stein}
The  goal of this section is to provide proofs for \cref{main,main2}   when $X$ is a \emph{smooth} projective variety. It is important to note that our methods  differs   from the approach presented in \cite{Eys04}, although we do follow the general strategy   in that work.

 In this section, we will use the notation $\cD G$ to denote the derived group of any given group $G$. Throughout the section, our focus is on non-archimedean local fields with characteristic zero. More precisely, we consider finite extensions of $\bQ_p$ for some prime $p$.
\subsection{Reduction map of representation into algebraic tori}\label{subsec:tori} 
Let $X$ be a smooth projective variety. Let $a:X\to A$ be the Albanese morphism of $X$.   

\begin{lem}  
\label{lem:factor}
	Let $P\subset A$ be an abelian subvariety of the Albanese variety $A$ of $X$ and $K$ be a non-archimedean local field.  If $\tau:\pi_1(X)\to \GL_1(K)$ factors through $\sigma:\pi_1(A/P)\to \GL_1(K)$, then   the Katzarkov-Eyssidieux reduction map $s_\tau:X\to S_\tau$  factors through  the Stein factorization of  the map $q:X\to A/P$. 
\end{lem}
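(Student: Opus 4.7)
The strategy is to combine the characterization of $s_\tau$ in \cref{thm:KE} with the universal property of Stein factorization. Write the Stein factorization of $q:X\to A/P$ as $q=g\circ f$, where $f:X\to T$ is a proper surjective holomorphic fibration onto a normal projective variety $T$ and $g:T\to A/P$ is finite. Since $\tau$ lands in the torus $\GL_1(K)$, the Zariski closure of its image is abelian, hence reductive, so \cref{thm:KE} is applicable to $\tau$. Moreover, by hypothesis $\tau=\sigma\circ q_*$ on fundamental groups, and this identity is the only input we shall use beyond the geometry of $q$.

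The key step is to show that $s_\tau$ contracts every connected fiber of $f$ to a single point. Fix such a fiber $F$ and let $Z$ be an irreducible component of $F$. Then $q(Z)=g(f(Z))$ is a single point $y\in A/P$, so the composition
\[
Z^{\mathrm{norm}}\longrightarrow Z\hookrightarrow X\xrightarrow{\;q\;}A/P
\]
factors through $\{y\}$ and therefore the induced map $\pi_1(Z^{\mathrm{norm}})\to \pi_1(A/P)$ is trivial. Combined with $\tau=\sigma\circ q_*$, this gives
\[
\tau\bigl({\rm Im}[\pi_1(Z^{\mathrm{norm}})\to\pi_1(X)]\bigr)=\sigma(0)=\{1\},
\]
which is certainly bounded in $\GL_1(K)$. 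By the equivalence in \cref{thm:KE}, $s_\tau(Z)$ is a single point. Because $F$ is connected and each of its irreducible components is mapped to a point by $s_\tau$, and any two components that meet must map to the same point, we conclude that $s_\tau(F)$ is a single point of $S_\tau$.

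To finish, recall that since $f$ is the Stein factorization of $q$ we have $f_*\cO_X=\cO_T$ with $T$ normal, and we have just shown that every fiber of $f$ is contracted by $s_\tau$. The universal property of Stein factorization (applied, e.g., through \cref{lem:Stein}) then yields a unique morphism $h:T\to S_\tau$ with $s_\tau=h\circ f$, which is the desired factorization. I do not anticipate any serious obstacle: the only mildly subtle point is the possible reducibility of the fibers of $f$, and this is handled by the connectedness argument in the second paragraph.
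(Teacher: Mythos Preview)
Your proof is correct and follows essentially the same approach as the paper's: show that each connected component of a fiber of $q$ (equivalently, each fiber of the Stein factorization $f$) has trivial, hence bounded, image under $\tau$ because $\tau=q^*\sigma$, then invoke \cref{thm:KE} and the universal property of Stein factorization. The paper's argument is terser---it uses the third equivalence in \cref{thm:KE} directly on $\pi_1(F)$ rather than passing to normalizations of irreducible components---but the substance is identical.
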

\begin{proof}
 As $\tau=q^*\sigma$, it follows that for each connected component $F$ of the fiber of $q:X\to A/P$, $\tau(\pi_1(F))=\{1\}$. Therefore, $F$ is contracted by $s_\tau$.  The lemma follows. 
\end{proof}

\begin{lem}\label{lem:enough}
	Let $P\subset A$ be an abelian subvariety of $A$. 	Let $N$ be a Zariski dense open set of the image $j: M_{\rm B}(A/P, 1)\to M_{\rm B}(A,1)$ where we consider $M_{\rm B}(A/P, 1)$ and $M_{\rm B}(A,1)$ as algebraic tori defined over $\bar{\bQ}$. Then there  are   non-archimedean local fields $K_i$ and  a family of representations $\btau:=\{\tau_i:\pi_1(X)\to \GL_1(K_i)\}_{i=1,\ldots,m}$ such that
	\begin{itemize}
		\item $\tau_i\in N(K_i)$, where we use the natural identification $M^0_{\rm B}(X,1)\simeq M_{\rm B}(A,1)$. Here $M^0_{\rm B}(X,1)$  denotes the connected component of $M^0_{\rm B}(X,1)$ containing the trivial representation. 
		\item The reduction map $s_{\btau}:X\to S_\btau$ is the Stein factorization of $X\to A/P$.  
		\item  For the canonical current $T_{\btau}$ defined over $S_{\btau}$, $\{T_{\btau}\}$ is a K\"ahler class.  
	\end{itemize}  
\end{lem}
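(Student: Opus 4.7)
The plan is to take the $\tau_i$'s as pullbacks along the morphism $q: X \to A/P$ (the composition of the Albanese map $a: X \to A$ with the projection $A \to A/P$) of rank one characters $\sigma_i: \pi_1(A/P) \to \GL_1(K_i)$ whose classes lie in $N$. Under the identification $M_{\rm B}^0(X, 1) \simeq M_{\rm B}(A, 1)$ used in the statement, $j(M_{\rm B}(A/P, 1)) \subset M_{\rm B}(A, 1)$ is exactly the set of classes arising from such pullbacks. Let $X \xrightarrow{r} S \xrightarrow{f} A/P$ denote the Stein factorization of $q$, so that $f$ is finite. By \cref{lem:factor}, every $s_{\tau_i}$ factors through $r$; consequently the simultaneous reduction $s_\btau$ also factors through $r$.

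Next I would fix the $\sigma_i$'s by exploiting the Zariski density of $N$. A rank one character $\sigma$ of $\pi_1(A/P) = H_1(A/P, \bZ)$ with values in $K^*$ produces, through the Katzarkov--Eyssidieux construction of \cref{sec:KE}, a holomorphic one-form $\eta_\sigma \in H^0(A/P, \Omega^1_{A/P})$: the $\bR$-linear extension of $-v_K \circ \sigma$ to $H_1(A/P, \bR)$ is a real linear functional whose $(1,0)$-part yields a translation-invariant holomorphic one-form $\eta_\sigma$ on $A/P$. As $\sigma$ ranges over characters with $[\sigma] \in N(K)$ for various non-archimedean local fields $K$, the forms $\eta_\sigma$ exhaust a real-linearly dense subset of $H^0(A/P, \Omega^1_{A/P})$; I therefore select finitely many $\sigma_1, \ldots, \sigma_m$ with $[\sigma_i] \in N(K_i)$ whose spectral forms $\eta_i := \eta_{\sigma_i}$ span $H^0(A/P, \Omega^1_{A/P})$ as a $\bC$-vector space. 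Setting $\tau_i = q^*\sigma_i$, the spectral forms of $\tau_i$ on $X$ become $q^*\eta_i$, and together they generate $q^*\Omega^1_{A/P}$. Via the partial Albanese description of $s_\btau$ in \cref{sec:KE}, this forces $\dim S_\btau \geq \dim A/P = \dim S$; combined with the factorization through $r$, the induced proper fibration $S \to S_\btau$ is finite with connected fibers, hence an isomorphism. Thus $s_\btau = r: X \to S$.

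Finally I verify that $\{T_\btau\}$ is Kähler. By \cref{item:descend} of \cref{lem:functorial} applied to $f: S \to A/P$, and by the construction of $T_{\tau_i}$ recalled in \cref{def:canonical,def:canonical2} as the pullback to $S$ of $i\,\eta_i \wedge \bar\eta_i$ on $A/P$, we have $T_\btau = f^*\omega$ where $\omega := \sum_i i\,\eta_i \wedge \bar\eta_i$. Since the $\eta_i$ span $H^0(A/P, \Omega^1_{A/P})$, $\omega$ is a translation-invariant Kähler form on the abelian variety $A/P$. For any positive-dimensional closed subvariety $V \subset S$, the image $f(V) \subset A/P$ is positive-dimensional because $f$ is finite, and
\[
\{T_\btau\}^{\dim V} \cdot V = (\deg f|_V)\int_{f(V)} \omega^{\dim V} > 0.
\]
The Demailly--P\u{a}un criterion in the form of \cref{thm:DHP} then shows that $\{T_\btau\}$ is Kähler on $S$. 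The main technical obstacle in this plan is the precise correspondence between rank one non-archimedean characters of $\pi_1(A/P)$ lying in a Zariski-dense open $N$ and holomorphic one-forms on $A/P$: one must verify that varying the field $K_i$ and the point $[\sigma_i]\in N(K_i)$ produces enough spectral forms to span $H^0(A/P, \Omega^1_{A/P})$ over $\bC$. Once this is in place, the remainder of the argument reduces to standard finite-map geometry on abelian varieties together with the Katzarkov--Eyssidieux formalism already developed in the paper.
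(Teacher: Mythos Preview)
Your plan coincides with the paper's approach: pull back characters on $A/P$ to $X$, arrange that their spectral one-forms span $H^0(A/P,\Omega^1_{A/P})$, deduce that $s_{\btau}$ is the Stein factorization of $X\to A/P$, and conclude Kählerness via the finite map $S\to A/P$ and \cref{thm:DHP}. The one point you leave open---producing characters $\sigma_i\in N(K_i)$ whose spectral forms span $H^0(A/P,\Omega^1_{A/P})$---is exactly what the paper settles by an explicit construction, and it is worth knowing since a generic choice does not obviously work (the map $\sigma\mapsto\eta_\sigma$ depends on the choice of valuation, not just on $[\sigma]$).

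The paper's construction is as follows. Fix a basis $e_1,\ldots,e_m$ of $\pi_1(A/P)\simeq H_1(A/P,\bZ)$ and identify $M_{\rm B}(A/P,1)\simeq(\bar\bQ^\times)^m$. Since roots of unity are Zariski dense in $\bar\bQ^\times$ and $j^{-1}(N)$ is Zariski dense open in $(\bar\bQ^\times)^m$, one may choose $\varrho_i\in j^{-1}(N)(\bar\bQ)$ with $\varrho_i(e_i)=a_{ii}\in\bar\bQ^\times\setminus U(1)$ and $\varrho_i(e_j)=a_{ij}$ a root of unity for $j\neq i$. For each $i$ pick a non-archimedean valuation $v_i$ on a number field containing the $a_{ij}$ with $v_i(a_{ii})\neq 0$ (possible precisely because $a_{ii}\notin U(1)$); then $v_i(a_{ij})=0$ for $j\neq i$ automatically, so the classes $\nu_i:=v_i\circ\varrho_i\in H^1(A/P,\bR)$ satisfy $\nu_i(e_j)=v_i(a_{ii})\,\delta_{ij}$ and hence form a basis of $H^1(A/P,\bR)$. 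Their $(1,0)$-parts $\eta_i$ therefore span $H^0(A/P,\Omega^1_{A/P})$ over $\bC$, and letting $K_i$ be the completion at $v_i$ and $\tau_i=q^*\varrho_i$ completes your argument.
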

\begin{proof}
 Let  $e_1,\ldots,e_{m}$ be a basis of $\pi_1(A/P)\simeq H_1(A/P,\bZ)$. Note that  $\bar{\bQ}$-scheme $M_{\rm B}(A/P, 1)\simeq (\bar{\bQ}^\times)^{m}$.  Denote by $S\subset U(1)\cap \bar{\bQ}$   the set of roots of unity. Then $S$ is   Zariski dense in $\bar{\bQ}^\times$. Since $j^{-1}(N)$ is  a Zariski dense open set of $M_{\rm B}(A/P, 1)$,   it follows that   there are $\{a_{ij}\}_{i,j=1,\ldots,m}\in \bar{\bQ}^\times$ and   representations  $\{\varrho_i:\pi_1(A/P)\to \bar{\bQ}^\times\}_{i=1,\ldots,m}$
	defined by $
	\varrho_i(e_j)=a_{ij} $
	such that 
	\begin{itemize}
		\item  $[\varrho_i]\in j^{-1}(N)(\bar{\bQ})$;
		\item 	If $i= j$,  $a_{ij}\in 	 \bar{\bQ}^\times\setminus U(1) $;
		\item  	If $i\neq j$,  $a_{ij}\in   	S $.  
	\end{itemize} 
	Consider a number field $k_i$ containing $a_{i1},\ldots,a_{im}$ endowed with  a discrete non-archimedean valuation $v_i:k_i\to \bR$ such that $v_i(a_{ii})\neq 0$. Then $v_i(a_{ij})=0$ for every $j\neq i$. Indeed, for every $j\neq i$, since $a_{ij}$ is a root of unity, there exists $\ell\in \bZ_{> 0}$ such that $a_{ij}^\ell=1$. It follows that $0=v(a_{ij}^\ell)=\ell v(a_{ij})$.     Let $K_i$ be   the non-archimedean local field which is the completion of $k_i$ with respect to $v_i$.    It follows that each $\varrho_i:\pi_1(A/P)\to K_i^\times$ is unbounded.    Consider $\nu_i:\pi_1(A/P)\to \bR$ by composing $\varrho_i$ with $v_i: K_i^\times \to \bR$.   Then $\{\nu_1,\ldots,\nu_{m}\}\subset H^1(A/P,\bR)$ is a basis for the $\bR$-linear space $H^1(A/P,\bR)$.  It follows that $\nu_i(e_j)=\delta_{ij}$ for any $i,j$.  Let $\eta_i\in H^0(A/P, \Omega_{A/P}^1)$ be the $(1,0)$-part of the Hodge decomposition of $\nu_i$.  Therefore, $\{\eta_1,\ldots,\eta_{m}\}$ spans the $\bC$-linear space $H^0(A/P, \Omega_{A/P}^1)$. Hence $\sum_{i=1}^{m}i\eta_i\wedge\overline{\eta_i}$  is a K\"ahler form on $A/P$.   Let $\tau_i:\pi_1(X)\to K_i^\times$ be the composition of $\varrho_i$ with $\pi_1(X)\to \pi_1(A/P)$. 
	
Let  $q:A\to A/P$ be the quotient map. Let $P'$ the largest abelian subvariety of $A$ such that $q^*\eta_i|_{P'}\equiv 0$ for each $i$. Since $\{\eta_1,\ldots,\eta_{m}\}$ spans $H^0(B, \Omega_B^1)$, it follows that $P'=P$. Therefore, the reduction map $s_{\btau}:X\to S_\btau$ is the Stein factorization of $X\to A/P$ with $g:S_\btau\to A/P$ be the finite morphism. According to  \cref{def:canonical2}, $T_{\btau}=g^*\sum_{i=1}^{m}i\eta_i\wedge\overline{\eta_i}$.  Since  $\sum_{i=1}^{m}i\eta_i\wedge\overline{\eta_i}$  is a K\"ahler form on $A/P$, it follows that  $\{T_{\btau}\}$ is a K\"ahler class by \cref{thm:DHP}.   
	The lemma is proved. 
\end{proof}

\begin{cor}\label{cor:positivity}
	Let $X$ be a smooth projective variety.  If $\kC \subset M_{\rm B}( {X}, 1)$ is an  absolutely constructible subset. Consider the reduction map $s_{\kC}:X\to S_{\kC}$ defined in \cref{def:reduction ac}. Then there is a family of   representations  $\brho:=\{\varrho_i:\pi_1(X)\to \GL_1(K_i)\}_{i=1,\ldots,\ell}$ where $K_i$ are non-archimedean local fields  such that
	\begin{itemize}
		\item For each $i=1,\ldots,\ell$, $\varrho_i\in \kC(K_i)$; 
		\item  The reduction map $s_{\brho}:X\to S_{\brho}$ of $\brho$ coincides with $s_{\kC}$.
		\item For the canonical current $T_{\brho}$ defined over $S_{\kC}$, $\{T_{\brho}\}$ is a K\"ahler class. 
	\end{itemize} 
\end{cor}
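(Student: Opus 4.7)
The plan is to combine Theorem \ref{thm:S1}, which expresses an absolutely constructible subset of $M_{\rm B}(X,1)$ as a finite union of Zariski open pieces inside torsion translates of triple tori, with Lemma \ref{lem:enough}, which for any Zariski dense open subset of an image $M_{\rm B}(A/P,1)\to M_{\rm B}(A,1)$ produces a finite family of non-archimedean rank one representations realizing the Stein factorization of $X\to A/P$ and having Kähler canonical current. Concretely, by Theorem \ref{thm:S1} one decomposes $\kC=\bigsqcup_\alpha \kC_\alpha$ where each $\kC_\alpha$ is a Zariski dense open subset of a single torsion translate $t_\alpha\cdot N_\alpha$ of a triple torus $N_\alpha\subset M_{\rm B}^0(X,1)\simeq M_{\rm B}(A,1)$; Lemma \ref{lem:triple} identifies each $N_\alpha$ with the image of $M_{\rm B}(A/P_\alpha,1)\to M_{\rm B}(A,1)$ for some abelian subvariety $P_\alpha\subset A$.

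For each $\alpha$, the set $t_\alpha^{-1}\cdot\kC_\alpha$ is a Zariski dense open subset of $N_\alpha$, so Lemma \ref{lem:enough} yields a finite family $\{\tau_{i,\alpha}:\pi_1(X)\to \GL_1(K_{i,\alpha})\}_i$ over non-archimedean local fields with $[\tau_{i,\alpha}]\in(t_\alpha^{-1}\cdot\kC_\alpha)(K_{i,\alpha})$, whose reduction map factors $X\to S_{\brho_\alpha}\to A/P_\alpha$ with $S_{\brho_\alpha}\to A/P_\alpha$ finite, and whose canonical current is the pullback of a Kähler form $\omega_\alpha$ on $A/P_\alpha$. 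I then twist by the torsion character and set $\varrho_{i,\alpha}:=t_\alpha\cdot\tau_{i,\alpha}$. Since $t_\alpha$ has finite order, its values are roots of unity, hence units in every non-archimedean local field, so $t_\alpha$ is bounded in every valuation; consequently $\tau_{i,\alpha}(\pi_1(Z))$ is bounded iff $\varrho_{i,\alpha}(\pi_1(Z))$ is bounded for every subvariety $Z\subset X$, and thus $s_{\varrho_{i,\alpha}}=s_{\tau_{i,\alpha}}$ and $T_{\varrho_{i,\alpha}}=T_{\tau_{i,\alpha}}$. By construction $[\varrho_{i,\alpha}]\in\kC_\alpha(K_{i,\alpha})\subset\kC(K_{i,\alpha})$. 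Let $\brho:=\{\varrho_{i,\alpha}\}_{i,\alpha}$.

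For $s_\brho=s_\kC$, the inclusion $\brho\subset\kC$ gives one direction. For the reverse, let $\tau$ be any reductive representation with $[\tau]\in\kC(K)$ and $Z\subset X$ a subvariety contracted by $s_\brho$. Then $[\tau]\in\kC_\alpha(K)$ for some $\alpha$, so $t_\alpha^{-1}\cdot\tau$ factors through $\pi_1(A/P_\alpha)$ by Lemma \ref{lem:factor}; since $s_\brho$ refines $s_{\brho_\alpha}$ (the Stein factorization of $X\to A/P_\alpha$), the image of $Z$ in $A/P_\alpha$ is a point, so $(t_\alpha^{-1}\cdot\tau)(\pi_1(Z))=\{1\}$, and boundedness of $t_\alpha$ then gives $\tau(\pi_1(Z))$ bounded. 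For the Kähler class, $s_\brho=s_\kC$ factors through a finite morphism $q:S_\kC\to\prod_\alpha A/P_\alpha$ obtained by composing the finite map $S_\kC\to\prod_\alpha S_{\brho_\alpha}$ (from the simultaneous Stein factorization) with the product of the finite maps $S_{\brho_\alpha}\to A/P_\alpha$; by Lemma \ref{lem:functorial} one has $T_\brho=q^*\omega$ where $\omega:=\sum_\alpha \mathrm{pr}_\alpha^*\omega_\alpha$ is Kähler on $\prod_\alpha A/P_\alpha$, so for every positive dimensional closed subvariety $V\subset S_\kC$,
\begin{align*}
\int_V T_\brho^{\dim V}=\deg(q|_V)\int_{q(V)}\omega^{\dim V}>0,
\end{align*}
and Theorem \ref{thm:DHP} concludes that $\{T_\brho\}$ is a Kähler class.

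The main delicate point is the torsion-translate bookkeeping: Theorem \ref{thm:S1} only guarantees that the pieces of $\kC$ sit inside torsion translates of the triple tori, not in the triple tori themselves, so Lemma \ref{lem:enough} does not apply on the nose and one must verify that twisting by a torsion character preserves membership in $\kC$ while leaving the reduction map, canonical current, and Kähler class unchanged. A secondary subtlety is that arbitrary finite morphisms do not preserve Kähler cohomology classes, but the Nakai-Moishezon style criterion of Theorem \ref{thm:DHP} together with the explicit computation $\int_V T_\brho^{\dim V}=\deg(q|_V)\int_{q(V)}\omega^{\dim V}$ bypasses this difficulty.
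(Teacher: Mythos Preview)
Your proof is correct and follows essentially the same approach as the paper's: decompose $\kC$ via \cref{thm:S1} and \cref{lem:triple} into Zariski-dense opens inside torsion translates of images of $M_{\rm B}(A/P_\alpha,1)$, apply \cref{lem:enough} on each piece, twist back by the torsion characters, and conclude K\"ahlerness via the finite map to $\prod_\alpha A/P_\alpha$ together with \cref{thm:DHP}. One small point you should make explicit: the torsion character $t_\alpha$ is defined over a number field, so to form $\varrho_{i,\alpha}=t_\alpha\cdot\tau_{i,\alpha}$ inside $\GL_1(K_{i,\alpha})$ you must first replace $K_{i,\alpha}$ by a finite extension containing the relevant roots of unity (the paper does this); once this is done, your observation that $v\circ t_\alpha\equiv 0$ indeed gives $s_{\varrho_{i,\alpha}}=s_{\tau_{i,\alpha}}$ and $T_{\varrho_{i,\alpha}}=T_{\tau_{i,\alpha}}$, since for rank one the spectral one-form is the $(1,0)$-part of the class $v\circ\varrho\in H^1(X,\bR)$.
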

\begin{proof}
Let $A$ be the Albanese variety of $X$.  Since $\kC \subset M_{\rm B}( {X}, 1)$ is an absolute constructible subset, by \cref{thm:S1,lem:triple},  there are abelian subvarieties $\{P_i\subset A\}_{i=1,\ldots,m}$  and torsion points $v_i\in M_{\rm B}(X,1)(\bar{\bQ})$ such that $\kC=\cup_{i=1}^{m}v_i. N_i^\circ$. Here $N_i$ is the image in $M^0_{\rm B}(X, 1)\simeq M_{\rm B}(A,1)$  of the natural morphism $M_{\rm B}(A/P_i,1)\to M_{\rm B}(A,1)$ and $N_i^\circ$ is a Zariski dense open subset of $N_i$ such that $N_i\backslash N_i^\circ$ is defined over $\bar{\bQ}$. 
Let $k$ be a number field such that $v_i\in M_{\rm B}(X,1)(k)$ for each $i$.  
 \begin{claim}
 	Denote by $P:=\cap_{i=1}^{m}P_i$. Then $s_{\kC}:X\to S_{\kC}$ is the Stein factorization of $X\to A/P$.  
 \end{claim}
\begin{proof}
	Let   $\tau:\pi_1(X)\to \GL_1(K)$ be a reductive representation with $K$ a non-archimedean local field such that $\tau\in \kC(K)$.   Note that the reduction map $s_\tau$ is the same if we replace $K$ by a finite extension. We thus can assume that $k\subset K$. Note that there exists some $i\in \{1,\ldots,\ell\}$  such that   $[v_i^{-1}.\tau] \in N_i(K)$. Write $\varrho:=v_i^{-1}.\tau$.  Since $v_i$ is a torsion element, it follows that $v_i(\pi_1(X))$ is finite, and thus the reduction  map $s_\varrho$ coincides with $s_\tau$.  Since $\varrho$ factors through $\pi_1(A/P_i)\to \GL_1(K)$,  by \cref{lem:factor} $s_\varrho$ factors  through the Stein factorization of  $X\to A/P_i$.  Hence $s_\varrho$ factors  through the Stein factorization of  $X\to A/P$. By \cref{def:reduction ac}, it follows that $s_\kC:X\to S_{\kC}$ factors through  the Stein factorization of $X\to A/P$.   
	
Fix any $i$. By \cref{lem:enough}  there  are   non-archimedean local fields $K_j$ and  a family of reductive representations $\btau:=\{\tau_j:\pi_1(X)\to \GL_1(K_j)\}_{j=1,\ldots,n}$ such that
\begin{itemize}
	\item $\tau_j\in N_i^\circ(K_j)$. 
	\item The reduction map $s_{\btau}:X\to S_\btau$   is the Stein factorization of $X\to A/P_i$.  
	\item For the canonical current $T_{\btau}$   over $S_{\btau}$,  $\{T_{\btau}\}$ is a K\"ahler class.  
\end{itemize}  
We can replace $K_i$ by a finite extension such that  $k\subset K_i$ for each $K_i$. Then $v_i. \tau_i\in \kC(K_i)$ for every $i$. Note that the Katzarkov-Eyssidieux reduction map   $s_{v_i.\tau_j}:X\to S_{v_i.\tau_j}$ coincides with $s_{\tau_j}:X\to S_{\tau_j}$. Therefore, the Stein factorization of $X\to A/P_i$ factors through $s_\kC$.   Since this holds for each $i$, it follows that  the Stein factorization $X\to A/P_1\times \cdots\times A/P_m$   factors through $s_\kC$. Note that  the Stein factorization $X\to A/P_1\times \cdots\times A/P_m$   coincides with the Stein factorization of $X\to A/P$. Therefore,  the Stein factorization of $X\to A/P$ factors through $s_\kC$.  The claim is proved. 
\end{proof}
 By the above arguments, for each $i$, there  exists  a family of reductive representations into non-archimedean local fields $\brho_i:=\{\varrho_{ij}:\pi_1(X)\to \GL_1(K_{ij})\}_{j=1,\ldots,k_i}$ such that \begin{itemize}
 	\item  $\varrho_{ij}\in \kC(K_{ij})$ 
 	\item   $s_{\brho_i}:X\to S_{\brho_i}$ is the Stein factorization of $X\to A/P_i$ 
 	\item  For the canonical current $T_{\brho_i}$ defined over $S_{\brho_i}$, $\{T_{\brho_i}\}$ is a K\"ahler class.  
 \end{itemize}
By the above claim, we know that $s_\kC:X\to S_\kC$ is the Stein factorization of  $X\to S_{\brho_1}\times \cdots\times S_{\brho_m}$.  Then for the representation $\brho:=\{\varrho_{ij}:\pi_1(X)\to \GL_1(K_{ij})\}_{i=1,\ldots,m;j=1,\ldots,k_i}$, $s_{\brho}:X\to S_{\brho}$ is the Stein factorization of $X\to A/P$ hence $s_{\brho}$ coincides with $s_\kC$. Moreover, the canonical current $T_{\brho}=\sum_{i=1}^{m}g_i^*T_{\brho_i}$ where   $g_i:S_{\kC}\to S_{\brho_i}$ is the natural map. As $S_\kC\to S_{\brho_1}\times\cdots\times S_{\brho_m}$ is finite,  by \cref{thm:DHP}  $\{T_{\brho}\}$ is K\"ahler. 
\end{proof}

Let us prove the main result in this subsection. 
\begin{thm}\label{thm:positivity}
	Let $X$ be a smooth projective variety and let $T$ be an algebraic tori defined over some number field $k$.  Let $\kC \subset M_{\rm B}( {X}, T)(\bC)$ be an absolutely constructible subset. Consider the reduction map $s_{\kC}:X\to S_{\kC}$. Then there is a family of   reductive representations  $\btau:=\{\tau_i:\pi_1(X)\to T(K_i)\}_{i=1,\ldots,N}$ where $K_i$ are non-archimedean local fields  containing $k$ such that
	\begin{itemize}
		\item For each $i=1,\ldots,N$, $[\tau_i]\in \kC(K_i)$; 
		\item  The reduction map $s_{\btau}:X\to S_{\btau}$ of $\btau$ coincides with $s_{\kC}$.
		\item For the canonical current $T_{\btau}$  over $S_{\kC}$ defined in \cref{def:canonical2}, $\{T_{\btau}\}$ is a K\"ahler class. 
	\end{itemize} 
\end{thm}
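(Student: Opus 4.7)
The plan is to reduce to the rank-one case established in \cref{cor:positivity} via characters of $T$. After replacing $k$ by a finite extension we may assume $T$ is split over $k$, and we fix coordinate characters $\chi_1,\ldots,\chi_n:T\to\GL_1$ forming a $\bZ$-basis of the character group $X^*(T)$. Each $(\chi_j)_*:M_{\rm B}(X,T)\to M_{\rm B}(X,1)$ is a morphism of character varieties defined over $\bar{\bQ}$, so by \cref{thm:S2} the image $\kC_j:=(\chi_j)_*\kC$ is an absolutely constructible subset of $M_{\rm B}(X,1)$. Applying \cref{cor:positivity} to each $\kC_j$ yields a finite family $\brho^{(j)}=\{\varrho_i^{(j)}:\pi_1(X)\to\GL_1(K_i^{(j)})\}_i$ of reductive characters with $[\varrho_i^{(j)}]\in \kC_j(K_i^{(j)})$, $s_{\brho^{(j)}}=s_{\kC_j}$, and K\"ahler class $\{T_{\brho^{(j)}}\}$ on $S_{\brho^{(j)}}$. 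After possibly enlarging $K_i^{(j)}$, I lift $\varrho_i^{(j)}$ to a representation $\tau_i^{(j)}:\pi_1(X)\to T(K_i^{(j)})$ with $\chi_j\circ\tau_i^{(j)}=\varrho_i^{(j)}$ and $[\tau_i^{(j)}]\in\kC(K_i^{(j)})$; such a lift exists because $[\varrho_i^{(j)}]$ lies in the image $(\chi_j)_*\kC$, and every such $\tau_i^{(j)}$ is automatically reductive since $T$ is abelian. Setting $\btau:=\{\tau_i^{(j)}\}_{j,i}$ gives the first required property immediately.

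For the second property, the basis property of $\chi_1,\ldots,\chi_n$ gives an isomorphism $T(K)\cong (K^{\times})^n$, so a subgroup of $T(K)$ is bounded iff each of its images under the $\chi_j$ is bounded in $K^{\times}$. Hence for any reductive $\sigma:\pi_1(X)\to T(K)$, $s_\sigma$ is the simultaneous Stein factorization of $\{s_{\chi_j\circ\sigma}\}_{j=1,\ldots,n}$. Running this over all $\sigma$ with $[\sigma]\in\kC$ identifies $s_\kC$ with the simultaneous Stein factorization of $\{s_{\kC_j}\}_j=\{s_{\brho^{(j)}}\}_j$, which by \cref{lem:simultaneous} means the natural morphism $(r_1,\ldots,r_n):S_\kC\to\prod_j S_{\brho^{(j)}}$ is finite, where $r_j:S_\kC\to S_{\brho^{(j)}}$. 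Applying the same identity to each $\tau_i^{(j)}$ shows $s_{\varrho_i^{(j)}}=s_{\chi_j\circ\tau_i^{(j)}}$ factors through $s_{\tau_i^{(j)}}$ and hence through $s_\btau$. Combining, $s_\btau=s_\kC$.

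For the K\"ahler assertion, the crucial input is an explicit formula for the canonical current of a torus representation. Since $T$ is abelian the spectral cover of \cref{sec:KE} is trivial, and unwinding \cref{def:canonical} one checks that the spectral one-forms of $\tau_i^{(j)}$, expressed in the basis $\chi_1,\ldots,\chi_n$, are precisely the $(1,0)$-parts of the $\bR$-valued cohomology classes $v\circ\chi_k\circ\tau_i^{(j)}$ for $k=1,\ldots,n$. This yields the current identity $T_{\tau_i^{(j)}}=\sum_{k=1}^n \pi_{j,i,k}^*T_{\chi_k\circ\tau_i^{(j)}}$ on $S_{\tau_i^{(j)}}$, where $\pi_{j,i,k}:S_{\tau_i^{(j)}}\to S_{\chi_k\circ\tau_i^{(j)}}$; in particular $T_{\tau_i^{(j)}}\geq \pi_{j,i,j}^*T_{\varrho_i^{(j)}}$. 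Summing over $(j,i)$ via \cref{def:canonical2} and using that the induced morphism $S_\btau\to S_{\varrho_i^{(j)}}$ factors as $S_\btau\xrightarrow{r_j}S_{\brho^{(j)}}\xrightarrow{q_{j,i}}S_{\varrho_i^{(j)}}$,
$$
T_\btau\;\geq\;\sum_{j=1}^n r_j^*T_{\brho^{(j)}}\;=\;(r_1,\ldots,r_n)^*\!\Bigl(\sum_{j=1}^n \varpi_j^*T_{\brho^{(j)}}\Bigr),
$$
where $\varpi_j$ denotes the $j$-th projection on $\prod_j S_{\brho^{(j)}}$. Each $\{T_{\brho^{(j)}}\}$ being K\"ahler, $\sum_j\varpi_j^*\{T_{\brho^{(j)}}\}$ is a K\"ahler class on the product, and its pullback along the finite map $(r_1,\ldots,r_n)$ is K\"ahler on $S_\kC$ by \cref{thm:DHP}. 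Thus $T_\btau$ is the sum of a K\"ahler representative with continuous potential and a positive closed $(1,1)$-current with continuous potential, so for every positive-dimensional subvariety $V\subset S_\kC$ one has $\int_V T_\btau^{\dim V}\geq\int_V\omega^{\dim V}>0$ by binomial expansion; a final application of \cref{thm:DHP} gives that $\{T_\btau\}$ is K\"ahler.

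The main obstacle will be the current identity $T_{\tau_i^{(j)}}=\sum_k \pi_{j,i,k}^*T_{\chi_k\circ\tau_i^{(j)}}$: one must carefully retrace the construction in \cref{sec:KE} of the spectral forms and partial Albanese morphism for a torus representation and match them, under the basis $\chi_1,\ldots,\chi_n$, with the analogous data for each $\chi_k\circ\tau_i^{(j)}$. The assumption that $\chi_1,\ldots,\chi_n$ is a $\bZ$-basis (rather than merely $\bQ$-generating) of $X^*(T)$ is essential here in order to obtain an equality of currents rather than a proportionality up to isogeny.
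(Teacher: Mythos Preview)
Your proof is correct and follows essentially the same route as the paper: split $T$, push $\kC$ forward along each coordinate character to get absolutely constructible subsets $\kC_j\subset M_{\rm B}(X,1)$, apply \cref{cor:positivity} to each $\kC_j$, lift the resulting $\GL_1$-characters back to $T$-valued representations $\tau_i^{(j)}$, and compare canonical currents. The identification of $s_\kC$ with the simultaneous Stein factorization of the $s_{\kC_j}$ and the lifting step are handled identically in both arguments.

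The one place where your write-up differs from the paper is the current comparison. You aim for the full decomposition $T_{\tau_i^{(j)}}=\sum_k \pi_{j,i,k}^*T_{\chi_k\circ\tau_i^{(j)}}$ and flag it as the main obstacle, stressing the need for a $\bZ$-basis of $X^*(T)$. The paper bypasses this: it only uses the inequality $e_{ij}^*T_{\varrho_{ij}}\leq T_{\tau_{ij}}$, which follows immediately from the observation that the spectral one-form of $\varrho_{ij}=\chi_j\circ\tau_{ij}$ occurs among the spectral one-forms of $\tau_{ij}$ (so the corresponding summand $i\eta\wedge\bar\eta$ appears in $T_{\tau_{ij}}$). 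This inequality is all that is needed to conclude $T_{\btau}\geq\sum_j r_j^*T_{\brho^{(j)}}$ and hence the K\"ahler property, and it does not require tracking the full partial Albanese picture or worrying about isogenies. Your decomposition formula is correct, but the paper's shortcut makes the ``main obstacle'' disappear.
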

\begin{proof}
We replace $k$ by a finite extension such that $T$ is split over $k$. Then  we have $T\simeq \bG_{m,k}^\ell$.   Note that this does not change the reduction map $s_{\kC}:X\to S_{\kC}$.  We take $p_i:T\to \bG_{m,k}$ to be the $i$-th projection which is a $k$-morphism. It induces a morphism of $k$-schemes $\psi_i:M_{\rm B}(X,T)\to M_{\rm B}(X,\GL_1)$. By \cref{thm:S2}, $\kC_i:=\psi_i(\kC)$ is also an absolutely constructible subset.  Consider the reduction maps $\{s_{\kC_i}:X\to S_{\kC_i}\}_{i=1,\ldots,\ell}$ defined by \cref{def:reduction ac}.  
\begin{claim}
$s_{\kC}:X\to S_{\kC}$ is the Stein factorization of $s_{\kC_1}\times\cdots\times s_{\kC_\ell}:X\to S_{\kC_1}\times\cdots\times S_{\kC_\ell}$. 
\end{claim}
\begin{proof}
	Let $\varrho:\pi_1(X)\to T(K)$ be any reductive representation where $K$ is a non-archimedean local field containing $k$ such that $[\varrho]\in \kC(K)$.  Write $\varrho_i=p_i\circ \varrho:\pi_1(X)\to \GL_1(K)$. Then $[\varrho_i]=\psi_i([\varrho])\in \kC_i(K)$.   Note that for any subgroup $\Gamma\subset \pi_1(X)$, $\varrho(\Gamma)$ is bounded if and only if $\varrho_i(\Gamma)$ is bounded for any $i$. Therefore, $s_{\varrho}:X\to S_\varrho$ is the Stein factorization of $X\to S_{\varrho_1}\times\cdots\times  S_{\varrho_\ell}$. Hence $s_{\kC}:X\to S_{\kC}$ factors through the Stein factorization of $X\to S_{\kC_1}\times\cdots\times S_{\kC_\ell}$. 
	
	On the other hand, consider any $\varrho_i\in \kC_i(K)$ where $K$ is a non-archimedean local field containing $k$. Then there is a finite extension $L$ of $K$ such that 
	\begin{itemize}
		\item  there is  a reductive representation   $\varrho:\pi_1(X)\to T(L)$ with $[\varrho]\in \kC(L)$;
		\item $p_i\circ\varrho=\varrho_i$. 
	\end{itemize}
By the above argument,    $s_{\varrho_i}:X\to S_{\varrho_i}$ factors through $s_{\varrho}:X\to S_{\varrho}$. Note that $s_{\varrho}$ factors through $s_{\kC}$. It follows that     the Stein factorization of $X\to S_{\kC_1}\times\cdots\times S_{\kC_\ell}$ factors through $s_{\kC}$. The claim is proved.  
\end{proof}
We now apply \cref{cor:positivity} to conclude that  for each $i$, there  exists  a family of reductive representations into non-archimedean local fields $\brho_i:=\{\varrho_{ij}:\pi_1(X)\to \GL_1(K_{ij})\}_{j=1,\ldots,k_i}$ such that \begin{itemize}
	\item  $\varrho_{ij}\in \kC_i(K_{ij})$;  
	\item  The reduction map $s_{\brho_i}:X\to S_{\brho_i}$ of $\brho_i$ coincides with $s_{\kC_i}:X\to S_{\kC_i}$;
	\item  for the canonical current $T_{\brho_i}$ defined over $S_{\brho_i}$, $\{T_{\brho_i}\}$ is a K\"ahler class.   
\end{itemize}
Denote by  $\brho:=\{\varrho_{ij}\}_{i=1,\ldots,\ell;j=1,\ldots,k_i}$. Then $s_{\brho}:X\to S_{\brho}$ coincides with $s_\kC:X\to S_{\kC}$ by the above claim. Then $T_{\brho}$ is a K\"ahler class.

By the definition of $\kC_{i}$, we can find a finite extension $L_{ij}$ of $K_{ij}$ such that    
\begin{itemize}
	\item  there is  a reductive representation   $\tau_{ij}:\pi_1(X)\to T(L_{ij})$ with $[\tau_{ij}]\in \kC(L_{ij})$;
	\item $p_i\circ\tau_{ij}=\varrho_{ij}$. 
\end{itemize}
Therefore, for the family $\btau:=\{\tau_{ij}\}_{i=1,\ldots,\ell;j=1,\ldots,k_i}$,   $s_{\btau}:X\to S_{\btau}$  coincides with $s_{\kC}$ by the above claim.  
Note that for any $i,j$, there exists an morphism  $e_{ij}:S_{\tau_{ij}}\to S_{\varrho_{ij}}$ such that $s_{\varrho_{ij}}:X\to S_{\varrho_{ij}}$ factors through $e_{ij}$. We also note that $e_{ij}^*T_{\varrho_{ij}}\leq T_{\tau_{ij}}$ for the canonical currents.  It follows that $T_{\brho}\leq T_{\btau}$ (note that $S_\btau=S_\brho=S_\kC$). Therefore, $\{T_{\btau}\}$ is a K\"ahler class. We prove the theorem.  
\end{proof}

\subsection{Some criterion for representation into tori} 
We recall a lemma in \cite[Lemma 5.3]{CDY22}.
\begin{lem}\label{lem:BT}
	Let $G$ be an almost simple algebraic group over the non-archimedean local field $K$.  Let $\Gamma\subset G(K)$ be a finitely generated subgroup so that
	\begin{itemize}
		\item  it is a Zariski dense subgroup in $G$,
		\item it is not contained in any bounded subgroup of $G(K)$. 
	\end{itemize} 
	Let $\Upsilon$ be a normal subgroup of $\Gamma$ which is \emph{bounded}. Then $\Upsilon$ must be finite. 
\end{lem}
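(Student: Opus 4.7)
The plan is to combine the Bruhat-Tits fixed-point theorem with a Zariski-density dichotomy for normal subgroups of an almost simple group. First, since $\Upsilon$ is bounded in $G(K)$, by the Bruhat-Tits fixed-point theorem the set $F := \mathrm{Fix}_{\Delta(G)}(\Upsilon)$ of its fixed points in the Bruhat-Tits building $\Delta(G)$ is a non-empty, closed, convex subset. Because $\Upsilon$ is normal in $\Gamma$, the set $F$ is $\Gamma$-invariant: for $u \in \Upsilon$, $y \in F$, and $\gamma \in \Gamma$, one has $u \cdot \gamma y = \gamma (\gamma^{-1} u \gamma) y = \gamma y$. Since $\Gamma$ is unbounded in $G(K)$ and point-stabilizers on $\Delta(G)$ are bounded, every orbit $\Gamma \cdot x \subseteq F$ is unbounded, so $F$ itself is unbounded.

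Next I would analyze the Zariski closure $H \subseteq G$ of $\Upsilon$. By normality of $\Upsilon$ in $\Gamma$, one has $\Gamma \subseteq N_G(H)$; since $N_G(H)$ is Zariski closed and $\Gamma$ is Zariski dense in $G$, this forces $N_G(H) = G$, i.e., $H$ is normal in $G$. Because $G$ is almost simple, $H$ is either finite (contained in the finite center of $G$) or equal to $G$. In the first case $\Upsilon \subseteq H(K)$ is finite and we are done, so the crux is to rule out the alternative $H = G$.

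Assume for contradiction that $\Upsilon$ is infinite and Zariski dense in $G$, and let $\widetilde{\Upsilon}$ denote the closure of $\Upsilon$ in $G(K)$ in the analytic topology; then $\widetilde{\Upsilon}$ is compact (closed and bounded) and infinite, hence a $p$-adic analytic subgroup of positive dimension. Its Lie algebra $\mathrm{Lie}(\widetilde{\Upsilon}) \subseteq \mathfrak{g}(K)$ is invariant under $\mathrm{Ad}(\widetilde{\Upsilon})$, hence under $\mathrm{Ad}(G)$ by Zariski density, and is therefore a non-zero ideal of the simple Lie algebra $\mathfrak{g}(K)$, so it equals $\mathfrak{g}(K)$. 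Thus $\widetilde{\Upsilon}$ is a compact open, non-central subgroup of $G(K)$. A standard Bruhat-Tits theoretic fact then yields that the fixed-point set $\mathrm{Fix}_{\Delta(G)}(\widetilde{\Upsilon})$ is bounded; but by continuity of the $G(K)$-action on $\Delta(G)$ one has $\mathrm{Fix}(\widetilde{\Upsilon}) = \mathrm{Fix}(\Upsilon) = F$, contradicting the unboundedness of $F$ shown above.

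The main obstacle is precisely this last step: verifying that a non-central compact open subgroup of $G(K)$ has bounded fixed-point set on $\Delta(G)$. This should follow from the structure theory of compact open subgroups of semisimple groups over local fields (each lies inside a parahoric, and a non-central one cannot pointwise fix an unbounded convex set in the building without being contained in the finite kernel of the action on $\Delta(G)$), but carefully isolating this fact is what requires real Bruhat-Tits input, as opposed to the purely group-theoretic manipulations of the earlier steps.
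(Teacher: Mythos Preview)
The paper does not give its own proof of this lemma: it simply cites \cite[Lemma 5.3]{CDY22}. So there is no argument here to compare against yours; your proposal supplies content where the paper supplies only a reference.

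Your approach is sound and essentially complete. The reduction via the Zariski closure of $\Upsilon$ (normal in $G$ by density of $\Gamma$, hence finite or all of $G$) is exactly right, as is the observation that $F=\mathrm{Fix}_{\Delta(G)}(\Upsilon)$ is a nonempty, $\Gamma$-stable, hence unbounded convex subset. The step you flag as the main obstacle---that a compact \emph{open} subgroup of $G(K)$ has bounded fixed-point set in $\Delta(G)$---is a genuine Bruhat--Tits fact but not a deep one. Here is one way to fill it: since $\widetilde{\Upsilon}$ is open, for every relative root $\alpha$ the intersection $\widetilde{\Upsilon}\cap U_\alpha(K)$ is open in $U_\alpha(K)$ and hence contains a nontrivial filtration subgroup $U_{\alpha,n_\alpha}$. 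In any apartment $A$, the fixed set of $U_{\alpha,n_\alpha}$ is a half-apartment $\{v\in A:\alpha(v)\ge -n_\alpha\}$; intersecting these half-spaces over \emph{all} roots (positive and negative) yields a bounded polytope. Thus $\mathrm{Fix}(\widetilde{\Upsilon})\cap A$ is bounded for every apartment $A$, and since $\mathrm{Fix}(\widetilde{\Upsilon})$ is convex this forces it to be bounded. This contradicts the unboundedness of $F$, completing your argument. Alternatively, one can bypass the Lie-algebra computation entirely: once $\Upsilon$ is Zariski dense, it already contains nontrivial elements of each root group (since $\Upsilon\cap U_\alpha$ is Zariski dense in $U_\alpha$ after projecting), and the same half-apartment argument applies directly to $\Upsilon$ without passing to $\widetilde{\Upsilon}$.
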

This lemma enables us to prove the following result. 
\begin{lem}\label{lem:tori}
	Let $G$ be a reductive algebraic group over the non-archimedean local field $K$ of characteristic zero.  Let $X$ be a projective manifold and let $\varrho:\pi_1(X)\to G(K)$ be a Zariski dense representation. If $\varrho(\cD\pi_1(X))$ is bounded, then after replacing $K$ by some finite extension, for the  reductive representation  $\tau: \pi_1(X)\to G/\cD G(K)$ which is the composition of $\varrho$ with $G\to G/\cD G$,  the reduction map $s_\tau: X\to S_\tau$ coincides with  $s_\varrho:X\to S_\varrho$.   
\end{lem}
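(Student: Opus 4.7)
The plan is to show, for every subset $\Gamma\subset\pi_1(X)$, that $\varrho(\Gamma)$ is bounded in $G(K)$ if and only if $\tau(\Gamma)$ is bounded in $(G/\cD G)(K)$; the equality $s_\varrho=s_\tau$ then follows immediately from \cref{thm:KE}. The implication $(\Rightarrow)$ is immediate from \cref{lem:bounded criterion}, since $G\to G/\cD G$ is a $K$-morphism of affine schemes of finite type. For the converse, I would introduce the combined morphism $\phi:G\to G^{\mathrm{ad}}\times (G/\cD G)$, where $G^{\mathrm{ad}}=G/Z(G)$. Its kernel is $Z(G)\cap \cD G = Z(\cD G)$, which is finite since $\cD G$ is semisimple, so $\phi$ is an isogeny. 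A standard integrality argument (using that $K[G]$ is finite over $\phi^*K[G^{\mathrm{ad}}\times G/\cD G]$, so values of functions on $G$ satisfy integral equations with coefficients whose valuations are controlled by $\phi(B)$) shows that a subset $B\subset G(K)$ is bounded if and only if $\phi(B)$ is bounded. Consequently, $\varrho(\Gamma)$ is bounded in $G(K)$ if and only if both $\varrho^{\mathrm{ad}}(\Gamma)\subset G^{\mathrm{ad}}(K)$ and $\tau(\Gamma)\subset (G/\cD G)(K)$ are bounded, where $\varrho^{\mathrm{ad}}$ is the composition of $\varrho$ with $G\to G^{\mathrm{ad}}$. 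Thus the converse reduces to proving that $\varrho^{\mathrm{ad}}(\pi_1(X))$ is itself bounded in $G^{\mathrm{ad}}(K)$.

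To establish this uniform boundedness, I would first verify that $\varrho(\cD\pi_1(X))$ is Zariski dense in $\cD G$. Its Zariski closure $H$ is normalized by $\varrho(\pi_1(X))$, hence (by Zariski density of $\varrho$) normal in $G$; the induced map $\pi_1(X)/\cD\pi_1(X)\to G/H$ then has abelian domain and Zariski dense image, forcing $G/H$ to be abelian and therefore $H\supset \cD G$. After replacing $K$ by a finite extension, I would decompose $\cD G=G_1\cdots G_r$ as an almost direct product of almost $K$-simple factors, using the canonical identification $G^{\mathrm{ad}}=\prod_i G_i^{\mathrm{ad}}$. For each $i$, let $\pi_i:G\to G_i^{\mathrm{ad}}$ denote the natural projection. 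Then $\pi_i(\varrho(\pi_1(X)))$ is a finitely generated Zariski dense subgroup of $G_i^{\mathrm{ad}}(K)$, and $\pi_i(\varrho(\cD\pi_1(X)))$ is a bounded normal subgroup of it; by the Zariski density just established, $\pi_i(\varrho(\cD\pi_1(X)))$ is Zariski dense in the positive-dimensional group $G_i^{\mathrm{ad}}$, hence infinite.

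Now I would invoke \cref{lem:BT}: if $\pi_i(\varrho(\pi_1(X)))$ were unbounded in $G_i^{\mathrm{ad}}(K)$, then the bounded normal subgroup $\pi_i(\varrho(\cD\pi_1(X)))$ would have to be finite, contradicting the infiniteness established above. Hence $\pi_i(\varrho(\pi_1(X)))$ is bounded for every $i$, and $\varrho^{\mathrm{ad}}(\pi_1(X))\subset\prod_iG_i^{\mathrm{ad}}(K)$ is bounded. Combined with the first paragraph, this yields $\varrho(\Gamma)$ bounded $\Leftrightarrow$ $\tau(\Gamma)$ bounded, completing the proof. The main technical hurdle is the last step: upgrading boundedness of $\varrho$ on the \emph{normal subgroup} $\cD\pi_1(X)$ into \emph{uniform} boundedness of $\varrho^{\mathrm{ad}}$ on all of $\pi_1(X)$; this is exactly where \cref{lem:BT}, and hence the non-archimedean nature of $K$ together with Bruhat--Tits theory, is used in an essential way.
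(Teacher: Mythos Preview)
Your proof is correct and follows essentially the same strategy as the paper: decompose $G$ via an isogeny into its semisimple and toric parts, then apply \cref{lem:BT} to each almost simple factor to show the semisimple contribution is bounded. The paper uses the isogeny $G\to H_1\times\cdots\times H_k\times T$ and a Galois-cohomology argument for boundedness transfer, whereas you use $G\to G^{\mathrm{ad}}\times(G/\cD G)$ and an integrality argument; your contradiction step (Zariski density in a positive-dimensional group forces infiniteness) is slightly more direct than the paper's passage to a finite \'etale cover, but the substance is the same.
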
 
\begin{proof}
	Since $G$ is reductive,  then after replacing $K$ by a finite extension,  there is an isogeny $G\to H_1\times\cdots\times H_k\times T$, where $H_i$ are almost simple algebraic groups over $K$ and $T=G/\cD G$ is an algebraic tori over $K$.
Write $G':=H_1\times\cdots\times H_k\times T$.  We denote by $\varrho': \pi_1(X)\to G'(K)$ the induced representation by the above isogeny. 
\begin{claim}\label{claim:samereduction}
	The Katzarkov-Eyssidieux reduction map $s_{\varrho}:X\to S_{\varrho}$ coincides with $s_{\varrho'}:X\to S_{\varrho'}$.
\end{claim}
\begin{proof}
It suffices to prove that, for any subgroup $\Gamma$ of $\pi_1(X)$, $\varrho(\Gamma)$ is bounded if and only if $\varrho'(\Gamma)$ is bounded. 	Note that we have the following short exact sequence of algebraic groups
	$$
	0\to \mu\to G\to G'\to 0
	$$
	where $\mu$ is finite.  Then
	 we have
	 	$$
	 0\to \mu(K)\to G(K)\stackrel{f}{\to} G'(K)\to H^1(K,\mu),
	 $$
	where $H^1(K,\mu)$  is the Galois cohomology.  Note that $\mu(K)$ is finite. Since $K$ is a finite extension of some $\bQ_p$, it follows that $H^1(K,\mu)$ is also finite. Therefore, $f:G(K)\to G'(K)$ has finite kernel and cokernel. Therefore, $\varrho(\Gamma)$ is bounded if and only if $\varrho'(\Gamma)$ is bounded.  
 \end{proof} 

Set $\Gamma:=\varrho'(\pi_1(X))$ and $\Upsilon:=\varrho'(\cD\pi_1(X))$.   Let  $\Upsilon_i\subset H_i(K)$ and $\Gamma_i$ be the image of $\Upsilon$ and $\Gamma$ under the   projection $G(K)\to H_i(K)$. Then $\Gamma_i$ is Zariski dense in $H_i$ and $\Upsilon_i\triangleleft \Gamma_i$ is also bounded. Furthermore, $\cD \Gamma_i=\Upsilon_i$. 
	\begin{claim}
		$\Gamma_i$ is bounded for every $i$. 
	\end{claim}
\begin{proof}Assuming a contradiction, let's suppose that some $\Gamma_i$ is unbounded. Since $\Upsilon_i\triangleleft \Gamma_i$ and $\Upsilon_i$ is bounded, we can refer to \cref{lem:BT} which states that $\Upsilon_i$ must be finite. We may replace $X$ with a finite étale cover, allowing us to assume that $\Upsilon_i$ is trivial. Consequently, $\Gamma_i$ becomes abelian, which contradicts the fact that $\Gamma_i$ is Zariski dense in the almost simple algebraic group $H_i$. 
\end{proof} 
Based on the previous claim, it follows that the induced representations $\tau_i:\pi_1(X)\to H_i(K)$ are all bounded for every $i$. Consequently, they do not contribute to the reduction map of $s_{\varrho'}:X\to S_{\varrho'}$. Therefore, the only contribution to $s_{\varrho'}$ comes from $\tau:\pi_1(X)\to T(K)$, where $\tau$ is the composition of $\varrho:\pi_1(X)\to G(K)$ and $G(K)\to T(K)$.

According to \cref{claim:samereduction}, we can conclude that $s_\varrho$ coincides with the reduction map $s_\tau:X\to S_\tau$ of $\tau:\pi_1(X)\to T(K)$. This establishes the lemma.
\end{proof}

 \subsection{Eyssidieux-Simpson Lefschetz theorem and its application}
Let $X$ be a compact K\"ahler manifold and let $V\subset H^0(X,\Omega_X^1)$ be a $\bC$-subspace.  Let $a:X\to \cA_X$ be the Albanese morphism of $X$.  Note that $a^*:H^0(\cA_X, \Omega_{\cA_X}^1)\to H^0(X,\Omega_X^1)$ is an isomorphism.  Write $V':=(a^*)^{-1}(V)$.    Define $B(V)\subset \cA_X$ to be the largest abelian subvariety of $\cA_X$ such that $\eta|_{{B(V)}}=0$ for every $\eta\in V'$.  Set $\cA_{X,V}:= \cA_X/B(V)$.  The \emph{partial Albanese morphism associated with $V$} is the composition of $a$ with the quotient map $\cA_X\to \cA_{X,V}$, denoted  by $g_V: X\to \cA_{X,V}$.   Note that there exists $V_0\subset H^0(\cA_{X,V}, \Omega_{\cA_{X,V}}^1)$ with $\dim_\bC V_0=\dim_\bC V$ such that $g_V^*V_0=V$.   Let $\widetilde{\cA_{X,V}}\to \cA_{X,V}$ be the universal covering and let $X_V$ be  $X\times_{\cA_{X,V}} \widetilde{\cA_{X,V}}$. Note that $V_0$ induces a natural linear map $\widetilde{\cA_{X,V}}\to V_0^*$. Its composition with $X_V\to \widetilde{\cA_{X,V}}$ and $g_V^*:V_0\to V$ gives rise to   a holomorphic map 
\begin{align}\label{eq:integration}
	\widetilde{g}_V:X_V\to V^*.
\end{align} 
Let $f: X\to S$ be the Stein factorization of $g_V:X\to \cA_{X,V}$ with $q:S\to \cA_{X,V}$ the finite morphism.  Set $\bV:=q^*V_0$. 
\begin{dfn}\label{def:perfect}
	$V$ is called \emph{perfect} if for any closed subvariety $Z\subset S$ of dimension $d\geq 1$, one has ${\rm Im}[\Lambda^d \bV\to H^0(Z, \Omega_Z^d)]\neq 0$.   
\end{dfn} 
The terminology of ``perfect $V$\fg in \cref{def:perfect} is called ``SSKB factorisable\fg in \cite[Lemme 5.1.6]{Eys04}. 

Let us recall the following Lefschetz theorem  by Eyssidieux, which is a generalization of previous work by Simpson \cite{Sim92}. This theorem plays a crucial role in the proofs of \cref{main2,main}.
\begin{thm}[\protecting{\cite[Proposition 5.1.7]{Eys04}}]\label{thm:ES} 
	Let $X$ be  a  compact K\"ahler normal space and let $V\subset H^0(X,\Omega_X^1)$ be a subspace.  Assume that 
	$$
	\Im[\Lambda^{\dim V}V\to H^0(X, \Omega_X^{\dim V})]=\eta\neq 0. 
	$$  
	Set $(\eta=0)=\cup_{i=1}^{k}Z_k$ where $Z_i$ are proper closed subvarieties of $X$.  For each $Z_i$, denote by $V_i:={\rm Im}[V\to H^0(Z_i, \Omega_{Z_i})]$.  Assume that $V_i$ is perfect for each $i$.  
	Then there are two possibilities which exclude  each other:
	\begin{itemize}
		\item either $V$ is perfect;
		\item or  for  the holomorphic map 	$\widetilde{g}_V:X_V\to V^*$  defined as \eqref{eq:integration}, $(X_{V}, \widetilde{g}_{V}^{-1}(t))$ is $1$-connected for any $t\in V^*$; i.e. $\widetilde{g}_{V}^{-1}(t)$  is connected and $\pi_1(\widetilde{g}_{V}^{-1}(t))\to \pi_1(X_{V})$ is surjective. 
	\end{itemize}
\end{thm}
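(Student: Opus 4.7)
My plan is to prove the dichotomy by a Lefschetz--Morse argument on the integration map $\widetilde{g}_V : X_V \to V^*$, exploiting the fact that $V^* \cong \mathbb{C}^{\dim V}$ is Stein and the perfection hypothesis on each component $Z_i \subset (\eta = 0)$ furnishes an inductive handle. Concretely, I would factor $g_V : X \to \cA_{X,V}$ through its Stein factorization $X \xrightarrow{f} S \xrightarrow{q} \cA_{X,V}$ with $q$ finite. If $V$ is \emph{not} perfect, \cref{def:perfect} gives a closed subvariety $W \subset S$ of some dimension $d \geq 1$ such that the image of $\Lambda^d \bV$ in $H^0(W,\Omega_W^d)$ vanishes. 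The first step is to show that $f^{-1}(W)$ cannot be contained in $(\eta=0)$: if it lay inside some $Z_j$, the restricted space $V_j$ would itself fail to be perfect along $f(Z_j) \cap W$, contradicting the hypothesis. Hence the witness of non-perfection is genuinely ``generic'' with respect to the zero divisor of $\eta$.

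Next I would analyse the integration map $\widetilde g_V$. Writing $n = \dim V$, the form $\eta \neq 0$ implies that $d\widetilde g_V$ has generic rank $n$ on $X_V$, and its rank drops exactly along the preimage of $(\eta = 0)$, which decomposes into pieces coming from the $Z_i$. The function $\phi := \|\widetilde g_V\|^2$ is plurisubharmonic on the non-compact complex space $X_V$, and the Hessian $i\partial\bar\partial \phi$ is strictly positive in the horizontal directions of $\widetilde g_V$. I would then apply plurisubharmonic Morse theory (in the spirit of Andreotti--Frankel, Demailly, and Simpson) to the sublevel sets of $\phi$, which shows that the relative homotopy groups $\pi_k(X_V, \widetilde g_V^{-1}(t))$ vanish for $k$ in a range governed by the complex codimension of the degeneracy locus of $d\widetilde g_V$. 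By the generic rank computation, this range includes $k \leq 1$ once one accounts for the fact that the degeneracy along each $\widetilde\pi^{-1}(Z_i)$ is controlled inductively by perfection of $V_i$ (which guarantees that the restricted integration map does not collapse dimension).

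The final step is the dichotomy: the vanishing of $\pi_0(X_V, \widetilde g_V^{-1}(t))$ and $\pi_1(X_V, \widetilde g_V^{-1}(t))$ translates, via the long exact sequence of the pair, into the connectedness of $\widetilde g_V^{-1}(t)$ and the surjectivity of $\pi_1(\widetilde g_V^{-1}(t)) \to \pi_1(X_V)$, i.e.\ $1$-connectedness. If this connectivity fails for some fiber, the Morse argument forces $d\widetilde g_V$ to have maximal rank on every positive-dimensional subvariety of $S$, which is exactly the statement that $V$ is perfect. The two cases therefore exclude each other and exhaust all possibilities.

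The main obstacle I foresee is the interplay between the singularities of $X$ (which is only a compact normal K\"ahler space) and the plurisubharmonic Morse theory: one must extend the Morse-theoretic comparison of sublevel sets to the singular setting, using a resolution or working with the regular part together with a careful control of strata. A secondary technical issue is the inductive input: the perfection of each $V_i$ has to be leveraged in the precise form needed to bound the Morse indices contributed by $\widetilde\pi^{-1}(Z_i)$, and matching these local bounds with the global rank computation of $d\widetilde g_V$ is where the argument becomes delicate. Once these two points are handled, the dichotomy follows from the standard affine Lefschetz package applied to $\widetilde g_V : X_V \to V^* \cong \mathbb{C}^n$.
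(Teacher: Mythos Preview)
The paper does not prove this statement: \cref{thm:ES} is quoted verbatim as \cite[Proposition~5.1.7]{Eys04} and used as a black box (see the sentence preceding the theorem: ``Let us recall the following Lefschetz theorem by Eyssidieux\ldots''). There is therefore no proof in the present paper to compare your proposal against.

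That said, your overall strategy---plurisubharmonic Morse theory applied to $\|\widetilde g_V\|^2$ on the covering $X_V$, with the perfection hypothesis on each $V_i$ controlling the Morse indices contributed by the degeneracy locus $(\eta=0)$---is indeed the Simpson--Eyssidieux approach, so you are on the right track in spirit. However, the logical structure of your dichotomy is inverted. You write that if connectivity \emph{fails} the Morse argument forces perfection of $V$; in fact the argument runs the other way: one \emph{assumes} $V$ is not perfect and then the Morse/Lefschetz machinery (with the inductive input from the $V_i$) yields the $1$-connectivity of the pair $(X_V,\widetilde g_V^{-1}(t))$. The role of non-perfection is not to produce a subvariety $W\subset S$ disjoint from $(\eta=0)$ as you suggest in your ``first step''; rather, the hypothesis $\eta\neq 0$ together with the perfection of each $V_i$ is what guarantees that the critical set of a generic linear projection $V^*\to\bC$ composed with $\widetilde g_V$ contributes only handles of index $\geq 2$, regardless of whether $V$ itself is perfect. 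The exclusion of the two cases (perfect versus $1$-connected) is a separate, more elementary observation: if $V$ is perfect then $\dim S=\dim V$ and the generic fibre of $\widetilde g_V$ is discrete, so $\pi_1(\widetilde g_V^{-1}(t))\to\pi_1(X_V)$ cannot be surjective unless $X_V$ is simply connected. Your proposal conflates these two directions, and the ``witness $W$'' paragraph does not do real work. If you want to reconstruct the proof, consult Simpson's original Lefschetz argument in \cite{Sim92} and Eyssidieux's extension in \cite[\S5.1]{Eys04}; the delicate point, as you correctly anticipate, is handling the singularities of $X$ and the non-isolated critical locus.
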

We need the following version of the Castelnuovo-De Franchis theorem proved by Catanese.
\begin{thm}[Castelnuovo-De Franchis-Catanese]\label{thm:CD}
	Let $X$ be a compact K\"ahler normal space and let $W\subset H^0(X,\Omega_X)$ be the subspace of dimension $d\geq 2$ such that
	\begin{itemize}
		\item ${\rm Im}\big(\Lambda^d W\to H^0(X,\Omega^d_X)\big)=0$;
		\item for every hyperplane $W'\subset W$,  ${\rm Im}\big(\Lambda^{d-1} W'\to H^0(X,\Omega^{d-1}_X)\big)\neq 0$.
	\end{itemize} 
	Then there is a projective normal variety $S$ of dimension $d-1$ and a fibration $f:X\to S$ such that $W\subset f^*H^0(S,\Omega_S)$.  
\end{thm}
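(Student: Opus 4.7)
\medspace

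The plan is to reduce to the smooth compact Kähler case, where the statement is the classical Catanese theorem extending Castelnuovo--De Franchis, and then descend the resulting fibration from a resolution of $X$ back down to $X$ itself.

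First, let $\pi\colon \tilde{X}\to X$ be a resolution of singularities. Then $\tilde{X}$ is a compact Kähler manifold (Fujiki--Varouchas). By \cref{lem:pull}, pullback of forms yields injective linear maps $\pi^*\colon H^0(X,\Omega_X^p)\hookrightarrow H^0(\tilde{X},\Omega_{\tilde{X}}^p)$ compatible with the wedge product. Set $\tilde{W}:=\pi^*W\subset H^0(\tilde{X},\Omega_{\tilde{X}}^1)$. Because $\pi^*$ is injective on all exterior powers, the two hypotheses transfer verbatim: the $d$-fold wedge of $\tilde{W}$ vanishes, while for every hyperplane $\tilde{W}'\subset\tilde{W}$ the $(d-1)$-fold wedge is non-zero.

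Next, apply the classical theorem of Catanese to the smooth compact Kähler manifold $\tilde{X}$ and the subspace $\tilde{W}$: this produces a proper surjective holomorphic fibration $\tilde{f}\colon\tilde{X}\to\tilde{S}$ onto a normal projective variety $\tilde{S}$ of dimension $d-1$ such that $\tilde{W}\subset\tilde{f}^*H^0(\tilde{S},\Omega_{\tilde{S}}^1)$. The projectivity of $\tilde{S}$ follows from the partial Albanese construction associated to $\tilde{W}$: the forms in $\tilde{W}$ descend to forms on $\tilde{S}$ which generically span $\Omega_{\tilde{S}}^1$, so the induced map from $\tilde{S}$ to an abelian subvariety of the Albanese of $\tilde{X}$ is generically finite, forcing $\tilde{S}$ to be projective.

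Then descend to $X$: every fiber of $\pi$ is compact and connected (as $X$ is normal), and every form in $\tilde{W}=\pi^*W$ restricts to zero on such a fiber $F$. Since $\tilde{W}=\tilde{f}^*V_0$ for some $V_0\subset H^0(\tilde{S},\Omega_{\tilde{S}}^1)$ whose elements generically span $\Omega_{\tilde{S}}^1$, the restriction $\tilde{f}|_F$ has everywhere-vanishing differential on the smooth locus of $F$, hence is constant by connectedness. Thus $\tilde{f}$ factors through $\pi$ as $\tilde{f}=g\circ\pi$ for a holomorphic map $g\colon X\to\tilde{S}$. Taking the Stein factorization $g\colon X\xrightarrow{f}S\xrightarrow{\nu}\tilde{S}$ produces a proper surjective holomorphic fibration $f$ with $S$ a normal projective variety of dimension $d-1$ (projective because $\nu$ is finite onto the projective variety $\tilde{S}$), and the inclusion $W\subset f^*H^0(S,\Omega_S^1)$ follows from $\tilde{W}\subset\tilde{f}^*H^0(\tilde{S},\Omega_{\tilde{S}}^1)=\pi^*f^*\nu^*H^0(\tilde{S},\Omega_{\tilde{S}}^1)\subset\pi^*f^*H^0(S,\Omega_S^1)$ together with the injectivity of $\pi^*$.

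The main obstacle is the descent step: one must check that $\tilde{f}$ genuinely contracts each fiber of $\pi$, which ultimately rests on the fact that the generic rank of $\tilde{W}$ equals $\dim\tilde{S}=d-1$ so that the forms control the fibration. Given this, everything else is formal; in particular, once the classical Catanese theorem is invoked on $\tilde{X}$, the passage from $\tilde{S}$ to $S$ via the Stein factorization preserves the required projective-normal structure.
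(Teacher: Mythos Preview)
The paper does not supply its own proof of this statement; it is quoted as a theorem of Catanese and used as a black box, so there is nothing in the paper to compare against directly.

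Your reduce-to-smooth-then-descend strategy is reasonable, but the descent argument has a real gap. You assert that since $V_0$ generically spans $\Omega^1_{\tilde S}$, the differential of $\tilde f|_F$ vanishes identically on the smooth locus of $F$. This inference is only valid at points of $F$ mapping into the open locus where the evaluation $V_0\to T^*_s\tilde S$ is surjective; nothing rules out $\tilde f(F)$ being a positive-dimensional subvariety lying entirely inside the proper analytic set $Z\subset\tilde S$ where this surjectivity fails, and over $Z$ the vanishing of $(\tilde f|_F)^*\omega$ for all $\omega\in V_0$ gives no control on $d(\tilde f|_F)$. The clean fix is to route through the partial Albanese rather than through the Catanese target: since $\tilde W=\pi^*W$, the partial Albanese morphism $g_{\tilde W}\colon\tilde X\to\cA_{\tilde X,\tilde W}$ factors as $g_W\circ\pi$ and hence contracts every fiber of $\pi$; replacing $\tilde f$ by the Stein factorization of $g_{\tilde W}$ (which, by the argument in the proof of \cref{lem:rank}, preserves both $\dim\tilde S=d-1$ and the inclusion $\tilde W\subset\tilde f^*H^0(\tilde S,\Omega^1_{\tilde S})$) makes the descent automatic, and the rest of your argument then goes through.
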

To apply \cref{thm:CD}, we need to show   the existence of a linear subspace $W\subset H^0(X,\Omega_X)$   as in the theorem.  
\begin{lem}\label{lem:non-empty}
	Let $X$ be a projective normal variety and let $V\subset H^0(X, \Omega_X)$. 	Let $r$ be the largest integer such that  $\Im[\Lambda^{r}V\to H^0(X, \Omega^{r}_X)]\neq 0$. Assume that $r<\dim_\bC V$. 
	 There exists $W\subset H^0(X, \Omega_X)$  such that 
	 \begin{thmlist} 
	 	\item     $2\leq \dim W\leq r+1$.   
	 	\item $\Im[\Lambda^{\dim W}W\to H^0(X, \Omega^{\dim  W}_X)]=0$;
	 	\item for every hyperplane $W'\subsetneq W$,  we always have $\Im[\Lambda^{\dim W-1}W'\to H^0(X, \Omega^{\dim  W-1}_X)]\neq0$. 
	 \end{thmlist} 
\end{lem}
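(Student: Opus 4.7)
The plan is to construct $W$ by starting from an $(r+1)$-dimensional subspace of $V$ on which the top exterior power already vanishes, and then repeatedly pass to a hyperplane that still has vanishing top exterior power, stopping at the first ``minimal'' stage. The termination is automatic at dimension $2$, which is what ultimately yields the bound $\dim W \ge 2$.

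For the initial step, since $r<\dim_{\bC} V$, we have $r+1\leq \dim_{\bC} V$, so we may pick any subspace $W_0\subset V$ with $\dim_{\bC} W_0=r+1$. Because $r$ is the largest integer with $\mathrm{Im}[\Lambda^{r}V\to H^0(X,\Omega^{r}_X)]\neq 0$, the map $\Lambda^{r+1}V\to H^0(X,\Omega^{r+1}_X)$ is zero; a fortiori, the induced map $\Lambda^{r+1}W_0\to H^0(X,\Omega^{r+1}_X)$ is zero. This gives the base case of an inductive construction of a chain
\[
W_0\supsetneq W_1\supsetneq W_2\supsetneq\cdots
\]
of subspaces of $V$ with $\dim_{\bC} W_i=r+1-i$ and $\mathrm{Im}[\Lambda^{\dim W_i}W_i\to H^0(X,\Omega^{\dim W_i}_X)]=0$ at every stage.

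The inductive step is as follows: given such a $W_i$, either every hyperplane $W'\subsetneq W_i$ satisfies $\mathrm{Im}[\Lambda^{\dim W_i -1}W'\to H^0(X,\Omega^{\dim W_i-1}_X)]\neq 0$, in which case we set $W:=W_i$ and stop; or there exists some hyperplane $W_{i+1}\subsetneq W_i$ with $\mathrm{Im}[\Lambda^{\dim W_{i+1}}W_{i+1}\to H^0(X,\Omega^{\dim W_{i+1}}_X)]=0$, and we continue. By construction, any $W=W_i$ produced in this way automatically satisfies (ii) and the upper bound in (i), and (iii) is exactly the stopping criterion.

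The only thing to verify is that the process terminates with $\dim W\geq 2$. Since the dimension drops by one at each step, at worst we reach a stage where $\dim_{\bC} W_i=2$; there the process must stop, because any hyperplane $W'\subsetneq W_i$ is one-dimensional, generated by some nonzero holomorphic 1-form $\omega\in H^0(X,\Omega^{1}_X)$, so $\mathrm{Im}[\Lambda^{1}W'\to H^0(X,\Omega^{1}_X)]=W'\neq 0$. Hence the constructed $W$ satisfies $2\leq \dim_{\bC} W\leq r+1$, proving (i) and completing the argument. No serious obstacle is expected; the argument is essentially a dimension-counting descent, and the only delicate point is the observation that the procedure cannot continue past dimension $2$, which ensures the lower bound in (i).
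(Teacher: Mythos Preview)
Your proof is correct, but it follows a genuinely different route from the paper's. The paper argues constructively: it first picks $\omega_1,\ldots,\omega_r\in V$ with $\omega_1\wedge\cdots\wedge\omega_r\neq 0$ and an extra $\omega\in V$ outside their span, then works in local coordinates $(z_1,\ldots,z_n)$ near a point where this wedge is nonzero so that $\omega_i=dz_i$ and $\omega=\sum a_i(z)\,dz_i$. From $\omega_1\wedge\cdots\wedge\omega_r\wedge\omega=0$ it deduces $a_j\equiv 0$ for $j>r$ and that not all $a_i$ are constant; letting $k$ be the transcendence degree of the field generated by the $a_i$, it sets $W=\langle \omega_1,\ldots,\omega_k,\omega\rangle$ and checks the three properties by ``easy linear algebra''. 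Your argument, by contrast, is a pure descent: start from any $(r{+}1)$-dimensional subspace $W_0\subset V$ (where the top wedge vanishes automatically by maximality of $r$) and repeatedly pass to a hyperplane whose top wedge still vanishes, stopping once no such hyperplane exists; termination at dimension $2$ is forced because a $1$-dimensional $W'$ injects into $H^0(X,\Omega_X^1)$. Your approach is shorter, avoids local coordinates and the transcendence-degree bookkeeping, and yields a $W\subset V$ directly. The paper's construction, on the other hand, is more explicit and ties the dimension of $W$ to an invariant (the transcendence degree of the coefficient functions), which may be conceptually suggestive in the surrounding Castelnuovo--De Franchis context even though the lemma itself does not require it.
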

\begin{proof}
By our assumption there exist $\{\omega_1,\ldots,\omega_r\}\subset V$ such that $\omega_1\wedge\cdots\wedge \omega_r\neq 0$. Let $W_0\subset V$ be the subspace generated by $\{\omega_1,\ldots,\omega_r\}$.  Since $r<\dim_\bC V$, there exists    $\omega\in V\setminus W_0$. 
	
	Pick a point $x\in X$ such that $\omega_1\wedge\cdots\wedge \omega_r(x)\neq 0$.  Then there exists  a coordinate system $(U;z_1,\ldots,z_n)$  centered at $x$ such that $dz_i=\omega_i$ for $i=1,\ldots,r$. Write $\omega=\sum_{i=1}^{n}a_i(z)dz_i$.  By our choice of $r$, we have
 $\omega_1\wedge\cdots\wedge \omega_r\wedge \omega= 0.$ 
It follows that 
	\begin{itemize}
		\item  $a_{j}(z)=0$ for $j=r+1,\ldots,n$;
		\item at least one of  $a_1(z),\ldots,a_r(z)$ is not constant.
	\end{itemize}Let $k+1$ be the transcendental degree of $\{1, a_1(z),\ldots,a_r(z)\}\subset \bC(U)$. Then $k\geq 1$.   We assume that $1, a_1(z),\ldots,a_k(z)$ is linearly independent for  the transcendental extension $\bC(U)/\bC$.     One can check by an easy linear algebra that the subspace  $W$ generated  $\{\omega_{1},\ldots,\omega_{k},\omega\}$ is an element of $E$.  The lemma is proved. 
\end{proof}

\begin{lem}\label{lem:rank}
	Let $X$ be a projective normal variety and let $V\subset H^0(X, \Omega_X)$.   	Let $r$ be the largest integer such that  $\Im[\Lambda^{r}V\to H^0(X, \Omega^{r}_X)]\neq 0$, which will be called \emph{generic rank of $V$}.  
	 Consider the partial Albanese morphism $g_{V}: X\to \cA_{X,V}$ induced by $V$.  Let $V_0\subset H^0(\cA_{X,V}, \Omega^1_{\cA_{X,V}})$ be the linear subspace such that $g_V^*V_0=V$.  Let $f: X\to S$ be the Stein factorization of $g_V$ with $q:S\to \cA_{X,V}$ the finite morphism.  Consider $\bV:=q^*V_0$. Assume that $${\rm Im}[\Lambda^{\dim Z} \bV\to H^0(Z, \Omega_Z^{\dim Z})]\neq 0$$ 
	for every proper closed subvariety $Z\subsetneq S$. Then there are two possibilities.
	\begin{itemize}
		\item either $${\rm Im}[\Lambda^{\dim S} \bV\to H^0(S, \Omega_S^{\dim S})]\neq 0;$$
		\item or $r=\dim_\bC V$.     
	\end{itemize}
\end{lem}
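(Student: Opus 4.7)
My plan is to establish the dichotomy by reducing it to the statement $r = \min(\dim V, \dim S)$. I first note that the two alternatives correspond precisely to $r = \dim S$ and $r = \dim V$ respectively: the first alternative $\Lambda^{\dim S}\bV \neq 0$ in $H^0(S,\Omega^{\dim S}_S)$ is equivalent to the generic rank of $\bV$ on $S$ being at least $\dim S$, which given the bound $r\leq \dim S$ forces equality. A preliminary reduction shows that the generic rank $r$ of $V$ on $X$ coincides with the generic rank of $\bV$ on $S$: indeed $f:X\to S$ is a proper surjective morphism with connected fibers, smooth at a general point of $X$, so $f^*$ is injective on $\Omega^k_S$ for every $k\leq \dim S$, and $V=f^*\bV$. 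From now on I work entirely on $S$.

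For the case $\dim V < \dim S$, I would pick any proper subvariety $Z\subsetneq S$ of dimension $\dim V$ and invoke the hypothesis to get $\Lambda^{\dim V}\bV|_Z \neq 0$ on $Z$. This forces $\Lambda^{\dim V}\bV\neq 0$ as a global section of $\Omega^{\dim V}_S$, and pulling back via $f^*$ (injective on forms of degree $\leq \dim S$) yields $\Lambda^{\dim V}V\neq 0$ on $X$, giving $r\geq \dim V$ and hence $r=\dim V$ — the second alternative.

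For the case $\dim V \geq \dim S$, I aim to show the first alternative by contradiction: assume $r<\dim S$. Since $r<\dim \bV$, Lemma~\ref{lem:non-empty} applied to $\bV$ on $S$ produces a subspace $W_S\subset \bV$ with $2\leq \dim W_S\leq r+1$, $\Lambda^{\dim W_S}W_S=0$, and the hyperplane non-vanishing property. Theorem~\ref{thm:CD} then yields a fibration $\psi:S\to T$ onto a projective normal variety of dimension $d-1\leq r$, where $d=\dim W_S$, with $W_S=\psi^*W_T$. A general fiber $F$ of $\psi$ is a proper subvariety of $S$ of dimension $\dim S-d+1\geq 1$, and since $W_S$ pulls back from $T$ it restricts to zero on $F$. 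The hypothesis applied to $F$ gives that $\bV|_F$ has generic rank $\dim F$, while the vanishing of $W_S$ on $F$ gives $\dim(\bV|_F)\leq \dim V-d$. Combining these, $\dim S-d+1\leq \dim V-d$, i.e., $\dim V\geq \dim S+1$ — an immediate contradiction when $\dim V=\dim S$.

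The main obstacle is the remaining subcase $\dim V > \dim S$, where the above chain does not yet yield a contradiction. I plan to handle it by sharpening the analysis of $r$: applying the hypothesis to proper subvarieties $Z\subsetneq S$ of dimension $r+1$ (when this is strictly less than $\dim S$) forces a rank contradiction, so that $r = \dim S - 1$ must hold; I then re-run Lemma~\ref{lem:non-empty} and Theorem~\ref{thm:CD} with this sharper constraint and exploit the partial Albanese structure. The key geometric input is that, by the universal property of the Albanese map, $g_V(X)\subset \cA_{X,V}$ is never contained in a proper translate of an abelian subvariety, so the Gauss map of $g_V(X)$ is non-degenerate, which forces the generic intersection $V_0\cap (T_{g_V(x)}g_V(X))^\perp$ to have the expected codimension $\dim S$ in $V_0$. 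Combined with the fibration produced by Castelnuovo–De Franchis–Catanese, this non-degeneracy contradicts the assumption $r<\dim S$ and completes the proof.
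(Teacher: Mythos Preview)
Your argument has a genuine gap in the subcase $\dim V>\dim S$, and the proposed Gauss-map fix does not work. The problem is already in your bound on the fiber $F$: you control the generic rank of $\bV|_F$ only by $\dim(\bV|_F)\le\dim V-d$, and this dimension bound is too coarse---it is what forces the case split in the first place. The paper's argument replaces it by a \emph{pointwise rank} estimate that works uniformly. In your notation: at a general point $s\in F$ (which is also general in $S$, since $F$ is a general fiber of $\psi$) the evaluation $\bV\to T_s^*S$ has image of dimension exactly $r$, and the image of $W_S$ equals the full conormal space $N^*_{F/S,s}$, because both have dimension $d-1$ and $W_S\subset\psi^*H^0(T,\Omega^1_T)$ forces $\mathrm{ev}_s(W_S)\subset N^*_{F/S,s}$. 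Since $W_S\subset\bV$, the image of $\bV$ in $T_s^*F=T_s^*S/N^*_{F/S,s}$ therefore has dimension exactly $r-(d-1)$, which is strictly less than $\dim S-(d-1)=\dim F$ precisely because $r<\dim S$. This already contradicts the hypothesis applied to $F\subsetneq S$, with no restriction on $\dim V$ whatsoever.

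As for the Gauss-map argument: the fact that $g_V(X)$ generates $\cA_{X,V}$ does \emph{not} imply that its Gauss map is non-degenerate (the image could, for instance, be fibered by translates of a positive-dimensional abelian subvariety while still generating), and even a non-degenerate Gauss map would only say that the conormal spaces $(T_a g_V(X))^\perp$ vary; it would not force the transversality of the \emph{fixed} subspace $V_0\subset H^0(\cA_{X,V},\Omega^1)$ with those conormal spaces that you need for $r=\dim S$. So this route does not close the gap; the rank estimate above does.
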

\begin{proof}
	Assume that  both $${\rm Im}[\Lambda^{\dim S} \bV\to H^0(S, \Omega_S^{\dim S})]= 0,$$ and $r<\dim_\bC V$.  Therefore, $r<\dim S\leq \dim X$. 
	By \cref{lem:non-empty}	there is a subspace $W\subset V$ with $\dim_\bC W=k+1\leq r+1$  such that $\Im[\Lambda^{\dim W}W\to H^0(X, \Omega^{\dim  W}_Y)]=0$,  and  for any subspace $W'\subsetneq W$,  we always have $\Im[\Lambda^{\dim W'}W'\to H^0(X, \Omega^{\dim  W'}_X)]\neq0$.    By our assumption, we have $\dim_\bC W\leq \dim X$.  By \cref{thm:CD}, there is a fibration $p:X\to B$ with $B$ a projective normal variety with $\dim B=\dim W-1\leq \dim X-1$   such that $W\subset p^*H^0(B,\Omega_B^1)$.   In particular, the generic rank of the forms in $W$ is $\dim W-1$. 
	 	Consider the partial Albanese morphism $g_{W}:X\to \cA_{X,W}$ associated with $W$.  We shall prove that $p$ can be made as the Stein factorisation of $g_W$.
	 	
Note that each fiber of $p$ is contracted by $g_W$. Therefore, we have a factorisation $X\stackrel{p}{\to} B\stackrel{h}{\to} \cA_{X,W}$. Note that there exists a linear space $W_0\subset H^0(\cA_{X,W}, \Omega^1_{\cA_{X,W}})$   such that $W=g_W^*W_0$.  
 If $\dim h(B)<\dim B$, then the generic rank of $W$ is less or equal to $\dim h(B)$.    This contradicts with \cref{thm:CD}. Therefore, $\dim h(B)=\dim B$.  Let $X\stackrel{p'}{\to} B'\to \cA_{X,W}$ be  the Stein   factorisation of $g_W$. Then there exists a birational morphism $\nu:B\to B'$ such that $p'= \nu\circ p$.  We can thus replace $B$ by $B'$, and $p$ by $p'$.      
 
Recall that  $f:X\to S$ is the Stein factorisation of the partial Albanese morphism $g_V:X\to \cA_{X,V}$  associated with $V$.  As $g_W$ factors through the natural quotient map $\cA_{X,V}\to \cA_{X,W}$, it follows that   $p:X\to B$   factors through   $X\stackrel{f}{\to }S\stackrel{\nu}{\to} B$.   
	
	Assume that $\dim S=\dim B$. Then $\nu$ is birational. Since   $\dim B=\dim W-1$ and the generic rank of $W$ is $\dim W-1$,   it follows that $${\rm Im}[\Lambda^{\dim S} \bV\to H^0(S, \Omega_S^{\dim S})]\neq 0.$$ This  contradicts with our assumption at the beginning.  Hence  $\dim S>\dim B$. 
	
	Let $Z$ be a general fiber of $\nu$ which is positive-dimensional. 
	 Since $W\subset p^*H^0(B,\Omega_B^1)$, and we have assumed that the generic rank of $\bV$ is less than $ \dim S$, it follows that the generic rank of $\Im [\bV\to H^0(Z, \Omega_Z^1)]$ is less than $\dim Z$. This implies that $${\rm Im}[\Lambda^{\dim Z} \bV\to H^0(Z, \Omega_Z^{\dim Z})]=0,$$  which contradicts with our assumption.  Therefore we obtain a contradiction. The lemma is proved. 
\end{proof}

\begin{rem}\label{rem:positive}
Let $Y$ be a normal projective variety. 	Let $\brho=\{\varrho_i:\pi_1(Y)\to {\rm GL}_N(K_i)\}_{i=1,\ldots,k}$ be a family of reductive representations where $K_i$ are non-archimedean local field. Let $\pi: X\to Y$ be a Galois cover dominating all spectral covers induced by $\varrho_i$. Let $V\subset H^0(X, \Omega_X)$ be the set of all spectral forms (cf. \cref{sec:KE} for definitions). We use the same notations as in \cref{lem:rank}. Considering Katzarkov-Eyssidieux reduction maps $s_{\brho}:Y\to S_\brho$ and $s_{\pi^*\brho}:X\to S_{\pi^*\brho}$. One can check that, for every closed subvariety $Z\subset S_{\brho}$,   $\{T_{\brho}^{\dim Z}\}\cdot Z>0$ if and only if for any closed subvariety $W\subset S_{\pi^*\brho}$ dominating $Z$ under $\sigma_\pi: S_{\pi^*\brho}\to S_{\brho}$ defined in \eqref{eq:functorial}, one has  
	$${\rm Im}[\Lambda^{\dim W} \bV\to H^0(W, \Omega_W^{\dim W})]\neq 0.$$ 
	In particular, $V$ is perfect if and only if $\{T_{\brho}\}$  is a K\"ahler class by \cref{thm:DHP}.
\end{rem}

\begin{thm}\label{thm:dichotomy}
	Let $X$ be a smooth projective variety and let $\brho:=\{\varrho_i:\pi_1(X)\to {\rm GL}_N(K_i)\}_{i=1,\ldots,k}$ be a family of   reductive representations where $K_i$ is a non-archimedean local field.   
Let  $s_\brho:X\to S_\brho$  be the Katzarkov-Eyssidieux reduction map. 
	 Let   $T_\brho$  be the canonical $(1,1)$-current  on   $S_\brho$   associated with $\brho$ defined in \cref{def:canonical2}.  Denote by $H_i$ the Zariski closure of $\varrho_i(\pi_1(X))$.   	   
Assume that for any proper closed subvariety  $\Sigma\subsetneq S_\brho$, one has $\{T_\brho\}^{\dim \Sigma}\cdot \Sigma>0$. Then 
	\begin{itemize}
		\item either $\{T_\brho\}^{\dim S_\brho}\cdot S_\brho>0$;
		\item or    the reduction map $s_{\sigma_i}: X\to S_{\sigma_i}$ coincides with  $s_{\varrho_i}:X\to S_{\varrho_i}$ for each $i$, where $\sigma_i:\pi_1(X)\to (H_i/\cD H_i)(K_i)$  is the composition of $\varrho_i$ with the group  homomorphism $H_i\to H_i/\cD H_i$. 
	\end{itemize}  
\end{thm}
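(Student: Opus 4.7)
The plan is to translate the theorem into a statement about perfectness of spectral one-forms on a common spectral cover of $X$, and then apply the Eyssidieux--Simpson Lefschetz theorem \cref{thm:ES} together with \cref{lem:tori}. To begin, I would take a common spectral covering $\pi\colon \xsp \to X$ dominating the spectral covers of all the $\varrho_i$ (cf.\ \cref{sec:KE}), and let $V \subset H^{0}(\xsp,\pi^{*}\Omega_X^{1})$ be the $\bC$-subspace spanned by the spectral one-forms of the $\varrho_i$. Then the reduction map $s_{\pi^{*}\brho}\colon \xsp \to S_{\pi^{*}\brho}$ coincides with the Stein factorisation of the partial Albanese morphism determined by $V$; I write $q\colon S_{\pi^{*}\brho}\to A$ for the induced finite morphism, $\bV := q^{*}V_{0}$ as in \cref{def:perfect}, and $\sigma_\pi\colon S_{\pi^{*}\brho}\to S_\brho$ for the associated finite Galois cover from \cref{sec:KE}.

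By \cref{rem:positive}, the standing hypothesis that $\{T_\brho\}^{\dim\Sigma}\cdot \Sigma>0$ for every proper $\Sigma\subsetneq S_\brho$ translates into the condition that $\Im[\Lambda^{\dim W}\bV \to H^{0}(W,\Omega_{W}^{\dim W})]\neq 0$ for every proper closed subvariety $W\subsetneq S_{\pi^{*}\brho}$; the same remark shows that the first conclusion $\{T_\brho\}^{\dim S_\brho}\cdot S_\brho>0$ is equivalent to $V$ being perfect. I would then apply \cref{lem:rank} to $V$ to obtain a first dichotomy: either $V$ is perfect, giving the first alternative of the theorem, or the generic rank of $V$ equals $\dim_{\bC}V$. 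In the latter case the top form $\eta\in H^{0}(\xsp,\Omega_{\xsp}^{\dim V})$ generated by $V$ is non-zero; writing $(\eta=0) = \cup_k Z_k$, each component $Z_k$ maps to a proper subvariety of $S_{\pi^{*}\brho}$, and an induction on dimension applied to the analogous spectral-form dichotomy on $Z_k$ shows that $V_k := \Im[V\to H^{0}(Z_k,\Omega_{Z_k})]$ is perfect for every $k$. Then \cref{thm:ES} forces either $V$ itself to be perfect, or the fibers of the integration map $\tilde g_V\colon \xsp_V\to V^{*}$ of \eqref{eq:integration} to be connected with $\pi_{1}(\tilde g_V^{-1}(t))\twoheadrightarrow \pi_{1}(\xsp_V)$ surjective for every $t\in V^{*}$.

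To finish in the surjective-on-$\pi_1$ case, I will show for each $i$ that $\varrho_i(\cD\pi_{1}(X))$ is bounded, whereupon \cref{lem:tori} applied with $G = H_i$ gives the second alternative of the theorem. Fix $i$ and consider the $\pi^{*}\varrho_i$-equivariant harmonic mapping $u_i\colon \widetilde{\xsp}\to\Delta(H_i)$ to the Bruhat--Tits building. By construction the $(1,0)$-part of the complexified differential of $u_i$ is a $\bC$-linear combination of the spectral one-forms of $\varrho_i$, hence lies in the image of $V$. Consequently $u_i$ and the Albanese-type map $\tilde g_V$ induce the same foliation by fibers on $\xsp_V$, up to a bounded perturbation in $\Delta(H_i)$. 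The surjectivity $\pi_{1}(\tilde g_V^{-1}(t))\twoheadrightarrow \pi_{1}(\xsp_V)$ then forces $\pi^{*}\varrho_i(\pi_{1}(\tilde g_V^{-1}(t)))$, and therefore $\pi^{*}\varrho_i(\cD\pi_{1}(\xsp))$, to lie in a bounded subgroup of $H_i(K_i)$; since $\pi$ is finite, $\varrho_i(\cD\pi_{1}(X))$ is bounded as well.

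The main obstacle I anticipate is this last step, namely converting the $1$-connectedness of the fibers of $\tilde g_V$ into boundedness of the derived-group image in the Bruhat--Tits building. The translation part of $u_i$ is rigidly controlled by the spectral one-forms via $\tilde g_V$, but the delicate point is to show that the residual semisimple contribution, governed by $\cD H_i$, collapses precisely because the fiber $\tilde g_V^{-1}(t)$ carries all of $\pi_{1}(\xsp_V)$ while having bounded image under the harmonic map.
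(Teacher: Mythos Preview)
Your plan is essentially the paper's. Two clarifications are in order.

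First, the assertion that each component $Z_k$ of $(\eta=0)$ maps to a \emph{proper} subvariety of $S_{\pi^*\brho}$ is not automatic and needs a rank-comparison argument: if $s_{\pi^*\brho}(Z_k)=S_{\pi^*\brho}$, then functoriality of the reduction map together with the standing hypothesis forces the generic rank of $V|_{Z_k}$ to be at least $\dim S_{\pi^*\brho}-1$, contradicting $Z_k\subset(\eta=0)$. Once this is established, perfectness of each $V_k$ follows \emph{directly} from the hypothesis via \cref{rem:positive} and \cref{item:same positivity}; no induction on dimension is needed (and an inductive appeal to the theorem itself would be circular as stated).

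Second, your worry about the final step is misplaced: the argument is simpler than you sketch, and there is no need to split $u_i$ into a ``translation part'' and a ``residual semisimple contribution''. The spectral one-forms are precisely the $(1,0)$-part of the differential of $u_i$, and since $\widetilde{g}_V$ is the integration map of $V$, every element of $V$ restricts to zero on each fiber $\widetilde{g}_V^{-1}(t)$. Hence $u_i$ has vanishing differential along the fiber and is therefore constant on each connected component of the lift of $\widetilde{g}_V^{-1}(t)$ to the universal cover of $\xsp$. Consequently $\pi^*\varrho_i\big(\Im[\pi_1(\widetilde{g}_V^{-1}(t))\to\pi_1(\xsp)]\big)$ fixes a point of the building $\Delta(H_i)$ and is bounded. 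Since $\xsp_V\to\xsp$ is an abelian covering, $\Im[\pi_1(\xsp_V)\to\pi_1(\xsp)]\supset\cD\pi_1(\xsp)$; combined with the $1$-connectedness from \cref{thm:ES} this gives boundedness of $\pi^*\varrho_i(\cD\pi_1(\xsp))$, and the Galois property of $\pi$ lets you pass to $\varrho_i(\cD\pi_1(X))$.
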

\begin{proof}
	Assume that $\{T_\brho\}^{\dim S_\brho}\cdot S_{\brho}=0$. Let $Y\to X$ be a Galois cover which dominates all spectral covers of $\varrho_i$.  We pull back all the spectral one forms on $Y$ to obtain a subspace $V\subset H^0(Y,\Omega_Y^1)$.   Consider the partial Albanese morphism   $g_{V}: Y\to \cA_{Y,V}$ associated to $V$, then  $s_{\pi^*\brho}: Y\to S_{\pi^*\brho}$ is  its Stein factorization with $q: S_{\pi^*\brho}\to \cA_{Y,V}$ the finite morphism. Note that there is a $\bC$-linear subspace $\bV\subset H^0( {S_{\pi^*\brho}},\Omega^1_{S_{\pi^*\brho}})$ such that $s_{\pi^*\brho}^*\bV=V$. 
	\begin{equation*}
		\begin{tikzcd}
			Y\arrow[r, "\pi"] \arrow[d, "s_{\pi^*\brho}"]\arrow[dd, "s_{\pi^*\varrho_i}"', bend right=40] &  X\arrow[dd, "s_{\varrho_i}", bend left=40]\arrow[d, "s_\brho"]\\	S_{\pi^*\brho}  \arrow[r, "\sigma_\pi"] \arrow[d, "q_i"]& S_\brho\arrow[d, "p_i"']\\
			S_{\pi^*\varrho_i} \arrow[r, ] & S_{\varrho_i}
		\end{tikzcd}
	\end{equation*}
	Note that  $\sigma_{\pi}$ is   finite surjective morphism.  By \cref{lem:functorial}  we have $T_{\pi^*\brho}=\sigma_\pi^*T_{\brho}$. By our assumption, for an  proper closed subvariety  $\Xi\subsetneq S_\brho$, one has $\{T_\brho\}^{\dim \Xi}\cdot \Xi>0$.  Hence for an  proper closed subvariety  $\Xi\subsetneq S_{\pi^*\brho}$, one has $\{T_{\pi^*\brho}\} ^{\dim \Xi}\cdot \Xi>0$. According to \cref{rem:positive}, this implies that  $${\rm Im}[\Lambda^{\dim \Xi} \bV\to H^0(\Xi, \Omega_\Xi^{\dim \Xi})]\neq 0.$$  
	Since $\{T_\brho\}^{\dim S}\cdot S=0$, it follows that  $\{T_{\pi^*\brho}\}^{\dim S_{\pi^*\brho}}\cdot S_{\pi^*\brho}=0$.    This implies that  $$\Im[\Lambda^{\dim S_{\pi^*\brho}}\bV\to H^0(S_{\pi^*\brho}, \Omega_{S_{\pi^*\brho}}^{\dim S_{\pi^*\brho}})]=0.$$   Let $r$ be the generic rank $V$. According to \cref{rem:positive},   we have $r= \dim S_{\pi^*\brho}-1$.  By \cref{lem:rank}, we have $r=\dim_\bC V$.  Therefore, $\Im[\Lambda^{r}V\to H^0(Y, \Omega_Y^{r})]\simeq \bC$. 
	\begin{claim} 
For any non-zero   $\eta\in \Im[\Lambda^{r}V\to H^0(Y, \Omega_Y^{r})]$,  
 each irreducible component $Z'$ of $(\eta=0)$ satisfies that $s_{\pi^*\brho}(Z')$ is a proper subvariety of $S_{\pi^*\brho}$. 
	\end{claim}
	\begin{proof}
		Assume that this is not the case.  Let $Z\to Z'$ be a desingularization.    Set  $V':= \Im[V\to H^0(Z,\Omega_Z^1)]$. Denote by $r'$ the generic rank of $V'$. Then $r'<r$ as  $Z'$ is  an irreducible component  of $(\eta=0)$. 
		Write $\iota:Z\to Y$ and  $g: Z\to X$ for the natural map. Then the Katzarkov-Eyssidieux reduction $s_{g^*\brho}: Z\to S_{g^*\brho} $ associated with $g^*\brho$ is  the Stein factorization of the partial Albanese morphism $g_{V'}:Z\to \cA_{Z,V'}$. We have the diagram
		\begin{equation*}
			\begin{tikzcd}
				Z \arrow[r, "\iota"]\arrow[rr, "g", bend left=20]  \arrow[d, "s_{g^*\brho}"] &  Y\arrow[d, "s_{\pi^*\brho}"]\arrow[r]&   X\arrow[d, "s_\brho"]\\
				S_{g^*\brho} \arrow[r, "\sigma_{\iota}"]\arrow[rr,"\sigma_g"', bend right=20]  & S_{\pi^*\brho}\arrow[r]&S_{\brho}  \\ 
			\end{tikzcd}
		\end{equation*}  
		such that $\sigma_{\iota}$ is a finite  \emph{surjective} morphism as we assume that $s_{\pi^*\brho}(Z')=S_{\pi^*\brho}$.    Let $\Sigma\subsetneq S_{g^*\brho}$ be  a proper closed subvariety.  Let $\Sigma':=\sigma_g(\Sigma)$. Since  $\{T_\brho\}^{\dim \Sigma'}\cdot \Sigma'>0$ by our assumption, by \cref{lem:functorial} $\{T_{g^*\brho}\}^{\dim \Sigma}\cdot \Sigma>0$.
By \cref{rem:positive}, it follows that the generic rank $r'$ of $V'$ is equal to $\dim S_{g^*\brho}-1=\dim S_{\pi^*\brho}-1$. This contradicts with the fact that $r'<r=\dim S_{\pi^*\brho}-1$. The claim  is proved.     
	\end{proof}
By the above claim, $s_{\pi^*\brho}(Z')$ is a proper subvariety of $S_{\pi^*\brho}$. 
Therefore, we have  $\{T_\brho\}^{\dim S_{g^*\brho}}\cdot S_{g^*\brho}>0$ . 	
Hence for each irreducible component $Z'$  
of $(\eta=0)$,  $  \Im[V\to H^0(Z,\Omega_Z^1)]$ is perfect by \cref{rem:positive} once again. We can apply \cref{thm:ES} to conclude that  for  the holomorphic map $\widetilde{g}_{V}: Y_V\to V^*$  defined as \eqref{eq:integration}, $(Y_V, \widetilde{g}_{V}^{-1}(t))$ is $1$-connected for any $t\in V^*$.      For the covering $ Y_V\to Y$, we know that ${\rm Im}[\pi_1(Y_V)\to \pi_1(Y)]$ contains the derived subgroup $\cD\pi_1(Y)$ of $\pi_1(Y)$. Then $\pi^*\varrho_i(\Im[\pi_1(Y_V)\to \pi_1(Y)])$ contains $\pi^*\varrho_i(\cD\pi_1(Y))$.       On the other hand, since $(Y_V, \widetilde{g}_{V}^{-1}(t))$ is $1$-connected for any $t\in V^*$, it follows that $\pi^*\varrho_i(\Im[\pi_1(\widetilde{g}_{V}^{-1}(t))\to \pi_1(Y)])$ contains $\pi^*\varrho_i(\cD\pi_1(Y))$. Note that $V$ is consists of all the spectral forms of $\pi^*\varrho_i$ for all $i$, hence each $\pi^*\varrho_i$-equivariant harmonic mapping $u_i$ vanishes over each connected component $p^{-1}(\widetilde{g}_{V}^{-1}(t))$ where $p:\widetilde{Y}\to Y$ is the universal covering. Then $\pi^*\varrho_i(\Im[\pi_1(\widetilde{g}_{V}^{-1}(t))\to \pi_1(Y)])$ fixes a point $P$ in the Bruhat-Tits building, which implies that it is bounded. Therefore, $\pi^*\varrho_i(\cD\pi_1(Y))$ is also bounded.  Note that the image of $\pi_1(Y)\to \pi_1(X)$  is a finite index subgroup of $\pi_1(X)$. 
Hence $ \varrho_i(\cD\pi_1(X))$    is also bounded for each $\varrho_i$. The theorem then follows from \cref{lem:tori}.
\end{proof}
 
 \subsection{A factorization theorem}
 As an application  of \cref{thm:dichotomy}, we will  prove the following factorization theorem which partially generalizes previous theorem by Corlette-Simpson \cite{CS08}.  This result is also a warm-up for the proof of \cref{thm:crucial}.
 \begin{thm}\label{thm:factor}
 	Let $X$ be a  projective normal variety and let $G$ be an almost simple algebraic group defined over  non-archimedean local field $K$ of characteristic zero.  Assume that $\varrho:\pi_1(X)\to G(K)$ is an unbounded,  Zariski dense representation such that for   any closed subvariety $Z$ of $X$,   the Zariski closure of  $\varrho({\rm Im}[\pi_1(Z_{\rm norm})\to \pi_1(X)])$ is a semisimple algebraic group (including the finite group). Then after replacing $X$ by a finite \'etale cover  and a birational modification, there exists a fibration $g:X\to Y$, a representation $\tau:\pi_1(Y)\to G(K)$ such that   $g^*\tau=\varrho$, and  $\dim Y\leq {\rm rank}_KG$. 
 \end{thm}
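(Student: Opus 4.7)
The plan is to take $Y := S_\varrho$, the Katzarkov-Eyssidieux reduction of $\varrho$ from \cref{thm:KE}, as the base of the desired fibration $g = s_\varrho$, and to prove simultaneously (a) that $\varrho$ descends to a representation of $\pi_1(Y)$ after a finite \'etale cover and birational modification, and (b) that $\dim Y \le {\rm rank}_K G$. The three main ingredients are the dichotomy of \cref{thm:dichotomy}, the bounded-normal-subgroup lemma \cref{lem:BT}, and the Cartan-type bound on the number of spectral forms.

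First I would prove that $\{T_\varrho\}^{\dim S_\varrho}\cdot S_\varrho > 0$. This is done by induction on $\dim X$. To apply \cref{thm:dichotomy}, one needs $\{T_\varrho\}^{\dim \Sigma}\cdot \Sigma > 0$ for every proper closed subvariety $\Sigma\subsetneq S_\varrho$. Choose $Z\subset X$ surjecting onto $\Sigma$ with $\dim Z=\dim\Sigma$. By hypothesis the Zariski closure $G'$ of $\varrho({\rm Im}[\pi_1(Z^{\rm norm})\to \pi_1(X)])$ is semisimple. If $G'$ is bounded, then by \cref{thm:KE} the map $s_\varrho$ contracts $Z$, so $\Sigma$ is a point and the positivity is trivial. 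Otherwise $G'$ has an unbounded almost simple factor; the induction hypothesis applied to this factor on $Z^{\rm norm}$ combined with the functoriality of the canonical current (\cref{item:same positivity} of \cref{lem:functorial}) yields the required positivity for $\Sigma$. Now invoke \cref{thm:dichotomy}: since $G$ is almost simple, $G/\cD G$ is finite, so the representation $\sigma:=\varrho \bmod \cD G$ has finite image and $s_\sigma$ is a single point. Because $\varrho$ is unbounded, $s_\varrho$ is not a point, so the second alternative of the dichotomy is ruled out, giving the strict positivity $\{T_\varrho\}^{\dim S_\varrho}\cdot S_\varrho>0$.

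Second I would prove the factorization. After a birational modification of $X$ we may assume $s_\varrho$ is a smooth surjective fibration on a Zariski open set with connected general fibre $F$. The image $\Gamma_F := \varrho({\rm Im}[\pi_1(F)\to \pi_1(X)])$ is bounded by \cref{thm:KE}, and it is normal in $\varrho(\pi_1(X))$ by the homotopy exact sequence of the fibration (cf.\ \cref{lem:surjective}). Since $\varrho(\pi_1(X))$ is Zariski dense and unbounded in the almost simple group $G$, \cref{lem:BT} forces $\Gamma_F$ to be finite. Passing to a finite \'etale cover $X'\to X$ corresponding to a finite index subgroup containing $\Gamma_F$'s lift, we may assume $\Gamma_F=\{1\}$; the special/singular fibres are handled identically because the hypothesis on subvarieties guarantees that $\varrho$ restricted to the normalization of any irreducible component of any fibre has semisimple, hence bounded, hence (again by \cref{lem:BT}) finite image. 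Thus $\varrho$ kills $\ker(s_{\varrho*}:\pi_1(X')\to \pi_1(S_\varrho))$ and descends to the required $\tau:\pi_1(S_\varrho)\to G(K)$ with $g^*\tau=\varrho$.

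Finally I would derive the dimension bound $\dim S_\varrho \le {\rm rank}_K G$. By the construction recalled in \cref{sec:KE}, $S_{\pi^*\varrho}$ maps finitely to the partial Albanese image $A$ of $X^{\rm sp}$ determined by the spectral one-forms $\eta_1,\dots,\eta_m\in H^0(X^{\rm sp},\pi^*\Omega^1_X)$. These forms are the eigenvalues of the Higgs field of the harmonic bundle associated to $\varrho$, and as such take values in a Galois-twisted Cartan subalgebra of $\mathrm{Lie}(G)$, whose dimension equals $\mathrm{rank}_K G$. Consequently $\dim_{\bC}\mathrm{span}\{\eta_i\} \le {\rm rank}_K G$, and therefore $\dim S_\varrho = \dim S_{\pi^*\varrho} \le \dim A \le {\rm rank}_K G$.

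The hardest step is the second one: descending $\varrho$ along the Stein factorization. The generic fibre argument is clean, but one must control the monodromy of \emph{every} irreducible component of every fibre after birational modification. The key point making this tractable is precisely the standing hypothesis that $\varrho$ restricted to any subvariety has semisimple Zariski closure, since this rules out unipotent degenerations on special fibres and lets the bounded-normal-subgroup lemma be applied uniformly.
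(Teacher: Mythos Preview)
Your overall strategy---take $Y=S_\varrho$ and descend $\varrho$---is different from the paper's, which instead invokes \cite[Proposition~2.5]{CDY22} to obtain the fibration $g:X\to Y$ and the \emph{big} representation $\tau$ on $Y$ from the outset, then proves the K\"ahler positivity of $\{T_\tau\}$ on $S_\tau$ (essentially your first step, but with a clean induction on $\dim\Sigma$ rather than on $\dim X$), and finally uses that $s_\tau:Y\to S_\tau$ is birational (from bigness of $\tau$ and almost-simplicity of $G$, via \cite[Theorem~6.1]{CDY22}) to conclude $\dim Y=\dim S_\tau=r\le\rank_K G$, where $r$ is the generic rank of the spectral forms.

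Your second step contains a genuine gap. For a component $F'$ of a special fibre you assert that $\varrho(\mathrm{Im}[\pi_1((F')^{\rm norm})\to\pi_1(X)])$ is finite ``again by \cref{lem:BT}'', but that lemma requires the bounded subgroup to be \emph{normal} in a Zariski-dense unbounded one, which fails for arbitrary fibre components; and ``bounded with semisimple Zariski closure'' certainly does not imply finite (think of $\mathrm{SL}_2(\cO_K)$). This descent along a fibration is precisely the nontrivial content packaged in \cite[Proposition~2.5]{CDY22}, not a routine consequence of the general-fibre argument. Your third step also misfires: the claim $\dim_\bC\mathrm{span}\{\eta_i\}\le\rank_K G$ is false in general (on a curve many rank-one forms can be linearly independent). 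What is bounded by $\rank_K G$ is the \emph{generic rank} of the spectral forms, since they arise from the differential of a harmonic map into a building of dimension $\rank_K G$; but one still needs $\dim S_\varrho$ to equal this generic rank, and that equality is exactly the content of the top-degree positivity $\{T_\varrho\}^{\dim S_\varrho}\cdot S_\varrho>0$ via \cref{rem:positive}. You prove that positivity in your first step but then never invoke it where it is needed.
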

 \begin{proof}
 By \cite[Proposition 2.5]{CDY22},  after replacing $X$ by a finite \'etale cover and a birational modification, there exists a fibration $g:X\to Y$ over a smooth projective variety $Y$, and  a big  representation $\tau:\pi_1(Y)\to G(K)$ such that we have   $g^*\tau=\varrho$. Since $\varrho(\pi_1(X))$ is Zariski dense, it follows that the Zariski closure $G'$ of $\tau(\pi_1(Y))$  contains the identity component of $G$, which is also almost simple. We   replace $G'$ by $G$ and thus $\tau$ is Zariski dense.  Let $s_\tau:Y\to S_\tau$ be the Katzarkov-Eyssidieux reduction of $\tau$.    
 	\begin{claim}\label{claim:kahler}
 		The $(1,1)$-class $\{T_{\tau}\}$ on $S_{\tau}$ is K\"ahler, where $T_{\tau}$  is the canonical current on $S_\tau$ associated to $\tau$. 
 	\end{claim}
 	\begin{proof}
 		By \cref{thm:DHP}, it is equivalent to prove that for any closed subvariety $\Sigma\subset S_\tau$, $\int_\Sigma \{T_\tau\}^{\dim \Sigma}>0$.   We will prove it by induction on $\dim \Sigma$.    
 		
 		\medspace
 		
 		\noindent  \textbf{Induction}. Assume that for every closed subvariety $\Sigma\subset S_\tau$ of dimension $\leq r-1$,   $\{T_{\tau}\}^{\dim \Sigma}\cdot \Sigma>0$. 
 		
 		Let $\Sigma$ be any closed subvariety of $S_\tau$ with $\dim \Sigma=r$. Let $Z$ be a desingularization of any irreducible component  in $s_\tau^{-1}(\Sigma)$ which is surjective over $\Sigma$.  Denote by $f:Z\to Y$. 
 		\begin{equation*}
 			\begin{tikzcd}
 				Z\arrow[r, "f"] \arrow[d, "s_{f^*\tau}"] & Y \arrow[d, "s_\tau"]\\
 				S_{f^*\tau} \arrow[r, "\sigma_f"] &  S_\tau 
 			\end{tikzcd}
 		\end{equation*}
 	By \cref{lem:functorial},  $\sigma_f$ is a finite morphism whose image is $\Sigma$ and $T_{f^*\tau}=\sigma_f^*T_\tau$.

 		We first prove the induction for  $\dim \Sigma=1$. In this case $\dim S_{f^*\tau}=1$.   Since the spectral forms associated to $f^*\tau$ are not constant,  it  follows that  $T_{f^*\tau}$ is big.  By \cref{lem:functorial}, $\{T_{\tau}|_\Sigma\}$ is big. Therefore, we prove the induction when $\dim \Sigma=1$. 
 		
 	Assume now the induction holds for closed subvariety $\Sigma\subset S_\tau$ with $\dim \Sigma\leq r-1$. Let us treat  the  case $\dim \Sigma=r$.  	By \cref{lem:functorial} and the induction, we know that for any closed proper positive dimensional subvariety $\Xi\subset S_{f^*\tau}$, we have $\{T_{f^*\tau}\}^{\dim \Xi}\cdot \Xi>0$.   Note that the conditions in \cref{thm:dichotomy}  for $f^*\tau$ is  fulfilled.  Therefore, there are two possibilities: 	
 		\begin{itemize}
 			\item either $\{T_{f^*\tau}\}^{r}\cdot S_{f^*\tau}>0$; 
 			\item or    the reduction map $s_{f^*\tau}: Z\to S_{f^*\tau}$ coincides with  $s_{\nu}:Z\to S_\nu$, where $\nu:\pi_1(Z)\to (H/\cD H)(K)$  is the composition of $\tau$ with the group  homomorphism $H\to H/\cD H$. Here $H$ is the Zariski closure of $f^*\tau$.  
 		\end{itemize}  
 		If the first case happens, by \cref{lem:functorial} again we have $\int_\Sigma \{T_\tau\}^{\dim \Sigma}>0$.  we finish the proof of the induction for $\Sigma\subset S_\tau$ with $\dim \Sigma=r$. Assume that the second situation occurs. Since  $H$ is assumed to be semisimple, it follows that $H/\cD H$ finite. Therefore, $\nu$ is bounded and thus $S_{f^*\tau}$ is a point. This contradicts with the fact that $\dim S_{f^*\tau}=\dim \Sigma=r>0$.  Therefore, the second situation cannot occur. We finish the proof of the induction. The claim is proved.  
 	\end{proof}
 	This claim in particular implies that 
 	\begin{enumerate}
 \item [($\diamondsuit$)] 
 	the \emph{generic rank}  $r$ of the multivalued holomorphic 1-forms on $Y$ induced by the differential of  harmonic mappings of $\tau$ is equal to $\dim S_\tau$.  
\end{enumerate}
Since $G$ is almost simple, by \cite[Theorem 6.1]{CDY22} we know that the Katzarkov-Eyssidieux reduction map $s_\tau:Y\to S_{\tau}$ is birational.       
Therefore $r=  \dim Y$.  On the other hand, we note that $r$  is less than or equal to the dimension of the Bruhat-Tits building $\Delta(G)_K$, which is equal to ${\rm rank}_KG$. It follows that $\dim Y\leq \rank_KG$. The theorem is proved. 
 \end{proof}
 
 \begin{rem}
 The authors do not know the proof of $(\diamondsuit)$ which does not go through the stronger conclusion \cref{claim:kahler}.
In \cite[p. 148]{Zuo96}, Zuo claimed $(\diamondsuit)$ for much weaker assumption for $\varrho$ without providing a proof.  Using our terminology, it can be stated as follows: 
 \begin{enumerate}
 \item [($\spadesuit$)]  let $X$, $G$ and $K$ be as in \Cref{thm:factor}. If  $\varrho:\pi_1(X)\to G(K)$  is a Zariski dense, unbounded, and big representation, then the generic rank of the multivalued form equals to   $\dim X$. \label{Zuo}
 \end{enumerate}  Zuo's claim is essential in his proof of the result that such $X$ is of general type.  While we cannot provide a counter-example to Claim $(\spadesuit)$, we believe his statement is too strong. Indeed, as we will see in next subsection, we have to apply \cref{thm:positivity} to  add sufficiently many new unbounded representations of $\pi_1(X)$ into non-archimedean local fields such that the collection of all the induced multivalued forms have the rank equal to $\dim X$.  
 
 A new proof, along with a much more general statement on the hyperbolicity of $X$, can be found in  \cite[Theorem I]{CDY22}. We  emphasize that  in \cite{CDY22} we avoid the use of Claim $(\spadesuit)$, and use a completely different method. 
 \end{rem}

\subsection{Constructing K\"ahler classes via representations into non-archimedean fields} 
Let $X$ be a smooth projective variety.  In this subsection we will prove a more general theorem than \cref{thm:positivity}. 
\begin{thm} \label{thm:crucial}
	Let $\kC$ be absolutely constructible subset of $M_{\rm B}(X, N)(\bC)$. Then   there is a family of   representations  $\btau:=\{\tau_i:\pi_1(X)\to {\rm GL}_N(K_i)\}_{i=1,\ldots,M}$ where $K_i$ are non-archimedean local fields   such that
	\begin{itemize}
		\item For each $i=1,\ldots,M$, $[\tau_i]\in \kC(K_i)$; 
		\item  The reduction map $s_{\btau}:X\to S_{\btau}$ of $\btau$ coincides with $s_{\kC}:X\to S_\kC$ defined in \cref{def:reduction ac}.
		\item For the canonical current $T_{\btau}$ defined over $S_{\kC}$, $\{T_{\btau}\}$ is a K\"ahler class. 
	\end{itemize} 
\end{thm}

To prove this theorem, we need several preparations.
Let $\tau:\pi_1(X)\to \mathrm{GL}_N(K)$ be a reductive representation, where $K$ is a non-archimedian local field of characteristic zero.
 Then we get the spectral covering $\pi:\xsp\to X$, the spectral one forms $\eta_1,\ldots,\eta_l\in H^0(\xsp,\pi^*\Omega^1_X)$, the reduction map $s_{\tau}:X\to S_{\tau}$ and the canonical current $T_{\tau}$ on $S_{\tau}$ (cf. \cref{def:canonical}).

We define a Zariski closed set $W_{\tau}\subset TX$ as follows.
Each spectral one form $\eta_i\in  H^0(\xsp,\pi^*\Omega^1_X)$ determines a section of $H^0(\pi^*TX,\mathcal{O}_{\pi^*TX})$, where $\pi^*TX=TX\times_X\xsp $.
Hence we get the zero locus $\{\eta_i=0\}\subset \pi^*TX$.
We set $W_{\tau}'=\cap_{i=1}^l\{\eta_i=0\}$.
Then $W_{\tau}'\subset \pi^*TX$ is Zariski closed.
We set $W_{\tau}=p(W_{\tau}')$.
Since the natural map $p:\pi^*TX\to TX$ is finite, $W_{\tau}$ is Zariski closed.
Note that the set $\{\eta_1,\ldots,\eta_l\}$ is invariant under the action of the Galois group $\mathrm{Gal}(\xsp/X)$.
Hence we have
\begin{equation}\label{eqn:20240417}
p^{-1}(W_{\tau})=W_{\tau}'.
\end{equation}

Let $f:Z\to X$ be a morphism from another smooth projective variety $Z$.
Then we get $W_{f^*\tau}\subset TZ$ from $f^*\tau:\pi_1(Z)\to \mathrm{GL}_N(K)$.

\begin{lem}\label{lem:20240411}
Under the map $f_*:TZ\to TX$, we have $W_{f^*\tau}=(f_*)^{-1}W_{\tau}$.
\end{lem}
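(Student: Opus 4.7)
The core idea is that $W_{\tau}$ depends only on the harmonic datum underlying $\tau$ (essentially on the eigenvalue-forms of the ``Higgs field\fg, i.e.\ the $(1,0)$-part of the complexified differential of the $\tau$-equivariant harmonic map into the Bruhat-Tits building), and this harmonic datum pulls back functorially under $f:Z\to X$. I would first recall from \cref{sec:KE} that $\eta_1,\ldots,\eta_l\in H^0(\xsp,\pi^*\Omega^1_X)$ are the spectral one-forms, i.e.\ the eigenvalue-one-forms of the Higgs field, made single-valued by passing to the Galois ramified cover $\pi:\xsp\to X$. The set $\{\eta_1,\ldots,\eta_l\}$ is permuted by the Galois action of $\mathrm{Gal}(\xsp/X)$, which is what makes $W_{\tau}'$ Galois-stable and yields the identity $p^{-1}(W_{\tau})=W_{\tau}'$ recalled in \eqref{eqn:20240417}. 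Next, by \cref{thm:reductive} the pullback $f^*\tau:\pi_1(Z)\to \mathrm{GL}_N(K)$ is reductive, and by Mochizuki's uniqueness for tame pure imaginary harmonic bundles (\cref{moc}) together with functoriality of pullback (analogous to the compact case used in \cref{lem:C*}), the tame harmonic bundle associated with $f^*\tau$ coincides with the pullback of that associated with $\tau$. Consequently the spectral one-forms on $Z$ associated with $f^*\tau$ are precisely the pullbacks of $\eta_1,\ldots,\eta_l$ under the natural cover, and the locus $W_{f^*\tau}$ has an analogous intrinsic description.

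The next step is to exhibit a concrete diagram realising this functoriality. Form $Y := (Z\times_X\xsp)^{\mathrm{norm}}$ with its natural finite map $\pi_Y:Y\to Z$ and the induced map $\tilde f:Y\to\xsp$; then $\tilde f^*\eta_1,\ldots,\tilde f^*\eta_l\in H^0(Y,\pi_Y^*\Omega^1_Z)$ are the spectral one-forms for $f^*\tau$ on the (possibly non-minimal) cover $Y\to Z$. This gives the commutative square
\[
\begin{tikzcd}
TZ\times_Z Y \arrow[r,"\tilde f_*"] \arrow[d,"p_Y"] & \pi^*TX \arrow[d,"p"] \\
TZ \arrow[r,"f_*"] & TX
\end{tikzcd}
\]
in which, by construction, the preimage of $W_{\tau}'=\bigcap_i\{\eta_i=0\}$ under $\tilde f_*$ equals $\bigcap_i\{\tilde f^*\eta_i=0\}$, and the projection to $TZ$ of the latter is $W_{f^*\tau}$. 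The inclusion $W_{f^*\tau}\subseteq f_*^{-1}W_{\tau}$ is then a direct chase: if $v=p_Y(w)$ with $\tilde f_*(w)\in W_{\tau}'$, then $f_*v=p(\tilde f_*w)\in p(W_{\tau}')=W_{\tau}$. For the converse, given $v\in TZ$ with $f_*v\in W_{\tau}$, equation \eqref{eqn:20240417} yields $p^{-1}(f_*v)\subset W_{\tau}'$; then using that $Y$ is built as a fibre product over $X$ we can lift any point of $\pi^{-1}(f(\pi_{TZ}(v)))$ to $Y$ above $\pi_{TZ}(v)$, producing a point $w\in TZ\times_Z Y$ with $p_Y(w)=v$ and $\tilde f_*(w)\in W_{\tau}'$, whence $v\in W_{f^*\tau}$.

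The only genuinely delicate point is the first step, namely ensuring that the $W$-set defined from the minimal spectral cover of $f^*\tau$ coincides with the $W$-set computed from the (possibly non-minimal) cover $Y$; equivalently, that $W_{f^*\tau}$ depends only on the unordered set-with-multiplicity of eigenvalue-forms, not on the particular Galois cover on which they are realised. This I would verify by noting that $W_{\tau}$ can be characterised intrinsically as the set of $v\in TX$ such that every spectral value at $v$ vanishes (equivalently, that the Higgs endomorphism attached to $v$ is ``nilpotent\fg in the appropriate sense), a characterisation which is manifestly independent of the chosen cover and is preserved by pullback since $f^*\theta(v)=\theta(f_*v)$. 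Once this independence statement is recorded, the diagram chase above completes the proof with no further work.
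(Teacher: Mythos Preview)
Your approach is correct and follows essentially the same route as the paper: both hinge on the functoriality of the spectral one-forms under pullback and the Galois-invariance identity \eqref{eqn:20240417}, followed by a short diagram chase. The paper's execution is slightly more direct—it works with the natural map $\zsp\to\xsp$ and the equality $\{\eta_i\circ g\}=\{\sigma_j\}$ of spectral-form \emph{sets}, so the cover-independence issue you flag never arises—whereas you route through the fibre product $(Z\times_X\xsp)^{\mathrm{norm}}$ and then bridge the gap to the true spectral cover via the intrinsic ``nilpotent-Higgs\fg characterisation of $W_\tau$; both variants yield the same conclusion.
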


\begin{proof}
We have a natural map $\zsp\to \xsp$ induced from $f:Z\to X$.
This induces the following commutative diagram:
 \begin{equation*}
   	\begin{tikzcd}
   		\pi_Z^*TZ \arrow[r, " g"]  \arrow[d, "	p_Z"] &  \pi^*TX\arrow[d, "p"]\\
   		TZ \arrow[r, "f_*"]  & TX\\ 
   	\end{tikzcd}
   \end{equation*}  
   Let $\{\sigma_1,\ldots,\sigma_k\}\subset H^0(\pi_Z^*TZ,\mathcal{O}_{\pi_Z^*TZ})$ be the set of spectral one forms defined by $f^*\tau:\pi_1(Z)\to\mathrm{GL}_N(K)$.
   Then we have $\{\eta_1\circ g,\ldots,\eta_l\circ g\}=\{\sigma_1,\ldots,\sigma_k\}$ as sets.
   Hence $g^{-1}W'_{\tau}=W'_{f^*\tau}$.
   By \eqref{eqn:20240417}, we have $p_Z^{-1}(f_*)^{-1}W_{\tau}=W'_{f^*\tau}$.
   Since $\pi_Z:\zsp\to Z$ is surjective, $p_Z$ is also surjective.
   Hence $W_{f^*\tau}=p_Z(W'_{f^*\tau})=(f_*)^{-1}W_{\tau}$.
\end{proof}

 Let $\kC$ be an absolutely constructible subset of $M_{\rm B}(X, N)(\bC)$.
 Let $\btau=\{\tau_1,\ldots,\tau_k\}$ be a set of reductive representations over non-archimedian local fields of characteristic zero such that $[\tau_i]\in\kC$.
 Then we get $s_{\btau}:X\to S_{\btau}$ and $T_{\btau}$ (cf. \cref{def:canonical2}).
We define $W_{\btau}\subset TX$ by $W_{\btau}=\cap_{i=1}^kW_{\tau_i}$.

\begin{cor}\label{cor:20240412}
Let $f:Z\to X$ be a morphism of smooth projective varieties.
 Then we have $(f_*)^{-1}(W_{\btau})=W_{f^*\btau}$ under the map $f_*:TZ\to TX$.
\end{cor}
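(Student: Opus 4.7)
The plan is to deduce this corollary directly from \cref{lem:20240411} together with the fact that taking preimages commutes with intersections. Specifically, by the definition of $W_{\btau}$ applied to the family $\btau = \{\tau_1,\ldots,\tau_k\}$, we have
\[
W_{\btau} = \bigcap_{i=1}^k W_{\tau_i} \subset TX,
\]
and similarly, since $f^*\btau = \{f^*\tau_1,\ldots,f^*\tau_k\}$, we have
\[
W_{f^*\btau} = \bigcap_{i=1}^k W_{f^*\tau_i} \subset TZ.
\]

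I would then apply \cref{lem:20240411} term by term: for each index $i$, the lemma gives $(f_*)^{-1}(W_{\tau_i}) = W_{f^*\tau_i}$. Since the preimage of an intersection is the intersection of preimages, it follows that
\[
(f_*)^{-1}(W_{\btau}) = (f_*)^{-1}\Bigl(\bigcap_{i=1}^k W_{\tau_i}\Bigr) = \bigcap_{i=1}^k (f_*)^{-1}(W_{\tau_i}) = \bigcap_{i=1}^k W_{f^*\tau_i} = W_{f^*\btau},
\]
which is exactly the desired identity.

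There is no real obstacle here since the full content is already packaged into \cref{lem:20240411}; the only thing to verify is the purely set-theoretic commutation of preimage with finite intersection, which is immediate. In particular, one does not need to revisit the spectral coverings or the Galois invariance used in the proof of \cref{lem:20240411}. The resulting proof will be essentially one display line long.
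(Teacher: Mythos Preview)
Your proposal is correct and matches the paper's approach exactly: the paper's proof is the single sentence ``This follows directly from \cref{lem:20240411},'' and your argument spells out precisely how, by intersecting over the finitely many $\tau_i$ and using that preimages commute with intersections.
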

 
 \begin{proof}
 This follows directly from \cref{lem:20240411}.
 \end{proof}

 \begin{lem}\label{lem:20240412}
 Assume that $\dim S_{\btau}=\dim X$.
 Then $\{T_{\btau}\}^{\dim S_{\btau}}\cdot S_{\btau}>0$ if and only if  $W_{\btau}\cap T_xX=\{0\}$ for generic points $x\in X$. 
 \end{lem}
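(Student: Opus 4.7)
\textbf{Proof proposal for \cref{lem:20240412}.} The plan is to pull everything up to a common spectral covering, where the current $s_{\btau}^{*}T_{\btau}$ acquires an explicit smooth representative built from the spectral one-forms, and then read off positivity pointwise. Concretely, I will construct a normal projective variety $\xsp$ together with a finite (ramified) Galois covering $\pi:\xsp\to X$ that dominates all the spectral coverings of the individual $\tau_{i}$ (after a desingularisation we may assume $\xsp$ smooth), and on $\xsp$ I will collect the full family of spectral one-forms $\{\eta_{i,j}\}\subset H^{0}(\xsp,\pi^{*}\Omega_{X}^{1})$, where $\{\eta_{i,j}\}_{j}$ are the spectral forms of $\tau_{i}$. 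Using the commutative diagram $\sigma_{\pi}\circ s_{\pi^{*}\tau_{i}}=s_{\tau_{i}}\circ\pi$ together with the defining identities $\sigma_{\pi}^{*}T_{\tau_{i}}=T_{\pi^{*}\tau_{i}}=b_{i}^{*}\sum_{j}\sqrt{-1}\,\eta_{i,j}'\wedge\overline{\eta_{i,j}'}$ from \cref{sec:KE}, I will compute
\[
\pi^{*}s_{\btau}^{*}T_{\btau}\;=\;\sum_{i=1}^{k}\pi^{*}s_{\tau_{i}}^{*}T_{\tau_{i}}\;=\;\sum_{i,j}\sqrt{-1}\,\eta_{i,j}\wedge\overline{\eta_{i,j}}\;=:\omega,
\]
which is a smooth semipositive $(1,1)$-form on $\xsp$.

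Next I will analyse $\omega$ pointwise. Since $\pi$ is finite and both $X$ and $\xsp$ have the same dimension, $d\pi$ is an isomorphism at every point of a Zariski open set $U\subset\xsp$, and the spectral forms $\eta_{i,j}$ lie in $\pi^{*}\Omega_{X}^{1}$, so via $d\pi$ they may be viewed as linear forms on $T_{\pi(\tilde x)}X$. At any $\tilde x\in U$, the form $\omega$ is positive definite at $\tilde x$ if and only if the family $\{\eta_{i,j}(\tilde x)\}$ spans $T_{\pi(\tilde x)}^{*}X$, equivalently if and only if their common zero set in $T_{\pi(\tilde x)}X$ is $\{0\}$. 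By the very definition of $W_{\tau_{i}}$ and by \eqref{eqn:20240417}, this common zero set is exactly $W_{\btau}\cap T_{\pi(\tilde x)}X$. Thus $\omega$ is positive definite at some (equivalently, a dense Zariski open set of) point of $\xsp$ if and only if $W_{\btau}\cap T_{x}X=\{0\}$ for generic $x\in X$.

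Finally I will convert this pointwise statement into the desired cohomological statement. The hypothesis $\dim S_{\btau}=\dim X$ makes $s_{\btau}$ generically finite of some degree $d\geq 1$; combined with $\deg\pi$, the projection formula for currents with continuous potentials (Bedford--Taylor, applicable because $T_{\btau}$ has continuous local potentials by \cref{lem:functorial}) yields
\[
\int_{\xsp}\omega^{\dim X}\;=\;d\cdot\deg(\pi)\cdot\{T_{\btau}\}^{\dim S_{\btau}}\!\cdot S_{\btau}.
\]
Since $\omega$ is a \emph{smooth} semipositive $(1,1)$-form, $\int_{\xsp}\omega^{\dim X}>0$ if and only if $\omega$ is positive definite at a generic point of $\xsp$; combined with the previous paragraph this gives the claimed equivalence. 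The main technical obstacle is the careful verification of the identity $\pi^{*}s_{\btau}^{*}T_{\btau}=\omega$ when $\xsp$ (or $S_{\btau}$) is singular, and the corresponding degree formula for $T_{\btau}^{\dim S_{\btau}}$: one has to argue either directly on the normalisations using continuity of potentials and \cref{lem:Demailly}, or by passing to a simultaneous resolution of $\xsp$ and $S_{\btau}$ and invoking the functoriality of \cref{lem:functorial} to ensure the computation is insensitive to the chosen model.
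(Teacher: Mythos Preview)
Your proposal is correct and follows essentially the same route as the paper: pass to a common Galois cover $Y\to X$ dominating all the spectral covers, and reduce positivity of $\{T_{\btau}\}^{\dim S_{\btau}}\cdot S_{\btau}$ to the spectral one-forms spanning $(T_yY)^*\simeq (T_xX)^*$ at a generic point, which is precisely the condition $W_{\btau}\cap T_xX=\{0\}$. The paper's version is more compressed---it cites the criterion packaged in \cref{rem:positive} (non-vanishing of ${\rm Im}[\Lambda^{\dim Y}V\to H^0(Y,\Omega_Y^{\dim Y})]$) rather than writing out the explicit smooth form $\omega=\sum_{i,j}\sqrt{-1}\,\eta_{i,j}\wedge\overline{\eta_{i,j}}$ and the projection/degree formula---but the content is the same; note also that since $s_{\btau}$ has connected fibres the degree $d$ in your formula is simply $1$, and you need not desingularise $\xsp$ (working on the normal cover suffices, as the spectral forms are already holomorphic there).
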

 
 \begin{proof}
 Let $\pi:Y\to X$ be a Galois cover dominating all spectral covers induced by $\tau_1,\ldots,\tau_l$.
 Let $V\subset H^0(Y,\Omega_Y)$ be the set of all spectral forms.
 We have $\{T_{\btau}\}^{\dim S_{\btau}}\cdot S_{\btau}>0$ 
if and only if 
 $${\rm Im}[\Lambda^{\dim Y} V\to H^0(Y, \Omega_{Y}^{\dim Y})]\neq 0.$$ 
 This is true if and only if for generic smooth points $y\in Y$, the induced map $V\to (T_yY)^*$ is surjective, which is equivalent to $W_{\btau}\cap T_xX=\{0\}$ for generic points $x\in X$.
 \end{proof}

 We say $\btau$ is \emph{full with respect to $\kC$} if the following holds:
 \begin{itemize}
 \item $s_{\btau}:X\to S_{\btau}$ coincides with $s_{\kC}:X\to S_{\kC}$,
 \item $W_{\btau}$ is \emph{minimal}, i.e., for every reductive representation $\sigma$ such that $[\sigma]\in \kC$, we have $W_{\btau}\subset W_{\sigma}$.
 \end{itemize}
 
 \begin{lem}\label{lem:20240409}
 Assume that $s_{\btau}:X\to S_{\btau}$ coincides with $s_{\kC}:X\to S_{\kC}$ and that $\{T_{\btau}\}$ is a K\"ahler class. 
 Then for every $\btau'$ which is full with respect to $\kC$, $\{T_{\btau'}\}$ is a K\"ahler class. 
 \end{lem}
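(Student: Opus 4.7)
The plan is to verify the Demailly--P\u{a}un criterion (\cref{thm:DHP}) for $\{T_{\btau'}\}$ on $S_{\kC}$; concretely, I will show that $\{T_{\btau'}\}^{\dim Z}\cdot Z>0$ for every positive dimensional closed subvariety $Z\subset S_{\kC}$. The key input is the inclusion provided by fullness: for each $\tau_i\in \btau$, fullness of $\btau'$ with respect to $\kC$ applied to $\sigma=\tau_i$ gives $W_{\btau'}\subset W_{\tau_i}$, and intersecting over $i$ yields $W_{\btau'}\subset W_{\btau}$. This inclusion is what will transport positivity from $T_{\btau}$ to $T_{\btau'}$.

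For a given positive dimensional $Z\subset S_{\kC}$, I would next construct an auxiliary morphism $f:Y\to X$ from a smooth projective variety $Y$ of dimension $\dim Z$ such that $s_{\kC}\circ f$ is generically finite onto $Z$; this can be produced by desingularizing a suitable irreducible component of $s_{\kC}^{-1}(Z)$ that dominates $Z$ and cutting by iterated general hyperplane sections until the dimension drops to $\dim Z$. Because $s_{\btau}=s_{\btau'}=s_{\kC}$, the functoriality of \cref{lem:functorial} produces finite morphisms $\sigma_f:S_{f^*\btau}\to S_{\kC}$ and $\sigma_f':S_{f^*\btau'}\to S_{\kC}$, each of whose image has closure $Z$; the equality $\dim Y=\dim Z$ then forces $\dim S_{f^*\btau}=\dim S_{f^*\btau'}=\dim Y$, placing us in the range of validity of \cref{lem:20240412}.

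The conclusion is then a short chain. By \cref{item:same positivity} of \cref{lem:functorial}, the hypothesis $\{T_{\btau}\}^{\dim Z}\cdot Z>0$ is equivalent to $\{T_{f^*\btau}\}^{\dim S_{f^*\btau}}\cdot S_{f^*\btau}>0$, which by \cref{lem:20240412} is in turn equivalent to $W_{f^*\btau}\cap T_yY=\{0\}$ for a generic $y\in Y$. Invoking \cref{cor:20240412} together with $W_{\btau'}\subset W_{\btau}$ gives $W_{f^*\btau'}=(f_*)^{-1}W_{\btau'}\subset (f_*)^{-1}W_{\btau}=W_{f^*\btau}$, so the same generic transversality persists for $f^*\btau'$. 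Reversing the two equivalences yields $\{T_{\btau'}\}^{\dim Z}\cdot Z>0$, completing the Demailly--P\u{a}un check.

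The main point requiring real care is the dimension identity $\dim S_{f^*\btau'}=\dim Y$: it rests on the fact that $s_{\btau'}$ and $s_{\btau}$ agree as morphisms to $S_{\kC}$, so that the pullback reductions along $f$ detect exactly the same image $Z$ inside $S_{\kC}$. Once this is in place, everything else is a formal consequence of the functorial machinery (\cref{lem:functorial}, \cref{cor:20240412}, \cref{lem:20240412}) already established in the paper, and the Kähler conclusion for $\{T_{\btau'}\}$ follows immediately from \cref{thm:DHP}.
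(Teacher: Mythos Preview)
Your proposal is correct and follows essentially the same approach as the paper's proof: both verify the Demailly--P\u{a}un criterion by choosing, for each subvariety $\Sigma\subset S_{\kC}$, a smooth variety of the same dimension mapping generically finitely onto $\Sigma$, then use $W_{\btau'}\subset W_{\btau}$ together with \cref{cor:20240412} and \cref{lem:20240412} to transfer generic transversality from $f^*\btau$ to $f^*\btau'$, and finally pull positivity back via \cref{lem:functorial}. The only cosmetic difference is that the paper picks directly a subvariety $\Sigma'\subset X$ with $\dim\Sigma'=\dim\Sigma$ and desingularizes it, whereas you describe the equivalent construction via hyperplane sections; your explicit remark on why $\dim S_{f^*\btau'}=\dim Y$ is a welcome clarification that the paper leaves implicit.
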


\begin{proof}
Let $\Sigma\subset S_{\kC}$ be a closed subvariety.
We shall show $\{T_{\btau'}\}^{\dim \Sigma}\cdot \Sigma>0$.
Let $\Sigma'\subset X$ be a closed subvariety such that $\dim \Sigma'=\dim \Sigma$ and $s_{\btau}(\Sigma')=\Sigma$.
Let $Z\to \Sigma'$ be a desingularization and let $f:Z\to X$ be the induced map.
We consider $s_{f^*\btau}:Z\to S_{f^*\btau}$.
Since $\{T_{\btau}\}$ is a K\"ahler class, we have $\{T_{f^*\btau}\}^{\dim S_{f^*\btau}}\cdot S_{f^*\btau}>0$ (cf. \cref{lem:functorial}).
By \cref{lem:20240412}, we have $W_{f^*\btau}\cap T_xZ=\{0\}$ for generic points $x\in Z$.

Since $\btau'$ is full, we have $W_{\btau'}\subset W_{\btau}$.
Hence by \cref{cor:20240412}, we have $W_{f^*\btau'}\subset W_{f^*\btau}$.
Hence we have $W_{f^*\btau'}\cap T_xZ=\{0\}$ for generic points $x\in Z$.
Hence by \cref{lem:20240412}, we have $\{T_{f^*\btau'}\}^{\dim S_{f^*\btau'}}\cdot S_{f^*\btau'}>0$.
Thus by \cref{lem:functorial}, we have $\{T_{\btau'}\}^{\dim \Sigma}\cdot \Sigma>0$.
\end{proof}

 \begin{lem}\label{lem:20240410}
 Let $\btau=\{\tau_1,\ldots,\tau_k\}$ be full with respect to $\kC$.
 Suppose $\{T_{\btau}\}^{\dim \Sigma}\cdot \Sigma>0$ for every proper closed subvariety $\Sigma\subsetneqq S_{\btau}$.
 Then $\{T_{\btau}\}$ is a K\"ahler class. 
 \end{lem}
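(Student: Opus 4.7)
\textbf{Proof plan for \cref{lem:20240410}.} The plan is to verify the numerical K\"ahler criterion: by \cref{thm:DHP}, combined with the hypothesis on proper subvarieties $\Sigma\subsetneq S_\btau$, it suffices to establish $\{T_\btau\}^{\dim S_\btau}\cdot S_\btau>0$, whence $\{T_\btau\}$ is K\"ahler. I apply \cref{thm:dichotomy} to $\btau$, whose hypothesis is exactly what is supplied here, obtaining two alternatives: either (1) $\{T_\btau\}^{\dim S_\btau}\cdot S_\btau>0$, in which case \cref{thm:DHP} finishes the proof; or (2) for each $i$, the reduction $s_{\sigma_i}:X\to S_{\sigma_i}$ coincides with $s_{\tau_i}:X\to S_{\tau_i}$, where $\sigma_i:\pi_1(X)\to T_i(K_i)$ is the composition of $\tau_i$ with the abelianization $H_i\to T_i:=H_i/\cD H_i$ of the Zariski closure $H_i$ of $\tau_i(\pi_1(X))$. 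It remains to handle case~(2) and deduce $\{T_\btau\}$ K\"ahler directly (making the hypothesis redundant).

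In case (2), the group $H_i$ is defined over a number field $k_i\subset K_i$. Applying \cref{thm:S2} to the diagram $M_{\rm B}(X,N)\leftarrow M_{\rm B}(X,H_i)\to M_{\rm B}(X,T_i)$, I transfer $\kC$ to an absolutely constructible subset $\kC_i\subset M_{\rm B}(X,T_i)$ containing $[\sigma_i]$. Since $T_i$ is an algebraic torus, \cref{thm:positivity} applies to each $\kC_i$ and produces a family $\brho_i=\{\varrho_{ij}:\pi_1(X)\to T_i(K_{ij})\}_j$ with $[\varrho_{ij}]\in \kC_i$, reduction $s_{\brho_i}=s_{\kC_i}$, and canonical current class $\{T_{\brho_i}\}$ K\"ahler on $S_{\kC_i}$. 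Each $\varrho_{ij}$ lifts (over a finite extension $F_{ij}$) to a reductive $\delta_{ij}:\pi_1(X)\to\GL_N(F_{ij})$ with $[\delta_{ij}]\in\kC$; its reduction $s_{\delta_{ij}}$ factors through $s_\kC=s_\btau$, and since $\varrho_{ij}$ is obtained from $\delta_{ij}$ by postcomposition with the algebraic map $H_i\to T_i$, the reduction $s_{\varrho_{ij}}$ also factors through $s_\kC$. I form the enlarged family $\bm\delta:=\btau\cup\{\varrho_{ij}\}_{i,j}$; then $s_{\bm\delta}=s_\btau$, and \cref{def:canonical2} gives
\[
T_{\bm\delta}=T_\btau+\sum_{i}\pi_i^*T_{\brho_i},
\]
where $\pi_i:S_\btau\to S_{\kC_i}$ is the morphism coming from $s_{\kC_i}=\pi_i\circ s_\btau$ (existing because $s_\kC=s_\btau$ is finer than $s_{\kC_i}$: any $[\sigma]\in\kC_i$ lifts to some $[\tau]\in\kC$ whose reduction is finer than $s_\sigma$).

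The key point is that the combined morphism $\pi:=(\pi_1,\ldots,\pi_k):S_\btau\to\prod_iS_{\kC_i}$ is finite. It is proper between projective normal varieties, so it suffices to prove injectivity. If $\pi(s_\btau(x))=\pi(s_\btau(y))$, then $s_{\kC_i}(x)=s_{\kC_i}(y)$ for every $i$; since $s_{\sigma_i}$ factors through $s_{\kC_i}$ (as $[\sigma_i]\in\kC_i$), this yields $s_{\sigma_i}(x)=s_{\sigma_i}(y)$; by the case (2) identification $s_{\sigma_i}=s_{\tau_i}$, we obtain $s_{\tau_i}(x)=s_{\tau_i}(y)$ for all $i$, and hence $s_\btau(x)=s_\btau(y)$. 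Setting $T_0:=\sum_i\mathrm{pr}_i^*T_{\brho_i}$, which is K\"ahler on $\prod_iS_{\kC_i}$, the finiteness of $\pi$ together with \cref{thm:DHP} implies that $\{\pi^*T_0\}=\{\sum_i\pi_i^*T_{\brho_i}\}$ is K\"ahler on $S_\btau$. Since $T_{\bm\delta}-\pi^*T_0=T_\btau$ is a positive closed current with continuous potentials, its class is nef (intersecting each subvariety non-negatively by \cref{lem:functorial}), so $\{T_{\bm\delta}\}=\{T_\btau\}+\{\pi^*T_0\}$ is K\"ahler. Finally, \cref{lem:20240409} applied with auxiliary family $\bm\delta$ (satisfying $s_{\bm\delta}=s_\kC$ and K\"ahler canonical current class) and with the full family $\btau$ yields $\{T_\btau\}$ K\"ahler, completing case~(2).

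The main obstacle is establishing finiteness of the simultaneous morphism $\pi:S_\btau\to\prod_iS_{\kC_i}$, which is the mechanism that transports the abelianized K\"ahlerness (obtained on each $S_{\kC_i}$ from \cref{thm:positivity}) back to $S_\btau$. This finiteness rests crucially on the case (2) output $s_{\sigma_i}=s_{\tau_i}$ of the dichotomy theorem, without which distinct points of $S_\btau$ could collapse under $\pi$; the rest of the argument is a bookkeeping assembly of \cref{thm:S2}, \cref{thm:positivity}, \cref{thm:DHP}, and \cref{lem:20240409}.
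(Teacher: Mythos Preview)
Your overall strategy coincides with the paper's: apply \cref{thm:dichotomy}, and in case~(2) pass to the torus quotients $T_i=H_i/\cD H_i$, invoke \cref{thm:positivity} on the transferred sets $\kC_i\subset M_{\rm B}(X,T_i)$, and return via \cref{lem:20240409}. Two steps, however, contain real gaps.

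First, the injectivity argument for $\pi:S_\btau\to\prod_i S_{\kC_i}$ fails at its last implication: from $s_{\tau_i}(x)=s_{\tau_i}(y)$ for all $i$ one cannot conclude $s_\btau(x)=s_\btau(y)$, since $s_\btau$ is the \emph{Stein factorisation} of $\prod_i s_{\tau_i}$ and therefore separates distinct connected components of a common fibre. Fortunately you only need $\pi$ to be \emph{finite}, and this follows at once from the factorisation $h=(\prod_i g_i)\circ\pi$, where $h:S_\btau\to\prod_i S_{\tau_i}$ is the finite map from the Stein factorisation and $g_i:S_{\kC_i}\to S_{\tau_i}=S_{\sigma_i}$ comes from $[\sigma_i]\in\kC_i$; the paper instead proves directly (\cref{claim:coincide}) that $s_\btau$ coincides with the Stein factorisation of $\prod_i s_{\kC_i}$, by showing each factors through the other.

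Second, and more substantially, your invocation of \cref{lem:20240409} with auxiliary family $\bm\delta=\btau\cup\{\varrho_{ij}\}$ is not licensed. The proof of that lemma relies on the standing hypothesis (declared in the paragraph preceding it) that every member of the auxiliary family has class in $\kC$: this is exactly what yields $W_{\btau}\subset W_{\text{aux}}$ from fullness of $\btau$. Your $\varrho_{ij}$ have class in $\kC_i\subset M_{\rm B}(X,T_i)$, not in $\kC\subset M_{\rm B}(X,N)$. The paper repairs this by taking the auxiliary family to be $\{\delta_{ij}\}$ (which \emph{do} lie in $\kC$) and transferring the K\"ahlerness of $\sum_i e_i^\ast T_{\brho_i}$ to $\{T_{\{\delta_{ij}\}}\}$ via the current inequality $q_{ij}^\ast T_{\varrho_{ij}}\le T_{\delta_{ij}}$ of \eqref{eq:small} (valid because $\varrho_{ij}$ is conjugate to $\eta_{ij}$, which is a quotient of $\delta_{ij}$); then \cref{lem:20240409} applies verbatim. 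You introduce the lifts $\delta_{ij}$ but never use them in the auxiliary family; doing so, together with \eqref{eq:small}, is the missing ingredient.
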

 
 \begin{proof}
We will apply \cref{thm:dichotomy}.
We have two possibilities.
Assume that the first case occurs.
In this case, we have $\{T_{\btau}\}^{\dim S_{\btau}}\cdot S_{\btau}>0$ as desired and by \cref{thm:DHP}, $\{T_\btau\}$ is K\"ahler. 
Therefore, in the following, we assume that the second case occurs.
In this case, we apply \cref{thm:positivity}.

Let us denote by $H_i$  the Zariski closure of $\tau_i(\pi_1(X))$ for each $i=1,\ldots,k$, which is a reductive algebraic group over $L_i$.  
By \cite{MO}, there is some number field $k_i$ and some non-archimedean place $v_i$ of $k_i$ such that $L_i=(k_i)_{v_i}$ and $H_i$ is defined over $k_i$.   
Denote $T_i:=H_i/\cD H_i$, which is a tori. Replacing $k_i$   by a finite extension, we may assume that $T_i$ is  defined over  $k_i$. 
Then by \cref{thm:dichotomy}, $s_{\tau_i}:X\to S_{\tau_i}$ coincides with $s_{\sigma_i}:X\to S_{\sigma_i}$, where $\sigma_i:\pi_1(X)\to T_i(L_i)$ is the composition of $\tau_i:\pi_1(X)\to H_i(L_i)$ with the group  homomorphism $H_i\to T_i$.    
Consider the morphisms of affine  $k_i$-schemes of finite type 
\begin{equation}\label{diagram00}
	 \begin{tikzcd}
	 	 & M_{\rm B}(X, N)\\ 
	M_{\rm B}(X, T_i)    &	M_{\rm B}(X, H_i)\arrow[u]\arrow[l] 
	 \end{tikzcd}
\end{equation}
Then by \cref{thm:S2}, $\kC\subset M_{\rm B}(X, N)(\bC)$ is transferred via the diagram  \eqref{diagram00}  to some  absolutely constructible subset $\kC_i$ of $M_{\rm B}(X, T_i) $.  
Consider the reduction map $s_{\kC_i}:X\to S_{\kC_i}$ defined in \cref{def:reduction ac}.  
Let $f:X\to S$ be the Stein factorisation of $s_{\kC_1}\times\cdots\times s_{\kC_k}: X\to S_{\kC_1}\times \cdots\times S_{\kC_k}$.   
\begin{claim}\label{claim:coincide}
	The reduction map $s_{\btau}:X\to S_{\btau}$ coincides with $f:X\to S$.
\end{claim}
\begin{proof}
Note that the reduction map $s_{\sigma_i}: X\to S_{\sigma_i}$ coincides with  $s_{\tau_i}:X\to S_{\tau_i}$. 
By \eqref{diagram00} and the definition of $\kC_i$,   $[\sigma_i]\in \kC_i(L_i)$. 
Therefore, $s_{\sigma_i}$ factors through $s_{\kC_i}$.  
Hence $s_{\tau_i}:X\to S_{\tau_i}$ factors through  $X\to S$.
Thus $s_{\btau}$ factors through   $X\stackrel{f}{\to}S\stackrel{q}{\to}S_{\btau}$.
By the construction of $\kC_i$, the map $s_{\kC_i}:X\to S_{\kC_i}$ factors through $s_{\kC_i}:X\to S_{\kC}\to S_{\kC_i}$.
By $S_{\kC}=S_{\btau}$, we have $X\to S_{\btau}\to S$.
\end{proof}
Since $T_i$ are all algebraic tori defined over number fields $k_i$, we apply \cref{thm:positivity} to conclude that there exists   a family of  reductive representations  $\brho_i:=\{\varrho_{ij}:\pi_1(X)\to T_i(K_{ij})\}_{j=1,\ldots,n_i}$ with $K_{ij}$ non-archimedean local field  such that
\begin{enumerate}[label=(\arabic*)]
	\item For each $i=1,\ldots,k; j=1,\ldots,n_i$, $[\varrho_{ij}]\in \kC_i(K_{ij})$; 
	\item \label{item:dominant} The reduction map $s_{\brho_i}:X\to S_{\brho_i}$ of $\brho_i$ coincides with $s_{\kC_i}:X\to S_{\kC_i}$;
	\item \label{item:kahler}for the canonical current $T_{\brho_i}$   over $S_{\kC_i}$ associated with $\brho_i$, $\{T_{\brho_i}\}$ is a K\"ahler class. 
\end{enumerate} 
By the definition of $\kC_i$, there exist a finite extension  $F_{ij}$ of $K_{ij}$ and reductive representations $\{\delta_{ij}:\pi_1(X)\to {\rm GL}_N(F_{ij})\}_{j=1,\ldots,n_i}$ such that
\begin{enumerate}[label=(\alph*)]
\item For each $i=1,\ldots,k; j=1,\ldots,n_i$, $[\delta_{ij}]\in \kC(F_{ij})$;
\item the Zariski closure of $\delta_{ij}:\pi_1(X)\to \GL_N(F_{ij})$ is contained in $H_i$; 
\item \label{item:same} $[\eta_{ij}]=[\varrho_{ij}]\in M_{\rm B}(X,T_i)(F_{ij})$, where $\eta_{ij}:\pi_1(X)\to T_i(F_{ij})$ is the composition of $\delta_{ij}:\pi_1(X)\to H_i(F_{ij})$ with the group  homomorphism $H_i\to T_i$.  
\end{enumerate} 
Therefore, $\eta_{ij}$ is conjugate to $\varrho_{ij}$ and thus their reduction map coincides.
We have the factorization $X\to S_{\delta_{ij}}\overset{q_{ij}}{\to} S_{\eta_{ij}}$.
By \cref{def:canonical2}, one can see that \begin{align}\label{eq:small}
	 q_{ij}^*T_{\varrho_{ij}}=q_{ij}^*T_{\eta_{ij}}\leq T_{\delta_{ij}}.
\end{align}  
 Consider the family of   representations  $\bm{\delta}:=\{\delta_{ij}:\pi_1(X)\to {\rm GL}_N(F_{ij})\}_{i=1,\ldots,k;j=1,\ldots,n_i}$. 
 Let  $e_i:S\to S_{\kC_i}=S_{\brho_i}$ be the natural map. Note that $e_1\times\cdots\times e_k: S\to S_{\kC_1}\times\cdots\times S_{\kC_k}$ is finite.   
By \Cref{item:dominant,item:kahler},  $\{\sum_{i=1}^{k}e_{i}^*T_{\brho_i}\}$ is K\"ahler on $S=S_\kC$.  
By \eqref{eq:small}, we conclude that  $\{T_{\bm{\delta}}\}$ is K\"ahler on $S_\kC$. 
 According to \Cref{lem:20240409}, it implies that $\{T_{\btau}\}$ is K\"ahler on $S_\kC$.
 \end{proof}

 \begin{lem}\label{lem:202404091}
 Let $f:Z\to X$ be a morphism of smooth projective varieties.
 Let $j:M_{\rm B}(X,N)\to M_{\rm B}(Z,N)$ be the induced map.
 If $\btau$ is full with respect to $\kC$, then $f^*\btau$ is full with respect to $j(\kC)$.
 \end{lem}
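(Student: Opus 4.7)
The plan is to verify the two conditions of fullness for $f^*\btau$ with respect to $j(\kC)$: coincidence of the reduction maps, and minimality of $W_{f^*\btau}$.

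For the coincidence $s_{f^*\btau} = s_{j(\kC)}$, I would argue that both are simultaneous Stein factorizations onto normal varieties, so it suffices to check that their fibers agree. Since each $f^*\tau_i$ is reductive by \cref{thm:reductive} and lies in $j(\kC)$, the map $s_{j(\kC)}$ is automatically at least as fine as $s_{f^*\btau}$. For the other direction, let $F \subset Z$ be an irreducible subvariety with $s_{f^*\btau}(F)$ a point. By the functoriality diagram of \cref{lem:functorial}, $\sigma_f \circ s_{f^*\btau} = s_\btau \circ f$, so $s_\btau(f(F))$ is a point; taking the Zariski closure $W := \overline{f(F)} \subset X$, irreducibility of $F$ forces $W$ irreducible, and continuity shows $s_\btau(W)$ is a point. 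Since $\btau$ is full, $s_\btau = s_\kC$, so $s_\kC(W)$ is a point.

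Now any reductive $\sigma$ with $[\sigma] \in j(\kC)$ is, up to conjugation, equal to $f^*\varrho$ for some reductive $\varrho$ with $[\varrho] \in \kC$: indeed, every $[\sigma] = j([\varrho_0])$ for some $[\varrho_0] \in \kC$, and passing to the semisimplification $\varrho := \varrho_0^{ss}$ (which has the same class in $\kC$ and is reductive) yields a reductive lift whose pullback is reductive by \cref{thm:reductive} and conjugate to $\sigma$. By \cref{thm:KE}, $s_\varrho(W)$ being a point implies $\varrho(\mathrm{Im}[\pi_1(W) \to \pi_1(X)])$ is bounded; since $\pi_1(F) \to \pi_1(Z) \to \pi_1(X)$ factors through $\pi_1(W) \to \pi_1(X)$ (via $f|_F:F \to W \hookrightarrow X$), the group $\sigma(\mathrm{Im}[\pi_1(F) \to \pi_1(Z)]) = \varrho(f_*\pi_1(F))$ is bounded. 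Applying \cref{thm:KE} again gives that $s_\sigma(F)$ is a point, and as this holds for all such $\sigma$, $s_{j(\kC)}(F)$ is a point.

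The second condition, minimality of $W_{f^*\btau}$, follows essentially formally from \cref{cor:20240412}. Given a reductive $\sigma$ with $[\sigma] \in j(\kC)$, write $\sigma = f^*\varrho$ up to conjugation as above, noting $W_\sigma = W_{f^*\varrho}$ depends only on the semisimple class. Applying \cref{cor:20240412} twice gives $W_{f^*\btau} = f_*^{-1}(W_\btau)$ and $W_{f^*\varrho} = f_*^{-1}(W_\varrho)$. Fullness of $\btau$ yields $W_\btau \subset W_\varrho$, and taking preimages under $f_*$ preserves the inclusion, so $W_{f^*\btau} \subset W_\sigma$.

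The main delicate point is in the first condition, in the passage from $s_{f^*\btau}(F)$ being a point to $s_\sigma(F)$ being a point for arbitrary reductive $[\sigma] \in j(\kC)$. The subtlety lies in correctly using the Zariski closure $W = \overline{f(F)}$ (to promote the constructible image $f(F)$ to an honest subvariety to which \cref{thm:KE} applies) and in identifying the arbitrary reductive class in $j(\kC)$ with a pullback of a reductive class in $\kC$ via semisimplification. Both steps are routine once the framework is set up, but must be stated carefully.
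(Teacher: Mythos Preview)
Your proof is correct, and the minimality argument is essentially identical to the paper's (the paper phrases it as checking $W_{f^*\btau}\subset W_{f^*\rho}$ for reductive $[\rho]\in\kC$, leaving implicit the identification of reductive classes in $j(\kC)$ with pullbacks that you spell out via semisimplification).

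For the coincidence $s_{f^*\btau}=s_{j(\kC)}$, however, the paper takes a much shorter route than your fiber-by-fiber verification. It simply observes that \emph{both} maps are the Stein factorization of the same composite $Z\xrightarrow{f} X\to S_{\kC}=S_{\btau}$: by the functoriality diagram of \cref{lem:functorial}, $s_{f^*\btau}$ is the Stein factorization of $s_{\btau}\circ f$, and the same functoriality applied to each $[\tau]\in\kC$ shows that $s_{j(\kC)}$ is the Stein factorization of $s_{\kC}\circ f$. Since $s_{\btau}=s_{\kC}$ by fullness, the two Stein factorizations agree. This avoids having to pick a subvariety $F$, pass to the Zariski closure $\overline{f(F)}$, and invoke \cref{thm:KE} twice. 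Your argument is not wrong, but the extra care you flag as ``the main delicate point'' is unnecessary once one recognizes the Stein-factorization description.
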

 
 \begin{proof}
 Note that $s_{j(\kC)}:Z\to S_{j(\kC)}$ is the Stein factorization of the composite of $Z\to X$ and $X\to S_{\kC}$.
 Similarly, $s_{f^*\btau}:Z\to S_{f^*\btau}$ is the Stein factorization of the composite of $Z\to X$ and $X\to S_{\btau}$.
 Hence $s_{f^*\btau}:Z\to S_{f^*\btau}$ coincides with $s_{j(\kC)}:Z\to S_{j(\kC)}$.
 
 Let $\rho:\pi_1(X)\to \mathrm{GL}_N(\bC)$ be a reductive representation such that $[\rho]\in \kC$.
Then $W_{\btau}\subset W_{\rho}$.
 We have $(f_*)^{-1}(W_{\rho})=W_{f^*\rho}$ (cf. \cref{lem:20240411}) and $(f_*)^{-1}(W_{\btau})=W_{f^*\btau}$ (cf. \cref{cor:20240412}) under the map $f_*:TZ\to TX$.
 This shows that $W_{f^*\btau}$ is minimal.
 \end{proof}

\begin{lem}\label{lem:20240419}
Suppose $\btau$ is full with respect to $\kC$.
 Then $\{T_{\btau}\}$ is a K\"ahler class. 
\end{lem}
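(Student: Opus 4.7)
The plan is to prove, by strong induction on $d \geq 1$, that for every positive-dimensional closed subvariety $\Sigma \subset S_{\btau}$ of dimension $d$, one has $\{T_{\btau}\}^{d} \cdot \Sigma > 0$. Once this is established, Theorem \ref{thm:DHP} (applied to the pair $(S_{\btau}, \{T_{\btau}\})$) will immediately yield that $\{T_{\btau}\}$ is a K\"ahler class. Equivalently, one can package the induction and apply Lemma \ref{lem:20240410} once at the top.

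For the inductive step, given $\Sigma \subset S_{\btau}$ of dimension $d \geq 1$, I would take a desingularization $Z$ of an irreducible component of $s_{\btau}^{-1}(\Sigma)$ that dominates $\Sigma$, and let $f : Z \to X$ be the induced morphism. By Lemma \ref{lem:functorial}, the induced map $\sigma_f : S_{f^*\btau} \to S_{\btau}$ is finite with image $\Sigma$, so in particular $\dim S_{f^*\btau} = d$. By Theorem \ref{thm:S2}, the image $j(\kC)$ under $j : M_{\rm B}(X, N) \to M_{\rm B}(Z, N)$ is again absolutely constructible, and by Lemma \ref{lem:202404091} the pulled-back family $f^*\btau$ is full with respect to $j(\kC)$. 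Thus $f^*\btau$ is a legitimate target for Lemma \ref{lem:20240410}.

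To apply Lemma \ref{lem:20240410} to $f^*\btau$, I need to verify its hypothesis: for every proper closed subvariety $\Xi \subsetneq S_{f^*\btau}$, one has $\{T_{f^*\btau}\}^{\dim \Xi} \cdot \Xi > 0$. Since $\sigma_f$ is finite, $\sigma_f(\Xi)$ is a closed subvariety of $\Sigma \subset S_{\btau}$ with $\dim \sigma_f(\Xi) = \dim \Xi$, and since $\Xi$ is proper in $S_{f^*\btau}$, $\sigma_f(\Xi)$ is proper in $\Sigma$; in particular $\dim \sigma_f(\Xi) < d$. The inductive hypothesis applied to $\sigma_f(\Xi)$ gives $\{T_{\btau}\}^{\dim \sigma_f(\Xi)} \cdot \sigma_f(\Xi) > 0$, which by Lemma \ref{lem:functorial}(iii) transfers to $\{T_{f^*\btau}\}^{\dim \Xi} \cdot \Xi > 0$. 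Lemma \ref{lem:20240410} then ensures that $\{T_{f^*\btau}\}$ is K\"ahler on $S_{f^*\btau}$, whence $\{T_{f^*\btau}\}^{d} \cdot S_{f^*\btau} > 0$, and a final application of Lemma \ref{lem:functorial}(iii) yields $\{T_{\btau}\}^{d} \cdot \Sigma > 0$. The base case $d = 1$ is handled by exactly the same argument, the point being that $S_{f^*\btau}$ is then a curve with no proper positive-dimensional subvarieties, so the hypothesis of Lemma \ref{lem:20240410} is vacuously satisfied.

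The main conceptual difficulty is that the lemma might at first appear circular, since Lemma \ref{lem:20240410} already demands positivity on proper subvarieties. The key point that makes the induction go through is the pullback compatibility of fullness (Lemma \ref{lem:202404091}): even though $\btau$ itself is not a priori known to satisfy the hypotheses of Lemma \ref{lem:20240410}, its pullback $f^*\btau$ to any subvariety $\Sigma$ is again full with respect to an absolutely constructible subset, so the dichotomy in \cref{thm:dichotomy} plus the torus input from \cref{thm:positivity} embedded in Lemma \ref{lem:20240410} is available at every stratum. Functoriality of the canonical currents (Lemma \ref{lem:functorial}) then allows positivity to be transported between $S_{f^*\btau}$ and $\Sigma$, closing the induction.
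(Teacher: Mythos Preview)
Your proposal is correct and follows essentially the same approach as the paper's proof: both argue by induction on the dimension of $\Sigma\subset S_{\btau}$ (the paper phrases it as Noetherian induction), pull back to a desingularization $Z$ mapping to $X$, use Lemma~\ref{lem:202404091} to transport fullness, verify the hypothesis of Lemma~\ref{lem:20240410} on $S_{f^*\btau}$ via Lemma~\ref{lem:functorial} and the inductive hypothesis, and then push the resulting positivity back down. The only cosmetic difference is that the paper chooses $Z\subset X$ with $\dim Z=\dim\Sigma$ rather than a full irreducible component of the fiber, but this does not affect the argument.
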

 
 \begin{proof}
Let $\Sigma\subset S_{\btau}$ be a closed subvariety.
It suffices to show $\{T_{\btau}\}^{\dim \Sigma}\cdot \Sigma>0$ by \cref{thm:DHP}.
We prove this by Noetherian induction.
Hence we assume that $\{T_{\btau}\}^{\dim \Sigma'}\cdot \Sigma'>0$ for every proper closed subvarieties $\Sigma'\subsetneqq \Sigma$.

We take a subvariety $Z\subset X$ such that $\dim Z=\dim \Sigma$ and $s_{\btau}(Z)=\Sigma$.
Let $Z'\to Z$ be a desingularization.
Let $f:Z'\to X$ be the induced map.
Then by \cref{lem:202404091}, $f^*\btau$ is full with respect to $j(\kC)$, where $j:M_{\rm B}(X,N)\to M_{\rm B}(Z,N)$ is the induced map.
Note that $j(\kC)$ is an absolutely constructible subset of $M_{\rm B}(Z', N)(\bC)$.
By our hypothesis of Noetherian induction and \cref{lem:functorial}, $f^*\btau$ satisfies the assumption of \cref{thm:dichotomy}; i.e., for every proper closed subvariety $\Sigma'\subsetneqq S_{f^*\btau}$, we have $\{T_{f^*\brho}\}^{\dim \Sigma'}\cdot \Sigma'>0$. 
Thus by \cref{lem:20240410}, $\{T_{f^*\btau}\}$ is a K\"ahler class. 
Hence $\{T_{\btau}\}^{\dim \Sigma}\cdot \Sigma>0$ as desired (cf. \cref{lem:functorial}).
This completes the induction step.
 \end{proof}

 \begin{proof}[Proof of \cref{thm:crucial}]
 By \cref{def:reduction ac,lem:simultaneous},  there are non-archimedean local fields $L_1,\ldots,L_\ell$ of characteristic zero and reductive representations $\tau_i:\pi_1(X)\to {\rm GL}_N(L_i)$ such that $[\tau_i]\in \kC(L_i)$ and   $s_\kC:X\to S_\kC$ is the Stein factorization of $(s_{\tau_1},\ldots,s_{\tau_\ell}):X\to S_{\tau_1}\times\cdots\times S_{\tau_\ell}$.  
 Write $\btau:=\{\tau_i\}_{i=1,\ldots,\ell}$.
 By the Noetherian property, we may assume that $W_{\btau}$ is minimal.
 Hence $\btau$ is full.
 By \cref{lem:20240419}, $\{T_{\btau}\}$ is a K\"ahler class. 
  \end{proof}

\subsection{Holomorphic convexity of Galois coverings}
In this subsection we will prove \cref{main2}.  We shall use the notations and results proven in \cref{sec:shmor,thm:Shafarevich1} without recalling the details.
\begin{thm} \label{thm:HC}
	Let $X$ be a smooth projective variety.  Let $\kC$ be an absolutely constructible subset of $M_{\rm B}(X,N)(\bC)$ defined in \cref{def:ac}.  Assume that $\kC$  is   defined on $\bQ$.  	Let $\pi:\widetilde{X}_\kC\to X$ be the covering corresponding to the group $ \cap_{\varrho}\ker \varrho\subset \pi_1(X)$ where $\varrho:\pi_1(X)\to {\rm GL}_N(\bC)$ ranges over all reductive representations  such that  $[\varrho]\in \kC(\bC)$.  Then $\widetilde{X}_\kC$ is holomorphically convex.  In particular, if $\pi_1(X)$ is a subgroup of ${\rm GL}_N(\bC)$ whose Zariski closure is reductive, then $\widetilde{X}_\kC$ is holomorphically convex. 
\end{thm}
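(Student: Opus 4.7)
The plan is to reduce the holomorphic convexity of $\widetilde{X}_{\kC}$ to the Stein property of an appropriate covering of a finite \'etale cover of ${\rm Sh}_\kC(X)$, and then to verify the hypothesis of Proposition~\ref{prop:stein}. By the construction in the proof of Theorem~\ref{thm:Shafarevich1} (applied to $\kC$, which is defined over $\bQ$ and $\bR^*$-invariant by Definition~\ref{def:ac}), we obtain a commutative diagram
\[
\begin{tikzcd}
\widetilde{X}_H \arrow[r,"\pi_H"] \arrow[d,"{\rm sh}_H"'] & X \arrow[d,"{\rm sh}_\kC"] \\
\widetilde{S}_H \arrow[r,"\mu"] & {\rm Sh}_\kC(X)
\end{tikzcd}
\]
in which ${\rm sh}_H$ is a proper holomorphic fibration, ${\rm Sh}_\kC(X)$ is projective, and by Claim~\ref{claim:free} there is a finite index torsion-free normal subgroup $N\subset \pi_1(X)/H$ whose action on $\widetilde{S}_H$ is free and properly discontinuous, so that $W:=\widetilde{S}_H/\nu(N)$ is a finite \'etale cover of ${\rm Sh}_\kC(X)$, hence a compact projective normal variety. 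Since ${\rm sh}_H$ is proper and surjective with connected fibers, it will suffice to prove that $\widetilde{S}_H\to W$ is Stein, because then $\widetilde{X}_H=\widetilde{X}_\kC$ is holomorphically convex.

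To apply Proposition~\ref{prop:stein} to the topological Galois cover $\widetilde{S}_H\to W$, I will build a Kähler current $T$ on $W$ with continuous potentials and a continuous plurisubharmonic function $\Phi:\widetilde{S}_H\to\bR_{\geq 0}$ with $\hess\Phi\geq \pi^*T$. The current $T$ will arise as the sum of two pieces. First, apply Theorem~\ref{thm:crucial} to $\kC$ to obtain a family $\btau=\{\tau_i:\pi_1(X)\to \GL_N(K_i)\}$ of reductive representations into non-archimedean local fields with $[\tau_i]\in\kC(K_i)$, whose reduction map equals $s_\kC:X\to S_\kC$ and whose canonical current $T_\btau$ has Kähler class on $S_\kC$. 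Since ${\rm sh}_\kC$ is finer than $s_\kC$ (finite image forces bounded image), there is a natural morphism $\sigma:{\rm Sh}_\kC(X)\to S_\kC$, and $\mu^*\sigma^*T_\btau$ is a closed positive $(1,1)$-current on $W$ with continuous potential. Second, Proposition~\ref{prop:nonrigid} produces a reductive $\bC$-VHS representation $\sigma^{\vhs}$ that, together with the period map $\phi:\widetilde{S}_H\to \sD$ constructed via \eqref{eq:Psi}, gives (as in the proof of Claim~\ref{claim:projective}) a line bundle $L_G$ on $W$ equipped with a smooth metric whose curvature $T_G$ is semi-positive and strictly positive along the horizontal differential of $\phi$. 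I will take $T:=\mu^*\sigma^*T_\btau + \varepsilon\, T_G$ for $\varepsilon>0$ small.

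For the plurisubharmonic function, apply Proposition~\ref{prop:Eys} to each $\tau_i$: after replacing $\tau_i$ with its Zariski closure's restriction to a Zariski dense factor, the associated squared distance function from the Bruhat--Tits building descends by item~\ref{item: descends} of that proposition to a continuous psh function $\phi_{\tau_i}$ on $\widetilde{S}_H$ (via the factorization ${\rm sh}_\varrho\circ\pi=\mu\circ{\rm sh}_H$ and the compatibility $\mu\circ {\rm sh}_H=s_\kC\circ\pi_H$ followed by $\sigma$), with $\hess\phi_{\tau_i}\geq $ the pullback of the corresponding piece of $T_\btau$. Summing these together with the squared distance $d_{\sD}(\phi(\cdot),o)^2$ from the period map, which contributes $\geq c\,T_G$ on $\widetilde{S}_H$ by the horizontal positivity of Griffiths' canonical bundle, gives a $\pi_1(X)/H$-invariant continuous psh function $\Phi$ on $\widetilde{S}_H$ with $\hess\Phi\geq \pi^*T$, as required.

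The main obstacle is verifying that $\{T\}$ is indeed a Kähler class on $W$. I will establish this via the Nakai--Moishezon criterion (Theorem~\ref{thm:DHP}): for any positive-dimensional closed subvariety $Z\subset W$, either $\sigma\circ\mu(Z)$ has the same dimension as $Z$, in which case $\int_Z\{\mu^*\sigma^*T_\btau\}^{\dim Z}>0$ by the Kähler property of $T_\btau$; or $\sigma\circ\mu$ drops dimension on $Z$, in which case by the very construction of $\widetilde{S}_H$ via \eqref{eq:Psi} the period map $\phi$ must be immersive somewhere on $Z$ along the collapsed directions, and the strict horizontal positivity of $T_G$ produces a strictly positive contribution in those directions, so that $\int_Z\{\mu^*\sigma^*T_\btau+\varepsilon T_G\}^{\dim Z}>0$ for small $\varepsilon>0$. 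The final statement follows since for any subgroup $\Gamma\subset \GL_N(\bC)$ whose Zariski closure is reductive, the closed orbit of the tautological representation in $M_{\rm B}(X,N)$ is absolutely constructible and defined over $\bQ$ by Example~\ref{example:ac} and Theorem~\ref{thm:S2}, and the intersection of kernels of reductive representations in this orbit contains $\ker(\pi_1(X)\to \Gamma)$, whence $\widetilde{X}$ is a Galois cover of $\widetilde{X}_\kC$ with finite group and is thus itself holomorphically convex.
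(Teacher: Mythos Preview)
Your overall strategy coincides with the paper's: reduce to showing $\widetilde{S}_H$ is Stein via Proposition~\ref{prop:stein}, build the Kähler class on $W$ as the sum of the pullback of the canonical current $T_{\btau}$ from Theorem~\ref{thm:crucial} and the Griffiths curvature of $K_\sD$, and construct the plurisubharmonic potential by summing Bruhat--Tits squared distances (Proposition~\ref{prop:Eys}) with a period-map term. A few technicalities deserve tightening. First, the psh contribution from the period side must use the squared distance on the \emph{symmetric space} $\sS$ associated with $\sD$, not on $\sD$ itself: one composes $\phi$ with the projection $\sD\to\sS$ to obtain a pluriharmonic map, and it is $2d_\sS^2(u(\cdot),u(x_0))$ whose complex Hessian dominates the Griffiths curvature. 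Second, the second branch of your Nakai--Moishezon dichotomy (``$\phi$ immersive along the collapsed directions'') is exactly the content of Claim~\ref{claim:discrete} (fibers of $g:\widetilde{S}_H\to S_\kC\times\sD$ are discrete), which should be cited rather than absorbed into ``by the very construction''. Third, the parameter $\varepsilon$ is unnecessary: both summands are closed semipositive with continuous potentials, so $\int_Z (f^*T_{\btau}+T_G)^{\dim Z}>0$ holds as soon as $f^*T_{\btau}+T_G$ is strictly positive at one smooth point of $Z$, uniformly in $Z$. Finally, $W\to{\rm Sh}_\kC(X)$ is a finite quotient, not \'etale in general, since elements of $(\pi_1(X)/H)\setminus N$ may have fixed points on $\widetilde{S}_H$; this does not affect the argument.

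The genuine gap is your deduction of the ``in particular'' clause. The conjugacy class of a single reductive representation is \emph{not} an absolutely constructible subset in the sense of Definition~\ref{def:ac}: it need not be defined over $\bar{\bQ}$, and it is essentially never $\bR^*$-invariant (Condition~\ref{item:R*}). Example~\ref{example:ac} only covers the trivial representation and isolated points, and Theorem~\ref{thm:S2} does not help here. The correct deduction is to take $\kC=M_{\rm B}(X,N)(\bC)$ itself, which is absolutely constructible and defined over $\bQ$; the given faithful reductive embedding $\pi_1(X)\hookrightarrow\GL_N(\bC)$ is then one of the representations appearing in the intersection $H=\cap_\varrho\ker\varrho$, forcing $H=\{1\}$ and hence $\widetilde{X}_\kC=\widetilde{X}$.
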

\begin{proof}
Let $H:=\cap_{\varrho}\ker \varrho\cap \sigma$ , where $\sigma$ is the $\bC$-VHS defined in \cref{prop:nonrigid} and $\varrho:\pi_1(X)\to {\rm GL}_N(\bC)$ ranges over all reductive representation   such that $[\varrho]\in \kC $.   Denote by  $\widetilde{X}_H:=\widetilde{X}/H$.  Let $\sD$ be the period domain associated to the $\bC$-VHS $\sigma$ defined in \cref{prop:nonrigid} and let $p: \widetilde{X}_H\to \sD$ be the period mapping.  By \eqref{eq:inclusion ker},   $H=\cap_{\varrho}\ker \varrho $, where $\varrho:\pi_1(X)\to {\rm GL}_N(\bC)$ ranges over all reductive representation   such that $[\varrho]\in \kC $.   Therefore, $\widetilde{X}_\kC=\widetilde{X}_H$. 

	Consider the product 
	$$
	\Psi=s_\kC\circ\pi_H\times p: \widetilde{X}_H\to S_\kC\times \sD
	$$
	where $p:\widetilde{X}_H\to \sD$ is the period mapping of $\sigma$.  Recall that $\Psi$ factors through a proper surjective fibration ${\rm sh}_H:\widetilde{X}_H\to \widetilde{S}_H$. Moreover, there is a properly discontinuous action of $\pi_1(X)/H$ on $\widetilde{S}_H$ such that ${\rm sh}_H$ is   equivariant with respect to this action.   
 Write $g:\widetilde{S}_H\to S_\kC\times \sD$ to be the induced holomorphic map.  Denote by $\phi: \widetilde{S}_H\to \sD$ the composition of $g$ and the projection map $S_\kC\times \sD\to \sD$.      Since the period mapping $p$ is horizontal, and ${\rm sh}_H$ is surjective, it follows that $\phi$ is also  horizontal.    
 
Recall that in \cref{claim:free} we prove that 
		there is a finite index normal subgroup $N$ of  $\pi_1(X)/H$ and a homomorphism $\nu:N\to {\rm Aut}(\widetilde{S}_H)$ such that   ${\rm sh}_H:\widetilde{X}_H\to \widetilde{S}_H$ is $\nu$-equivariant and $\nu(N)$ acts on $\widetilde{S}_H$ properly discontinuous and without   fixed point.  
 Let $Y:=\widetilde{X}_H/N$. Moreover, $c: Y\to X$ is a finite Galois \'etale cover and $N$ gives rise to a proper surjective fibration
 $
Y\to \widetilde{S}_H/\nu(N) 
$  between compact normal complex spaces. Write $W:= \widetilde{S}_H/\nu(N)$. Then $\widetilde{S}_H\to W$ is a topological Galois unramified covering.  
Recall that the  {canonical bundle $K_{\sD}$ on the period domain $\sD$} can be endowed with a $G_0$-invariant smooth metric $h_{\sD}$ whose curvature is strictly positive-definite in the horizontal direction.  As $\phi:\widetilde{S}_H\to \sD$ is $\nu(N)$-equivariant, it follows that $\phi^*K_{\sD}$ descends to a line bundle on the quotient $W:=\widetilde{S}_H/\nu(N)$, denoted by $L_G$.      The smooth metric $h_{\sD}$ induces a smooth metric on $L_G$  whose curvature form is denoted by $T$. Let $x\in W$ be a smooth point of $W$ and let $v\in T_{\widetilde{S}_H,x}$. Then $|v|_\omega^2>0$   if $d\phi(v)\neq 0$.    

We fix a reference point $x_0$ on $\widetilde{S}_H$.  Let $\sS$ be the symmetric space associated with $\sD$ endowed with the natural metric $d_{\sS}$.  Then there exists natural quotient map $\sD\to \sS$.  It induces a  $\nu(N)$-equivariant pluri-harmonic mapping $u:\widetilde{S}_H\to \sS$.  Define $\phi_0:=2d^2_{\sS}(u(x),u(x_0))$.  By \cite[Proposition  3.3.2]{Eys04}, we have
\begin{align} \label{eq:below}
	 \hess \phi_0\geq \omega=q^*T,
\end{align} 
where   $q:\widetilde{S}_H\to \widetilde{S}_H/\nu(N)$  denotes the quotient map.


We now apply \cref{thm:crucial} to find  a family of   representations  $\btau:=\{\tau_i:\pi_1(X)\to {\rm GL}_N(K_i)\}_{i=1,\ldots,m}$ where $K_i$ are non-archimedean local fields   such that
\begin{itemize}
	\item For each $i=1,\ldots,m$, $[\tau_i]\in \kC(K_i)$; 
	\item  The reduction map $s_{\btau}:X\to S_{\btau}$ of $\btau$ coincides with $s_{\kC}$.
	\item For the canonical current $T_{\btau}$ defined over $S_{\kC}$, $\{T_{\btau}\}$ is a K\"ahler class. 
\end{itemize}  
Consider 
\begin{equation*}
	\begin{tikzcd}
\widetilde{X}_H\arrow[r]\arrow[d, "{\rm sh}_H"] &		Y\arrow[r, "c"]\arrow[d] & X \arrow[d, "s_{\kC}"]\arrow[dr, "s_\btau"]\arrow[drr,bend left=20,"s_{\tau_i}"]& &\\ 
\widetilde{S}_H	\arrow[rrrr, "r_i"', bend right=30]\arrow[rr, bend right=20, "r"']\arrow[r, "q"]	& W \arrow[r, "f"]& S_\kC \arrow[r, equal] & S_\btau \arrow[r,"e_i"]& S_{\tau_i}
	\end{tikzcd}
\end{equation*}
Note that $p$ is a finite surjective morphism. 
 
We fix a reference point $x_0$ on $\widetilde{S}_H$. For each $i=1,\ldots,m$, let $u_i:\widetilde{X}_H\to \Delta(\GL_{N})_{K_i}$ be the $\tau_i$-equivariant harmonic mapping from $\widetilde{X}_H$ to the Bruhat-Tits building of $\GL_{N}(K_i)$ whose existence was ensured by a  theorem of Gromov-Schoen \cite{GS92}. Then the function $\tilde{\phi}_i(x):=2d_i^2(u_i(x),u_i(x_0))$ defined over $\widetilde{X}_H$ is locally Lipschitz,   where $d_i:\Delta(\GL_{N})_{K_i}\times \Delta(\GL_{N})_{K_i}\to \bR_{\geq 0}$ is the distance function on the Bruhat-Tits building. By \cref{prop:Eys}, it induces a  continuous psh functions $\{\phi_i:\widetilde{S}_H\to \bR_{\geq 0}\}_{i=1,\ldots,m}$ such that  $\hess \phi_i\geq r_i^*T_{\tau_i}$ 
 for each $i$. By the definition of $T_{\btau}$, we have
\begin{align}\label{eq:below2}
	 \hess \sum_{i=1}^{m}\phi_i\geq r^*T_{\btau}.
\end{align} 
 Therefore,  putting \eqref{eq:below} and  \eqref{eq:below2} together we obtain
\begin{align}\label{eq:below3}
\hess \sum_{i=0}^{m}\phi_i\geq q^*(f^*T_\btau+T).
\end{align}
As $f$ is a finite surjective morphism, $\{f^*T_\btau\}$ is also K\"ahler by \cref{thm:DHP}. 

By \cref{claim:discrete}, we know that $g:\widetilde{S}_H\to S_\kC\times \sD$ has discrete fibers. Since $T$ is induced by the curvature form of $(K_\sD, h_{\sD})$, and $\phi:\widetilde{S}_H\to \sD$ is horizontal, we can prove that    for every  irreducible  positive dimensional closed subvariety $Z$ of $W$, $f^*T_\btau+T$ is strictly positive at general smooth points of $Z$. Therefore,   $$\{f^*T_\btau+T\}^{\dim Z}\cdot Z= \int_{Z}(f^*T_\btau+T)^{\dim Z}>0.$$ Recall that $W$ is projective by the proof of \cref{claim:projective}.     We utilize \cref{thm:DHP} to conclude that $\{f^*T_\btau+T\}$ is K\"ahler.

Given that $\widetilde{S}_H\to W$ represents a topological Galois unramified cover, we can apply \cref{prop:stein} in conjunction with \eqref{eq:below3} to deduce that $\widetilde{S}_H$ is a Stein manifold.  Furthermore, since $\widetilde{X}_H\to \widetilde{S}_H$ is a proper surjective holomorphic fibration,  the holomorphic convexity of $\widetilde{X}_H$ follows from the Cartan-Remmert theorem. Ultimately, the theorem is established by noting that $\widetilde{X}_H=\widetilde{X}_\kC$.
 \end{proof} 

\subsection{Steiness of infinite Galois coverings}
  We shall use the notations in the proof of \cref{thm:HC} without recalling their definitions.  
\begin{thm}\label{thm:Stein}
	Let $X$ be a smooth projective variety. Consider an absolutely constructible subset $\kC$ of $M_{\rm B}(X,{\rm GL}_N(\bC))$ as defined in \cref{def:ac}. We further assume that $\kC$ is  defined over $\bQ$. If $\kC$ is considered to be \emph{large}, meaning that for any closed subvariety $Z$ of $X$, there exists a reductive representation $\varrho:\pi_1(X)\to {\rm GL}_N(\bC)$ such that $[\varrho]\in \kC$ and $\varrho({\rm Im}[\pi_1(Z^{\rm norm})\to \pi_1(X)])$ is infinite, then all intermediate coverings between $\widetilde{X}$ and $\widetilde{X}_\kC$ of $X$ are Stein manifolds. 
\end{thm}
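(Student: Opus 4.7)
The plan is to prove that $\widetilde{X}_\kC$ itself is Stein. Once this is established, every intermediate covering $\widetilde{X}'$ between $\widetilde{X}$ and $\widetilde{X}_\kC$ admits an unramified covering map $\widetilde{X}' \to \widetilde{X}_\kC$ (being a quotient of $\widetilde{X}$ by a subgroup of $\pi_1(X)$ containing $\cap_{[\varrho]\in\kC}\ker\varrho$ acts properly discontinuously and freely on $\widetilde{X}'$), and by the classical theorem that an unramified covering of a Stein space is Stein, the conclusion follows at once. To establish Steinness of $\widetilde{X}_\kC$, I shall strengthen the output of \cref{thm:HC}: its proof produces a proper holomorphic fibration ${\rm sh}_H: \widetilde{X}_\kC = \widetilde{X}_H \to \widetilde{S}_H$ onto a \emph{Stein} normal complex space $\widetilde{S}_H$, together with the equivariant diagram \eqref{eq:equivariant} relating it to the Shafarevich morphism ${\rm sh}_\kC: X \to {\rm Sh}_\kC(X)$. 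It therefore suffices to prove that ${\rm sh}_H$ is a biholomorphism.

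First I would use largeness to show that ${\rm sh}_\kC$ itself is a biholomorphism. By \cref{thm:Shafarevich1}, for any positive-dimensional irreducible closed subvariety $Z \subset X$, ${\rm sh}_\kC(Z)$ is a point if and only if $\varrho(\Im[\pi_1(Z^{\rm norm}) \to \pi_1(X)])$ is finite for every reductive $\varrho:\pi_1(X)\to \GL_N(\bC)$ with $[\varrho] \in \kC(\bC)$. The largeness hypothesis rules this out for every such $Z$. If ${\rm sh}_\kC$ had a positive-dimensional fiber, passing to an irreducible component of that fiber would yield a positive-dimensional closed subvariety mapping to a point, contradicting largeness. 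Hence all fibers of ${\rm sh}_\kC$ are zero-dimensional; as ${\rm sh}_\kC$ is a proper surjective holomorphic fibration with connected fibers between irreducible normal complex spaces, each fiber is a single point, so ${\rm sh}_\kC$ is a proper bijective holomorphic map between normal complex spaces, and hence a biholomorphism.

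Second, I would deduce that ${\rm sh}_H$ is a biholomorphism and conclude. From \eqref{eq:equivariant} one has $\mu \circ {\rm sh}_H = {\rm sh}_\kC \circ \pi_H$, where $\pi_H: \widetilde{X}_H \to X$ is an unramified Galois covering and $\mu: \widetilde{S}_H \to \widetilde{S}_H/\Gamma_0 = {\rm Sh}_\kC(X)$ has discrete fibers by the properly discontinuous action of $\Gamma_0$ established in \cref{lem:properdis}. For any $y \in \widetilde{S}_H$, the fiber ${\rm sh}_H^{-1}(y)$ is compact and connected, and its image under $\pi_H$ equals ${\rm sh}_\kC^{-1}(\mu(y))$, a single point since ${\rm sh}_\kC$ is a biholomorphism. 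Hence ${\rm sh}_H^{-1}(y)$ is contained in a single discrete fiber of $\pi_H$ and, being connected, reduces to a single point. Thus ${\rm sh}_H$ is a proper bijective holomorphic fibration between normal complex spaces, hence a biholomorphism, and so $\widetilde{X}_\kC \simeq \widetilde{S}_H$ is Stein. The one delicate point is the first step: one must apply the Shafarevich characterisation of \cref{thm:Shafarevich1} carefully so as to exclude positive-dimensional fibers \emph{at every point} rather than merely generically, by passing to an irreducible component of any putative positive-dimensional fiber; everything else is a string of standard manipulations with coverings and Stein spaces.
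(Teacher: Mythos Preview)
Your proposal is correct and follows essentially the same strategy as the paper: establish that ${\rm sh}_H:\widetilde{X}_\kC=\widetilde{X}_H\to\widetilde{S}_H$ is a biholomorphism under the largeness hypothesis, and then use that $\widetilde{S}_H$ is Stein (which is proved in the course of \cref{thm:HC}). There are two small differences worth noting. First, the paper proves ${\rm sh}_H$ biholomorphic directly (\cref{claim:biho}): it takes a hypothetical positive-dimensional compact $Z$ in a fiber of ${\rm sh}_H$, projects to $W=\pi_H(Z)\subset X$, and uses that $\varrho({\rm Im}[\pi_1(Z^{\rm norm})\to\pi_1(X)])=\{1\}$ forces $\varrho({\rm Im}[\pi_1(W^{\rm norm})\to\pi_1(X)])$ to be finite, contradicting largeness; you instead go through ${\rm sh}_\kC$ first and then descend via the equivariant diagram, which is slightly more indirect but equally valid. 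Second, for the intermediate coverings, the paper pulls back the explicit psh function and the K\"ahler-class current from the proof of \cref{thm:HC} to each intermediate covering and reapplies \cref{prop:stein}, whereas you simply invoke the classical theorem that an unramified cover of a Stein space is Stein. Your shortcut is cleaner and, incidentally, covers all intermediate coverings (not just Galois ones) without further comment; the paper's route has the advantage of making the Stein exhaustion completely explicit.
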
 
\begin{proof} 
Note that  ${\rm sh}_H:\widetilde{X}_H\to \widetilde{S}_H$ is a proper holomorphic surjective fibration. 
\begin{claim}\label{claim:biho}
	${\rm sh}_H$ is biholomorphic.
\end{claim}
\begin{proof}
	Assume that there exists a positive-dimensional compact subvariety $Z$ of $\widetilde{X}_H$ which is contained in some fiber of ${\rm sh}_H$. Consider $W:=\pi_H(Z)$ which is a compact positive-dimensional irreducible subvariety of $X$.  Therefore, ${\rm Im}[\pi_1(Z^{\rm norm})\to \pi_1(W^{\rm norm})]$ is a finite index subgroup of $\pi_1(W^{\rm norm})$. By the definition of $\widetilde{X}_H$, for any reductive $\varrho:\pi_1(X)\to\GL_{N}(\bC)$ with $[\varrho]\in \kC(\bC)$, we have $\varrho({\rm Im}[\pi_1(Z^{\rm norm})\to \pi_1(X)])=\{1\}$. Therefore, $\varrho({\rm Im}[\pi_1(W^{\rm norm})\to \pi_1(X)])=\{1\}$ is finite. This contradicts with out assumption that $\kC$ is large. Hence,  ${\rm sh}_H$ is  a one-to-one proper holomorphic map of complex normal   spaces.  Consequently, it is biholomorphic. 
\end{proof}
By the proof of \cref{thm:HC}, there exist
\begin{itemize}
	\item  a topological Galois unramified covering $q: \widetilde{X}_H=\widetilde{S}_H\to W$, where $W$ is a projective normal variety;
	\item a positive $(1,1)$-current with continuous potential $f^*T_\btau+T$ over $W$ such that $\{f^*T_\btau+T\}$ is K\"ahler;
	\item    a continuous semi-positive plurisubharmonic function $\sum_{i=0}^{m}\phi_i$ on $\widetilde{S}_H$ such that we have 
	\begin{align}\label{eq:below4}
		\hess \sum_{i=0}^{m}\phi_i\geq q^*(f^*T_\btau+T). 
	\end{align}  
 \end{itemize} 
Let $p:\widetilde{X}'\to \widetilde{X}_H$ be the intermediate Galois covering of $X$ between $\widetilde{X}\to \widetilde{X}_H$. By \eqref{eq:below} we have
 \begin{align}\label{eq:below5}
	\hess \sum_{i=0}^{m}p^*\phi_i\geq (q\circ p)^*(f^*T_\btau+T). 
\end{align}   
We apply   \cref{prop:stein}  to conclude that $\widetilde{X}'$ is Stein. 
\end{proof}

 \appendix 
 \section{Shafarevich conjecture for projective normal varieties}
 \begin{center}
 	  {Ya Deng, Ludmil Katzarkov\footnote{E-mail:  \textbf{l.katzarkov@miami.edu},  University of Miami, Coral Gables, FL, USA; Institute of Mathematics
 	  		and Informatics, Bulgarian Academy of Sciences, Sofia, Bulgaria; NRU HSE, Moscow, Russia} \& Katsutoshi Yamanoi}
 \end{center}

\medspace

In this appendix, we aim to extend   \cref{thm:HC,thm:Stein} to include singular normal varieties, and thus completing the proofs of \cref{main2,main}.
\subsection{Absolutely constructible subset (II)}
Let $X$ be a projective normal variety.   
Following the recent work of Lerer \cite{Ler22}, we can also define absolutely constructible subsets in the character variety $M_{\rm B}(X, N):=M_{\rm B}(\pi_1(X), \GL_{N})$. 
\begin{dfn}\label{def:ac2}
	Let $X$ be  a normal projective variety, $\mu:Y\to X$ be a resolution of singularities, and $\iota:M_{\rm B}(X,N)\hookrightarrow M_{\rm B}(Y,N)$ be the embedding.  A subset $\kC\subset M_{\rm B}(X,N)(\bC)$ is called \emph{absolutely constructible} if $\iota(\kC)$ is an absolutely constructible subset of $M_{\rm B}(Y,N)$ in the sense of \cref{def:ac}.
\end{dfn}
Note that the above definition does not depend on the choice of the resolution of singularities (cf. \cite[Lemma 2.7]{Ler22}). Moreover, we have the following result.
\begin{proposition}[\protecting{\cite[Proposition 2.8]{Ler22}}]\label{prop:Lerer}
	Let $X$ be  a normal projective variety. Then $ M_{\rm B}(X,N)$   is absolutely constructible in the sense of \cref{def:ac2}. 
\end{proposition}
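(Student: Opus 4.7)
The plan is to verify the three conditions of \cref{def:ac} directly for $\iota(M_{\rm B}(X,N))$ viewed as a subset of $M_{\rm B}(Y,N)(\mathbb{C})$. First, since $\mu:Y\to X$ is a proper birational morphism onto a normal variety, every fiber of $\mu$ is connected by Zariski's main theorem, and hence $\mu_*:\pi_1(Y)\twoheadrightarrow\pi_1(X)$ is surjective. It follows that $\iota$ is a closed embedding of affine $\mathbb{Q}$-schemes of finite type, so $\iota(M_{\rm B}(X,N))$ is Zariski closed in $M_{\rm B}(Y,N)$ and defined over $\mathbb{Q}$, verifying condition~(i).

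For condition~(ii), the formation $X\leadsto M_{\rm B}(X,N)$ and the embedding $\iota$ depend only on the algebraic structure of $\mu$, and applying $\sigma\in {\rm Aut}(\mathbb{C}/\mathbb{Q})$ to $\mu$ produces the conjugate resolution $\mu^\sigma:Y^\sigma\to X^\sigma$ with its own closed embedding $\iota^\sigma$. Since the Riemann--Hilbert isomorphism $\psi$ and the Galois conjugation $p_\sigma$ are algebraic and compatible with pullback along $\mu$, the image $\psi^{-1}\circ p_\sigma\circ \psi(\iota(M_{\rm B}(X,N)))$ agrees with $\iota^\sigma(M_{\rm B}(X^\sigma,N))$, which is again $\bar{\mathbb{Q}}$-closed.

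The main point is condition~(iii), the $\mathbb{R}^*$-invariance. For this I would invoke \cref{lem:C*} applied to the pair $(Y,\, f:\tilde E\to Y)$, where $\tilde E$ is a smooth resolution of the exceptional locus $E:=\mu^{-1}(X_{\rm sing})\subset Y$. The key identification is
\[
\iota(M_{\rm B}(X,N))(\mathbb{C}) \;=\; j_{\tilde E}^{-1}\{1\}(\mathbb{C}) \;\subset\; M_{\rm B}(Y,N)(\mathbb{C}).
\]
Indeed, a reductive class $[\rho]\in M_{\rm B}(Y,N)(\mathbb{C})$ lies in $\iota(M_{\rm B}(X,N))$ iff $\rho$ kills $\ker(\mu_*)$. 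By Koll\'ar's theorem on $\pi_1$ of fibers of proper surjective morphisms with connected fibers, $\ker(\mu_*)$ is the normal closure in $\pi_1(Y)$ of the images of $\pi_1$ of the exceptional fibers of $\mu$; since each such fiber is contained in $E$ and the map from $\pi_1$ of its resolution is surjective, this normal closure coincides with the normal closure of ${\rm Im}(\pi_1(\tilde E)\to\pi_1(Y))$. As $\ker(\rho)$ is itself normal, $\rho$ kills $\ker(\mu_*)$ iff it kills ${\rm Im}(\pi_1(\tilde E)\to\pi_1(Y))$, i.e., $f^*\rho$ is trivial. By \cref{thm:reductive}, $f^*\rho$ is automatically reductive, so $[f^*\rho]=[1]$ in the GIT sense iff $f^*\rho$ is literally trivial. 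Then \cref{lem:C*} yields the $\mathbb{C}^*$-invariance of $j_{\tilde E}^{-1}\{1\}$, and hence the required $\mathbb{R}^*$-invariance.

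The main obstacle is the topological input underlying this identification, namely the normal-generation statement $\ker(\mu_*) = {\rm nc}({\rm Im}(\pi_1(\tilde E)\to\pi_1(Y)))$. This relies on Koll\'ar's theorem on $\pi_1$ of fibers together with the surjectivity of $\pi_1$ from a resolution of a (possibly reducible) exceptional fiber onto the fiber itself, which one handles component-by-component if necessary. Once this is secured, the remaining steps are formal consequences of \cref{lem:C*}, \cref{thm:reductive}, and the functoriality of $M_{\rm B}$.
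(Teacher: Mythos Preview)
Your verification of conditions~(i) and~(ii) is essentially fine. The serious problem is the key identification you propose for condition~(iii), namely
\[
\iota(M_{\rm B}(X,N))(\bC) \;=\; j_{\tilde E}^{-1}\{1\}(\bC),
\]
which fails in both directions. First, the inclusion $\iota(M_{\rm B}(X,N))\subset j_{\tilde E}^{-1}\{1\}$ is already false whenever $X_{\rm sing}$ is positive-dimensional: if $\rho=\mu^*\tau$ descends from $X$, then $f^*\rho=(\mu\circ f)^*\tau$, and $\mu\circ f$ has image $X_{\rm sing}$; there is no reason for $\tau$ to be trivial on the image of $\pi_1$ of a resolution of $X_{\rm sing}$. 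Second, even when $X_{\rm sing}$ is finite, the surjectivity $\pi_1(\tilde E)\twoheadrightarrow\pi_1(E)$ that you invoke is false in general for non-normal $E$: if a fiber $F_x$ is, say, a cycle of rational curves, then $\pi_1(F_x)=\bZ$ while its normalization is a disjoint union of $\bP^1$'s with trivial $\pi_1$, and the extra loop can survive in $\pi_1(Y)$. So ``handling component-by-component'' does not repair this. Consequently your appeal to \cref{lem:C*} with the single test variety $\tilde E$ does not establish the $\bR^*$-invariance.

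The paper does not give a self-contained proof; it cites Lerer and remarks that the $\bC^*$-invariance can be obtained by the method of \cref{lem:C*}. The intended argument is \emph{not} to use a single $\tilde E$, but rather to describe $\iota(M_{\rm B}(X,N))$ as the intersection $\bigcap_{f} j_Z^{-1}\{1\}$ over \emph{all} proper morphisms $f:Z\to Y$ from smooth projective varieties $Z$ with $\mu\circ f(Z)$ a point (compare the definition of $\kC$ in \eqref{eq:kc}). Each such $j_Z^{-1}\{1\}$ is $\bC^*$-invariant by \cref{lem:C*}, so the intersection is as well. The nontrivial step is then the \emph{descent lemma}: a reductive $\rho$ on $Y$ that is trivial on every such $Z$ descends to $\pi_1(X)$. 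Proving this requires more than the normal-generation statement you cite; it needs the full strength of Koll\'ar--Takayama type results (or the argument in Lerer) relating $\ker(\mu_*)$ to \emph{all} subvarieties contracted by $\mu$, not merely to the total exceptional locus.
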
 
This result holds significant importance, as it provides a fundamental example of absolutely constructible subsets for projective normal varieties.  It is worth noting that in \cite[Proposition 2.8]{Ler22}, it is explicitly stated that $\iota(M_{\rm B}(X,N))$ is $U(1)$-invariant, with $\iota$ defined in \cref{def:ac2}. However, it should be emphasized that the proof can be easily adapted to show $\bC^*$-invariance, similar to the approach used in the proof of \cref{lem:C*}.
\subsection{Reductive Shafarevich conjecture for   projective normal varieties}
\begin{thm} \label{main4}
	Let $Y$ be a projective normal variety.  Let $\kC$ be an absolutely constructible subset of $M_{\rm B}(Y,N)(\bC)$,   defined on $\bQ$ (e.g. $\kC=M_{\rm B}(Y,N)$).  	Consider the covering $\pi:\widetilde{Y}_\kC\to Y$ corresponding to the subgroup  $\cap_{\varrho}\ker \varrho$ of $\pi_1(Y)$,  where $\varrho:\pi_1(Y)\to \GL_N(\bC)$ ranges over all reductive representations such that  $[\varrho]\in \kC$.  Then the complex space $\widetilde{Y}_\kC$ is holomorphically convex. In particular, 
	\begin{itemize}
		\item The covering corresponding to the intersection of the kernels of all reductive representations of 
		$\pi_1(Y)$ in $\GL_N(\bC)$ is holomorphically convex; 
		\item  if $\pi_1(Y)$ is a subgroup of ${\rm GL}_N(\bC)$ whose Zariski closure is reductive, then the universal covering of $Y$ is holomorphically convex.
	\end{itemize}
\end{thm}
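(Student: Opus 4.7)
The plan is to reduce \cref{main4} to the smooth case \cref{thm:HC} by passing to a desingularization and descending holomorphic convexity along the resolution map. Let $\mu : X \to Y$ be a resolution of singularities, with $X$ smooth projective. Since $Y$ is normal and $\mu$ is proper birational, Zariski's main theorem forces every fiber of $\mu$ to be connected; by \cref{lem:surjective} the induced map $\mu_* : \pi_1(X) \to \pi_1(Y)$ is surjective. Set $\kC' := \iota(\kC) \subset M_{\rm B}(X,N)(\bC)$, where $\iota : M_{\rm B}(Y,N) \hookrightarrow M_{\rm B}(X,N)$ is the closed $\bQ$-immersion induced by $\mu_*$. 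By \cref{def:ac2}, $\kC'$ is absolutely constructible in the sense of \cref{def:ac}; being the image of a $\bQ$-constructible set under a $\bQ$-morphism, it is defined over $\bQ$. Moreover, since $\mu_*$ is surjective, for reductive $\varrho : \pi_1(Y) \to \GL_N(\bC)$ the image $\mu^*\varrho(\pi_1(X))$ coincides with $\varrho(\pi_1(Y))$, so $\mu^*\varrho$ remains reductive; the reductive classes in $\kC'$ are thus precisely the pullbacks of reductive classes in $\kC$.

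The kernels transform as $\ker(\mu^*\varrho) = \mu_*^{-1}(\ker\varrho)$. Writing $H_Y := \cap_\varrho \ker\varrho$ and $H_X := \cap_{\varrho'} \ker\varrho'$ for reductive representatives in $\kC$ and $\kC'$ respectively, we obtain $H_X = \mu_*^{-1}(H_Y)$. This identifies the fiber product $\widetilde{Y}_\kC \times_Y X$ with $\widetilde{X}_{\kC'}$, and the projection $p : \widetilde{X}_{\kC'} \to \widetilde{Y}_\kC$ becomes a proper surjective holomorphic fibration whose fibers are pullbacks of fibers of $\mu$, hence compact and connected.

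Applying \cref{thm:HC} to $(X, \kC')$ yields that $\widetilde{X}_{\kC'}$ is holomorphically convex; let $\pi_S : \widetilde{X}_{\kC'} \to S$ be its Remmert reduction onto a Stein space $S$. Each fiber of $p$ is a compact connected analytic subset of $\widetilde{X}_{\kC'}$, and so is contracted to a point by $\pi_S$. Because $\widetilde{Y}_\kC$ is normal (it covers the normal variety $Y$) and $p$ is a proper holomorphic fibration, one has $p_* \mathcal{O}_{\widetilde{X}_{\kC'}} = \mathcal{O}_{\widetilde{Y}_\kC}$, and $\pi_S$ descends to a unique holomorphic map $q : \widetilde{Y}_\kC \to S$ with $\pi_S = q \circ p$. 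For any compact $K \subset S$, $q^{-1}(K) = p(\pi_S^{-1}(K))$ is the continuous image of a compact set, so $q$ is proper. Thus $\widetilde{Y}_\kC$ carries a proper holomorphic map to a Stein space, and by the Cartan--Remmert theorem it is holomorphically convex.

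The two consequences follow at once. Taking $\kC = M_{\rm B}(Y, N)$, which is absolutely constructible over $\bQ$ by \cref{prop:Lerer}, yields the first bulleted assertion. For the second, the hypothesis gives a reductive representation $\pi_1(Y) \hookrightarrow \GL_N(\bC)$ whose class lies in $M_{\rm B}(Y,N)(\bC)$; since its kernel is trivial, $H_Y = \{1\}$ and $\widetilde{Y}_\kC$ is the universal cover $\widetilde{Y}$. The main technical point I anticipate is ensuring the clean descent of the Remmert reduction along $p$ — specifically that $\pi_S$ really factors through $p$, which rests on the normality of $Y$ together with the connectedness of the fibers of $\mu$; the remaining verifications (absolute constructibility, $\bQ$-rationality, surjectivity of $\mu_*$) are either formal or direct citations of earlier results.
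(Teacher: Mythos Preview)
Your proof is correct and follows essentially the same route as the paper: pass to a desingularization $\mu:X\to Y$, identify $\widetilde{X}_{\kC'}$ with the fiber product $\widetilde{Y}_\kC\times_Y X$ so that $p$ is a proper holomorphic fibration, and then descend the proper map to a Stein space along $p$. The only cosmetic difference is that the paper names the Stein target as the concrete space $\widetilde{S}_H$ built inside the proof of \cref{thm:HC}, whereas you invoke the Remmert reduction of $\widetilde{X}_{\kC'}$ abstractly; since the Remmert reduction is unique, these are the same map.
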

\begin{proof}
	Let $\mu:X\to Y$ be any  desingularization.   Let $j:M_{\rm B}(Y,N)\hookrightarrow  M_{\rm B}(X,N)$ the closed immersion induced by $\mu$, which is a morphism of affine $\bQ$-schemes of finite type . Then by \cref{def:ac2}, $j(\kC)$ is an absolutely constructible in the sense of \cref{def:ac}.   Since $\kC$ is  defined on $\bQ$, so   is $j(\kC)$. We shall use the notations in \cref{thm:Shafarevich1}. Let $\widetilde{X}_H$ be the covering associated with the subgroup  $H:=\cap_{\varrho}\ker \varrho$ of $\pi_1(X)$  where $\varrho:\pi_1(X)\to \GL_N(\bC)$ ranges over all reductive representations such that  $[\varrho]\in j(\kC)(\bC)$.   In other words,  $H:=\cap_{\tau}\ker \mu^*\tau$ where $\tau:\pi_1(Y)\to \GL_N(\bC)$ ranges over all reductive representations  such that  $[\tau]\in \kC(\bC)$.   Denote by $H_0:=\cap_{\tau}\ker  \tau$ where $\tau:\pi_1(Y)\to \GL_N(\bC)$ ranges over all reductive representations  such that  $[\varrho]\in \kC(\bC)$.  Therefore, $H=(\mu_*)^{-1}(H_0)$, where $\mu_*:\pi_1(X)\to \pi_1(Y)$ is a surjective homeomorphism as $Y$ is normal.  Therefore, the natural homeomorphism $\pi_1(X)/H\to \pi_1(Y)/H_0$ is an isomorphism.  Then $\widetilde{X}_\kC=\widetilde{X}/H$ and $\widetilde{Y}_H:=\widetilde{Y}/H_0$ where $\widetilde{X}$ (resp. $\widetilde{Y}$) is the universal covering of $X$ (resp. $X$). It induces a lift $p:\widetilde{X}_H\to \widetilde{Y}_\kC$ such that  
	\begin{equation*}
		\begin{tikzcd}
			\widetilde{X}_H\arrow[r,"\pi_H"] \arrow[d,"p"]& X\arrow[d, "\mu"]\\
			\widetilde{Y}_\kC \arrow[r, "\pi"] & Y
		\end{tikzcd}
	\end{equation*}
	\begin{claim}\label{claim:base change}
		$p:\widetilde{X}_H\to \widetilde{Y}_\kC$ is a proper  surjective holomorphic fibration with connected fibers. 
	\end{claim}
	\begin{proof}
		Note that ${\rm Aut}(\widetilde{X}_H/X)=\pi_1(X)/H\simeq \pi_1(Y)/H_0={\rm Aut}(\widetilde{Y}_\kC/Y)$.  
		Therefore, $\widetilde{X}_H$ is the base change $\widetilde{Y}_\kC\times_Y X$.   Note that each fiber of $\mu$ is connected as $Y$ is normal. It follows that each fiber of $p$ is connected. 	The claim is proved. 
	\end{proof}
	By    \cref{thm:Shafarevich1}, we know that there exist a proper surjective holomorphic fibration ${\rm sh}_H:\widetilde{X}_H\to \widetilde{S}_H$ such that  
	$\widetilde{S}_H$ is a Stein space. Therefore, for each connected compact subvariety $Z\subset  \widetilde{X}_H$, ${\rm sh}_H(Z)$ is a point.  By \cref{claim:base change}, it follows that each fiber of $p$ is compact and connected, and thus is contracted by ${\rm sh}_H$. Therefore,  ${\rm sh}_H$ factors through a  proper surjective fibration  $f: \widetilde{X}_\kC\to \widetilde{S}_H$:
	\begin{equation*}
\begin{tikzcd}
	\widetilde{X}_H\arrow[dr,"{\rm sh}_H"] \arrow[d,"p"]&  \\
	\widetilde{Y}_\kC \arrow[r, "f"] & \widetilde{S}_H
\end{tikzcd}
	\end{equation*} Therefore, $f$ is  a proper surjective holomorphic fibration  over a Stein space. By the Cartan-Remmert theorem, $\widetilde{Y}_\kC$ is holomorphically convex. 

 If we define $\kC$ as $M_{\rm B}(Y,N)$, then according to  \Cref{prop:Lerer}, $\kC$ is also absolutely constructible. As a result, the last two claims can be deduced. Thus, the theorem is proven.
\end{proof} 
\begin{thm} \label{thm:Stein singular}
	Let $Y$ be a projective normal variety.  Let $\kC$ be an absolutely constructible subset  of $M_{\rm B}(Y,N)(\bC)$,  defined on $\bQ$  (e.g. $\kC=M_{\rm B}(Y,N)$).  Let $\kC(\bC)$ be \emph{large} in the sense that for any closed positive dimensional subvariety $Z$ of $Y$, there exists a reductive representation $\varrho:\pi_1(Y)\to {\rm GL}_N(\bC)$	 such that $[\varrho]\in \kC(\bC)$ and $\varrho({\rm Im}[\pi_1(Z^{\rm norm})\to \pi_1(Y)])$ is infinite.  Then    all intermediate Galois coverings of $Y$ between $\widetilde{Y}$ and $\widetilde{Y}_\kC$  are Stein spaces. Here $\widetilde{Y}$ denotes the universal covering of $Y$.
\end{thm}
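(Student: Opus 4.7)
The plan is to parallel the proof of \cref{thm:Stein} for smooth $X$, compensating for the singularity of $Y$ by passing through a resolution of singularities $\mu: X\to Y$. First I would take such a resolution and consider the induced closed immersion $j: M_{\rm B}(Y,N)\hookrightarrow M_{\rm B}(X,N)$, so that $j(\kC)$ is absolutely constructible in $M_{\rm B}(X,N)$ and defined over $\bQ$ by \cref{def:ac2}. Set $H_0:=\cap_{[\varrho]\in\kC(\bC)}\ker\varrho\subset \pi_1(Y)$ and $H:=\mu_*^{-1}(H_0)\subset \pi_1(X)$, so that $\pi_1(X)/H\simeq \pi_1(Y)/H_0$ and $\widetilde{X}_H=\widetilde{Y}_\kC\times_Y X$, as in the proof of \cref{main4}. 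Applying the constructions of \cref{thm:Shafarevich1,thm:HC} to $X$ and $j(\kC)$ provides a proper surjective fibration ${\rm sh}_H:\widetilde{X}_H\to \widetilde{S}_H$ onto a Stein normal space $\widetilde{S}_H$, a projective normal quotient $W:=\widetilde{S}_H/\nu(N)$, a closed positive $(1,1)$-current $T_W$ on $W$ with continuous potential and K\"ahler class, and a continuous plurisubharmonic function $\Phi$ on $\widetilde{S}_H$ with $\hess\Phi\geq q^*T_W$, where $q:\widetilde{S}_H\to W$ is the quotient. From the proof of \cref{main4}, ${\rm sh}_H$ factors as $f\circ p$, where $p:\widetilde{X}_H\to \widetilde{Y}_\kC$ has compact connected fibers (these are fibers of the birational $\mu$, which are connected by Zariski's main theorem since $Y$ is normal) and $f:\widetilde{Y}_\kC\to \widetilde{S}_H$ is a proper surjective holomorphic fibration, which realizes the Remmert reduction of the holomorphically convex space $\widetilde{Y}_\kC$.

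The crux is to show that, under the largeness hypothesis, $f$ is biholomorphic, which identifies $\widetilde{Y}_\kC\simeq \widetilde{S}_H$ and forces $\widetilde{Y}_\kC$ to be Stein. Since $f$ is the Remmert reduction to a Stein space, this reduces to showing that $\widetilde{Y}_\kC$ contains no positive-dimensional compact irreducible subvariety. I would argue by contradiction: suppose $F\subset \widetilde{Y}_\kC$ is such a subvariety, and let $W_0:=\pi_\kC(F)\subset Y$ be its image under the covering map $\pi_\kC:\widetilde{Y}_\kC\to Y$. Then $W_0$ is a positive-dimensional compact irreducible subvariety of $Y$ (algebraic by Chow), and $F\to W_0$ is finite surjective, being proper with discrete fibers. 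Passing to normalizations gives a finite surjective map $F^{\rm norm}\to W_0^{\rm norm}$, so ${\rm Im}[\pi_1(F^{\rm norm})\to \pi_1(W_0^{\rm norm})]$ has finite index. Since $F\subset \widetilde{Y}_\kC$ and $\pi_1(\widetilde{Y}_\kC)$ maps to $H_0$ inside $\pi_1(Y)$, the composite $\pi_1(F^{\rm norm})\to \pi_1(F)\to \pi_1(Y)$ has image in $H_0$, hence is killed by every reductive $\varrho:\pi_1(Y)\to \GL_N(\bC)$ with $[\varrho]\in \kC(\bC)$. Combined with the finite-index property, this forces $\varrho({\rm Im}[\pi_1(W_0^{\rm norm})\to \pi_1(Y)])$ to be finite for every such $\varrho$, contradicting the largeness of $\kC$. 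Thus $f$ is a proper bijective holomorphic fibration between normal complex spaces, hence biholomorphic, and $\widetilde{Y}_\kC\simeq \widetilde{S}_H$ is Stein.

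For an intermediate Galois covering $p':\widetilde{Y}'\to \widetilde{Y}_\kC$ with $\widetilde{Y}\to \widetilde{Y}'$, the subgroup $\pi_1(\widetilde{Y}')$ is normal in $\pi_1(Y)$ and in particular in the overgroup $\Gamma_W\subset \pi_1(Y)$ corresponding to $W$ (the preimage of $\nu(N)\subset \pi_1(Y)/H_0$, containing $H_0$). Hence $q':=q\circ p':\widetilde{Y}'\to W$ is a topological Galois unramified covering of the compact projective normal variety $W$. Pulling back produces a continuous plurisubharmonic function $(p')^*\Phi$ on $\widetilde{Y}'$ with $\hess((p')^*\Phi)\geq (q')^*T_W$. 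Since $T_W$ has K\"ahler class and continuous potential, \cref{prop:stein} applied to $\widetilde{Y}'\to W$ yields that $\widetilde{Y}'$ is Stein, completing the argument.

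The main obstacle is that \cref{thm:Stein} cannot be applied directly to the smooth model $X$ with the family $j(\kC)$, because $j(\kC)$ is never large on $X$ when $\mu$ is not an isomorphism: any positive-dimensional component of a $\mu$-fiber lies in the kernel of every $\mu^*\tilde\varrho$, and so violates largeness on $X$. The exclusion of compact positive-dimensional subvarieties therefore has to be carried out intrinsically on $\widetilde{Y}_\kC$ using the largeness hypothesis phrased on $Y$, and the role of the normalization step is exactly this bridge: it converts the relation $F\subset \widetilde{Y}_\kC$, which gives triviality of $\varrho$ on $\pi_1(F^{\rm norm})$, into the finiteness of $\varrho$ on $\pi_1(W_0^{\rm norm})$ that can confront the largeness assumption. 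All remaining ingredients --- the K\"ahler current $T_W$, the plurisubharmonic function $\Phi$, and the quotient $W$ --- are imported unchanged from the smooth side via the identification $\widetilde{X}_H=\widetilde{Y}_\kC\times_Y X$ and the compatibility of the Galois actions under $\pi_1(X)/H\simeq \pi_1(Y)/H_0$.
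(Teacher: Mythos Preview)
Your proposal is correct and follows essentially the same approach as the paper's proof: both pass to a desingularization $\mu:X\to Y$, use the factorization ${\rm sh}_H = f\circ p$ from the proof of \cref{main4}, prove that $f:\widetilde{Y}_\kC\to\widetilde{S}_H$ is a biholomorphism by the same contradiction argument via largeness (your $F$, $W_0$ are the paper's $Z$, $W$), and then transport the plurisubharmonic function and K\"ahler-class current from the proof of \cref{thm:HC} to conclude via \cref{prop:stein}. Your additional remarks on why \cref{thm:Stein} cannot be applied directly to $X$ with $j(\kC)$, and your explicit verification that $q\circ p':\widetilde{Y}'\to W$ is Galois, are helpful elaborations not spelled out in the paper but consistent with its argument.
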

\begin{proof}
Let $\mu:X\to Y$ be any  desingularization. In the following, we will use the same notations as in the proof of \cref{main4} without explicitly recalling their definitions.    Recall that we have constructed  three proper surjective holomorphic fibrations $p$, $f$, and ${\rm sh}_H$ satisfying the following commutative diagram:
	\begin{equation*}
	\begin{tikzcd}
		&	\widetilde{X}_H\arrow[r,"\pi_H"] \arrow[d,"p"]\arrow[ld,"{\rm sh}_H"']& X\arrow[d, "\mu"]  \\
		\widetilde{S}_H&	\widetilde{Y}_\kC \arrow[l, "f"] \arrow[r,"\pi"]&  Y
	\end{tikzcd}
\end{equation*} 
\begin{claim}\label{claim:biho2}
	$f:\widetilde{Y}_\kC\to \widetilde{S}_H$ is a biholomorphism.
\end{claim}
The proof follows a similar argument to that of \cref{claim:biho}. For the sake of completeness, we will provide it here. 
\begin{proof}[Proof of \cref{claim:biho2}]
As each fibers of $f$ is compact and connected, it suffices to prove that there are no compact positive dimensional subvarieties $Z$ of $\widetilde{Y}_\kC$ such that $f(Z)$ is a point. Let us assume, for the sake of contradiction, that such a $Z$ exists. Consider $W:=\pi(Z)$ which is a compact positive-dimensional irreducible subvariety of $Y$.  Therefore, ${\rm Im}[\pi_1(Z^{\rm norm})\to \pi_1(W^{\rm norm})]$ is a finite index subgroup of $\pi_1(W^{\rm norm})$. By the definition of $\widetilde{Y}_\kC$, for any reductive $\varrho:\pi_1(Y)\to\GL_{N}(\bC)$ with $[\varrho]\in \kC(\bC)$, we have $\varrho({\rm Im}[\pi_1(Z^{\rm norm})\to \pi_1(X)])=\{1\}$. Therefore, $\varrho({\rm Im}[\pi_1(W^{\rm norm})\to \pi_1(Y)])$ is finite. This contradicts with out assumption that $\kC$ is large. Hence, $f$ is a  one-to-one proper holomorphic map of complex normal spaces. Consequently, it is biholomorphic. 
\end{proof}
The rest of the  proof is same as in \cref{thm:Stein}. 
By the proof of \cref{thm:HC}, there exist
\begin{itemize}
	\item  a topological Galois unramified covering $q:\widetilde{S}_H\to W$, where $W$ is a projective normal variety;
	\item a positive closed $(1,1)$-current with continuous potential $T_0$ over $W$ such that $\{T_0\}$ is K\"ahler;
	\item    a continuous semi-positive plurisubharmonic function $\phi$ on $\widetilde{S}_H$ such that we have 
	\begin{align} \label{eq:below6}
		\hess \phi\geq q^*T_0. 
	\end{align}  
\end{itemize} 
By \cref{claim:biho2}, $\widetilde{Y}_\kC$ can be identified with  $\widetilde{S}_H$. 
Let $p:\widetilde{Y}'\to \widetilde{Y}_\kC$ be the intermediate Galois covering of $Y$ between $\widetilde{Y}\to \widetilde{Y}_\kC$. By \eqref{eq:below6} we have
\begin{align} 
	\hess  p^*\phi\geq (q\circ p)^*T_0.
\end{align}   
We apply   \cref{prop:stein}  to conclude that $\widetilde{Y}'$ is Stein.  %
\end{proof} 
   

 \end{document}